\crefname{thm}{Theorem}{Theorems}
\Crefname{thm}{Theorem}{Theorems}
\crefname{prop}{Proposition}{Propositions}
\Crefname{prop}{Proposition}{Propositions}
\crefname{rem}{Remark}{Remarks}
\Crefname{rem}{Remark}{Remarks}
\newcommand{\ie}{{i.e. }}
\newcommand{\eg}{{e.g. }}
\newcommand{\oo}{$\infty$\=/} 	
\newcommand{\bbone}{{\mathbbm 1 }}
\newcommand{\Kappa}{\mathrm{K}}
\renewcommand{\AA}{\mathbb{A}}
\newcommand{\CC}{\mathbb{C}}
\newcommand{\JJ}{\mathbb{J}}
\newcommand{\NN}{\mathbb{N}}
\newcommand{\RR}{\mathbb{R}}
\newcommand{\UU}{\mathbb{U}}
\newcommand{\WW}{\mathbb{W}}
\newcommand{\cA}{\mathcal{A}}
\newcommand{\cB}{\mathcal{B}}
\newcommand{\cC}{\mathcal{C}}
\newcommand{\cD}{\mathcal{D}}
\newcommand{\cE}{\mathcal{E}}
\newcommand{\cF}{\mathcal{F}}
\newcommand{\cJ}{\mathcal{J}}
\newcommand{\cL}{\mathcal{L}}
\newcommand{\cM}{\mathcal{M}}
\newcommand{\cP}{\mathcal{P}}
\newcommand{\cR}{\mathcal{R}}
\newcommand{\cS}{\mathcal{S}}
\newcommand{\cW}{\mathcal{W}}
\newcommand{\eqnand}{\qquad \text{and} \qquad}            
\newcommand{\truncated}[1]{^{\leq #1}}            
\newcommand{\ot}{\leftarrow}                    
\newcommand{\tto}{{\begin{tikzcd}[ampersand replacement=\&]{}\ar[r]\&{}\end{tikzcd}}}
\newcommand{\ntto}[1]{{\begin{tikzcd}[ampersand replacement=\&]{}\ar[r, "{#1}"]\&{}\end{tikzcd}}}
\newcommand{\mto}{{\begin{tikzcd}[ampersand replacement=\&]{}\ar[r,mapsto]\&{}\end{tikzcd}}}
\newcommand{\stto}{{\begin{tikzcd}[ampersand replacement=\&, sep=small]{}\ar[r]\&{}\end{tikzcd}}}
\newcommand{\nstto}[1]{{\begin{tikzcd}[ampersand replacement=\&, sep=small]{}\ar[r, "{#1}"]\&{}\end{tikzcd}}}
\newcommand{\subto}{\hookrightarrow}            
\newcommand{\mono}{\rightarrowtail}                
\newcommand{\onto}{\twoheadrightarrow}            
\newcommand{\Ra}{\Rightarrow}                    
\newcommand{\RA}{\Longrightarrow}                
\newcommand{\La}{\Leftarrow}                    
\renewcommand{\iff}{\Leftrightarrow}            
\newcommand{\IFF}{\Longleftrightarrow}            
\newcommand{\xto}[1]{\xrightarrow {#1}}            
\newcommand{\xot}[1]{\xleftarrow {#1}}            
\newcommand{\pbmark}{\ar[dr, phantom, "\ulcorner" very near start, shift right=1ex]}
\newcommand{\pomark}{\ar[ul, phantom, "\lrcorner" very near start, shift right=1ex]}
\newcommand{\map}[2]{\left[#1,#2\right]}                               
\newcommand{\relmap}[3]{\left[#2, #3\right]_{#1}}                
\newcommand{\intmap}[2]{\left\lsem #1, #2 \right\rsem}            
\newcommand{\bracemap}[2]{\left\{#1,#2\right\}}                 
\newcommand{\intbracemap}[2]{\left\{\!\!\left\{#1,#2\right\}\!\!\right\}}              
\newcommand{\s}{\mathrm{s}}                                
\renewcommand{\t}{\mathrm{t}}                            
\DeclareMathOperator*{\colim}{colim}                     
\newcommand{\colimit}[1]{\underset{#1}{\mathrm{colim}}\ }  
\newcommand{\fun}[2]{\left[#1,#2\right]}                
\newcommand{\Arr}[1]{{#1}^\rightarrow}                    
\newcommand{\iso}{^\simeq}                                
\newcommand{\op}{^{op}}                                    
\newcommand{\cc}{_\mathrm{cc}}                            
\newcommand{\lex}{^\mathrm{lex}}                        
\newcommand{\pointed}{^\bullet}                            
\newcommand{\DKLoc}[2]{{L\!\left(#1,#2\right)}}                    
\newcommand{\relDKLoc}[4]{{L_{#1}^{#2}\!\left(#3,#4\right)}}    
\newcommand{\commaindex}{\downarrow}
\newcommand{\comma}{\!\downarrow\!}
\newcommand{\ini}{0}                                        
\newcommand{\Set}{\mathrm{\mathcal{S} et}}                    
\newcommand{\Fin}{\mathrm{ \mathcal{F} in}}                        
\newcommand{\Sp}{\mathrm{\mathcal{S}p}}                            
\newcommand{\Cat}{\mathrm{\mathcal{C} at}}                         
\newcommand{\pp}{\,\square\,}                                        
\newcommand{\pbh}[2]{\left\langle #1, #2 \right\rangle}                
\newcommand{\intpbh}[2]{\left\llangle #1, #2 \right\rrangle}            
\renewcommand{\P}[1]{{\mathcal{P}\!\left(#1\right)}}                        
\newcommand{\relP}[2]{{\mathcal{P}_{#1}\!\left(#2\right)}}                    
\newcommand{\Ind}[1]{{\mathrm{\mathcal{I} nd}\!\left(#1\right)}}            
\newcommand{\relInd}[3]{{\mathrm{\mathcal{I} nd}_{#1}^{#2}\!\left(#3\right)}}    
\renewcommand{\S}[1]{{\mathcal{S}\!\left[#1\right]}}                    
\newcommand{\join}{\star}                                    
\newcommand{\intperp}{\text{\begin{sideways}$\!\Vdash$\end{sideways}}\,}    
\newcommand{\iperp}{\ \intperp\ }                                    
\newcommand{\fwperp}{\upModels}                                        
\newcommand{\lexperp}{\underset{\text{lex}}{\perp}}                                        
\definecolor{mat}{rgb}{0.77,0.85,1}
\newcommand{\Quillenmap}[1]{q\left(#1\right)}
\newcommand{\Quillenmapn}[2]{q^{#1}\left(#2\right)}
\newcommand{\Quillen}[1]{T_Q\!\left(#1\right)}
\newcommand{\Quillenn}[2]{T_Q^{#1}\!\left(#2\right)}
\newcommand{\Kellymap}[1]{k\left(#1\right)}
\newcommand{\Kellymapn}[2]{k^{#1}\left(#2\right)}
\newcommand{\Kellyconstruction}{T_K}
\newcommand{\Kelly}[1]{T_K\!\left(#1\right)}
\newcommand{\Kellyvar}[1]{T_K{#1}}
\newcommand{\Kellyn}[2]{T_K^{#1}\!\left(#2\right)}
\newcommand{\Plusoriginal}[1]{{#1}^+}
\newcommand{\Plusconstruction}{{T_+}}
\newcommand{\Plus}[1]{T_{+}\!\left(#1\right)}
\newcommand{\Plusvar}[1]{T_{+}{#1}} 
\newcommand{\Plusmap}[1]{{#1}^+}
\newcommand{\Minusmap}[1]{{#1}^-}
\newcommand{\Plusplus}[1]{T_{++}\!\left({#1}\right)}
\newcommand{\Plusplusvar}[1]{T_{++}{#1}}
\newcommand{\Plusplusmap}[1]{{#1}^{++}}
\newtheorem{thm}{Theorem}[subsection]
\newtheorem{prop}[thm]{Proposition}
\newtheorem{cor}[thm]{Corollary}
\newtheorem{lem}[thm]{Lemma}
\newtheorem{lemma}[thm]{Lemma}
\newenvironment{thm-intro}[1]
  {\thmintro}
  {\endthmintro}
\theoremstyle{definition}
\newtheorem{defi}[thm]{Definition}
\newtheorem{rem}[thm]{Remark}
\begin{document}

\title{
Small object arguments, \\
plus-construction, \\
and left-exact localizations
}
\author{
M.~Anel%
\footnote{Department of Philosophy, Carnegie Mellon University}
\and 
C.~Leena~Subramaniam%
\footnote{IRIF, Université Paris 7 -- Denis Diderot}
}
\maketitle

\begin{abstract}
We present a variant of the small object argument, inspired by Kelly, better suited to construct unique factorisation systems.
Our main result is to compare it to the plus-construction involved in sheafification.
We apply this to construct localizations, modalities and left-exact localizations explicitly from generators.
\end{abstract}

\setcounter{tocdepth}{2}
\tableofcontents

\newpage

\section*{Introduction}
\addcontentsline{toc}{section}{Introduction}

The small object argument is a technique of construction of the ``completion" of some object by means of the transfinite iteration of a ``partial completion" operator.
The idea can be traced back at least to the construction of injective modules, where it improves on previous completion techniques based on Zorn's lemma~\cite{Cartan-Eilenberg, Grothendieck:Tohoku}. 
The construction was given a proper setting--and a name--when Quillen presented it as the construction of a weak factorization system~\cite{Quillen:HA}. 
It was then adapted to strong factorization systems by Gabriel and Ulmer~\cite{Gabriel-Ulmer} and Kelly~\cite{Kelly} with the motivation of constructing reflective subcategories.

The purpose of this work is to revisit the construction of Kelly in terms of the pushout~product/pullback~hom closed monoidal structure on arrow categories and in the context of \oo categories.
Let $W\to \Arr \cC$ be a diagram of arrows in some cocomplete category $\cC$. 
From $W$ and a map $f:A\to B$, we can construct
the coend $\int^w \pbh wf \pp w$, where $\pbh w f$ is the pullback-hom of $w$ and $f$,
and $\pbh w f\pp w$ is the pushout-product of $\pbh w f$ and $w$.
This map comes with a natural counit
\[
\int^{w\in W} \pbh wf \pp w\ntto \epsilon f
\]
corresponding to a commutative square in $\cC$.
Let 
\[
A\ntto {u(f)} \Kelly f \ntto {\Kellymap f} B
\] 
be the factorization of $f$ via the cocartesian gap map $\Kellymap f$ of the square $\epsilon$.
Our first result is the following version of \cite[Theorem 11.5]{Kelly}.

\begin{thm-intro}{\cref{thm:!SOA2}}[Construction of factorization systems]
Let $\cC$ be a cocomplete \oo category and $W\to \Arr \cC$ be a small diagram of arrows with small  domains and codomains.
Then $\Kellymap f$ is Kelly's operator and the transfinite iteration of $k$ converges to the factorization associated to the strong orthogonal system $({^\perp}(W^\perp),W^\perp)$.
\end{thm-intro}
\noindent We apply this to the construction of a number of factorization systems in \ref{sec:factorizations} (image factorization, Postnikov towers, nullification, long localization and conservative functors, cofinal functors and fibrations).
We also apply this to recover Kelly's result on reflective subcategories in \cref{thm:orthogonal-reflection}.

\medskip

Our main result, though, is the comparison of Kelly's construction $k$ with the plus-construction involved in sheafification~\cite{SGA41,MLM}.
We shall show in \ref{sec:+construction} that the plus-construction $\Plusmap f$ of a map $f:A\to B$ is involved in a factorization of $f$
\[
A \ntto {\Minusmap f}\Plus f \ntto {\Plusmap f} B,
\]
where $\Plus f = \colimit {W\comma f} \text{codomain}(w)$.

\begin{thm-intro}{\cref{thm:+construction}}[plus-construction]
If $\cC$ is a presentable category and $W\to \Arr \cC$ is a pre-modulator (\cref{defi:pre-modulator}), then there exists a natural isomorphism $\Plusmap f = \Kellymap f$.
In particular, the factorization of $f$ generated by such a $W\to \Arr \cC$ can be produced by a transfinite iteration of the plus-construction $f\mapsto \Plusmap f$.
\end{thm-intro}

This theorem is useful because it simplifies the colimit formula for $\Plus f$ is usually more workable than the coend formula for $\Kelly f$.
We use it to derive a recipe to construct {\em modalities}, that is, factorization systems stable under base change.
\begin{thm-intro}{\cref{thm:+modality}}[Stable plus-construction]
Let $\cC$ is a presentable locally cartesian closed category.
If $W\to \Arr \cC$ is a modulator (\cref{defi:modulator}), then the factorisation system generated by $W$ is a modality.
\end{thm-intro}

But our main applications are to the construction of {\em left-exact localizations} for \oo topoi.
Such localizations are in one-to-one correspondence with {\em left-exact modalities}, that is, factorization systems that are stable by finite limits in the arrow category (and not only by base change) (see \cref{prop:stable/lex-FS,prop:lex-loc=lex-mod}).
We introduce the notion of {\em lex modulator} (\cref{defi:lex-modulator}), which enlarges the notion of a Grothendieck topology. 
We prove in \cref{prop:lex-mod-4-loc-lex} that any accessible left-exact localization can be presented by means of a lex modulator.

\begin{thm-intro}{\cref{thm:lex+construction}}[Left-exact plus-construction]
Let $\cE$ be a $n$\=/topos  ($n\leq \infty$) and $W\to \Arr \cE$ a lex modulator (\cref{defi:lex-modulator}),
then the factorization system generated by $W$ is a lex modality (\ie a lex localization).
\end{thm-intro}

We also characterize the left-exact modality generated by an arbitrary diagram of maps $W\to \Arr \cE$.
Let $W^\Delta$ be the collection of all the diagonals $\Delta^n w$ of all maps $w$ in $W$,
and $W^{\Delta mod}$ be the {\em modulator envelope} (\cref{defi:base-change-envelope}) of $W^\Delta$ with respect to some generators
(essentially, $W^{\Delta mod}$ is the stabilisation by base change of $W^\Delta$).

\begin{thm-intro}
{\cref{thm:lex-loc-gen-by-map}}[Generation of left-exact localizations]
Let $W\to \Arr \cE$ be a diagram of maps in an $n$\=/topos $\cE$ ($n\leq \infty$).
\begin{enumerate}[label=(\alph*), itemsep=0em, leftmargin=*]
\item The left-exact modality generated by $W$ is the factorization system generated by the modulator $W^{\Delta mod}$.
\item The left-exact localization $P_W$ of $\cE$ generated by $W$ is the iteration of the plus-construction associated to the modulator $W^{\Delta mod}$.
\item An object $X$ in $\cE$ is local for the left-exact localization generated by $W$ if and only if $X$ is orthogonal to all the diagonals of $W$ and all their base change.
\end{enumerate}\end{thm-intro}

\paragraph{Acknowledgments }
The authors are thankful to
Steve Awodey,
Georg Biedermann,
Eric Finster,
Jonas Frey,
André Joyal,
Paul-André Melliès,
Egbert Rijke,
Andrew Swan,
and Joseph Tapia
for many helpful conversations.
The first author gratefully acknowledges the support of the Air Force Office of Scientific Research through MURI grant FA9550-15-1-0053. 


\section{Arrow categories}

\paragraph{Vocabulary conventions}
Our basic framework is the theory of \oo categories~\cite{Joyal:QC,Lurie:HTT}.
To avoid the ``\oo'' prefix everywhere, we shall simply say {\em category} instead of {\em \oo category} and \emph{groupoid} instead of \emph{\oo groupoid}. 
In particular, we shall say simply {\em topos} for the notion of \oo topos of~\cite{Lurie:HTT}.
When classical categories or topoi are needed, we shall refer to them as 1-categories or 1-topoi. 

We shall denote by $\cS$ the category of small \oo groupoids. 
As it is common, we shall also call its objects {\em (small) spaces}. 
The morphisms between two objects $x$ and $y$ of a category $\cC$ form an \oo groupoid (not necessarily small) that we denote most of the time by $\map x y$ and by $\relmap \cC x y$ if the category $\cC$ needs to be recalled. 
We shall say that a map in an \oo category is {\em invertible} or an {\em isomorphism} if it admits both a left and a right inverse.

For an object $X$ in a category $\cC$, the slice and coslice categories shall be denoted as the comma categories $\cC\comma X$ and $X\comma \cC$. 
Since a number of computations of this work involve comma categories, this choice of notations makes things easier.

If $C$ is a small category, the {\em internal groupoid} of $C$ is the maximal subgroupoid of $C$, and the {\em external groupoid} of $C$ is the groupoid obtained by localizing all arrows. 
We shall sometimes denote this last object by $|C|$. 
It can also be defined as $\colim_C1$ in $\cS$.

\subsection{The many enrichments of arrow categories}

We recall the definition of the pushout-product and pullback-hom operations on the maps of a category~\cite[Ch. 11]{Riehl:CHT}.
These operations are involved in a number of enrichments or monoidal structures on arrow categories summarized in \cref{table:enrichment-arrow-cat}.
We recall also their application to the definition of codiagonal and diagonal of a map.
More about all these notions and their relationship to factorisation systems can be found in~\cite{ABFJ:GBM}.

\subsubsection{Cartesian closed structure on arrows}
\label{sec:cartesian-structure-on-arrows}

Let $\cS$ be the category of spaces and $I$ the arrow category $\{0<1\}$.
We denote by $\Arr \cS = \fun I \cS$ the category of arrows of $\cS$.
Let $f:A\to B$ and $g:X\to Y$ be two maps in $\cS$.
The cartesian product $f\times g$ in $\Arr \cS$ is the map $A\times X \to B\times Y$.
The unit is the identity map of $1$.
This cartesian structure is closed, and the corresponding internal hom is given by
\[
\bracemap fg \quad:=\quad \map fg \stto \map {B}{Y},
\]
where $\map fg = \map AX \times_{\map AY}\map BY$ is the space of maps from $f$ to $g$ in $\Arr \cS$.

For $\cC$ another category, the arrow category $\Arr \cC = \fun I \cC$ is naturally enriched over $(\Arr \cS, \times, 1, \bracemap--)$.
We shall also denote by $\bracemap fg$ this enrichement.

\medskip
More generally, if $(\cC,\times,1,\intmap--)$ is a cartesian closed category, then so is the category $\Arr \cC$.
The cartesian product is computed termwise and the internal hom is 
\[
\intbracemap fg \quad:=\quad \intmap fg \stto \intmap {B}{Y},
\]
where $\intmap fg = \intmap AX \times_{\intmap AY}\intmap BY$.

\subsubsection{pushout-product and pullback-hom of arrows}
\label{sec:pp-pbh-structure-on-arrows}

Let $f:A\to B$ and $g:X\to Y$ be two maps in $\cS$.
The {\em pushout-product} $f\pp g$ is defined as the following cocartesian gap map
\[
\begin{tikzcd}
A\times X \ar[r] \ar[d] & A\times Y \ar[d]\ar[ddr, bend left] \\
B\times X \ar[r] \ar[rrd, bend right = 15 ] & B\times X \coprod_{A\times X} A\times Y \pomark \ar[rd, "f\pp g", dashed]\\
&& B\times Y.
\end{tikzcd}
\]
If $0$ and $1$ are the initial and terminal objects of $\cS$, the map $0\to 1$
is the unit for the pushout-product.
The {\em pullback-hom} $\pbh f g$ is defined as the following cartesian gap map
\[
\begin{tikzcd}
\map B X \ar[rd,"\pbh f g", dashed] \ar[rrd, bend left=15] \ar[ddr, bend right]	\\
&\map A X \times_{\map A Y}\map B Y \ar[r]\ar[d]  \pbmark & \map B Y \ar[d]	\\
&\map A X \ar[r] & \map A Y.
\end{tikzcd}
\]
The pushout-product $\square$ and the pullback-hom $\pbh--$ turn $\Arr \cS$ into a symmetric monoidal closed structure.

\medskip

For a general category $\cC$, and two maps $f:A\to B$ and $g:X\to Y$ in $\cC$, the definition of $\pbh f g$ still makes sense if $\map --$ is interpreted as the space of maps in $\cC$.
This provides a functor $\pbh -- : (\Arr \cC)\op \times \Arr \cC \to \Arr \cS$ which is an enrichment of $\Arr \cC$ over $(\Arr \cS, \pp, 0\to 1, \pbh--)$ (which is different from the one of \ref{sec:cartesian-structure-on-arrows}).
If $\cC$ is cocomplete and complete, this enrichment is moreover tensored and cotensored. 
For $f:A\to B$ in $\cS$ and $g:X\to Y$ in $\cC$, the tensor structure if given by the previous formula of $f\pp g$ where the cartesian products $A\times X$ are interpreted as $\coprod_AX$.
Similarly, the cotensor is defined by the formula of $\pbh f g$ where $\map A X$ is interpreted as $\prod_A X$.
The adjunction property between pushout-product and pullback-hom gives a canonical invertible map $\pbh {u\pp f} g = \pbh u {\pbh f g }$.

\medskip

Recall that, for any category $\cC$, the identity maps of objects of $\cC$ provide a functor $\cC\to \Arr \cC$ whose (essential) image is the full category generated by the invertible maps of $\cC$.
This category of invertible maps has some important absorption properties with respect to the pushout-product and pullback-hom.
The proof of the following lemma is straightforward using the definitions of $\pp$ and $\pbh--$.
\begin{lemma}[Absorption properties of invertible maps]
\label{lem:abs-iso}
Let $f$ and $g$ be two maps in $\cC$ and $u$ be a map in $\cS$.
\begin{enumerate}[label={\em (\alph*)}, leftmargin=*]
\item If $u$ or $f$ is invertible then so is $u\pp f$.
\item If $f$ or $g$ is invertible then so is $\pbh g f$.
\end{enumerate}
\end{lemma}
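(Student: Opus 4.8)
The plan is to prove both parts by one and the same mechanism, reading the conclusion straight off the defining (co)cartesian gap squares together with the stability of invertible maps under cobase and base change. The only fact I would rely on is that in any pushout (resp.\ pullback) square, if one of the two maps being pushed out (resp.\ pulled back) is invertible then so is the parallel map; this holds in $\cS$ and, since $\cC$ is assumed cocomplete and complete, in $\cC$ as well. Given this, each statement collapses to the observation that once a parallel edge of the square becomes an isomorphism, the gap map is identified (through that isomorphism) with one of the two surviving structure maps, whose invertibility is exactly the hypothesis.

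For part (a) I would write $u:A\to B$ in $\cS$ and $f:X\to Y$ in $\cC$, so that $u\pp f$ is the cocartesian gap map of the square with corners $\coprod_A X$, $\coprod_A Y$, $\coprod_B X$ and pushout $Q=\coprod_B X\coprod_{\coprod_A X}\coprod_A Y$, landing in $\coprod_B Y$. If $u$ is invertible, the left edge $\coprod_A X\to\coprod_B X$ is invertible, hence by cobase change the opposite edge $\coprod_A Y\to Q$ is invertible; under this identification the gap map $Q\to\coprod_B Y$ becomes the cocone component $\coprod_A Y\to\coprod_B Y$ induced by $u$, which is invertible. If instead $f$ is invertible, the top edge $\coprod_A X\to\coprod_A Y$ is invertible, so $\coprod_B X\to Q$ is invertible, and the gap map is identified with $\coprod_B X\to\coprod_B Y$ induced by $f$, again invertible. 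Either way $u\pp f$ is invertible.

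Part (b) is formally dual. Writing $g:A\to B$ and $f:X\to Y$, the map $\pbh g f$ is the cartesian gap map $\map B X\to\map A X\times_{\map A Y}\map B Y$, where the pullback is formed from $f_*:\map A X\to\map A Y$ and $g^*:\map B Y\to\map A Y$, and the cone components are $f_*:\map B X\to\map B Y$ and $g^*:\map B X\to\map A X$. If $f$ is invertible then $f_*$ is invertible, so the bottom edge $\map A X\to\map A Y$ is invertible, hence by base change the projection $\map A X\times_{\map A Y}\map B Y\to\map B Y$ is invertible and the gap map is identified with $f_*:\map B X\to\map B Y$. If $g$ is invertible then $g^*:\map B Y\to\map A Y$ is invertible, the other projection is invertible by base change, and the gap map is identified with $g^*:\map B X\to\map A X$. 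In both cases $\pbh g f$ is invertible.

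There is essentially no serious obstacle here, in keeping with the remark that the proof is straightforward from the definitions: all the content is the stability of invertible maps under (co)base change, plus the elementary identification of the gap map with a surviving structure map once a parallel edge has collapsed to an isomorphism. The only point requiring care is the bookkeeping of which edge becomes invertible in each of the four cases and the correct matching of the gap map to the corresponding copower (for $\pp$) or pre-/post-composition map (for $\pbh--$); no transfinite iteration or coend machinery enters.
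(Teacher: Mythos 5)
Your proof is correct, and it is precisely the definition-unwinding argument the paper has in mind: the paper states only that the lemma is ``straightforward using the definitions of $\pp$ and $\pbh--$'', and your verification --- stability of invertible maps under cobase/base change in the defining gap squares, followed by two-out-of-three to identify the gap map with the surviving structure map ($u\times Y$ or $B\times f$ for part (a), $f_*$ or $g^*$ for part (b)) --- is exactly that routine check, carried out carefully in all four cases. Nothing further is needed.
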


\paragraph{Diagonal and codiagonal}

As an application of the previous structure, we recall how the codiagonal and
diagonal of maps can be written in terms of the pushout-product and pullback-hom.
For $n\geq -1$, let $S^n$ be the $n$\=/dimensional sphere, viewed as an object in $\cS$, with the convention that $S^{-1}=\ini$ (the empty space).
We denote by $s^n:S^n\to 1$ the canonical map to the point.
An easy computation shows that $s^m\pp s^n = s^{n+m+1}$.
Let $\cC$ be a category with finite colimits, 
then for a map $f:A\to B$ in $\cC$ its {\em codiagonal} is the map $\nabla f : B\coprod_AB\to B$
and its {\em iterated codiagonals} are the maps defined recursively by $\nabla^0 f = f$ and $\nabla^{n+1} f = \nabla (\nabla^n f)$.
A short computation proves that $\nabla f = s^0 \pp f$ and $\nabla^n f = s^{n-1} \pp f$.
Dually, if $\cC$ has finite limits, the {\em iterated diagonals} are $\Delta^n f = \pbh {s^{n-1}} f$.
Note also the formula $\Delta^n \pbh gf = \pbh g {\Delta^nf} = \pbh {\nabla^n g}f$.

\smallskip
Recall that a map $f:A\to B$ in $\cS$ is a {\em cover} if it induces a surjection $\pi_0(A) \to \pi_0(B)$.

\begin{lemma}[Whitehead completion]
\label{lem:Whitehead}
A map $w$ in $\cS$ is invertible iff all its iterated diagonals $\Delta^n w =\pbh {s^{n-1}} w$ are covers.
\end{lemma}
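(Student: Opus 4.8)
I would prove the two implications separately; the forward one is immediate and the backward one is a form of Whitehead's theorem. If $w$ is invertible, then for each $n$ the map $\Delta^n w = \pbh {s^{n-1}} w$ is invertible by the absorption property \cref{lem:abs-iso}(b) (the right-hand argument of $\pbh--$ is invertible), and an invertible map is a fortiori a cover, being a bijection on $\pi_0$. So every iterated diagonal is a cover, with no connectivity input needed.

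For the converse, the plan is to relate the homotopy fibres of the iterated diagonals of $w$ to those of $w$ itself. First I would record the reduction $\Delta^{n} w = \Delta^{n-1}(\Delta w)$: it follows from $s^{0}\pp s^{n-2}=s^{n-1}$ together with the pullback-hom adjunction $\pbh {u\pp f} g=\pbh u{\pbh f g}$, so that the collection $\{\Delta^n w\}_{n\geq 1}$ is exactly the collection of iterated diagonals of the single diagonal $\Delta w : A\to A\times_B A$. Next I would compute that the homotopy fibre of $\Delta w$ over a point $(a_1,a_2,p)$ is the homotopy fibre of $w_* : \mathrm{Path}_A(a_1,a_2)\to \mathrm{Path}_B(w a_1, w a_2)$ over $p$, and identify this, at a basepoint, with a based loop space $\Omega\,\mathrm{fib}(w)$ of a homotopy fibre of $w$. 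Iterating the reduction gives $\mathrm{fib}(\Delta^n w)\simeq \Omega^n\,\mathrm{fib}(w)$.

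With this in hand I would translate the hypothesis. Recall that a map is a cover exactly when all of its homotopy fibres are nonempty. Unwinding the identification shows that ``$w$ and $\Delta w$ are covers'' is equivalent to ``every homotopy fibre of $w$ is nonempty and path-connected'', and more generally that ``$\Delta^k w$ is a cover for all $k\leq n$'' is equivalent to ``every homotopy fibre of $w$ is $(n-1)$-connected''. Letting $n\to\infty$, the assumption that all $\Delta^n w$ are covers forces every homotopy fibre of $w$ to have vanishing homotopy groups in every degree while remaining nonempty, \ie to be contractible. A map of spaces all of whose homotopy fibres are contractible is an equivalence, so $w$ is invertible.

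I expect the main obstacle to be the bookkeeping in the fibre computation: being a cover means surjectivity on $\pi_0$ over \emph{all} points of the codomain, not merely over a chosen basepoint, so the passage from ``$\Delta w$ is a cover'' to ``the fibres of $w$ are path-connected'' requires ranging over all triples $(a_1,a_2,p)$ in $A\times_B A$ and tracking which component of which fibre each determines. Making the identification $\mathrm{fib}(\Delta^n w)\simeq \Omega^n\,\mathrm{fib}(w)$ precise and compatible with this quantification over components is the technical heart of the argument; once it is set up cleanly, the conclusion follows from the standard fact that an $\infty$-connected space is contractible.
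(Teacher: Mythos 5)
Your argument is correct and is in substance the same as the paper's: the paper's entire proof consists of citing \cite[Prop. 6.5.1.18]{Lurie:HTT} for the equivalence between ``$\Delta^n w$ is a cover for all $n\leq k+1$'' and ``$w$ is $k$\=/connected'', and then invoking Whitehead's theorem that $\infty$\=/connected maps in $\cS$ are invertible. Your fibrewise identification $\mathrm{fib}(\Delta^n w)\simeq \Omega^n\,\mathrm{fib}(w)$, together with the bookkeeping over all components that you rightly flag, is exactly a direct proof of the cited proposition, so the two proofs take the same route, yours being self-contained where the paper defers to Lurie.
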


\begin{proof}
Recall that, for $\=/2\leq k\leq \infty$ a map $w$ in $\cS$ is $k$\=/connected iff all maps $\Delta^n w$ are covers for $n\leq k+1$~\cite[Prop. 6.5.1.18]{Lurie:HTT}.
The result follows from the fact that $\infty$\=/connected maps in $\cS$ are invertible by Whitehead's theorem.
\end{proof}

\begin{table}[htbp]
\begin{center}	
\caption{The different enrichments on arrow categories}
\label{table:enrichment-arrow-cat}
\medskip
\renewcommand{\arraystretch}{2}
\begin{tabularx}{.9\textwidth}{
|>{\hsize=1\hsize\linewidth=\hsize\centering\arraybackslash}X
|>{\hsize=1\hsize\linewidth=\hsize\centering\arraybackslash}X
|}
\hline
{\em $\cC$ arbitrary category}
    & {\em Enrichments of $\Arr \cC$}
\\
\hline
over $\cS$ 
    & $\map--$ hom space
\\
\hline
over $(\Arr \cS, \times)$
    & $\bracemap--$ external ``arrow-hom''
    
    Tensor $\times$ exists if $\cC$ cocomplete
\\
\hline
over $(\Arr \cS, \pp)$
    & $\pbh--$ external pullback-hom
    
    Tensor $\pp$ exists if $\cC$ cocomplete
\\
\hline
\hline
{\em $(\cC,\times,\intmap--)$ cartesian closed}
& {\em Enrichments of $\Arr \cC$}
\\
\hline
over $(\cC,\times)$ 
    & $\intmap--$  enriched hom
    
    Tensor always exists
\\
\hline
over $(\Arr \cC, \times)$
    & $\intbracemap--$  internal hom

    Cartesian closed structure
\\
\hline
over $(\Arr \cC, \pp)$
    & $\intpbh--$ internal pullback-hom

    Symmetric monoidal closed structure
\\
\hline
\end{tabularx}
\end{center}
\end{table}

\subsection{Orthogonal systems}

Two arrows $f:A\to B$ and $g:X\to Y$ of $\cC$ are said to be {\em weakly orthogonal} (resp. {\em orthogonal}) if the map $\pbh f g$ is a cover in $\cS$ (resp. an invertible map in $\cS$).
We shall denote respectively by $f\pitchfork g$ and $f\perp g$ these two relations.
The first relation says that for any commutative square $\alpha:f\to g$ there exists a diagonal lift.
The second says that such a lift exists and is unique.
Given a diagram $W\to \Arr \cC$ we can define $W^\pitchfork = \{g\, |\, \forall f \in W, f\pitchfork g\}$ and $^\pitchfork W = \{f\, |\, \forall g \in W, f\pitchfork g\}$.
We shall always look at $W^\pitchfork$ and $^\pitchfork W$ as (replete) full subcategories of $\Arr \cC$.
However, the generating data $W\to \Arr \cC$ can be an arbitrary diagram,
indexed by a set or any other category.
We define similarly $W^\perp$ and $^\perp W$.
We shall say that a pair $(\cA,\cB)$ of full subcategories of $\Arr \cC$ is a
{\em weak orthogonal system} if $\cB = {^\pitchfork}\cA$ and $\cA = \cB^\pitchfork$.
For any diagram $W\to \Arr \cC$, the pair $(\cA_W,\cB_W) := \left(^\pitchfork (W^\pitchfork), W^\pitchfork\right)$ is a weak orthogonal system, said to be generated by $W$. 
The pair $(\cA_W,\cB_W)$ depends only on the essential image of the functor $W\to \Arr \cC$.
We shall say that a pair $(\cL,\cR)$ of full subcategories of $\Arr \cC$ is an
{\em orthogonal system} if $\cR = {^\perp}\cL$ and $\cL = \cR^\perp$.
For any diagram $W\to \Arr \cC$, the pair $(\cL_W,\cR_W) := \left(^\perp (W^\perp), W^\perp\right)$ is an orthogonal system, said to be generated by $W$.
The pair $(\cL_W,\cR_W)$ depends only on the essential image of the functor $W\to \Arr \cC$.

\medskip
We recall some properties of (weak) orthogonal systems.

\begin{lemma}
\label{lem:prop-wfs}
Let $(\cA,\cB)$ be the weak orthogonal system generated by some $W$.
Then $\cA$ is stable by discrete sums (sums indexed by a set), cobase change and transfinite composition.
\end{lemma}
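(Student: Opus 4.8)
The plan is to exploit the defining equality $\cA = {^\pitchfork}\cB$, with $\cB = W^\pitchfork$: a map $h$ belongs to $\cA$ precisely when $\pbh{h}{g}$ is a cover in $\cS$ for every $g \in \cB$. Each of the three closure statements then becomes the assertion that the property ``$\pbh{h}{g}$ is a cover for all $g$'' survives the operation in question. The single structural input I would use throughout is that, for a fixed $g$, the functor $\pbh{-}{g}\colon (\Arr\cC)\op \to \Arr\cS$ sends colimits of arrows to limits: it is built out of the mapping-space functors $\map{-}{X}$ and $\map{-}{Y}$, each of which turns colimits in $\cC$ into limits in $\cS$, together with the gap (pullback) construction, which is itself a limit. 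Granting this, everything reduces to checking that covers in $\cS$ --- equivalently $\pi_0$-surjections, \ie effective epimorphisms of the topos $\cS$ --- are stable under the dual limit operations.

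For set-indexed sums, if $h = \coprod_i h_i$ then $\map{\coprod_i -}{X} = \prod_i \map{-}{X}$, whence $\pbh{\coprod_i h_i}{g} = \prod_i \pbh{h_i}{g}$; a product of covers is a cover, since $\pi_0$ commutes with products and a product of surjections is surjective. For cobase change, write $h' \colon A' \to B'$ for the pushout of $h\colon A\to B$ along a map $A \to A'$, so that $B' = B \coprod_A A'$. Expanding the mapping spaces out of $B'$ as fibre products and rearranging, one verifies that $\pbh{h'}{g}$ is the base change of $\pbh{h}{g}$ along the natural map of codomains $\map{A'}{X} \times_{\map{A'}{Y}} \map{B'}{Y} \to \map{A}{X} \times_{\map{A}{Y}} \map{B}{Y}$. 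As $\cS$ is a topos, covers are stable under base change, so $\pbh{h'}{g}$ is again a cover.

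The transfinite composition case is the delicate one. Fix $g\colon S\to T$ in $\cB$ and write $h = (X_0 \to \colim_{i} X_i)$ for a chain, continuous at limit stages, whose successive maps $X_i \to X_{i+1}$ lie in $\cA$. The colimit-to-limit property gives an identification $\pbh{h}{g} = \lim_i \pbh{(X_0\to X_i)}{g}$ of $\pbh{h}{g}$ with the limit of a tower that is again continuous at limit ordinals. The composition law for pullback-homs identifies, at each successor step, the gap of the transition square with a base change of $\pbh{(X_i \to X_{i+1})}{g}$, which is a cover because $X_i\to X_{i+1} \in \cA$; thus the tower is one of covers whose transition squares have cover gaps. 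It therefore remains to show that the limit of such a tower is a cover.

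This final point is where the real work lies, and I expect it to be the \emph{main obstacle}. I would reduce it to a nonemptiness statement: fixing a point $d$ of the codomain $\lim_i D_i$ of $\pbh{h}{g}$, where $D_i = \map{X_0}{S} \times_{\map{X_0}{T}}\map{X_i}{T}$, its homotopy fibre is $\lim_i \Phi_i$ with $\Phi_i$ the fibre of $\pbh{(X_0\to X_i)}{g}$ over the image of $d$; the cover-gap condition makes each $\Phi_{i+1}\to\Phi_i$ a cover of nonempty spaces, again using stability of covers under base change. One must then prove that a continuous transfinite tower of nonempty spaces with cover transition maps has nonempty limit. At successor ordinals this is immediate, but at limit ordinals one cannot simply choose compatible points: the obstruction is governed by the $\lim^1$ (and higher inverse-limit) behaviour of the homotopy groups along the tower, and controlling it --- so that no component of the codomain is lost in passing to the limit --- is the technical heart of the argument.
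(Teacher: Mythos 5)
Your first two cases are correct. The identities $\pbh{\coprod_i h_i}{g}=\prod_i\pbh{h_i}{g}$ and the identification of $\pbh{h'}{g}$, for $h'$ a cobase change of $h$, with a base change of $\pbh{h}{g}$ along the map of codomains you describe both check out by direct computation with the fiber products, and set-indexed products and base changes of covers are indeed covers. Note that the paper offers no proof of \cref{lem:prop-wfs} at all (it is stated as a recollection of standard facts), but your product argument for discrete sums is exactly the reasoning the authors sketch in \cref{rem:defect-sum}, where they also explain why it fails for space-indexed sums. A small bonus in your successor-step analysis: the gap map of the transition square is not merely a base change of $\pbh{(X_i\to X_{i+1})}{g}$; an easy computation shows it \emph{is} that map, since the relevant pullback simplifies to $\map{X_i}{S}\times_{\map{X_i}{T}}\map{X_{i+1}}{T}$.

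The transfinite composition case, however, contains a genuine gap, and you say so yourself: you reduce the claim to the assertion that a continuous transfinite tower of nonempty spaces with cover transition maps has nonempty limit, and then leave that assertion unproven. Two corrections to your framing of the missing step. First, the reduction does not induct properly through limit ordinals: at a limit stage $\lambda$ one has $\Phi_\lambda=\lim_{i<\lambda}\Phi_i$, and to continue the recursion one needs the projections $\Phi_\lambda\to\Phi_i$ to be covers again; so the statement to prove by induction on the length of the tower is ``the projection from the limit to every stage is a cover,'' not bare nonemptiness of the final limit. Second, the obstruction is not a $\lim^1$ phenomenon: $\lim^1\pi_1$ only creates extra components in $\pi_0$ of the limit and never obstructs the surjectivity you need. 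For an $\omega$-tower one chooses a thread of components using surjectivity of the $\pi_0$-tower and then a point of the limit is just a choice of points and connecting paths in the chosen components, with no higher coherence needed, because the poset $\omega$ is freely generated by the successor arrows. The genuine difficulty, which your sketch does not address, is carrying out the recursion homotopy-coherently through limit ordinals, where the indexing poset is no longer free. The standard repair is a strictification: replace the tower by a levelwise Kan-fibrant one with strict limits at limit stages; a Kan fibration which is a cover is surjective on vertices by path lifting, so the set-level thread construction (lift at successors, use continuity at limits) goes through verbatim and produces a point of the strict limit, which computes the $\infty$-categorical one. With that lemma supplied your proof closes; alternatively, the classical route proves stability under transfinite composition by constructing the lift against $g\in\cB$ directly by the same recursion-plus-strictification, without passing through limits of pullback-homs.
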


\begin{rem}
\label{rem:defect-sum}
The previous statement about discrete sums cannot a priori be extended to more general sum, that is, colimits indexed by a space.
Let $(\cA,\cB)$ be a weak factorization, $f$ be a map in $\cB$, and $w: I \to \cA$ a family of maps in $\cA$ indexed by a space $I$.
All maps $\pbh {w_i} f$ are covers and the question is whether the product $\prod_{i:I} \pbh {w_i} f$ is still a cover.
When $I$ is a set, this is true, but arbitrary products of covers are not covers in general.
As a counter-example, consider the weak factorization system on $\cS$ generated by the map $0\to 1$.
A map is in right class if and only if it is surjective on $\pi_0$ if and only if it is a cover.
The sum indexed by a space $I$ of copies of $0\to 1$ is the map $0\to I$.
But covers do not have a right lifting property with respect to maps $0\to I$.
\end{rem}

\begin{lemma}
\label{lem:prop-ufs}
Let $(\cL,\cR)$ be the orthogonal system generated by some $W$.
\begin{enumerate}[label={(\alph*)}, leftmargin=*]
\item \label{lem:prop-ufs:1}
	$\cL$ is absorbing for $\pp$: for any $u$ in $\Arr \cS$ and any $f$ in $\cL$, the map $u\pp f$ is in $\cL$.
\item \label{lem:prop-ufs:1-dual}
	$\cR$ is absorbing for $\pbh--$: for any $u$ in $\Arr \cS$ and any $f$ in $\cR$, the map $\pbh u f$ is in $\cR$.
\item \label{lem:prop-ufs:2}
	$\cL$ is stable by colimits in $\Arr \cC$.
\item \label{lem:prop-ufs:2-dual}
	$\cR$ is stable by limits in $\Arr \cC$.
\end{enumerate}

\end{lemma}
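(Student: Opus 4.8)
The plan is to express membership in $\cL$ and $\cR$ entirely through invertibility of pullback-homs, using the defining equalities $\cL=\cR^\perp$ and $\cR={}^\perp\cL$, and then to reduce each of the four closure properties to three ingredients: the adjunction isomorphism $\pbh{u\pp f}{g}=\pbh u{\pbh f g}$, the absorption property of invertible maps (\cref{lem:abs-iso}(b)), and the fact that the pullback-hom functor $\pbh--:(\Arr\cC)\op\times\Arr\cC\to\Arr\cS$ turns colimits in its first variable and limits in its second variable into limits in $\Arr\cS$.

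For \ref{lem:prop-ufs:1}, to show $u\pp f\in\cL=\cR^\perp$ I would check $(u\pp f)\perp g$ for every $g\in\cR$, that is, that $\pbh{u\pp f}{g}$ is invertible. Rewriting via the adjunction gives $\pbh{u\pp f}{g}=\pbh u{\pbh f g}$, and since $f\in\cL$ and $g\in\cR$ yield $f\perp g$, the inner map $\pbh f g$ is invertible; then \cref{lem:abs-iso}(b) forces $\pbh u{\pbh f g}$ invertible. Part \ref{lem:prop-ufs:1-dual} is the dual: to show $\pbh u f\in\cR={}^\perp\cL$ I check $g\perp\pbh u f$ for every $g\in\cL$, i.e. that $\pbh g{\pbh u f}$ is invertible. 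Applying the adjunction twice together with the symmetry $g\pp u=u\pp g$ of the pushout-product gives $\pbh g{\pbh u f}=\pbh{g\pp u}{f}=\pbh u{\pbh g f}$; now $g\in\cL$ and $f\in\cR$ make $\pbh g f$ invertible, and \cref{lem:abs-iso}(b) again concludes.

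For \ref{lem:prop-ufs:2} and \ref{lem:prop-ufs:2-dual}, which are the stability statements, the argument is formal once the (co)continuity of $\pbh--$ is in hand. A colimit $f=\colim_i f_i$ of maps in $\cL$ lies in $\cL=\cR^\perp$ iff $\pbh f g$ is invertible for all $g\in\cR$; since $\pbh--$ sends colimits in its first variable to limits in $\Arr\cS$, we get $\pbh{\colim_i f_i}{g}=\lim_i\pbh{f_i}{g}$, a limit of invertible maps, which is invertible because isomorphisms in $\cS$ are closed under limits. Part \ref{lem:prop-ufs:2-dual} is word-for-word the same after swapping the roles of $\cL,\cR$ and replacing ``colimit in the first variable'' by ``limit in the second variable''.

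The only delicate bookkeeping is the variance-and-symmetry juggling in \ref{lem:prop-ufs:1} and \ref{lem:prop-ufs:1-dual}; the one genuinely substantive input, and the step I expect to require the most care, is the (co)continuity of the pullback-hom, which I would justify from the pointwise computation of $\pbh f g$ as the cartesian gap map built out of the hom spaces $\map--$, using that $\map--$ converts colimits in its first argument and limits in its second argument into limits in $\cS$.
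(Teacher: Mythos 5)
Your proposal is correct and follows essentially the same route as the paper: for \ref{lem:prop-ufs:1} and \ref{lem:prop-ufs:2} it is the paper's argument verbatim (the adjunction identity $\pbh{u\pp f}{g}=\pbh u{\pbh fg}$ plus the absorption property of \cref{lem:abs-iso}, and continuity of $\pbh--$ turning a colimit in the first variable into a limit of invertible maps). The only cosmetic difference is that where the paper dispatches \ref{lem:prop-ufs:1-dual} and \ref{lem:prop-ufs:2-dual} with the single phrase ``by duality,'' you unwind that duality explicitly, using the other adjunction of the two-variable structure ($\pbh g{\pbh uf}=\pbh{u\pp g}{f}=\pbh u{\pbh gf}$) and the limit-preservation of $\pbh--$ in its second variable --- same substance, just spelled out.
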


\begin{proof}
By duality, it is enough to prove the statements for the left class.

\smallskip
\noindent \ref{lem:prop-ufs:1}
For any $g$, we have $\pbh {u\pp f} g = \pbh u {\pbh f g}$.
By the absorption properties of \cref{lem:abs-iso}, this map is invertible as soon as $f\perp g$.
Applied when $g\in W^\bot$, this proves the first assertion.

\smallskip
\noindent \ref{lem:prop-ufs:2}
Let $f_i$ be a diagram in $\cL$, we have $\pbh {\colim f_i} g = \lim \pbh {f_i} g$.
If $g\in W^\bot$, all maps $\pbh {f_i} g$ are invertible, and so is their limit.
This proves that $\colim f_i$ is in $\cL$.
\end{proof}

\begin{lemma}
\label{lem:FSisWFS}
A pair $(\cL,\cR)$ is an orthogonal system iff it is a weak orthogonal system whose left class is stable by codiagonal.
\end{lemma}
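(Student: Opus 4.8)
The plan is to reduce both implications to a single pointwise fact relating weak orthogonality and orthogonality. Using the identity $\Delta^n \pbh f g = \pbh{\nabla^n f}{g} = \pbh{f}{\Delta^n g}$ recorded in the paragraph on diagonals, together with \cref{lem:Whitehead}, I would first establish the characterization
\[
f\perp g \quad\iff\quad \nabla^n f \pitchfork g \ \text{ for all } n\geq 0 \quad\iff\quad f\pitchfork \Delta^n g \ \text{ for all } n\geq 0 .
\]
Indeed, $f\perp g$ means that $\pbh f g$ is invertible in $\cS$, which by \cref{lem:Whitehead} happens exactly when every iterated diagonal $\Delta^n \pbh f g$ is a cover; rewriting $\Delta^n \pbh f g$ as $\pbh{\nabla^n f}{g}$ (resp. as $\pbh{f}{\Delta^n g}$) turns this into the condition $\nabla^n f \pitchfork g$ (resp. $f\pitchfork \Delta^n g$) for all $n$. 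Since $\nabla^0 f = f$ and $\Delta^0 g = g$, the case $n=0$ recovers $f\pitchfork g$. This equivalence is the engine for both directions.

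For the implication ($\Leftarrow$), suppose $(\cL,\cR)$ is a weak orthogonal system with $\cL$ closed under codiagonals. Given $f\in\cL$ and $g\in\cR$, codiagonal-stability gives $\nabla^n f\in\cL$ for all $n$, and $\cL={}^\pitchfork\cR$ gives $\nabla^n f\pitchfork g$; the characterization then yields $f\perp g$. Hence every map of $\cL$ is orthogonal to every map of $\cR$, that is $\cL\subseteq{}^\perp\cR$ and $\cR\subseteq\cL^\perp$. The reverse inclusions ${}^\perp\cR\subseteq{}^\pitchfork\cR=\cL$ and $\cL^\perp\subseteq\cL^\pitchfork=\cR$ are automatic, since $f\perp g$ implies $f\pitchfork g$. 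Thus $\cL={}^\perp\cR$ and $\cR=\cL^\perp$, so $(\cL,\cR)$ is an orthogonal system.

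For ($\Rightarrow$), assume $(\cL,\cR)$ is an orthogonal system, so $\cL={}^\perp\cR$ and $\cR=\cL^\perp$. I would first check that $\cL$ is codiagonal-stable: for $f\in\cL$ and $g\in\cR$ the map $\pbh f g$ is invertible, whence $\pbh{\nabla f}{g}=\pbh{s^0}{\pbh f g}$ is invertible by \cref{lem:abs-iso}, so $\nabla f\perp g$ for every $g\in\cR$ and therefore $\nabla f\in{}^\perp\cR=\cL$. It then remains to show that $(\cL,\cR)$ is a weak orthogonal system. The inclusions $\cL\subseteq{}^\pitchfork\cR$ and $\cR\subseteq\cL^\pitchfork$ again follow from $f\perp g\Rightarrow f\pitchfork g$. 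For $\cL^\pitchfork\subseteq\cR$, a map $g\in\cL^\pitchfork$ satisfies $\nabla^n f\pitchfork g$ for all $f\in\cL$ (using codiagonal-stability of $\cL$), hence $f\perp g$ by the characterization, so $g\in\cL^\perp=\cR$. For ${}^\pitchfork\cR\subseteq\cL$, I would use that $\cR$ is stable under diagonals, i.e. $\Delta^n g=\pbh{s^{n-1}}{g}\in\cR$ for $g\in\cR$ — either directly, since $\pbh{f}{\Delta^n g}=\Delta^n\pbh f g$ is invertible whenever $f\in\cL$ and $g\in\cR$, or by citing \cref{lem:prop-ufs}; then $f\in{}^\pitchfork\cR$ gives $f\pitchfork \Delta^n g$, i.e. all $\Delta^n\pbh f g$ are covers, and the characterization forces $f\perp g$, so $f\in{}^\perp\cR=\cL$.

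The one genuinely non-formal point, and the step I expect to be the main obstacle, is this last inclusion ${}^\pitchfork\cR\subseteq\cL$. Weak orthogonality of $f$ against $\cR$ only says that each $\pbh f g$ is a cover, which is strictly weaker than invertibility; the upgrade to $f\perp g$ is possible only because testing $f$ against the diagonals $\Delta^n g$ — which themselves lie in $\cR$ — reconstructs exactly the full tower of iterated diagonals of $\pbh f g$ that \cref{lem:Whitehead} requires. So the crux is to ensure that $\cR$ genuinely contains all diagonals of its members, which is precisely where the closure properties of an orthogonal system (\cref{lem:abs-iso}, \cref{lem:prop-ufs}) are needed; everything else is the purely formal bookkeeping of the two Galois connections.
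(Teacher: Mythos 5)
Your proof is correct and follows essentially the same route as the paper's: both directions hinge on the identity $\Delta^n \pbh fg = \pbh{\nabla^n f}{g} = \pbh{f}{\Delta^n g}$ combined with \cref{lem:Whitehead}, with codiagonal-stability of $\cL$ in the necessity direction obtained from \cref{lem:abs-iso} (equivalently \cref{lem:prop-ufs}). The only difference is cosmetic: where the paper dismisses the inclusion ${}^\pitchfork\cR\subseteq\cL$ as ``similar,'' you spell it out via diagonal-stability of $\cR$, which is exactly the intended dual argument.
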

\begin{proof}
We prove first that the condition is necessary.
If $(\cL,\cR)$ is an orthogonal system, the class $\cL$ is stable by codiagonal as a consequence of \cref{lem:prop-ufs}.\ref{lem:prop-ufs:1}.
Then, we need to prove that $(\cL,\cR)$ is a weak orthogonal system, that is, that $\cL^\pitchfork = \cR$.
We have for free that $\cR \subset \cL^\pitchfork$.
Let $g$ be a map in $\cL^\pitchfork$, then for any $f$, the map $\pbh fg $ is a cover in $\cS$.
Consequently, all maps $\Delta^n \pbh fg = \pbh {\nabla^nf} g $ are covers and by \cref{lem:Whitehead} the map $\pbh fg$ is invertible.
The argument to prove that $\cL ={^\pitchfork}\cR$ is similar.

Reciprocally, if $(\cL,\cR)$ is a weak orthogonal system and $\cL$ is stable by codiagonal, we have that $\Delta^n \pbh fg = \pbh {\nabla^nf} g $ is a cover for any $g$ in $\cR$ and $f$ in $\cL$.
This proves that $\cR\subset \cL^\perp$.
Reciprocally, we have $\cL^\perp \subset \cL^\pitchfork = \cR$. This proves $\cR = \cL^\perp$.
The equality $\cL= {^\perp} \cR$ is proven similarly.
\end{proof}

\subsection{Factorization systems}

A {\em weak factorisation system} (WFS) on $\cC$ is the data of a weak orthogonal system $(\cA,\cB)$ such that, for every map $f$ in $\cC$, there exists a factorisation $f= ab$ where $a\in \cA$ and $b\in \cB$.
This factorization is only assumed to exist, it is thus a property of a weak orthogonal system. The factorization is not unique in general.
%
A {\em factorisation system} (FS) on $\cC$ is the data of an orthogonal system $(\cL,\cR)$ such that, for every map $f$ in $\cC$, there exists a factorisation $f= ab$ where $a\in \cL$ and $b\in \cR$.
Again the factorization is only assumed to exist and is a property of an orthogonal system. 
However, in this case the factorization can be proven to be unique (precisely, the category of such factorizations is a contractible groupoid).
For a map $f:A\to B$, we shall denote by $\lambda(f):A\to C$ and $\rho(f):C\to B$ the two components of the factorization.
The functors $f\mapsto \lambda(f)$ and $f\mapsto \rho(f)$ define respectively
right and left adjoints to the inclusions $\cL\subset \Arr \cC \supset \cR$.
In particular, $\cR$ is a reflective subcategory of $\Arr \cC$.

\medskip
A weak orthogonal system shall be called {\em accessible} if it is of the form $\left(^\pitchfork (W^\pitchfork), W^\pitchfork\right)$ for $W\to \Arr \cC$ a small diagram of maps with small domain.
An orthogonal system shall be called {\em accessible} if it is of the form $\left(^\perp (W^\perp), W^\perp\right)$ for $W\to \Arr \cC$ a small diagram of small objects in $\Arr \cC$.
These definitions apply in particular to factorization systems, weak or not.

\subsubsection{Modalities and lex modalities}
\label{sec:stableFS}

A factorization system is called {\em stable}, or a {\em modality} if the factorization is stable under base change~\cite{ABFJ:GBM,RSS}.
This is equivalent to asking that both classes $\cL$ and $\cR$ be stable under base change.
Since the right class $\cR$ is always stable under base change, the condition is in fact only on $\cL$.

A factorization system is called a {\em left-exact modality} (or a {\em lex modality} for short) if the factorization is stable under finite limits in $\Arr \cC$.
Again, since the right class $\cR$ is always stable under limits, the condition is in fact only on the class $\cL$.
A number of characterizations of lex modalities is given in~\cite[Thm 3.1]{RSS}, but not the one we just used as a definition.
\Cref{prop:stable/lex-FS}\ref{prop:stable/lex-FS:enum2} proves that our definition is equivalent to~\cite[Thm 3.1(x)]{RSS}.
Recall that a map $f\to g$ in $\Arr \cC$ is cartesian if the corresponding square in $\cC$ is cartesian.

\begin{prop}
\label{prop:stable/lex-FS}
Let $(\cL,\cR)$ be a factorization system on $\cC$.
\begin{enumerate}[label=(\alph*), leftmargin=*]
\item \label{prop:stable/lex-FS:enum1} $(\cL,\cR)$ is a modality if and only if
  the reflection $\rho:\Arr \cC \to \cR$ preserves cartesian maps.
\item \label{prop:stable/lex-FS:enum2} $(\cL,\cR)$ is a lex modality if and only if the reflection $\rho:\Arr \cC \to \cR$ is left-exact. 
        In particular, a lex modality is a modality.
\end{enumerate}
\end{prop}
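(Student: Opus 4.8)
The plan is to treat both parts uniformly, exploiting two structural facts already available. First, in a factorization system the factorization is \emph{functorial} on $\Arr\cC$ — via the adjunctions $\lambda$ (coreflection onto $\cL$) and $\rho$ (reflection onto $\cR$) — and it is \emph{unique}. Second, the right class is already maximally stable: by \cref{lem:prop-ufs} the class $\cR$ is closed under limits in $\Arr\cC$, and in particular under base change. Consequently, in each of the two equivalences the right factor $\rho$ carries no content by itself, and the real question is whether the \emph{left} factor $\lambda$ — equivalently the whole factorization — inherits the relevant stability. The two recurring tools will be the pasting lemma for pullbacks and the fact that limits in $\Arr\cC=\Fun I \cC$ are computed pointwise on domains and codomains.

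For part \ref{prop:stable/lex-FS:enum1}, I would first record that a morphism $\phi:f\to g$ in $\Arr\cC$ is cartesian exactly when it exhibits $f$ as a base change of $g$; concretely $f$ is then the pullback of $g$ along $\mathrm{cod}(\phi)$, taken pointwise in $\Arr\cC$. Applying the functorial factorization to $\phi$ produces two stacked squares, $\lambda f\to\lambda g$ on top and $\rho f\to\rho g$ on the bottom, whose outer rectangle is the given cartesian square, and ``$\rho$ preserves cartesian maps'' says precisely that the bottom square is a pullback. For $(\Leftarrow)$ I would invoke the pasting lemma: outer cartesian plus bottom cartesian forces the top square cartesian, so $\lambda f$ is the base change of $\lambda g$; specialising to $g\in\cL$ (where $\lambda g=g$ and $\rho g$ is invertible) shows $\cL$ is stable under base change, i.e. $(\cL,\cR)$ is a modality. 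For $(\Rightarrow)$, conversely, I would base-change the factorization of $g$ along $\mathrm{cod}(\phi)$: the right part stays in $\cR$ (always), the left part stays in $\cL$ (modality hypothesis), so by uniqueness this base-changed factorization \emph{is} the factorization of $f$, and reading off the middle object shows $\rho\phi$ is cartesian.

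For part \ref{prop:stable/lex-FS:enum2}, the implication $(\Rightarrow)$ is essentially immediate: stability under finite limits includes $\rho(\lim_i f_i)=\lim_i\rho(f_i)$, and since $\cR$ is closed under limits in $\Arr\cC$ this is exactly left-exactness of $\rho$. The content is $(\Leftarrow)$. Assuming $\rho$ left-exact, I take a finite diagram $f_i$ with limit $f$; then $\rho f=\lim_i\rho f_i$, and computing domains pointwise gives that the middle object $C_f=\mathrm{dom}(\rho f)$ equals $\lim_i C_{f_i}$. The universal property of this limit, combined with functoriality of the factorization along the projections $f\to f_i$, then forces $\lambda f=\lim_i\lambda f_i$ as well, so the entire factorization preserves the finite limit and $(\cL,\cR)$ is a lex modality. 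For the closing remark that a lex modality is a modality, I would note that a base change is itself a pullback in $\Arr\cC$ (against identity arrows), so $\rho$ left-exact implies $\rho$ preserves cartesian maps, whence the claim by part \ref{prop:stable/lex-FS:enum1}.

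The step I expect to be the main obstacle is the same in both parts: establishing that controlling only the right factor $\rho$ suffices to pin down the \emph{whole} factorization, so that $\lambda$ and the middle object inherit the stability for free. This is where uniqueness of the factorization, the pasting lemma (part \ref{prop:stable/lex-FS:enum1}), and the pointwise/universal-property identification of $C_f$ as $\lim_i C_{f_i}$ (part \ref{prop:stable/lex-FS:enum2}) do the real work; the remainder is bookkeeping with functoriality and with the standing closure properties of $\cR$ recalled in \cref{lem:prop-ufs}.
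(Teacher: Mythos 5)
Your proposal is correct and takes essentially the same route as the paper's proof: part \ref{prop:stable/lex-FS:enum1} via the two stacked factorization squares, uniqueness of the factorization, and the pasting/cancellation property of cartesian squares; part \ref{prop:stable/lex-FS:enum2} via reducing stability under finite limits to the comparison map $C\to\lim C_i$ on middle objects; and the closing implication via encoding a cartesian map $f'\to f$ as a cartesian square over $1_{Y'}\to 1_Y$ in $\Arr\cC$, which $\rho$ sends to a square of the same type since it fixes codomains. The only detail left tacit (and easily supplied) is that in your $(\Leftarrow)$ of part \ref{prop:stable/lex-FS:enum1}, the conclusion $f\in\cL$ rests on $\rho f$ being invertible as a pullback of the invertible $\rho g$, which your parenthetical already sets up.
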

\begin{proof}
\noindent \ref{prop:stable/lex-FS:enum1}
Let $f:A\to B$ and $f':A'\to B'$ be two maps in $\cC$ and let $\alpha:f'\to f$ be a cartesian map between them.
The factorization of $f'$ and $f$ induces a diagram
\[
\begin{tikzcd}
A'  \ar[d,"\lambda(f')"'] \ar[rr] \ar[rrd,phantom,"\lambda(\alpha)"]
    && A \ar[d,"\lambda(f)"]
\\
C'  \ar[d,"\rho(f')"'] \ar[rr] \ar[rrd,phantom,"\rho(\alpha)"]
    && C \ar[d,"\rho(f)"]
\\
B'  \ar[rr] 
    && B
\end{tikzcd}
\]
Then, the factorization system is a modality if and only if both
$\lambda(\alpha)$ and $\rho(\alpha)$ are cartesian squares.
But by the cancellation property of cartesian squares, this is equivalent to $\rho(\alpha)$ being cartesian only.

\smallskip
\noindent \ref{prop:stable/lex-FS:enum2}
Let $f_i:A_i\to B_i$ be a finite diagram in $\Arr \cC$ and $f= \lim f_i:A\to B$.
We have a canonical diagram
\[
\begin{tikzcd}
A  \ar[d,"\lambda(f)"'] \ar[rr, equal]
    && \lim A_i \ar[d,"\lim \lambda(f_i)"]
\\
C  \ar[d,"\rho(f)"'] \ar[rr]
    && \lim C_i \ar[d,"\lim \rho(f_i)"]
\\
B  \ar[rr, equal] 
    && \lim B_i.
\end{tikzcd}
\]
The factorization system is left-exact if and only the map $C\to \lim C_i$ is an isomorphism.
But this is equivalent to $\rho (\lim f_i ) = \lim \rho(f_i)$.

For the last statement, let $f:X\to Y$ and $f':X'\to Y'$ be two maps in $\cC$ and $\alpha:f'\to f$ be a map in $\Arr \cC$.
We use the remark that $\alpha$ is cartesian iff the following square is cartesian in $\Arr \cC$
\[
\begin{tikzcd}
f' \ar[r] \ar[d] & f\ar[d] \\
1_{Y'} \ar[r]& 1_{Y}.
\end{tikzcd}
\]
Since the reflection $\rho:\Arr \cC \to \cR$ does not change the codomain of maps, the image by $\rho$ of such a square is a square of the same type.
This proves that a lex modality is a modality.
\end{proof}

It is possible to strengthen the orthogonality relation to capture the stability
condition of a modality (see the notion of fiberwise orthogonality in~\cite{ABFJ:GC}).
It is also possible to define an orthogonality relation
suited to lex modalities, but we shall not need this (see \cref{table:strengthFS}).

\subsubsection{Enriched factorization systems}
\label{sec:enrichedFS}

When $(\cC,\otimes,\bbone,\intmap--)$ is a symmetric monoidal closed cocomplete category, the pushout-product and pullback-hom define a symmetric closed monoidal structure $(\Arr \cC,\boxtimes,0 \to \bbone,\intpbh--)$.
We define the {\em enriched orthogonality} relation $f\intperp g$ by the condition that the enriched pullback-hom $\intpbh f g$ be an invertible map in $\cC$.
The enriched and external orthogonality are related by
\[
f \iperp g \quad\IFF\quad \forall X,\ (X\otimes f) \perp g \quad\IFF\quad \forall X,\ f \perp \intmap X g.
\]
An {\em enriched orthogonal system} is a pair $(\cL,\cR)$ of full subcategories of $\Arr \cC$ such that $\cR=\cL^\intperp$ and $\cL={^\intperp}\cR$.
An external orthogonal system $(\cL,\cR)$ is enriched iff $\cL$ is stable by $X\otimes-$ for any $X$ in $\cC$, iff $\cR$ is stable by $\intmap X-$ for any $X$ in $\cC$.
An {\em enriched factorization system} is an enriched orthogonal system $(\cL,\cR)$ such that, for every map $f$ in $\cC$, there exists a factorisation $f= \rho(f)\lambda(f)$ where $\lambda(f)\in \cL$ and $\rho(f)\in \cR$.
Such a factorization can be proven to be unique.
An enriched orthogonal system shall be called {\em accessible} if it is of the form $\left(^\intperp (W^\intperp), W^\intperp\right)$ for $W\to \Arr \cC$ a small diagram of (internally) small objects in $\Arr \cC$.

In the case where $(\cC,\otimes,\bbone,\intmap--)$ is a cartesian closed category, the enriched orthogonality relation also satisfies
\[
f \iperp g \quad\RA\quad \forall X,\ f\perp X\otimes g.
\]
This is because $X\otimes g = X\times g$ is the base change of $g:A\to B$ along $X\times B\to B$ and the orthogonality relation is always stable by base change in its second variable.
Let $(\cL,\cR)$ be an enriched factorization system in $\cC$, then the factorization of a map $f:A\to B$ is always stable by base change along a projection $X\times B\to B$.
This is weaker than being a modality, for which the
factorization has to be stable under arbitrary base changes (see \cref{table:strengthFS}).

\begin{table}[htbp]
\begin{center}	
\caption{Strength of orthogonality relations and of factorization systems}
\label{table:strengthFS}
\medskip
\renewcommand{\arraystretch}{2}
\begin{tabularx}{.9\textwidth}{
|>{\hsize=.8\hsize\linewidth=\hsize\centering\arraybackslash}X
|>{\hsize=1.2\hsize\linewidth=\hsize\centering\arraybackslash}X
|>{\hsize=1\hsize\linewidth=\hsize\centering\arraybackslash}X
|}
\hline
$\cC$ is any category 
    & {\sl External orthogonality}
    
    $w\perp f$ 
    & Factorization systems
\\
\hline
$\cC$ is a cartesian closed category 
    & {\sl Enriched orthogonality}
    
    $w \iperp f \ :\iff\ \forall X, (X\times w) \perp f$ 
    & Enriched FS 
    
    (stable by product)
\\
\hline
$\cC$ is a locally cartesian closed category with 1
    & {\sl Fiberwise orthogonality} 
    
    $w\fwperp f \ :\iff\ w'\perp f$ 
    
    $\forall$ base change $w'\to w$ 
    & Modality 
    
    (FS stable by base~change)
\\
\hline
$\cC$ is a topos
    & {\sl Lex orthogonality}
    
    for $W\subset \Arr \cC$, 
    
    $W\lexperp f\ :\iff\ \lim_iw_i\perp f$ 
    
    $\forall$ finite diagram $I\to W$
    & Lex~modality/ lex~localization 
    
    (FS stable by finite limits)
\\
\hline
\end{tabularx}
\end{center}
\end{table}

\subsubsection{Reflective factorization systems}
\label{sec:localFS}

If $\cC$ has a terminal object, we shall say that a factorization system $(\cL,\cR)$ is {\em reflective} if the class $\cL$ satisfies the ``2~out~of~3" condition \cite{CHK:slex}.
Since left classes of orthogonal systems are always stable by composition and right cancellation, this condition is equivalent to $\cL$ having the left cancellation condition.
If $(\cL,\cR)$ is a factorization system, we denote by $\cR_1$ the full
subcategory of $\cC$ spanned by objects $X$ such that $X\to 1$ is in $\cR$.
For any $X$ in $\cC$, the factorization of the map $X\to 1$ produces a reflection of $\cC$ into $\cR_1$.
When $(\cL,\cR)$ is a reflective factorization system, the ``2~out~of~3" condition implies that the class of maps inverted by this reflection is exactly $\cL$.

\medskip

Any modality $(\cL,\cR)$ defines a reflective localization $P:\cC\rightleftarrows \cR_1:\iota$.
This localization is always associated to a reflective factorization system $(\cL^{3/2},\cR^{2/3})$ where $\cL^{3/2}$ is the completion of $\cL$ under the ``2~out~of~3" condition and $\cR^{2/3}\subset \cR$ is the class of maps $X\to Y$ in $\cC$ that can be obtained by base change from some map in $\cR_1$.
The factorization $X\to Z\to Y$ of a map $X\to Y$ is given by the diagram
\[
\begin{tikzcd}
X \ar[r]\ar[rd]\ar[rr,bend left=30,"\lambda(X\to 1)"]  & Z \ar[r] \ar[d] \pbmark & PX \ar[d] \\
    & Y \ar[r,"\lambda(Y\to 1)"'] & PY.
\end{tikzcd}
\]
The maps $X\to PX$ and $Y\to PY$ are in $\cL$ by definition of $P$.
The map $Z\to PX$ is in $\cL$ because $\cL$ is stable by base change.
Hence, the map $X\to Z$ is in $\cL^{3/2}$.
The map $Z\to Y$ is on $\cR^{2/3}$ since it is a base change of the map $PX\to PY$ in $\cR_1$.
From there, the relations $\left(\cL^{3/2}\right)^\perp = \cR^{2/3}$ and $\left(\cR^{2/3}\right)^\perp = \cL^{3/2}$ can be proved easily, 
so $(\cL^{3/2},\cR^{2/3})$ is indeed a factorization system.

Recall that a reflective localization $P:\cC\rightleftarrows \cD$ is {\em semi-left-exact} in the sense of \cite{CHK:slex} or {\em locally cartesian closed} in the sense of \cite{Gepner-Kock:Univalence} if $P$ preserves the pullbacks along maps in $\cD$ (viewed as maps in $\cC$). 
The following result appear also in \cite{Rijke:modal-descent}.

\begin{prop}
\label{prop:mod-slex}
The localization $P:\cC\rightleftarrows \cR_1$ associated to a modality is always semi-left-exact.
\end{prop}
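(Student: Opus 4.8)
The plan is to unwind the definition of semi\=/left\=/exactness and reduce the whole statement to the single structural fact that, for a modality, the left class $\cL$ is stable under base change. Recall that $P$ sends $X$ to the codomain $PX$ of the $\cL$\=/part $\lambda(X\to 1):X\to PX$ of the $(\cL,\cR)$\=/factorization of $X\to 1$, so the reflection unit $\lambda(X\to 1)$ always lies in $\cL$. By the definition recalled above (``$P$ preserves the pullbacks along maps in $\cD=\cR_1$''), I fix a cospan $A\xto{h}C\xot{g}B$ in which $g:B\to C$ is a map in $\cR_1$, i.e.\ both $B$ and $C$ are local, and I form the pullback $A\times_C B$. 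Since $PB=B$ and $PC=C$, the goal is exactly to prove $P(A\times_C B)\simeq PA\times_C B$.

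First I would build the comparison map. Because $C$ is local, $h:A\to C$ factors uniquely through the unit as $A\xto{\lambda(A\to 1)}PA\xto{\bar h}C$; pulling $g$ back along $\bar h$ produces the object $PA\times_C B$ together with a canonical map $\phi:A\times_C B\to PA\times_C B$ over $B$. Two observations then close the argument. (1) The object $PA\times_C B$ is local: it is the $\cC$\=/pullback of a diagram $PA\to C\ot B$ of objects of $\cR_1$, and a reflective subcategory is closed under all limits, so this pullback lands in $\cR_1$. (2) The map $\phi$ is in $\cL$: a direct inspection of universal properties identifies the square
\[
\begin{tikzcd}
A\times_C B \ar[r]\ar[d,"\phi"'] & A \ar[d,"\lambda(A\to 1)"]\\
PA\times_C B \ar[r] & PA
\end{tikzcd}
\]
as cartesian, so $\phi$ is the base change of the unit $\lambda(A\to 1)\in\cL$ along the projection $PA\times_C B\to PA$; since $(\cL,\cR)$ is a modality, $\cL$ is stable under base change, hence $\phi\in\cL$.

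Combining (1) and (2), $\phi:A\times_C B\to PA\times_C B$ is a map in $\cL$ with local codomain; composing with $PA\times_C B\to 1\in\cR$ displays an $(\cL,\cR)$\=/factorization of $A\times_C B\to 1$. By uniqueness of such factorizations, $\phi$ must be (isomorphic to) the reflection unit $\lambda(A\times_C B\to 1)$, so $P(A\times_C B)\simeq PA\times_C B=PA\times_{PC}PB$, which is precisely the assertion that $P$ preserves this pullback. The only nonformal point—and thus where I expect the main obstacle to sit—is step (2): recognizing $\phi$ as a base change of the unit and invoking the base\=/change stability of $\cL$. This is the sole place the modality hypothesis is used; the factoring of $h$ through $PA$, the closure of $\cR_1$ under limits, and the uniqueness of factorizations are all formal consequences of $(\cL,\cR)$ being a reflective factorization system.
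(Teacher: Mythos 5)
Your proof is correct, and it establishes exactly the statement in the paper's sense of semi-left-exactness: $P$ preserves pullbacks along maps $g:B\to C$ lying in $\cR_1$. The route differs from the paper's mainly in organization rather than substance. The paper factors \emph{both} horizontal maps of its cartesian square (\ref{eqn:mod-slex}) in $\Arr\cC$ and invokes \cref{prop:stable/lex-FS}\ref{prop:stable/lex-FS:enum1} (for a modality the reflection $\rho:\Arr\cC\to\cR$ preserves cartesian maps) to conclude that both factored squares are cartesian; it then identifies the middle objects with $PX'$ and $PY'$ using that $PX\to 1$ and $PY\to 1$ are in $\cR$, and finishes by idempotency of $P$. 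You instead reflect only the corner $A$, form the candidate pullback $PA\times_C B$, and verify directly that the comparison map $\phi$ is the unit. Note that your cartesian square exhibiting $\phi$ as a base change of $\lambda(A\to 1)$ is precisely the left-hand square of the paper's factored diagram, so your step (2) inlines---and reproves, in the one instance needed---the cartesianness statement that the paper cites from \cref{prop:stable/lex-FS}. What each approach buys: yours is self-contained and more elementary, with base-change stability of $\cL$ as the sole nonformal input, closed off by uniqueness of $(\cL,\cR)$-factorizations; the paper's reuses an already-established characterization of modalities, treats the two legs symmetrically, and obtains at once that the image square is again cartesian without a separate uniqueness step. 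One small simplification of your step (1): since $(C\to 1)\circ g = (B\to 1)\in\cR$ and $C\to 1\in\cR$, left cancellation for the right class gives $g\in\cR$ directly, so $PA\times_C B\to PA\to 1$ is in $\cR$ as a composite of a base change of $g$ with $PA\to 1$; your appeal to closure of the reflective subcategory $\cR_1$ under limits is equally valid.
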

\begin{proof}
Any map in $\cR_1$ is of the type $PX\to PY$ for some map $X\to Y$ in $\cC$.
We consider a pullback square 
\begin{equation}
\tag{\ensuremath{\star}}
\label{eqn:mod-slex}
\begin{tikzcd}
X' \ar[r,"f"]\ar[d] \pbmark & PX \ar[d] \\
Y' \ar[r,"g"'] & PY.
\end{tikzcd}
\end{equation}
and its factorization for the modality $(\cL,\cR)$
\[
\begin{tikzcd}
X' \ar[r,"\lambda(f)"]\ar[d] \pbmark & X'' \ar[r,"\rho(f)"]\ar[d] \ar[rd,"P(\ref{eqn:mod-slex})" description, phantom] \pbmark & PX \ar[d] \\
Y' \ar[r,"\lambda(g)"'] & Y'' \ar[r,"\rho(g)"'] & PY.
\end{tikzcd}
\]
Both squares are cartesian by \cref{prop:stable/lex-FS}\ref{prop:stable/lex-FS:enum1}.
Since the maps $PX\to 1$ and $PY\ot 1$ are in $\cR$ by definition of $P$, the objects $X''$ and $Y''$ are in fact $PX'$ and $PY'$.
Because $P$ is idempotent, the square $P(*)$ is then the image of the square (\ref{eqn:mod-slex}) by $P$.
This proves that $P$ preserves such cartesian squares.
\end{proof}

\begin{cor}[{\cite[Prop. 1.4]{Gepner-Kock:Univalence}}]
\label{cor:mod-LCCC}
In the situation of \cref{prop:mod-slex}, the local category $\cR_1$ has always universal colimits.
Moreover, if $\cC$ is locally cartesian closed with a terminal object, then so is $\cR_1$.
\end{cor}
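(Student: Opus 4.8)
The plan is to lean on three facts about the reflection $P:\cC\rightleftarrows\cR_1:\iota$: that $\cR_1$ is closed under limits in $\cC$ (so pullbacks computed in $\cR_1$ are pullbacks in $\cC$), that $P$ is a left adjoint (so it preserves colimits, and a colimit in $\cR_1$ is $P$ applied to the colimit computed in $\cC$), and that $P$ is semi-left-exact by \cref{prop:mod-slex} (so it preserves pullbacks along maps of $\cR_1$). I will assume throughout that $\cC$ has universal colimits; this is automatic under the hypothesis of the second assertion, since in a locally cartesian closed category each pullback functor $f^*$ has a right adjoint, hence preserves colimits, and colimits are universal.

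For the universal colimits of $\cR_1$, fix a diagram $A_i$ in $\cR_1$ with colimit $A=\colim_i A_i$ taken in $\cR_1$, and a map $f:B\to A$ in $\cR_1$; I must show $B\simeq\colim_i(A_i\times_A B)$ in $\cR_1$. Writing $\bar A=\colim_i^{\cC}A_i$ for the colimit in $\cC$, we have $A=P\bar A$, with the structure map $\eta:\bar A\to A$ the reflection unit. First I would apply universality of colimits in $\cC$ to the projection $B\times_A\bar A\to\bar A$: since $A_i\times_{\bar A}(B\times_A\bar A)\simeq A_i\times_A B$, this yields $\colim_i^{\cC}(A_i\times_A B)\simeq B\times_A\bar A$. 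Each $A_i\times_A B$ lies in $\cR_1$, being a pullback of objects of $\cR_1$, so applying $P$ and using that $P$ preserves colimits gives $\colim_i^{\cR_1}(A_i\times_A B)=P(B\times_A\bar A)$.

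The crux is then to evaluate $P(B\times_A\bar A)$. Here $B\times_A\bar A$ is the pullback of the unit $\eta:\bar A\to A$ along $f:B\to A$, and the decisive observation is that $f$ is a map of $\cR_1$; thus the square is a pullback along a map of $\cR_1$, and semi-left-exactness (\cref{prop:mod-slex}) gives $P(B\times_A\bar A)\simeq PB\times_{PA}P\bar A=B\times_A A\simeq B$, the final step because $P\eta$ is an equivalence. This establishes $B\simeq\colim_i(A_i\times_A B)$, i.e. universal colimits. I expect this application to be the main obstacle: one must orient the pullback so that the map pulled back along is $f\in\cR_1$ (and not the unit $\eta$, whose source $\bar A$ is not in $\cR_1$), and one must use that $P$ inverts $\eta$.

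For the second assertion, the terminal object $1$ lies in $\cR_1$, since $1\to 1$ is invertible and hence in $\cR$, and it remains terminal in the full subcategory. To obtain local cartesian closure I would argue that $\cR_1$, being a reflective localization of the presentable locally cartesian closed category $\cC$, is itself presentable, and that for presentable categories local cartesian closure is equivalent to universality of colimits (each $f^*$ preserves colimits if and only if, by the adjoint functor theorem, it admits a right adjoint); the first part then applies. Alternatively, and without invoking presentability, one constructs the right adjoints directly: for $f:Y\to X$ in $\cR_1$ the pullback $f^*:(\cR_1)\slice X\to(\cR_1)\slice Y$ is the restriction of $f^*_{\cC}$ (limits being computed in $\cC$), and $f_*:=P_X\circ(f_*)_{\cC}\circ\iota_Y$ is right adjoint to it, where $P_X:\cC\slice X\to(\cR_1)\slice X$ is the slice reflection; the adjunction is verified by composing $f^*_{\cC}\dashv(f_*)_{\cC}$ with $P_X\dashv\iota_X$.
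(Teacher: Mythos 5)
The paper offers no argument of its own for this corollary---it simply cites \cite[Prop. 1.4]{Gepner-Kock:Univalence}---so your proposal is in effect a reconstruction of the cited proof, and its first half is correct and well executed. In the universal-colimits computation you identify the crux accurately: the square exhibiting $B\times_A\bar A$ must be read as the pullback of the unit $\eta\colon\bar A\to A$ along the map $f\colon B\to A$, whose source and target lie in $\cR_1$, so that semi-left-exactness (\cref{prop:mod-slex}) applies and gives $P(B\times_A\bar A)\simeq B\times_A P\bar A\simeq B$, using that $P$ inverts $\eta$; together with universality in $\cC$, the closure of $\cR_1$ under limits, and $\colim^{\cR_1}=P\circ\colim^{\cC}$, this is exactly the Gepner--Kock mechanism.

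For local cartesian closure, both of your routes need repair. Route (a): a reflective subcategory of a presentable category is presentable only when the reflection is \emph{accessible}; neither presentability of $\cC$ nor accessibility of the modality is among the corollary's hypotheses (the paper itself claims presentability of $\cR_1$ only ``as soon as the modality is accessible'' in \cref{sec:modal-topos}). Since \cite[Prop. 1.4]{Gepner-Kock:Univalence} does work in the presentable accessible setting, route (a) matches the intended citation but not the corollary's literal generality. Route (b), which would cover the general case, has a genuine gap at its crux: the adjunction is \emph{not} verified by formally composing $f^*_{\cC}\dashv(f_*)_{\cC}$ with $P_X\dashv\iota_X$. Composing with $P_Y\dashv\iota_Y$ gives $P_Yf^*_{\cC}\dashv(f_*)_{\cC}\iota_Y$ between $\cC\slice X$ and $(\cR_1)\slice Y$, but restricting the left adjoint along $\iota_X$ is not free, because for local $V$ one has $\map V{P_XW}\neq\map V W$ in general (maps into a reflection are not maps into the object). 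Your formula $f_*=P_X\circ(f_*)_{\cC}\circ\iota_Y$ is a right adjoint only once one knows that $(f_*)_{\cC}\iota_YU$ is \emph{already} local, after which the $P_X$ is harmless; equivalently, one must check that base change along the map $f$ of $\cR_1$ carries $P$-equivalences over $X$ to $P$-equivalences over $Y$. This does follow from \cref{prop:mod-slex} by the same orientation trick as in your first part (apply $P$ to the pullback of $Z\to X$ along $f$ to get $P(Z\times_XY)\simeq PZ\times_XY$, whence pullback of an inverted map stays inverted), but it is the decisive step and is missing as written.
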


\begin{prop}[{\cite[Thm 3.1]{RSS}}]
\label{prop:lex-loc=lex-mod}
The following conditions on a modality $(\cL,\cR)$ are equivalent:
\begin{enumerate}
\item the reflection $\cC \to \cR_1$ is left-exact;
\item $(\cL,\cR)$ is a reflective factorization system;
\item $(\cL,\cR)$ is a lex modality.
\end{enumerate}
\end{prop}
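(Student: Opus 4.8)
The plan is to prove the cycle of implications by systematically comparing the three pieces of data attached to the modality $(\cL,\cR)$: the reflection $P\colon\cC\to\cR_1$, its class $W_P$ of inverted maps, and the fibrewise reflection $\rho\colon\Arr\cC\to\cR$. The organising observation is that \emph{for any} modality one has $\cL\subseteq W_P$. Indeed, for $f\colon A\to B$ in $\cL$ the naturality square of the unit $\eta$ gives $Pf\circ\eta_A=\eta_B\circ f$; the right-hand side is a composite of maps in $\cL$ (namely $f$ and $\eta_B=\lambda(B\to 1)$), so by right cancellation of $\cL$ and $\eta_A\in\cL$ we get $Pf\in\cL$, while $(PA\to 1)=(PB\to 1)\circ Pf$ together with left cancellation of $\cR$ gives $Pf\in\cR$; hence $Pf$ is invertible and $f\in W_P$. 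Since $W_P$ always satisfies the 2-out-of-3 condition, and since the inverted maps of the reflective factorization system associated with $P$ are exactly its left class, this identifies $W_P=\cL^{3/2}$. Consequently condition (2) — that $\cL$ satisfy 2-out-of-3 — is equivalent to the single equality $\cL=W_P$.

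For (2)$\Rightarrow$(1) I would argue as follows. Under (2) we have $\cL=W_P$, and $\cL$ is stable under base change because $(\cL,\cR)$ is a modality; hence $W_P$ is stable under base change. I then invoke the standard characterisation (in the ambient presentable/topos setting, \cite{Lurie:HTT,RSS}) that a reflective localization is left-exact precisely when its class of equivalences is stable under base change, to conclude that $P$ is left-exact, i.e.\ (1).

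The reverse implication (1)$\Rightarrow$(2) is the crux. Writing $f=\rho(f)\circ\lambda(f)$ and using $\lambda(f)\in\cL\subseteq W_P$ together with 2-out-of-3 of $W_P$, the inclusion $W_P\subseteq\cL$ reduces to the \textbf{key subclaim} that $\cR\cap W_P$ consists of isomorphisms. Here left-exactness of $P$ must be used analytically: given $g\colon X\to Y$ in $\cR$ with $Pg$ invertible, every fibre of $g$ is again in $\cR$ (stability of $\cR$ under base change), hence is a modal object; left-exactness of $P$ makes $P$ preserve these fibres, so each fibre has trivial $P$-localization and, being modal, is terminal. As equivalences in a topos are detected fibrewise (over a generating family, or by the iterated-diagonal criterion of the Whitehead-type \cref{lem:Whitehead}), $g$ is invertible. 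This is the one step that genuinely separates lex modalities from general ones — it fails, for instance, for ordinary truncation — and it is where I expect the real work to lie; I would cite \cite{RSS} for the careful execution.

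Finally I would close the loop with (3)$\Leftrightarrow$(1). For (3)$\Rightarrow$(1), \cref{prop:stable/lex-FS}\ref{prop:stable/lex-FS:enum2} rephrases (3) as the left-exactness of $\rho\colon\Arr\cC\to\cR$; restricting $\rho$ along the left-exact embedding $\cC\hookrightarrow\Arr\cC$, $X\mapsto(X\to 1)$, whose image (the arrows into $1$) is closed under finite limits in $\Arr\cC$ and on which $\rho$ computes $(X\to 1)\mapsto(PX\to 1)$, shows that $P$ preserves finite limits, which is (1). For (1)$\Rightarrow$(3) I first use (1)$\Rightarrow$(2) to know $(\cL,\cR)=(\cL^{3/2},\cR^{2/3})$, so that the factorization of $f\colon A\to B$ is the pullback one, $\rho(f)=\big(B\times_{PB}PA\to B\big)$; then for a finite diagram $f_i$ with $f=\lim f_i$, left-exactness of $P$ and the commutation of limits with limits give $\rho(\lim f_i)=\lim B_i\times_{\lim PB_i}\lim PA_i=\lim\rho(f_i)$, so $\rho$ is left-exact and (3) holds again by \cref{prop:stable/lex-FS}\ref{prop:stable/lex-FS:enum2}.
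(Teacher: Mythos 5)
A preliminary remark on the comparison itself: the paper offers no proof of this proposition --- it is stated with a citation to \cite[Thm 3.1]{RSS} --- so your attempt cannot match or diverge from an in-paper argument; it can only be judged on its own merits and against the paper's surrounding toolkit. On that score most of your skeleton is sound and well aligned with the paper: the inclusion $\cL\subseteq W_P$ via the two cancellation properties, the identification $W_P=\cL^{3/2}$ (so that condition (2) becomes the equality $\cL=W_P$), and your (2)$\Rightarrow$(1) via base-change stability of the inverted maps are all consistent with the discussion in \ref{sec:localFS}, and the criterion you invoke for (2)$\Rightarrow$(1) is exactly the one the paper itself cites in \cref{ex:modalities:slex} (\cite[Cor. 4.8]{CHK:slex}, \cite[Prop. 6.2.1.1]{Lurie:HTT}). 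Your (3)$\Rightarrow$(1) is also correct: arrows with terminal codomain are closed under finite limits in $\Arr\cC$ (since $\lim_I 1\simeq 1$), so left-exactness of $\rho$ restricts to left-exactness of $P$ by \cref{prop:stable/lex-FS}\ref{prop:stable/lex-FS:enum2}.

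The genuine gap is exactly where you suspect it, and your sketch of the key subclaim does not work as written. In a general topos, invertibility is not detected on fibres over global points (there may be essentially none), so ``each fibre is modal and has trivial localization, hence terminal'' is unavailable; and your hedge of passing to a generating family does not repair it, because the fibre of $g$ over a generalized point $\xi\colon S\to Y$ is a \emph{map} $X\times_Y S\to S$, not an object, and ``modal and inverted implies terminal'' then becomes the relative statement $\cR\cap W_P\subseteq\text{isos}$ that you are trying to prove. To run the internal/fibrewise argument rigorously one needs the slice reflections to be left-exact and computed compatibly with $P$, which is essentially equivalent to the conclusion $\cR=\cR^{2/3}$ --- so the sketch is circular at precisely the point where lexness must do real work. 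Deferring that step to \cite{RSS} leaves the crux unproven (though, in fairness, the paper defers the entire proposition to the same source). Finally, note a shortcut you missed that shortens your cycle: (3)$\Rightarrow$(2) is immediate from \cref{lem:facto-bicartesian}, whose cartesian square exhibits $f\simeq 1_B\times_{g}(gf)$ in $\Arr\cC$; if $\cL$ is stable under finite limits (the definition of a lex modality) and $g,\,gf\in\cL$, then $f\in\cL$, which is exactly the missing left-cancellation property. This does not eliminate your crux --- some implication out of (1) is still required to close the cycle --- but it replaces your detour (1)$\Rightarrow$(2)$\Rightarrow$(3) for entering condition (3) by a one-line argument.
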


\begin{table}[htbp]
\begin{center}	
\caption{Summary of the properties of factorization systems}
\label{table:propFS}
\medskip
\renewcommand{\arraystretch}{2}
\begin{tabularx}{\textwidth}{
|>{\hsize=1\hsize\linewidth=\hsize\centering\arraybackslash}X
|>{\hsize=1\hsize\linewidth=\hsize\centering\arraybackslash}X
|>{\hsize=1\hsize\linewidth=\hsize\centering\arraybackslash}X
|>{\hsize=1\hsize\linewidth=\hsize\centering\arraybackslash}X
|>{\hsize=1\hsize\linewidth=\hsize\centering\arraybackslash}X
|>{\hsize=1\hsize\linewidth=\hsize\centering\arraybackslash}X|}
\hline
\multicolumn{2}{|c|}{\multirow{2}{*}{\em Factorization~system}}&\multicolumn{4}{c|}{\em Stability properties of $\cL$}\\
\cline{3-6}
\multicolumn{2}{|c|}{\multirow{1}{*}{$(\cL,\cR)$}}& {\em none} & {\em base change along $\cR$} & {\em all base changes} & {\em finite limits}\\
\hline
\multirow{2}{6em}{\em Cancellation properties of $\cL$}	& {\em composition and right cancellation}		& any FS & semi-modality & modality & lex modality \\
\cline{2-5}
	& {\em + left cancellation (2~out~of~3)}	& reflective FS	& semi-lex reflective FS 	& \multicolumn{1}{c}{lex reflective FS}	& =	\\
\hline
\end{tabularx}
\end{center}  
\end{table}

\subsubsection{Modal topoi}
\label{sec:modal-topos}
\label{sec:perverse-topos}

When $\cE$ is an \oo topos with a modality $(\cL,\cR)$, the category of {\em modal objects} $\cR_1\subset \cE$ is not a topos in general, but it still enjoys many good properties. 
As we mentioned above, it is always a locally cartesian closed category with a terminal object and it is presentable as soon as the modality is accessible.
Since a number of our constructions are valid in such a category, we shall call them {\em modal topoi} (and {\em accessible modal topoi} if the modality is accessible).
We shall see in \cref{prop:PLCCC=modal-topos} that accessible modal topoi are the same thing as presentable locally cartesian closed categories.

An important example of an accessible modal topos is the category of {\em local objects} for a map $A\to 1$ in a topos $\cE$, that is objects $X$ such that $X\to X^A$ is invertible (where $X^A$ is the exponential in $\cE$).
In this case, the enriched local objects are also the {\em modal objects} for the (enriched) modality generated by the object $A$.
When $A=S^{n+1}$, the notion of $A$\=/modal topos gives back that of $n$\=/topos.
In the context of real cohesion, when $A=\RR$ the corresponding modal topos is the topos of discrete objects \cite{Schreiber:cohesion,Shulman:cohesion}.
In the context of algebraic geometry and of the topos of Nisnevich sheaves over smooths schemes, when $A=\AA^1$ the corresponding modal topos is the category of $\AA^1$\=/local (unstable) sheaves \cite{Robalo:motives}.

\section{Small object arguments}

The small object argument (SOA) is a construction to enhance a weak orthogonal system into a weak factorisation system~\cite{Quillen:HA,Riehl:CHT}.
The variants presented in \ref{sec:!SOA1} and \ref{sec:!SOA2} are constructions to enhance an orthogonal system into a factorisation system.
In order to better compare the classical SOA and its variants, we shall recall the classical version first in \ref{sec:SOA}.

\paragraph{Source and target of a map} 
For a map $f$ in $\cC$, we introduce the notation $\s f$ and $\t f$ to refer to the source (domain) and target (codomain) of $f$, that is, we have $f:\s f\to \t f$.
We shall use the terms source and domain (resp. codomain and target) interchangeably.

\subsection{SOA for WFS after Quillen}
\label{sec:SOA}

\begin{rem}[Adaptation for \oo categories]
\label{rem:set-v-space}
The small object argument is usually formalized in a 1-category.
We are going to provide a formulation valid also in an \oo category $\cC$, but it turns out that a mild adaptation is needed because the left class $\cA$ of a {\em weak} factorization system is only stable under {\em discrete} sums and not sums indexed by arbitrary spaces (see \cref{rem:defect-sum}).
Therefore, for any two objects $A$ and $B$, the argument cannot use the {\em space} of maps $\map A B$ and we need to replace it by an actual {\em set} of maps.
In order to do so, we shall consider fixed, for any pair $(A,B)$ of objects, a set $\relmap 0 A B $ together with a cover $\relmap 0 A B  \onto \map A B$. 
The set $\relmap 0 A B$ will play the role of a set of maps from $A$ to $B$.
Using Dwyer-Kan~\cite{Dwyer-Kan:1987}, such a choice of sets can always be assumed to be compatible with composition.
\end{rem}

We fix a small set of maps $W\to \Arr \cC$ in a cocomplete category $\cC$.
For $f:X\to Y$ a map in $\cC$, the small object argument computes a factorisation of $f$ for the weak orthogonal system $(\cA_W,\cB_W)$ by a transfinite iteration of the following construction $\Quillenmap f$.
First, remark that the condition $W\pitchfork f$ is equivalent to the
existence of a diagonal lift in the single canonical square
\[
\epsilon\ :\ \coprod_{w\in W} \map w f _0 \times w\stto f
\]
where $\map{w}{f}_0$ is a chosen set of maps between $w$ and $f$ as in \cref{rem:set-v-space},
and $\map w f _0 \times w = \coprod_{\map w f _0} w$ is the tensor structure of
$\Arr \cC$ over $\cS$.
The square defined by $\epsilon$ is 
\begin{equation}
\tag{$\boxslash$}\label{eqn:liftcoend}
\begin{tikzcd}
\coprod_{w} \map w f _0 \times \s w \ar[r]\ar[d,"\coprod_w\map{w}{f} \times w"'] & \s f\ar[d, "f"]\\
\coprod_{w} \map w f _0 \times \t w \ar[r] & \t f.
\end{tikzcd}
\end{equation}
We define the map $\Quillenmap f:\Quillen f\to \t f$ as the cocartesian gap map of this square
\begin{equation}
\tag{$q$\=/construction}
\label{eqn:qconstruction}
\begin{tikzcd}
\coprod_{w} \map w f _0\times \s w \ar[r]\ar[d,"\coprod_w\map w f _0 \times w"'] & \s f\ar[d,"u(f)"'] \ar[ddr, bend left, "f"]&\\
\coprod_{w} \map w f _0\times \t w \ar[r, "\ell"'] \ar[rrd, bend right] & \Quillen f \ar[rd,"\Quillenmap f"'] \pomark &\\
&& \t f.
\end{tikzcd}
\end{equation}
It is in fact more convenient to look at this as a diagram in $\cC^\to$:
\begin{equation}
\tag{$q$\=/lift}
\label{eqn:qlift}
\begin{tikzcd}
\coprod_{w} \map w f _0\times \s w \ar[r]\ar[d,"\coprod_w\map w f _0 \times w"'] & \s f\ar[d, "f", near end] \ar[r, "u(f)"] & \Quillen f\ar[d, "\Quillenmap f"]\\
\coprod_{w} \map w f _0\times \t w \ar[r] \ar[rru, dashed, "\ell" near start] & \t f \ar[r,equal] & \t f.
\end{tikzcd}
\end{equation}
The map $\Quillenmap f$ can be understood as the best approximation of $f$ on the right that admits a lift from the map $\coprod_w\map w f _0 \times w$.

\medskip
We get a factorisation of $f$ as $\Quillenmap fu(f):\s f\to \Quillen f\to \t f$.
The map $\coprod_w\map w f _0 \times w$ is in $\cA_W$
since $\cA_W$ is stable by discrete sums,
and the map $u(f):\s f\to \Quillen f$ is in $\cA_W$ since $\cA_W$ is stable by cobase change (\cref{lem:prop-wfs}).
The map $\Quillenmap f:\Quillen f\to \t f$ need not be in $\cB_W$, but it will be after a transfinite iteration.

\begin{thm}[Small object argument]
\label{thm:SOA}
Let $\cC$ be a cocomplete category and $W$ a small set of arrows in $\cC$ with small domains.
Then, the transfinite iteration of the $q$\=/construction converges to a factorization for the weak orthogonal system $(\cA_W, \cB_W)$ generated by $W$.
\end{thm}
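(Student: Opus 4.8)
The plan is to iterate the $q$\=/construction transfinitely, keeping the right factor and accumulating the left factors, and then to show that for a suitable regular cardinal $\kappa$ the stage\=/$\kappa$ right factor already lies in $\cB_W$ while the accumulated left factor lies in $\cA_W$ at every stage. Concretely, I would organise the iteration as follows. Set $g_0:=f$; given $g_\alpha:Z_\alpha\to\t f$, put $g_{\alpha+1}:=\Quillenmap{g_\alpha}:\Quillen{g_\alpha}\to\t f$, so that $Z_{\alpha+1}=\Quillen{g_\alpha}$ and the transition map $Z_\alpha\to Z_{\alpha+1}$ is $u(g_\alpha)$; at a limit $\lambda$ set $Z_\lambda:=\colim_{\alpha<\lambda}Z_\alpha$, with $g_\lambda:Z_\lambda\to\t f$ induced by the compatible family $(g_\alpha)_{\alpha<\lambda}$, using crucially that all the $g_\alpha$ share the one fixed target $\t f$. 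At any stage $\beta$ the left factor is the transfinite composite $\lambda_\beta:\s f=Z_0\to Z_\beta$ of the maps $u(g_\alpha)$, and the candidate factorisation is $f=g_\beta\circ\lambda_\beta$.

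Next I would check that the left factor stays in $\cA_W$ at every stage. Each $\coprod_w\map{w}{g_\alpha}_0\times w$ is a discrete sum of maps of $W$, hence in $\cA_W$ by stability under discrete sums (\cref{lem:prop-wfs}); the map $u(g_\alpha)$ is a cobase change of it, hence in $\cA_W$ by stability under cobase change; and $\lambda_\beta$, a transfinite composite of such maps, is in $\cA_W$ by stability under transfinite composition. This is exactly where it matters that the $q$\=/construction uses the chosen \emph{sets} $\map{w}{g_\alpha}_0$ of \cref{rem:set-v-space} rather than the mapping spaces, since $\cA_W$ is only guaranteed to be stable under sums indexed by sets (\cref{rem:defect-sum}).

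The hard part will be the convergence, namely choosing $\beta$ so that $g_\beta\in\cB_W=W^\pitchfork$. Because $W$ is small with small domains, I would fix a regular cardinal $\kappa$ for which every domain $\s w$ is $\kappa$\=/compact, and iterate up to $\kappa$. To see $g_\kappa\in W^\pitchfork$ I must show that $\pbh{w}{g_\kappa}$ is a cover for each $w$, i.e. that every square $w\to g_\kappa$ admits a diagonal lift up to homotopy. Given such a square, its top edge $\s w\to Z_\kappa=\colim_{\alpha<\kappa}Z_\alpha$ factors through some $Z_\alpha$ by $\kappa$\=/compactness of $\s w$ (the chain of length $\kappa$ being $\kappa$\=/filtered), producing a square $w\to g_\alpha$ with top edge $a':\s w\to Z_\alpha$ and bottom edge $b:\t w\to\t f$ (the latter unchanged), whose class is represented by an element of $\map{w}{g_\alpha}_0$. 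That element indexes a summand of the coproduct defining $u(g_\alpha)$, and the universal map $\ell$ of the diagram \eqref{eqn:qlift} restricts on it to a map $\t w\to Z_{\alpha+1}$, which I post\=/compose with the coprojection $Z_{\alpha+1}\to Z_\kappa$ to obtain the diagonal. The two triangle identities then come from chasing \eqref{eqn:qlift}: the upper one from $\ell\circ w=u(g_\alpha)\circ a'$ together with compatibility of coprojections, and the lower one from $g_{\alpha+1}\circ\ell=b$ together with $g_\kappa\circ(Z_{\alpha+1}\to Z_\kappa)=g_{\alpha+1}$. Hence $g_\kappa\pitchfork w$ for all $w$, so $g_\kappa\in\cB_W$ and $f=g_\kappa\circ\lambda_\kappa$ is the required factorisation.

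I expect the genuine obstacle to be the $\infty$\=/categorical bookkeeping inside this convergence step. Over a $1$\=/category a lift is a mere property, whereas here $w\pitchfork g_\kappa$ only demands that $\pbh{w}{g_\kappa}$ be surjective on $\pi_0$, so it suffices to produce \emph{one} lift per homotopy class of squares — which is precisely what factoring through a single stage and invoking the surjection $\map{w}{g_\alpha}_0\onto\map{w}{g_\alpha}$ of \cref{rem:set-v-space} supplies. The two delicate coherence points are (i) that $\kappa$\=/compactness lets the top edge factor through a single stage, and (ii) that the lift built at stage $\alpha+1$ genuinely solves the \emph{original} lifting problem after passing to the colimit, which is the triangle chase above; no smallness of the codomains $\t w$ is needed, since the target $\t f$ is held fixed throughout.
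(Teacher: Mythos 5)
Your proposal is correct and follows essentially the same route as the paper's proof of \cref{thm:SOA}: transfinite iteration of the $q$\=/construction with fixed target, membership of the accumulated left factor in $\cA_W$ via stability under discrete sums, cobase change and transfinite composition, and convergence at a regular $\kappa$ by factoring the top edge of a lifting square through a stage $\lambda<\kappa$ using $\kappa$\=/compactness of the domains $\s w$, with the lift supplied by the stage-$(\lambda+1)$ construction. Your explicit treatment of the surjection $\map{w}{g_\alpha}_0\onto\map{w}{g_\alpha}$ and of the irrelevance of smallness of the codomains $\t w$ merely spells out points the paper leaves implicit.
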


\begin{proof}
The natural transformation $u:f\to \Quillenmap f$ defines a transfinite sequence 
\[
f\ntto{u(f)} \Quillenmap f\ntto{u(\Quillenmap f)} \Quillenmapn2 f\ntto{u(\Quillenmapn2 f)} \dots
\]
We assumed that the domains $\s w$ were small for all $w\in W$. 
Let $\kappa$ be a regular cardinals majoring the size of all $\s w$.
The map $u^\kappa(f)$ is in $\cA_W$ because it is a transfinite composition of maps in $\cA_W$.
The result will be proven if we prove that the map $\Quillenmapn \kappa f $ is in $\cB_W=W^\pitchfork$.

Let $w$ be in $W$, $\alpha:w\to \Quillenmapn \kappa f $ be a map and $L(\alpha)$ be its space of lifts, we want to show that $L(\alpha)$ is not empty.
We define $\Quillenn \kappa f:=\s \Quillenmapn \kappa f$.
Because $\s w$ is $\kappa$\=/small and $\kappa$ is $\kappa$\=/filtered, the map $\s w\to \Quillenn \kappa f $ associated to $\alpha$ factors through some $\Quillenn \lambda f $ for some ordinal $\lambda<\kappa$.
By construction of $\Quillenn {\lambda+1} f $, we have a commutative diagram
\[
\begin{tikzcd}
\s w \ar[r]\ar[d, "w"'] 
& \Quillenn \lambda f \ar[d, "\Quillenmapn \lambda f "] \ar[rr, "u(\Quillenmapn \lambda f )"] 
&& \Quillenn {\lambda+1} f \ar[d, "\Quillenmapn {\lambda+1} f "]\ar[r]
&\dots \ar[r]
& \Quillenn \kappa f \ar[d, "\Quillenmapn \kappa f"]
\\
\t w \ar[r] \ar[rrru, dashed, bend left=10] 
& \t f \ar[rr, equal]
&& \t f \ar[r, equal] 
& \dots  \ar[r,equal] 
& \t f.
\end{tikzcd}
\]
and the dotted map provides the desired lift.
This finishes the proof that $f = \Quillenmapn \kappa f  u^\kappa(f)$ provides a factorisation of $f$ in $(\cA_W,\cB_W)$.
Moreover, the proof shows that this factorisation can be assumed natural in $f$.
\end{proof}

\begin{rem}[A variant]
\label{rem:Quillen-variant}
When $\cC$ is a 1-category, the operator $W_Q(f) = \coprod_{w\in W} \map w f _0
\times w$ can be understood as the density comonad associated to the diagram $W\to \Arr \cC$.
\[
\begin{tikzcd}
W \ar[d] \ar[r] & \Arr \cC \ar[loop right, "W_Q"]   \\
\fun {W\op} \Set \ar[ru, dashed, "R", shift left] \ar[from=ru, dashed, "N", shift left]
\end{tikzcd}
\]
The Quillen operator $q$ is then the pushout in the arrow category
\[
\begin{tikzcd}
W_Q(f) \ar[r] \ar[d] & f \ar[d]
\\
\t_Q f \ar[r] & \Quillenmap f \pomark .
\end{tikzcd}
\]
where $\t_Q f$ is the identity map of the codomain of $W_Q(f)$.

When $\cC$ is a general \oo category, the choice of set of morphisms of \ref{rem:set-v-space} prevents such an interpretation.
Nonetheless, it is possible to define a variant on Quillen's construction where we do not use such a set of maps, but the canonical space of maps.
This variant supports the diagram $W\to \Arr \cC$ to be indexed by an arbitrary category and not only a set.
For such a diagram, we can define a density comonad $W_M$ 
\[
\begin{tikzcd}
W \ar[d] \ar[r] & \Arr \cC \ar[loop right, "W_M"]   \\
\fun {W\op} \cS \ar[ru, dashed, "R", shift left] \ar[from=ru, dashed, "N", shift left]
\end{tikzcd}
\]
and an operator $f\mapsto M(f)$
\[
\begin{tikzcd}
W_M(f) \ar[r] \ar[d] & f \ar[d]
\\
\t_W f \ar[r] & M(f) \pomark .
\end{tikzcd}
\]
where $\t_M f$ is the identity map of the codomain of $W(f)$. 
The codomain of $M(f)$ is $\t f$ and thus $M(f)$ provides a factorization of $f$.
Let $(\cA_W,\cB_W)$ be the WFS generated by $W$.
The factorization of $f$ resulting from the iteration of $M$ has its right component in $\cB_W$.
However, the left component is not in general in $\cA_W$:
the map $W_M(f)= \colim_{W\comma F}w$ is a colimit of maps in $W$ but $\cA_W$ is not stable under colimits (since it would then be a FS). 
This construction seems to provide a WFS between the WFS $(\cA_W,\cB_W)$ and the FS $(\cL_W,\cR_W)$.
It is not clear to us what should be the extra property of such a WFS.
Also, this setting seems related to the plus-construction, since the codomain of $W_M(f)$ is exactly the operator $\Plus f$ of \cref{defi:+construction}.
\end{rem}

\begin{rem}[Garner construction]
\label{rem:garner1}
Quillen's construction provides a pointed endofunctor $\pi_f:f \to \Quillenmap f$ on $\Arr \CC$.
In~\cite{Garner:SOA}, Garner establishes a variant on the construction by considering not the colimit of the sequence of $f \to \Quillenmap f\to \Quillenmapn2 f\to \dots$ but the free monad $\gamma$ generated by the pointed endofunctor $\pi:1\to q$.
The map $\gamma(f)$ is essentially the quotient of $\Quillenmap f$ imposing that the two maps $q(\pi_f),\pi_{\Quillenmap f}:\Quillenmap f \to \Quillenmapn2 f$ coincide (as well as their analogs between higher iterations of $k$).
The resulting map $\gamma(f)$ is still in the right class $\cR$ and is equipped with a free algebra structure over the pointed endofunctor $q$.
The two constructions coincide if $\pi:1 \to q$ is a well-pointed endofunctor.
\end{rem}

\subsection{SOA for FS after Gabriel-Ulmer}
\label{sec:!SOA1}

This section explains a first adaptation of the SOA in order to construct the FS generated by the maps $W$.
This argument is in the spirit of the one used by Gabriel and Ulmer~\cite{Gabriel-Ulmer}. 
It consists in adding maps to $W$ in order to force the uniqueness of the lifts when applying the $q$ construction.
Given a square $\alpha:w\to f$, let us assume that we have two diagonal lifts
\[
\begin{tikzcd}
A\ar[d, "w"'] \ar[rr] && X \ar[d,"f"]\\
B\ar[rr]\ar[rru,dashed, bend left=10, "h"]\ar[rru,dashed, bend right=10, "h'"'] && Y
\end{tikzcd}
\]
These two lifts provide a new commutative square
\[
\begin{tikzcd}
B\coprod_AB\ar[d, "\nabla w"'] \ar[rr, "h \coprod_A h'"] && X \ar[d,"f"]\\
B\ar[rr] && Y
\end{tikzcd}
\]
where $\nabla w$ is the codiagonal of $w$.
The existence of a diagonal lift for this square is equivalent to an identification $h\simeq h'$.
For 1-categories this would be sufficient to prove that $h=h'$, but within $\infty$\=/categories, such an identification has itself to be proven to be unique (\cref{lem:Whitehead})

\begin{lemma}
\label{lem:codiagonal}
Let $\cC$ be a cocomplete category and $W$ a small set of objects in $\Arr \cC$ with small domains and codomains.
Let $W^\nabla$ be the smallest class of arrows in $\cC$ containing $W$ and stable by codiagonal, then 
\begin{enumerate}[label={(\alph*)}, leftmargin=*]
\item \label{lem:codiagonal:1}
	$W^\nabla$ is a small set of maps with small domains, and
\item \label{lem:codiagonal:2}
	the orthogonal system generated by $W$ coincides with the weak orthogonal system generated by $W^\nabla$.
\end{enumerate}
\end{lemma}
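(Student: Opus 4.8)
The plan is to handle the two parts separately, part~\ref{lem:codiagonal:1} being a routine smallness count and part~\ref{lem:codiagonal:2} the substantive comparison. First I make $W^\nabla$ explicit: since ``stable by codiagonal'' means closed under $f\mapsto\nabla f$, the smallest such class containing $W$ is $W^\nabla=\{\nabla^n w : w\in W,\ n\geq 0\}$, the collection of all iterated codiagonals of the generators. For part~\ref{lem:codiagonal:1}, the set $W^\nabla$ is indexed by $W\times\NN$ and is therefore small. For the domains I would induct on $n$: the codomain of $\nabla^n w$ is always $\t w$, and $\s(\nabla^{n+1}w)=\t w\coprod_{\s(\nabla^n w)}\t w$ is a pushout of the small object $\t w$ along the small object $\s(\nabla^n w)$. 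As a pushout is a finite colimit and finite colimits of $\kappa$-compact objects are again $\kappa$-compact, each domain $\s(\nabla^n w)$ is small, the base case being exactly the hypothesis that the maps of $W$ have small domains and codomains.

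For part~\ref{lem:codiagonal:2} I compare the right classes first. Writing $g$ for a test map and using the recorded identity $\pbh{\nabla^n w}{g}=\Delta^n\pbh wg$ together with \cref{lem:Whitehead}, the following are equivalent for fixed $w$: $\nabla^n w\pitchfork g$ for every $n\geq 0$; every $\Delta^n\pbh wg$ is a cover; $\pbh wg$ is invertible; $w\perp g$. Quantifying over $w\in W$ yields $(W^\nabla)^\pitchfork=W^\perp$, so the weak orthogonal system generated by $W^\nabla$ and the orthogonal system generated by $W$ already share the same right class $\cR:=W^\perp=\cR_W$.

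It then remains to identify the left classes, that is to show ${}^\pitchfork\cR={}^\perp\cR$. The inclusion ${}^\perp\cR\subseteq{}^\pitchfork\cR$ is automatic. For the converse, fix $f\in{}^\pitchfork\cR$ and $g\in\cR$. Since $\cR=\cR_W$ is now the right class of an \emph{orthogonal} system, \cref{lem:prop-ufs}.\ref{lem:prop-ufs:1-dual} makes it stable under $\pbh u{-}$, hence under each iterated diagonal $\Delta^n=\pbh{s^{n-1}}{-}$; thus $\Delta^n g\in\cR$ for all $n$. Now $\Delta^n\pbh fg=\pbh{\nabla^n f}{g}=\pbh f{\Delta^n g}$ (the codiagonal/diagonal exchange, together with the adjunction $\pbh{f\pp s^{n-1}}{g}=\pbh f{\pbh{s^{n-1}}{g}}$), and the hypothesis $f\pitchfork\Delta^n g$ makes this map a cover for every $n$. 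By \cref{lem:Whitehead}, $\pbh fg$ is invertible, i.e. $f\perp g$; as $g\in\cR$ was arbitrary, $f\in{}^\perp\cR$. Equivalently, ${}^\pitchfork\cR$ is stable by codiagonal, so by \cref{lem:FSisWFS} the weak orthogonal system generated by $W^\nabla$ is in fact an orthogonal system, which by its matching right class must be the one generated by $W$.

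The main obstacle is precisely this last reconciliation between weak orthogonality (mere existence of a lift) and strict orthogonality (a contractible space of lifts): in the \oo categorical setting the passage from ``all lifts exist'' to ``lifts are unique'' is exactly Whitehead's theorem in the form of \cref{lem:Whitehead}, applied to $\pbh f{-}$ through the exchange $\pbh{\nabla^n f}{g}=\pbh f{\Delta^n g}$, which is why closing $W$ under codiagonals is the correct enlargement. Part~\ref{lem:codiagonal:1} then serves only to guarantee that $W^\nabla$ satisfies the smallness hypotheses needed to run the small object argument of \cref{thm:SOA}.
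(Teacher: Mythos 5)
Your proof is correct and follows essentially the same route as the paper: part~\ref{lem:codiagonal:1} by the same induction using stability of small objects under finite colimits, and part~\ref{lem:codiagonal:2} by comparing right classes through the identity $\pbh{\nabla^n w}{g}=\Delta^n\pbh{w}{g}$ and \cref{lem:Whitehead}. The only difference is cosmetic: where the paper dispatches the equality of the left classes with ``a similar reasoning,'' you spell it out explicitly (using \cref{lem:prop-ufs} to see that $W^\perp$ is stable under the diagonals $\Delta^n=\pbh{s^{n-1}}{-}$), which is exactly the intended argument.
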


\begin{proof}
    
\noindent \ref{lem:codiagonal:1}
$W^\nabla$ can be constructed explicitly as follows.
Let $W_0=W$ and $W_{n+1}$ be the completion of $W_n$ for the codiagonals: $W_{n+1}=W_n\cup S^0\pp W_n$.
Then $W^\nabla$ is the colimit of the inclusion $W_n\subset W_{n+1}$ for all $n\in \NN$.
This proves that $W^\nabla$ is set if $W$ is.
For $f:A\to B$ in $W$, the domain of $\nabla f$ is $B\coprod_AB$.
This is a small object of $\cC$ since both $A$ and $B$ are assumed small and small objects are stable by finite colimits.
Then, a recursive argument proves that the domains of $\nabla^nf$ are all small.

\smallskip

\noindent \ref{lem:codiagonal:2}
We need to prove that $W^\bot = (W^\nabla)^\pitchfork$ and $^\bot(W^\bot) = {^\pitchfork}((W^\nabla)^\pitchfork)$.
A map $g$ is in $(W^\nabla)^\pitchfork$ iff the map $\pbh {s^n\pp w} g = \pbh {s^n} {\pbh w g}$
is a surjection for all $n \geq -1$.
Using \cref{lem:Whitehead}, this last condition is equivalent to $\pbh w g$ being invertible.
This proves that $(W^\nabla)^\pitchfork \subset W^\bot$.
Since the reverse inclusion is always true, we have in fact $(W^\nabla)^\pitchfork = W^\bot$.
Now, by \cref{lem:abs-iso}, if $\pbh w g$ is invertible, then so is $\pbh {s^n\pp w} g = \pbh {s^n} {\pbh w g}$.
This proves that $W^\bot \subset (W^\nabla)^\bot$.
Again, the reverse is always true, and we get $(W^\nabla)^\bot = W^\bot$.
Finally, we have $(W^\nabla)^\pitchfork = (W^\nabla)^\bot = W^\bot$.
A similar reasoning proves that ${^\pitchfork}((W^\nabla)^\pitchfork) = {^\pitchfork}(W^\bot)= {^\bot}(W^\bot)$.
\end{proof}

\begin{thm}[Gabriel-Ulmer construction]
\label{thm:!SOA1}
Let $\cC$ be a cocomplete category and $W$ a small set of small objects in $\Arr \cC$.
Then, the orthogonal system $(\cL, \cR)$ generated by $W$ is a factorisation system.
\end{thm}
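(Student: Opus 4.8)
The plan is to reduce the statement to the classical small object argument (\cref{thm:SOA}), applied not to $W$ itself but to its codiagonal completion $W^\nabla$. By definition the orthogonal system generated by $W$ is $(\cL,\cR)=\bigl({}^\perp(W^\perp),\,W^\perp\bigr)$, and this already encodes the orthogonality part of a factorization system; so the only thing left to establish is that every map $f$ in $\cC$ admits a factorization $f=\rho(f)\lambda(f)$ with $\lambda(f)\in\cL$ and $\rho(f)\in\cR$.

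I would carry this out in three steps. First, I invoke \cref{lem:codiagonal}\ref{lem:codiagonal:1}: since $W$ is a small set of small objects of $\Arr \cC$, the class $W^\nabla$ is again a small set of arrows with small domains, which is precisely the hypothesis needed to run \cref{thm:SOA}. Second, I apply \cref{thm:SOA} to $W^\nabla$: the transfinite iteration of the associated $q$\=/construction converges and produces, naturally in $f$, a factorization $f=\rho(f)\lambda(f)$ whose left component lies in $\cA_{W^\nabla}$ and whose right component lies in $\cB_{W^\nabla}$, the two classes of the weak orthogonal system generated by $W^\nabla$. Third, I appeal to \cref{lem:codiagonal}\ref{lem:codiagonal:2}, which identifies that weak orthogonal system with the orthogonal system generated by $W$, i.e. $\cA_{W^\nabla}={}^\perp(W^\perp)=\cL$ and $\cB_{W^\nabla}=W^\perp=\cR$. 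Hence $\lambda(f)\in\cL$ and $\rho(f)\in\cR$, and $(\cL,\cR)$ is a factorization system.

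The genuinely delicate step is already discharged inside \cref{lem:codiagonal} rather than in the present theorem. The difficulty in upgrading a weak factorization system to a strong one is that orthogonality demands \emph{uniqueness} of diagonal lifts, not merely their existence; closing $W$ under codiagonals $\nabla^n w$ is exactly what converts this uniqueness into a lifting property, and the equivalence between weak lifting against all iterated codiagonals of $w$ and genuine orthogonality against $w$ rests on the Whitehead-type criterion of \cref{lem:Whitehead}. Granting that equivalence, the theorem is a direct transport of the classical argument across the identification of the two generated systems, and I expect no further obstacle at this stage.
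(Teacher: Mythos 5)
Your proposal is correct and is precisely the paper's own argument: the proof of \cref{thm:!SOA1} given there consists exactly of invoking \cref{lem:codiagonal} and applying \cref{thm:SOA} to $W^\nabla$, with the identification $\cA_{W^\nabla}=\cL$, $\cB_{W^\nabla}=\cR$ supplied by \cref{lem:codiagonal}\ref{lem:codiagonal:2}. You have simply spelled out the reduction (including the smallness check in \cref{lem:codiagonal}\ref{lem:codiagonal:1} and the role of the Whitehead criterion in converting uniqueness of lifts into lifting against codiagonals) that the paper leaves implicit.
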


\begin{proof}
Using \cref{lem:codiagonal}, we simply apply \cref{thm:SOA} to $W^\nabla$.
\end{proof}

\begin{rem}
A ``homotopical" variant of this argument is implicitly used in many cofibrantly generated simplicial model structures.
Let $f$ be a map in a such a model category.
If the map $s^n:S^n\to 1$ is replaced by the map $\sigma^n:\partial \Delta[n] \to \Delta[n]$, 
the map $\sigma^n\pp f$ is a homotopical version of the codiagonal of $\nabla^nf=s^n\pp f$.
The stability of cofibrations (trivial or not) under $\sigma^n\pp-$ is a way to stabilize them under codiagonal and to force the uniqueness of lift up to homotopy.
More generally, this property can be used to construct homotopy factorisation systems in the sense of~\cite[App. H.2]{Joyal:TQC}.
\end{rem}

\subsection{SOA for FS after Kelly}
\label{sec:!SOA2}

This section explains a second adaptation of the SOA in order to construct the FS generated by the class of maps $W$. 
This argument is essentially the one used by Kelly in~\cite[Sections 10 and 11]{Kelly}.
It consists in keeping the set of generators $W$ fixed but changing the construction $q$ for a new one that will force the uniqueness of lifts.
Let us explain the idea in the case of the orthogonal system generated by a single map $w$.
In the classical SOA, when constructing $\Quillenmap f$ the lifts of $w$ are added freely to $f$, independently of the potentially existing lifts.
Moreover, if several lifts exist, we need to collapse them into the same lift.
These two remarks suggest replacing the lifting diagram
\[
\begin{tikzcd}
\map{w}{f}\times \s w \ar[r]\ar[d] & \s f\ar[d, "f"]\\
\map{w}{f}\times \t w \ar[r] & \t f.
\end{tikzcd}
\]
by the lifting diagram
\[
\begin{tikzcd}
\map{\t w}{\s f}\times \t w\coprod_{\map{\t w}{\s f}\times \s w}\map{w}{f}\times \s w \ar[r]\ar[d] & \s f\ar[d, "f"]\\
\map{w}{f}\times \t w \ar[r] & \t f.
\end{tikzcd}
\]
The space $\map{w}{f}$ parameterizes all squares between $w$ and $f$, with or without lifts.
The space $\map{\t w}{\s f}$ parameterizes all lifts existing between $w$ and $f$ (together with the corresponding squares).
Altogether, this new lifting condition contains the old lifting condition (the right component of the pushout), but enforces existing lifts (left component) to be identified with the corresponding free lift (middle component). 
Now, the main remark is that the this new square is nothing but the counit map
\[
\pbh w f \pp w \to f
\]
From there, the construction will be exactly the same as in \cref{thm:SOA} but with the map $\pbh w f \pp w \to f$ instead of $\map w f _0 \times w \to f$.

\begin{rem}
In~\cite[Sections 10 and 11]{Kelly}, Kelly uses the sum $\map{\t w}{\s f}\times \t w\coprod \map{w}{f}\times \s w$ instead of the pushout $\map{\t w}{\s f}\times \t w\coprod_{\map{\t w}{\s f}\times \s w}\map{w}{f}\times \s w$. 
The two constructions turn out to be equivalent in the context of 1-categories, but the correct formula for $\infty$\=/categories does need the pushout. The difference concerns the treatment of the uniqueness of the lifts.
\end{rem}

\begin{rem}
The left class $\cA$ of a weak orthogonal system is stable only under discrete sums but the left class $\cL$ of an orthogonal system is stable under any colimit in $\Arr \cC$.
For this reason, the introduction of a set of maps between objects of an $\infty$\=/category of \cref{rem:set-v-space} is no longer necessary here.
The following variant of the SOA works similarly within a 1-category or an $\infty$\=/category.
\end{rem}

Related to the previous remark is the fact that the classical SOA uses a generating {\em set} of maps, but our variant allows the generating maps to be any small {\em diagram} of maps $W\to \cC^\to$. 
For such a diagram, the map
\[
W_K (f) := \int^{w\in W} \pbh w f \pp w
\]
is going to replace the map $\coprod_{w\in W} \map w f _0 \times w$ of \cref{thm:SOA}.

\medskip
The following lemma deals with lifting properties with respect to a diagram of arrows. 

\begin{lemma}[Functorial lifting]
\label{lem:caracortho}
Let $W\to \cC^\to$ be a diagram of arrows in $\cC$.
The following conditions are equivalent:
\begin{enumerate}[label={(\alph*)}, leftmargin=*]
\item \label{lem:caracortho-1}
	For all $w$ in $W$, the map $\pbh{w}{f}$ is invertible.
\item \label{lem:caracortho-2}
	For all $w$ in $W$, the identity map of $\pbh{w}{f}$ has a diagonal lift.
	\[
	\begin{tikzcd}
	\map{\t w} {\s f}	\ar[r, equal] \ar[d] 		& \map{\t w}{\s f}	\ar[d]\\
	\map w f		\ar[r, equal] \ar[ru, dashed]& \map w f.
	\end{tikzcd}
	\]

\item \label{lem:caracortho-3}
	For all $w$ in $W$, the map $\pbh{w}{f} \pp w \to f$ has a diagonal lift.
\item \label{lem:caracortho-4}
	The single map $\epsilon:\int^{w\in W} \pbh{w}{f}\pp w\to f$ has a diagonal lift.
\end{enumerate}
\end{lemma}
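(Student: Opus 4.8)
The plan is to prove the pointwise equivalences (a)$\,\Leftrightarrow\,$(b)$\,\Leftrightarrow\,$(c) for each fixed $w$, and then to link these to the single-square condition (d) through the chain (a)$\,\Rightarrow\,$(d)$\,\Rightarrow\,$(c). Throughout I read ``the square $\alpha\colon g\to f$ has a diagonal lift'' as the assertion that $\alpha$, regarded as a point of $\map gf$, lies in the image of $\pbh gf\colon\map{\t g}{\s f}\to\map gf$, that is, that the fiber of $\pbh gf$ over $\alpha$ is inhabited.

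First I would treat (a)$\,\Leftrightarrow\,$(b). Writing $h:=\pbh wf$, a diagonal lift of the identity square of $h$ displayed in (b) is, by reading off its two triangles, a map $s\colon\map wf\to\map{\t w}{\s f}$ with $s\circ h\simeq\mathrm{id}$ and $h\circ s\simeq\mathrm{id}$. Such an $s$ is at once a left and a right inverse of $h$, so by the convention that a map is invertible precisely when it admits a left and a right inverse, (b) holds exactly when $h$ is invertible; conversely $s=h^{-1}$ provides the lift. This step is immediate.

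Next I would prove (b)$\,\Leftrightarrow\,$(c) using the pushout-product/pullback-hom adjunction recalled above, which supplies the canonical isomorphism $\pbh{\pbh wf\pp w}f=\pbh{\pbh wf}{\pbh wf}$ (the instance of $\pbh{u\pp w}f=\pbh u{\pbh wf}$ with $u=\pbh wf$). The counit $\epsilon_w\colon\pbh wf\pp w\to f$ is by definition the adjunct of $\mathrm{id}_{\pbh wf}$, so under this isomorphism of objects of $\Arr\cS$ the point $\epsilon_w$ is carried to $\mathrm{id}_{\pbh wf}$; since the isomorphism identifies the two pullback-homs, it identifies the fiber over $\epsilon_w$ with the fiber over $\mathrm{id}_{\pbh wf}$, whence the lift of (c) exists iff that of (b) does. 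The one point that must be verified with care is exactly that the adjunction isomorphism sends the distinguished point $\epsilon_w$ to $\mathrm{id}_{\pbh wf}$, which is the universal property of the counit; I expect this to be the only genuinely delicate step.

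Finally I would close the loop through (d). For (a)$\,\Rightarrow\,$(d): if every $\pbh wf$ is invertible, then each $\pbh wf\pp w$ is invertible by \cref{lem:abs-iso}(a), and since invertible maps are stable under colimits in $\Arr\cC$ the coend $W_K(f)=\int^w\pbh wf\pp w$ is invertible; then $\pbh{W_K(f)}f$ is invertible by \cref{lem:abs-iso}(b), hence in particular a cover, so its fiber over $\epsilon$ is inhabited and (d) holds. For (d)$\,\Rightarrow\,$(c): each coend leg $\iota_w\colon\pbh wf\pp w\to W_K(f)$ satisfies $\epsilon\circ\iota_w=\epsilon_w$ by construction of the counit, and a diagonal lift $d$ of $\epsilon$ yields the lift $d\circ\t(\iota_w)$ of $\epsilon_w$, the two triangle identities following from the naturality of $\iota_w$. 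Combining these with the pointwise equivalences gives (a)$\,\Rightarrow\,$(d)$\,\Rightarrow\,$(c)$\,\Rightarrow\,$(a) alongside (a)$\,\Leftrightarrow\,$(b)$\,\Leftrightarrow\,$(c), which is the full set of equivalences.
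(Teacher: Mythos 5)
Your proof is correct and follows essentially the same route as the paper's: the pointwise equivalences (a)$\Leftrightarrow$(b) (a lift of the identity square is exactly a simultaneous left and right inverse) and (b)$\Leftrightarrow$(c) (via the adjunction $\pbh{u\pp w}{f}=\pbh{u}{\pbh wf}$, matching the counit with the identity point), then (a)$\Rightarrow$(d) using \cref{lem:abs-iso} and closure of invertible maps under the coend, and (d)$\Rightarrow$(c) by composing with the coend legs. The only difference is that you spell out details the paper leaves implicit, notably that the adjunction equivalence carries $\epsilon_w$ to $\mathrm{id}_{\pbh wf}$ and that lifts are read as mere inhabitation of fibers, which is precisely the uniqueness subtlety the paper's proof flags in passing.
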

\begin{proof}
The equivalence \ref{lem:caracortho-1} $\iff$ \ref{lem:caracortho-2} is straightforward. 
The equivalence \ref{lem:caracortho-2} $\iff$ \ref{lem:caracortho-3} is direct by adjunction.
Notice that the lift in \ref{lem:caracortho-2} is not asked to be unique (but by uniqueness of inverses, it will be automatically) and that we use this fact in the equivalence \ref{lem:caracortho-2} $\iff$ \ref{lem:caracortho-3}.

If all maps $\pbh{w}{f}$ are invertible, then by \cref{lem:abs-iso} so are the maps $\pbh{w}{f}\pp w$ and the coend $\int^w \pbh{w}{f}\pp w$. This proves \ref{lem:caracortho-1} $\Ra$ \ref{lem:caracortho-4}.
Reciprocally, we simply use the canonical maps $\pbh w f \pp w \to \int^w \pbh w f \pp w$ to show that \ref{lem:caracortho-4} $\Ra$ \ref{lem:caracortho-3}.
\end{proof}

In details, the commutative square $\epsilon$ of condition \ref{lem:caracortho-4} is
\begin{equation}
\tag{$!\boxslash$}
\label{eqn:liftcoend!}
\begin{tikzcd}
\int^w\Big( \map{\t w}{\s f}\times \t w\coprod_{\map{\t w}{\s f}\times \s w}\map{w}{f}\times \s w \Big) \ar[r]\ar[d,"W_K (f)"'] & \s f\ar[d, "f"]\\
\int^w \map{w}{f}\times \t w \ar[r] & \t f.
\end{tikzcd}
\end{equation}
Kelly's construction then proceeds as in \cref{thm:SOA}. 
The map $\Kellymap f:\Kelly f\to \t f$ is defined as the cogap map of the square (\ref{eqn:liftcoend!})
\begin{equation}
\tag{$k$\=/construction}
\label{eqn:kconstruction}
\begin{tikzcd}
\int^w\Big( \map{\t w}{\s f}\times \t w\coprod_{\map{\t w}{\s f}\times \s w}\map{w}{f}\times \s w \Big) \ar[r]\ar[d,"W_K (f)"'] & \s f\ar[d, "u(f)"'] \ar[ddr, bend left, "f"]&\\
\int^w \map{w}{f}\times \t w \ar[r, "\ell"] \ar[rrd, bend right=20] & \Kelly f \ar[rd, "\Kellymap f"'] \pomark &\\
&& \t f.
\end{tikzcd}
\end{equation}
Let $\t_W f$ be the identity map of the codomain of $W_K (f)$, that is, of $\int^w \map{w}{f}\times \t w$. 
In the presentation of~\cite[Prop. 9.2]{Kelly}, the functor $f\mapsto \Kellymap f$ is defined via the pushout in $\Arr \cC$
\[
\begin{tikzcd}
W_K (f) \ar[r] \ar[d] & f \ar[d]
\\
\t_W f \ar[r] & \Kellymap f \pomark .
\end{tikzcd}
\]
As in \cref{thm:SOA}, it is convenient to look at this diagram in terms of the following lifting
\begin{equation}
\tag{$k$\=/lift}\label{eqn:k-lift}
\begin{tikzcd}
\int^w\Big( \map{\t w}{\s f}\times \t w\coprod_{\map{\t w}{\s f}\times \s w}\map{w}{f}\times \s w \Big) \ar[r]\ar[d] & \s f\ar[d, "f"] \ar[r,"u(f)"] & \Kelly f\ar[d, "\Kellymap f"]\\
\int^w \map{w}{f}\times \t w  \ar[r] \ar[rru, dashed, "\ell"'] & \t f \ar[r, equal] & \t f.
\end{tikzcd}
\end{equation}
The map $\Kellymap f$ can be understood as the best approximation of $f$ on the right that admits a lift from the map $\int^w\pbh{w}{f} \pp w$.

As in \cref{thm:SOA}, we have a factorisation of $f$ as $\Kellymap f u(f):\s f\to \Kelly f\to \t f$.
Because $\cL$ is absorbing for $\square$ and stable by colimits in $\Arr \cC$ (\cref{lem:prop-ufs}), 
the map $\int^w\map{w}{f} \pp w$ is in $\cL$ and so is its cobase change $u(f):X\to \Kelly f$.
The map $\Kellymap f:\Kelly f\to \t f$ need not be in $\cR$ but it will be after a transfinite iteration.

\begin{thm}[{Kelly construction \cite[Theorem 11.5]{Kelly}}]
\label{thm:!SOA2}
Let $\cC$ be a cocomplete category and $W\to \Arr \cC$ be a small diagram of arrows with small domains and codomains.
Then, the transfinite iteration of the $k$\=/construction converges to a factorization associated to the orthogonal system $(\cL_W, \cR_W)$ generated by $W$.
\end{thm}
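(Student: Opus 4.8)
The plan is to run the same transfinite scheme as in the proof of \cref{thm:SOA}, with the operator $k$ in place of $q$ and the coend $W_K(f)=\int^{w\in W}\pbh w f\pp w$ in place of the discrete sum, and then to upgrade the conclusion from weak orthogonality to orthogonality by invoking \cref{lem:caracortho}. First I would form the transfinite sequence associated to the pointed endofunctor $u:f\to\Kellymap f$,
\[
f\ntto{u(f)}\Kellymap f\ntto{u(\Kellymap f)}\Kellymapn 2 f\ntto{u(\Kellymapn 2 f)}\cdots,
\]
whose codomain is constantly $\t f$. Writing $\Kellyn\lambda f:=\s\Kellymapn\lambda f$, I would pick a regular cardinal $\kappa$ majoring the sizes of \emph{all} the domains $\s w$ and \emph{all} the codomains $\t w$ of the arrows $w\in W$; this is the point at which codomains, not only domains, must be assumed small, since the building blocks of $W_K(f)$ are the mapping spaces $\map{\t w}{\s f}$ and $\map w f$, which are corepresented by $\s w$ and $\t w$. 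The transfinite composite $u^\kappa(f):\s f\to\Kellyn\kappa f$ then lies in $\cL_W$: each $W_K(\Kellymapn\lambda f)$ is in $\cL_W$ because $\cL_W$ is absorbing for $\pp$ and stable under colimits (\cref{lem:prop-ufs}), each $u(\Kellymapn\lambda f)$ is its cobase change and hence again in $\cL_W$, and $u^\kappa(f)$ is a transfinite composite of such maps, so it is a colimit of maps in $\cL_W$.

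It remains to show that $\Kellymapn\kappa f\in\cR_W=W^\perp$, that is, that $\pbh w{\Kellymapn\kappa f}$ is invertible for every $w\in W$. This is where \cref{lem:caracortho} does the essential work: by the equivalence of its conditions \ref{lem:caracortho-1} and \ref{lem:caracortho-3}, invertibility of $\pbh w{\Kellymapn\kappa f}$ is equivalent to the existence, for each $w$, of a diagonal lift of the counit square
\[
\pbh w{\Kellymapn\kappa f}\pp w\ntto\epsilon\Kellymapn\kappa f.
\]
The reduction is crucial: it trades the verification of orthogonality (existence \emph{and} uniqueness of lifts) for the mere existence of one lift, which is precisely the datum the iterated construction supplies. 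Writing $g:=\Kellymapn\kappa f$, such a lift amounts to a map $\int^w\map w g\times\t w\to\s g=\Kellyn\kappa f$ compatible with the two legs of $\epsilon$.

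To build it I would exploit that $\s g=\Kellyn\kappa f=\colimit{\lambda<\kappa}\Kellyn\lambda f$ is a $\kappa$\=/filtered colimit while $\t f$ stays constant. Since $\s w$ and $\t w$ are $\kappa$\=/compact, the corepresentables $\map{\s w}{-}$ and $\map{\t w}{-}$ commute with this colimit, and since finite limits commute with filtered colimits in $\cS$, so do $\map w{-}$ and $\map{\t w}{-}$; hence
\[
\map w g=\colimit{\lambda<\kappa}\map w{\Kellymapn\lambda f}\eqnand\map{\t w}{\s g}=\colimit{\lambda<\kappa}\map{\t w}{\Kellyn\lambda f}.
\]
Thus the source of $\epsilon$ is the $\kappa$\=/filtered colimit of its stagewise analogues, and to specify the diagonal it is enough to give compatible lifts at each finite stage and pass to the colimit. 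The stage\=/$(\lambda+1)$ instance of the $k$\=/construction furnishes exactly such a lift: applying the lifting diagram \eqref{eqn:k-lift} to $\Kellymapn\lambda f$ yields the map $\ell$, a diagonal $\int^w\map w{\Kellymapn\lambda f}\times\t w\to\Kellyn{\lambda+1}f$ of the square $W_K(\Kellymapn\lambda f)\to\Kellymapn\lambda f\to\Kellymapn{\lambda+1}f$; postcomposing with $\Kellyn{\lambda+1}f\to\Kellyn\kappa f$ and gluing over $\lambda$ produces the required diagonal of $\epsilon$.

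The hard part will be this last gluing: one must check that the stagewise lifts $\ell$ are genuinely compatible along the $\kappa$\=/filtered colimit and that the map they assemble is a diagonal of the single counit $\epsilon$ at stage $\kappa$, rather than merely of each stage $\lambda$. This is exactly where the commutation of the corepresentable functors $\map{\s w}{-}$ and $\map{\t w}{-}$ with $\kappa$\=/filtered colimits is indispensable, and it explains why the hypothesis must constrain both the domains and the codomains of the arrows in $W$. Granting the gluing, \cref{lem:caracortho} gives $\Kellymapn\kappa f\in\cR_W$, and together with $u^\kappa(f)\in\cL_W$ this exhibits the desired factorization; as in \cref{thm:SOA}, naturality of $u$ and of $\epsilon$ in $f$ should moreover make it functorial.
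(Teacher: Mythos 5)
Your proposal is correct and takes essentially the same route as the paper's proof: the same transfinite sequence built from $u$, the same appeal to \cref{lem:prop-ufs} to place $u^\kappa(f)$ in $\cL_W$, the same reduction via \cref{lem:caracortho} of orthogonality to the mere existence of a lift, $\kappa$\=/smallness of both domains and codomains to commute $\pbh w-$ past the $\kappa$\=/filtered colimit, and the assembly of the stagewise lifts $\ell$ supplied by the $k$\=/construction into a lift at stage $\kappa$. The only (cosmetic) difference is that the paper passes by adjunction to the lifts $\delta(\Kellymapn \lambda f):\map w{\Kellymapn \lambda f}\to\map{\t w}{\Kellyn {\lambda+1} f}$ and organizes them in a ladder diagram whose commutativity is exactly the gluing compatibility you flag as ``the hard part'' and defer, so your level of detail matches the paper's.
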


\begin{proof}
The natural transformation $u:f\to \Kellymap f$ defines a transfinite sequence
\[
f\ntto{u(f)} \Kellymap f\ntto{u(\Kellymap f)} \Kellymapn2 f\ntto{u(\Kellymapn2 f)} \dots
\]
We are going to prove that this sequence converges, that is, eventually becomes a sequence of invertible maps.
We assumed that all arrows $w$ were small (which is to say that both sources and targets are small) and that $W$ was a small category. 
Thus, we can fix two regular ordinals $\kappa'<\kappa$ majoring the size of all $w$.
The map $u^\kappa(f)$ is in $\cL_W$ because $\cL_W$ is stable by transfinite compositions.
The result will be proven if we prove that the map $\Kellymapn \kappa f$ is in $\cR_W=W^\bot$, in which case $\Kellymapn \kappa f = \Kellymapn {\kappa+1} f$.

\medskip
For any $w$ in $W$, we want to prove that $\pbh{w}{\Kellymapn \kappa f}$ is invertible.
We define $\Kellyn \kappa f:=\s \Kellymapn \kappa f$.
This is equivalent to proving that the identity map of $\pbh{w}{\Kellymapn \kappa f}$ has a lift
\[
\begin{tikzcd}
\map{\t w}{\Kellyn \kappa f}\ar[r, equal]\ar[d] & \map{\t w}{\Kellyn \kappa f} \ar[d]\\
\map{w}{\Kellymapn \kappa f}\ar[r, equal]\ar[ru, dashed] & \map{w}{\Kellymapn \kappa f}.
\end{tikzcd}
\]
Because $w$ is $\kappa$\=/small and $\kappa$ is $\kappa$\=/filtered we have
\[
\pbh{w}{\Kellymapn \kappa f} = \colimit {\lambda<\kappa} \pbh{w}{\Kellymapn \kappa f}.
\]
Also, for a fixed $w$, the map $\ell$ of the square (\ref{eqn:k-lift}) provides a diagonal lift for the square $\pbh w f \pp w \to \Kellymap f$.
By adjunction, it also provides a diagonal lift $\delta(f)$ for the square $\pbh w f \to \pbh w {\Kellymap f}$.
\[
\begin{tikzcd}
\map {\t w}{\s f}	\ar[r]\ar[d]					& \map{\t w}{\Kelly f} \ar[d]\\
\map w f		\ar[r]\ar[ru, dashed, "\delta(f)"]	& \map{w}{\Kellymap f}.
\end{tikzcd}
\]
Finally, we have a diagram
\[
\begin{tikzcd}
\map{\t w}{\Kellyn \kappa f}\ar[r]\ar[d] & \map{\t w}{\Kellyn {\kappa+1} f}\ar[r]\ar[d] &\dots \ar[r]&  \map{\t w}{\Kellyn \kappa f}\ar[r, equal]\ar[d] & \map{\t w}{\Kellyn \kappa f} \ar[d]\\
\map{w}{\Kellymapn \kappa f}\ar[r]\ar[ru, "\delta^\lambda"] & \map{w}{\Kellymapn {\kappa+1} f}\ar[r] \ar[ru, "\delta^{\lambda+1}"]& \dots \ar[r]&  \map{w}{\Kellymapn \kappa f}\ar[r, equal]\ar[ru, dashed, "\delta"] & \map{w}{\Kellymapn \kappa f}
\end{tikzcd}
\]
where the maps $\delta^\lambda := \delta(\Kellymapn \kappa f)$ provide the desired lift $\delta$ at the limit.
\end{proof}

\begin{rem}[Comparison with Kelly]
\label{rem:Kelly}
Because of its focus on reflective subcategories, Kelly only underlines in \cite[Section 10]{Kelly} the adjunction 
\[
\begin{tikzcd}
\s (-\pp w):\Arr \cS \ar[r, shift left] \ar[from=r, shift left]& \cC : \pbh w-
\end{tikzcd}
\]
where $\cC$ is identified with the full subcategory of $\Arr\cC$ spanned by maps $X\to 1$.
But its construction depends in fact on the full adjunction 
\[
\begin{tikzcd}
-\pp w :\Arr \cS \ar[r, shift left] \ar[from=r, shift left]& \Arr \cC : \pbh w-.
\end{tikzcd}
\]
More precisely, the relevant adjunction for a diagram $W\to \Arr \cC$ of arrows is the realization-nerve adjunction
\[
\begin{tikzcd}
&W \ar[rd]\ar[ld]\\
\fun {W\op} {\Arr \cS} \ar[rr, shift left,"\int^w(-\pp w)"] \ar[from=rr, shift left,"w\mapsto \pbh w-"]&& \Arr \cC \ar[loop right, "W_K"]
\end{tikzcd}
\]
built from the diagram $W\to \Arr \cC$ using the enrichment $\pbh--$ of $\Arr \cC$ over $(\Arr \cS, \pp)$.
The comonad $W_K (f)=\int^w \pbh w f \pp w$ is then the (enriched) density comonad of the diagram $W\to \Arr \cC$.
\end{rem}

\begin{rem}[Garner construction]
\label{rem:garner2}
In keeping with \cref{rem:garner1}, we could consider the Garner construction~\cite{Garner:SOA} and replace the colimit $\rho(f)$ of the sequence $f\to \Kellymap f\to \Kellymapn2 f \to \dots$ by the construction of the free monad $\gamma$ on the pointed endofunctor $\pi:1 \to k$.
It can be proven that the two constructions coincide because the colimit of the sequence of $\Kellymapn \kappa f$ is an idempotent functor.
\end{rem}

\begin{cor}
\label{cor:+-fix=R}
Under the hypotheses of \cref{thm:!SOA2}, a map $f$ is in $W^\bot$ if and only if the canonical map $f\to \Kellymap f$ is invertible.
\end{cor}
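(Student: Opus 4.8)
The plan is to prove the two implications separately, using throughout the description of $W^\bot$ as the class of maps $f$ such that $\pbh w f$ is invertible for every $w$ in $W$ (\cref{lem:caracortho}), together with the fact that $\Kellymap f$ is by definition obtained from $f$ by a pushout in $\Arr \cC$ along the map $W_K(f)\to \t_W f$. The canonical map in the statement is precisely the right-hand leg $f\to \Kellymap f$ of that pushout square, whose left component is $u(f):\s f\to \Kelly f$ and whose right component is the identity of $\t f$.

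For the direction $f\in W^\bot \Rightarrow (f\to\Kellymap f)$ invertible, I would first note that $f\in W^\bot$ forces every $\pbh w f$ to be invertible, so by the absorption properties of \cref{lem:abs-iso} (used exactly as in the proof of \cref{lem:caracortho}) the coend $W_K(f)=\int^w \pbh w f \pp w$ is itself an invertible map of $\cC$. Since $\t_W f$ is the identity of the codomain of $W_K(f)$, the morphism $W_K(f)\to \t_W f$ in $\Arr \cC$ has both components invertible (the top one is $W_K(f)$, the bottom one an identity), hence is an isomorphism of $\Arr \cC$. As $f\to\Kellymap f$ is the cobase change of this isomorphism, and isomorphisms are stable under pushout, the canonical map $f\to\Kellymap f$ is invertible.

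For the converse, I would exploit the diagonal lift already produced inside the construction: for each $w$, the map $\ell$ of the $k$-lift supplies, by adjunction, a lift $\delta(f)$ of the square with left vertical $\pbh w f$, right vertical $\pbh w{\Kellymap f}$, and horizontals the images under the functor $\pbh w-$ of the canonical map $f\to\Kellymap f$. If $f\to\Kellymap f$ is invertible, then both horizontals become isomorphisms; composing $\delta(f)$ with the inverse of the top horizontal yields a map $\map w f\to \map{\t w}{\s f}$, and I would check it is a two-sided inverse of $\pbh w f$ (the upper triangle is immediate from the lift, the lower one follows by substituting the commutativity of the square). This gives $\pbh w f$ invertible for all $w$, i.e. $f\in W^\bot$ by \cref{lem:caracortho}.

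The routine verifications — that $W_K(f)\to\t_W f$ is an isomorphism of arrows, that pushouts preserve isomorphisms, and the two triangle identities — are all immediate, so I do not expect a genuine obstacle; the only point that needs care is the bookkeeping in the converse, namely identifying the horizontals of the lifting square as the $\pbh w-$-images of $f\to\Kellymap f$, so that their invertibility is exactly what upgrades the one-sided lift $\delta(f)$ to a genuine inverse. I would also remark that the converse can alternatively be read off from \cref{thm:!SOA2}: if $f\to\Kellymap f$ is invertible then, by naturality of $u$, so is each map $\Kellymapn n f\to\Kellymapn{n+1}f$, whence $f\cong\Kellymapn\kappa f\in\cR_W=W^\bot$; I prefer the lift-based argument only because it avoids invoking convergence of the transfinite iteration.
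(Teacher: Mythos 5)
Your proposal is correct and takes essentially the same route as the paper: the forward direction is the paper's argument (invertibility of each $\pbh w f$ propagates through the coend $W_K(f)$ by \cref{lem:abs-iso}, and cobase change preserves isomorphisms), and your converse simply unpacks the paper's one-line appeal to \cref{lem:caracortho} and the square (\ref{eqn:k-lift}), using the adjoint lift $\delta(f)$ already built in the proof of \cref{thm:!SOA2} to produce an explicit two-sided inverse of $\pbh w f$. No gaps.
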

\begin{proof}
If $f$ is in $W^\bot$, the maps $\pbh w f$ are invertible and so is the map $W_K (f)=\int^w \pbh w f \pp w$. 
Thus $f\simeq \Kellymap f$.
Reciprocally, if $f\simeq \Kellymap f$, then by \cref{lem:caracortho} and \ref{eqn:k-lift}, $f$ is in $W^\bot$.
\end{proof}

\begin{rem}[Cartesian closed enriched version]
\label{rem:enrichedSOA}
The proof of \cref{thm:!SOA2} involves the external pushout-product structure on $\Arr \cC$.
In the case where $\cC$ is cartesian closed, the enriched pushout-product and pullback-homs can also be used  (\ref{sec:enrichedFS}).
The proof is essentially the same and the resulting factorization system is the factorization of the enriched orthogonal system $({^\intperp}(W^\intperp),W^\intperp)$.
A difference is that $W$ can now be a category enriched over $\cC$ (with the appropriate modification of the coend).
Another difference is in the notion of smallness.
The external smallness condition says that an object $X$ is small if, for any
$\kappa$\=/filtered colimit $Y_i$, there is an invertible map of external homs
$\colim \map X {Y_i} \simeq \map X {\colim Y_i}$.
The enriched smallness condition says that there is an invertible map of
internal homs
$\colim \intmap X {Y_i} \simeq \intmap X {\colim Y_i}$.
The two conditions are not related unless the terminal object 1 of $\cC$ is externally $\kappa$\=/small, in which case the enriched condition implies the
external one.
Note that 1 is always internally $\kappa$\=/small for all $\kappa$.
The internal smallness condition can be understood as a condition of smallness relative to the size of 1.
We shall not develop this approach in full detail, but we shall make remarks in the examples about this variant of the construction.

\end{rem}

\begin{rem}[Locally cartesian closed internal version]
\label{rem:LCCinternal+}
More generally, when $\cC$ is locally cartesian closed with a terminal object (\eg a topos, an $n$\=/topos, a modal topos), 
\cref{thm:!SOA2} admits yet another internal version where 
$W$ is allowed to be not only a category enriched over $\cC$, but an internal category $\WW$ in $\cC$ (\eg a stack).
We shall not develop here the theory of internal categories in a locally cartesian closed \oo category.
We shall only make some remarks without proofs about this other variant.
\Cref{table:SOA-variants} summarizes the possible variants for the \cref{thm:!SOA2}.

\end{rem}

\begin{table}[htbp]
\begin{center}	
\caption{Variants for Kelly's SOA}
\label{table:SOA-variants}
\medskip
\renewcommand{\arraystretch}{2}
\begin{tabularx}{.9\textwidth}{
|>{\hsize=1.1\hsize\linewidth=\hsize\centering\arraybackslash}X
|>{\hsize=.9\hsize\linewidth=\hsize\centering\arraybackslash}X
|}
\hline
$\cC$ is any category
    & $W \to \Arr \cC$ any diagram
\\
\hline
$\cC$ is a cartesian closed category
    & $W$ can be enriched over $\cC$
\\
\hline
$\cC$ is a locally cartesian closed category with 1
    & $W$ can be an internal category in $\cC$
\\
\hline
$\cC$ is a topos
    & $W$ can be a stack over $\cC$
\\
\hline
\end{tabularx}
\end{center}
\end{table}

\begin{rem}[Analogy with linear algebra]
\label{rem:analogy-linear-alg}
Let $E$ be a vector space equipped with a positive definite scalar product $\pbh--$.
Let $u$ be a vector in $E$, and $u^\bot$ the subspace of $E$ of vectors orthogonal to $u$.
The projection $P:E\to u^\bot$ is given by
$$
P(x) = x-{\pbh u x \over \pbh u u}u.
$$
Let $K$ be the operator $K(x) = x-\pbh u x u$, then
$$
K^n(x) = x - u{\pbh u x \over \pbh u u} \big(1 - (1-\pbh u u)^n\big)
$$
which converges to $P(x)$ when $\pbh u u \leq 1$.
The analogy of $K$ with Kelly's construction should be clear enough (a pushout becomes a subtraction).
Somehow, the role of 1 in the convergence is played by the inaccessible cardinal bounding the size of small objects.
\end{rem}

\subsection{Comparison with the plus-construction}
\label{sec:+construction}

\subsubsection{The plus-construction}

The purpose of this section is to connect the $k$\=/construction of \cref{thm:!SOA2} to the plus-construction involved in sheafification~\cite{SGA41,MLM,Johnstone:topos}.
Recall the enrichment of $\Arr \cC$ over the cartesian closed category $(\Arr \cS,\times)$ (see \ref{sec:cartesian-structure-on-arrows})
\[
\bracemap w f \quad:\quad \map w f \stto \map {\t w}{\t f} .
\]
\begin{defi}[The plus-construction]
\label{defi:+construction}
Let $f$ be a map in $\cC$ and $W\to \Arr \cC$ a diagram of maps.
We define the {\em plus-construction} of the map $f$ as the map
\begin{align*}
\Plusmap f &:=\int^w \bracemap w f \times \t w 
\\
&= \int^w \map w f \times \t w \stto \int^w \map {\t w}{\t f} \times \t w
\\
&= \colimit {W\commaindex f} \t w \stto \colimit {W \commaindex {\t f}} \t w
\end{align*}
where $W \commaindex {\t f}$ is the comma category of the target functor $\t:W \to \cC$ over the object $\t f$.
We shall denote by $\Plus f$ the domain of $\Plusmap f$.
The second iteration of the plus-construction shall be denoted by $\Plusplusmap f$ and $\Plusplus f$.
\end{defi}

Let $\cC = \P C$, for a small category $C$, and let $W\subset \Arr \cC$ be the full subcategory of covering sieves $R\to X$ of some Grothendieck topology on $C$. 
For $X$ in $C$, we denote by $W(X)$ be the category of covering sieves of $X$
(\ie the fiber of the target functor $\t:W\to C$ at $X$).
Recall that for a presheaf $F$, the presheaf $\Plusoriginal F$ is defined by 
\begin{align*}
C\op &\tto \cS \\
X &\mto \Plusoriginal F(X) = \colim_{R\to X\in W(X)\op}\map R F.
\end{align*}

\begin{lemma}
\label{lem:+=+}
The presheaf $\Plusoriginal F$ is isomorphic to the domain $\Plus {F\to 1}$ of the plus-construction $\Plusmap {(F\to 1)}$.
\end{lemma}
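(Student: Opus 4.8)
The plan is to unwind both sides into colimits over categories of elements and then exhibit a homotopy equivalence between the two indexing categories, the only substantial input being that covering sieves are stable under pullback.

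First I would reduce the coend. For $w=(R\to X)$ a covering sieve and $f=(F\to 1)$, the pullback defining the arrow-hom collapses: since $\map R 1$ and $\map X 1$ are contractible, $\map w{(F\to 1)}=\map R F\times_{\map R 1}\map X 1\simeq\map R F$, and $\t w=X$ is representable. Hence $\Plus{(F\to 1)}=\int^{w\in W}\map w{(F\to1)}\times\t w\simeq\int^{w\in W}\map R F\times X$, a weighted colimit of representables. Evaluation at an object $Y\in C$ preserves this colimit (colimits of presheaves are computed pointwise) and $(\t w)(Y)=\map Y X$, so
\[
\Plus{(F\to1)}(Y)\ \simeq\ \int^{w\in W}\map R F\times\map Y X\ \simeq\ \colimit{\cK}1,
\]
where $\cK$ is the category of elements of $(w,\alpha)\mapsto\map Y{\t w}$ over $W\comma f$: its objects are triples $(R\to X,\ \alpha\colon R\to F,\ g\colon Y\to X)$ with $R\to X$ a covering sieve. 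On the classical side, $\Plusoriginal F(Y)=\colimit{W(Y)\op}\map{R'}F$, and rewriting this colimit of spaces as a colimit of points over its category of elements gives $\Plusoriginal F(Y)\simeq\colimit{\cK'}1$, where $\cK'$ has objects $(R'\to Y,\ \psi\colon R'\to F)$ with $R'\to Y$ a covering sieve of $Y$.

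The heart of the argument is a comparison functor. I would send $(R\to X,\alpha,g)\in\cK$ to $(g^\ast R\to Y,\ \alpha\circ\mathrm{pr})$, where $g^\ast R$ is the pullback of the sieve $R$ along $g\colon Y\to X$ and $\mathrm{pr}\colon g^\ast R\to R$ is the projection. The only place the axioms of a Grothendieck topology enter is here: $g^\ast R\to Y$ is again a covering sieve because covering sieves are stable under base change. Because the induced maps of pulled-back sieves run opposite to the morphisms of $\cK'$, this is most naturally a functor $\cK\to(\cK')\op$; as classifying spaces are insensitive to $(-)\op$, this is harmless. In the other direction I would use the section $(R'\to Y,\psi)\mapsto(R'\to Y,\psi,\mathrm{id}_Y)$ (take $X=Y$, $g=\mathrm{id}_Y$). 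One composite is the identity (since $\mathrm{id}_Y^\ast R'=R'$), while the pullback projection squares (base component $g$, sieve component $\mathrm{pr}$) assemble into a natural transformation relating the other composite to the identity. Since a natural transformation induces a homotopy on nerves, the two functors are mutually inverse homotopy equivalences, whence $\colimit{\cK}1\simeq\colimit{\cK'}1$ and therefore $\Plus{(F\to1)}(Y)\simeq\Plusoriginal F(Y)$, naturally in $Y$, giving the claimed isomorphism of presheaves.

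The main obstacle I anticipate is bookkeeping rather than mathematics: verifying that the pullback assignment is functorial on morphisms of $\cK$, that the projection squares form an honest natural transformation, and that the whole comparison is natural in the contravariant variable $Y$ (pulling covering sieves back along maps of $C$). By contrast, the mathematical content is concentrated in the single topology axiom that covering sieves are stable under base change.
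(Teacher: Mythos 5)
Your proposal is correct and follows essentially the same route as the paper: both sides are identified with the classifying spaces (external groupoids) of the same two indexing categories (the paper's $X\comma W\comma F$ and $W(X,F)$ are your $\cK$ and $\cK'$), and the comparison uses the same pair of functors --- the identity-section and base change of sieves --- with stability of covering sieves under pullback as the single topology axiom invoked. The only cosmetic difference is that the paper packages these two functors as an adjunction $G\dashv D$ inducing the equivalence of external groupoids, whereas you exhibit the composite-to-identity natural transformation directly, which amounts to the same data.
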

\begin{proof}
Let $H$ be the codomain of $\Plusmap {(F\to 1)}$.
This is the presheaf defined by 
\[
H(X) = \map X {\colimit {W\commaindex F} \t w} = \colimit {W\commaindex F} \map X {\t w}.
\]
We need to construct an isomorphism $\Plusoriginal F(X) = H(X)$, natural in $X$.
We consider the category $W(X)\comma F$ which is the category of elements of the functor 
\begin{align*}
W\commaindex F &\tto \cS   \\
w &\mto \map X {\t w}.
\end{align*}
Recall that the external groupoid of a category $C$ is the colimit $|C|:=\colim_C1$ in $\cS$.
Then, we have that $\colimit {W\commaindex F} \map X {\t w} = |X\comma W\comma F|$.

Let $W(X,F)$ be the category of elements of the functor 
\begin{align*}
W(X)\op &\tto \cS   \\
R\to X &\mto \map R F.
\end{align*}
Its objects are diagrams
\[
\begin{tikzcd}[sep=small]
            & R \ar[r]\ar[d,"w"']  & F\ar[d] \\
X \ar[r]    & Y \ar[r]        & 1
\end{tikzcd}
\]
where $w:R\to Y$ is a covering sieve and whose morphisms are the natural transformations which induce the identity on $X$ and $F$.
We have that $\Plusoriginal F(X) = |W(X,F)|$.

There exists an adjunction $G:W(X)\comma F \rightleftarrows W(X,F):D$ such that
\[
G\left(
{\begin{tikzcd}[sep=small]
R \ar[r]\ar[d]  & F\ar[d] \\
X \ar[r]            & 1
\end{tikzcd}}
\right)
\quad = \quad
{\begin{tikzcd}[sep=small]
                & R \ar[r]\ar[d]  & F\ar[d] \\
X \ar[r,equal]  & X \ar[r]           & 1
\end{tikzcd}}
\]
\[
D\left(
{\begin{tikzcd}[sep=small]
            & R \ar[r]\ar[d]  & F\ar[d] \\
X \ar[r]    & Y \ar[r]           & 1
\end{tikzcd}}
\right)
\quad = \quad
{\begin{tikzcd}[sep=small]
R\times_YX \ar[r]\ar[d]  & F\ar[d] \\
X \ar[r]           & 1
\end{tikzcd}}
\]
This adjunction induces an equivalence of external groupoids $|W(X)\comma F| \simeq |W(X,F)|$ which is natural in $X$ and $F$.
This finishes to show that $\Plusoriginal F(X) = H(X)$.
\end{proof}

\begin{rem}
\label{rem:+=+}
The proof relies implicitly on the stability by base change of the set of covering sieves (to construct the right adjoint $D$, and for its naturality in $X$)
and on the fact that $W$ contains the identity of generators, that is, representable functors (to have the natural map $F\to \Plusoriginal F$).
The facts that the covering sieves are monomorphisms and local is irrelevant.
The lemma would hold with any small full subcategory $W\to \Arr {\P C}$ such that
the codomain functor $\t:W\to \Arr {\P C} \to \P C$ has values in $C$,
$W$ contains all the identity maps of $C$,
and $W$ is stable by base change along maps of $C$ (\ie $\t:W\to C$ is a fibration in categories).
This motivates the subsequent definitions of a pre-modulator (\cref{defi:pre-modulator}) and a modulator (\cref{defi:modulator}).
\end{rem}

\begin{rem}
\label{rem:values+construction-in-presheaves}
Given a diagram $W\to \Arr {\P C}$ whose codomains are in $C$ and a map $f$ in
$\P C$, let $C\comma \Plus f$ be the category of elements of the presheaf $\Plus f=\s (\Plusmap f)$.
The associated fibration in groupoids $C\comma {\Plus f}\to C$ is the right part of the (cofinal,fibration) factorization of the functor $\t:W\comma f\to C$ sending a map $w\to f$ to $\t w$.
\[
\begin{tikzcd}
&C\comma \Plus f\ar[rd, "\text{fibration}"]\\
W\comma f \ar[rr,"\t"] \ar[ru,"\text{cofinal}"] && C
\end{tikzcd}
\]

The value of the presheaf $\Plus f$ at some $X$ in $C$ is the external groupoid of the category $W(X,f)$ whose objects are diagrams
\[
\begin{tikzcd}[sep=small]
            & \s w \ar[r]\ar[d,"w"'] & \s f\ar[d,"f"] \\
X \ar[r]    & \t w \ar[r]            & \t f.
\end{tikzcd}
\]
When $W$ is stable by base change, the proof of \cref{lem:+=+} shows that
\[
\Plus f(X) = \colim_{W(X)\op}\map w f .
\]
\end{rem}

\begin{rem}
\label{rem:arbitrary+}
The core of the plus-construction is really the domain of $\Plusmap f$, that is, the object $\Plus f$.
Because we are working with maps and not only objects, we made the plus-construction into a map.
But there is something arbitrary in the choice of its codomain.
We have chosen the codomain of map $\Plusmap f$ to be 
$\colimit {W \commaindex {\t f}} \t w$ but it would have also be meaningful to
chose simply $\t f$ (in our applications the two choices coincide).
Our choice is motivated by the remark that the whole map $\Plusmap f$ can be expressed as $\int^w \bracemap w f \times w$ 
but we do not have a real conceptual reason to justify this choice.
The conceptual relationship between Kelly's construction and the plus-construction is not fully clear to us.
It could be rooted in an articulation between the cartesian and pushout-product monoidal structures on $\Arr \cC$.
\end{rem}

\medskip
We need a few more notations.
The map $\Minusmap f:=\int^{w\in W} \pbh w f \pp w$ is the cocartesian gap map of the square
\[
\begin{tikzcd}
\int^w \map{\t w}{\s f}\times \s w  
\ar[dd]
\ar[rrr]
\ar[rrrdd, "c(f)=\int^w \pbh w f \times w" description]
&&& \int^w \map w f \times \s w \ar[dd, "b(f)=\int^w \map w f \times w"]
\\
\\
\int^w \map{\t w}{\s f}\times \t w
\ar[rrr,"a(f)=\int^w \pbh w f \times \t w"'] 
&&& \int^w \map w f \times \t w
\end{tikzcd}
\]
This square can also be written in terms of colimits 
\[
\begin{tikzcd}
\colimit{W \commaindex {\s f}} \s w
\ar[dd]
\ar[rr]
\ar[rrdd, "c(f)" description]
&& \colimit{W\commaindex f} \s w
\ar[dd, "b(f)"]
\\
\\
\colimit{W \commaindex {\s f}} \t w
\ar[rr,"a(f)"] 
&& \colimit{W\commaindex f}\t w
\end{tikzcd}
\]
where, for $X$ an object of $\cC$, the category $W \comma X$ is the comma category over the identity map of $X$.

\medskip
The following diagram is going to be central in the next result.
\[
\begin{tikzcd}
\colimit{W \commaindex {\s f}} \s w 
\ar[rd, "\alpha"]
\ar[rr, "\beta"]
\ar[dddrrr, "c(f)=\int^w \pbh w f \times w" description, near start, bend right=60] 
&&\colimit{W \commaindex f} \s w 
\ar[dddr, "b(f)=\int^w \map w f \times w" description, bend right=50]
\ar[rd, "\alpha'"]
\ar[rrd, "\gamma", bend left=20]
\\
&\colimit{W \commaindex {\s f}}\t w 
\ar[ddrr, "a(f)=\int^w \pbh w f \times \t w" description, bend right=40] 
\ar[rr, "\beta'", near start, crossing over] 
&& \colimit{W \commaindex {\s f}} \t w 
   \underset{\colimit{W \commaindex {\s f}} \s w}{\coprod}
   \colimit{W \commaindex f} \s w
\ar[dd,"\Minusmap f=\int^w \pbh w f \pp w" description]
\ar[r, "\delta"']
\ar[from=ulll, "\lrcorner" description, very near end, phantom, shift left=2]
& \s f 
\ar[dd,"u(f)"  description]
\ar[dddd,"f",bend left=50]
\\
\\
&&& \Plus f=\colimit{W \commaindex f}\t w 
\ar[dd, "\Plusmap f = \int^w \bracemap w f \times \t w" description]
\ar[r, "\varepsilon"]
& \Kelly f
\ar[dd,"\Kellymap f" description]
\ar[from=uul, "\lrcorner" description, near end, phantom]
\\
{}
\\
&&&\colimit{W \commaindex {\t f}} \t w 
\ar[r, "\zeta"]
&\t f 
\end{tikzcd}
\]

\subsubsection{A coincidence condition for $+$ and $k$}

We are going give conditions for the construction $\Kellymap f$ of \cref{thm:!SOA2} to coincide with $\Plusmap f$.
We shall do it by proving that, under sufficient conditions, all the Greek-lettered maps of the diagram are invertible.
So far the conditions on $\cC$ has only been that it is a cocomplete category and the assumptions of size have been on $W$.
The proof of the coincidence of the $k$\=/ and plus-constructions is going to need the stronger assumption that $\cC$ is presentable. 
Since any object is small in a presentable category, this will remove the
smallness assumption on the domain and codomain of diagrams $W\to \Arr \cC$ when applying \cref{thm:!SOA2}. 
However, $W$ will still be a small indexing category.

\begin{defi}[Pre-modulator]
\label{defi:pre-modulator}
Let $\cC$ be a presentable category with a fixed generating small subcategory $C\subset \cC$.
We shall say that a diagram $W\to \Arr \cC$ is a {\em pre-modulator}
if it satisfies the following conditions:
\begin{enumerate}[label=(\roman*), leftmargin=*]
\item \label{enum:modulator:1} $W$ is a small diagram,
\item \label{enum:modulator:2} $W\to \Arr \cC$ is fully faithful,
\item \label{enum:modulator:3} the codomains of the maps in $W$ are all in $C$,
\item \label{enum:modulator:4} the inclusion $C\subto \Arr \cC$ sending a
  generator to its identity map factors through $W\subto \Arr \cC$.
\end{enumerate}
We shall sometimes leave implicit the choice of the generating category $C$ when considering pre-modulators.

\end{defi}

\begin{rem}
\label{rem:def-admissibility}
We consider the comma category $\cC \comma C$. 
The target functor $\t:\cC\comma C\to C$ admits a fully faithful right adjoint $C\subto \cC\comma C$ sending a generator to its identity map.
The adjunction $\t:\cC\comma C\rightleftarrows C:id$ is then a reflective localization.
A pre-modulator can equivalently be defined as a small full subcategory $W\subset \cC\comma C$ such that the previous localization restricts to a localization $\t:W\rightleftarrows C:id$.
\end{rem}

\begin{thm}[The plus-construction]
\label{thm:+=k}
\label{thm:+construction}
Let $\cC$ be a presentable category.
If $W\to \Arr \cC$ is a pre-modulator, we have the simplifications 
\[
u(f) = a(f) = b(f) = c(f) = \Minusmap f
\eqnand
\Kellymap f = \Plusmap f.
\]
In particular, the factorization of $f$ generated by such a $W\to \Arr \cC$ can be produced by a transfinite iteration of the plus-construction.
Moreover, a map $f$ is in $\cR$ if and only if $f=\Plusmap f$.
\end{thm}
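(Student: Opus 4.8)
The plan is to follow the strategy announced before the statement: show that \emph{every} Greek-lettered map $\alpha,\beta,\gamma,\delta,\varepsilon,\zeta$ (and the induced $\alpha',\beta'$) in the central diagram is invertible, and then read off all the asserted equalities by a diagram chase. First I would record the reductions that organize this. Since the top square is a pushout, invertibility of $\alpha$ makes its cobase change $\alpha':\colimit{W\comma f}\s w\to P$ invertible, so $P$ is identified with $\colimit{W\comma f}\s w$ and, under this identification, $\delta$ becomes $\gamma$; thus $\delta$ is invertible as soon as $\gamma$ is. Invertibility of $\delta$ then forces, via the lower pushout, that $\varepsilon$ is invertible (a cobase change of an isomorphism) and that $u(f)$ is the cobase change of $\Minusmap f$ along an isomorphism, whence $u(f)=\Minusmap f$; together with $\alpha,\beta$ invertible the top pushout degenerates and yields $a(f)=b(f)=c(f)=\Minusmap f$. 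Finally, from the commutation $\Kellymap f\circ\varepsilon=\zeta\circ\Plusmap f$ and invertibility of $\varepsilon$ and $\zeta$ one gets $\Kellymap f=\Plusmap f$. So it suffices to prove that $\alpha$, $\zeta$ and $\gamma$ are invertible (invertibility of $\beta$ will follow from that of $\alpha$ and $\gamma$).

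The invertibility of $\alpha$ and $\zeta$ I would obtain from a single finality principle. By \cref{rem:def-admissibility} the target functor $\t:W\to C$ is a reflective localization with fully faithful right adjoint $id:C\to W$ sending a generator to its identity. For any object $Y$ of $\cC$ this restricts to the comma categories over $Y$: the functor $\t_*:W\comma Y\to C\comma Y$, $(w,\t w\to Y)\mapsto(\t w,\t w\to Y)$ (well defined since $\t w\in C$ by the pre-modulator hypothesis), is left adjoint to the inclusion $i:C\comma Y\to W\comma Y$, $(X\to Y)\mapsto(\Id_X\to Y)$, the unit being induced by the reflection unit $w\to\Id_{\t w}$. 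A right adjoint is final, so $i$ is final. Since $\cC$ is presentable and $C$ generates, $Y=\colimit{C\comma Y}X$, and since $\s(\Id_X)=\t(\Id_X)=X$, finality of $i$ gives $\colimit{W\comma\t f}\t w\simeq\colimit{C\comma\t f}X\simeq\t f$ (so $\zeta$ is invertible) and $\colimit{W\comma\s f}\s w\simeq\colimit{W\comma\s f}\t w\simeq\s f$ with $\alpha$ the identity of $\s f$ (so $\alpha$ is invertible).

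The invertibility of $\gamma:\colimit{W\comma f}\s w\to\s f$ is the main obstacle, and the one point where the comma is taken over a map rather than an object. Here the naive inclusion $C\comma\s f\to W\comma f$ is \emph{not} final: a square $w\to f$ admitting no diagonal filler gives an empty comma category, so one cannot simply restrict to identity-generators. Instead I would resolve the sources. Using density, $\s w=\colimit{C\comma\s w}X$, and interchanging colimits presents $\colimit{W\comma f}\s w$ as $\colimit{\cD}X$ over the Grothendieck construction $\cD=\int_{(w\to f)\in W\comma f}(C\comma\s w)$, whose objects are a square $(w,\sigma,\tau):w\to f$ together with a generator $e:X\to\s w$. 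The functor $\Psi:\cD\to C\comma\s f$, $(w,\sigma,\tau,e)\mapsto(X\xrightarrow{\sigma e}\s f)$, admits a fully faithful left adjoint $\Phi:(X\xrightarrow{\rho}\s f)\mapsto(\Id_X\to f,\Id_X)$ (here one uses $\Id_X\in W$, so $\Id_X\to f$ is the square determined by $\rho$), with counit $\Phi\Psi\Rightarrow\Id_\cD$ given on $(w,\sigma,\tau,e)$ by the map $\Id_X\to w$ whose source-leg is $e$; hence $\Psi$ is a right adjoint, therefore final. Consequently $\colimit{\cD}X\simeq\colimit{C\comma\s f}X\simeq\s f$, so $\gamma$ is invertible. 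This is exactly the mechanism seen concretely in \cref{lem:+=+}: each source $\s w$ is glued onto $\s f$ along the top leg $\sigma$ of its square, and the identity-generators already exhaust $\s f$.

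With $\alpha$, $\zeta$, $\gamma$ invertible, $\beta$ is invertible because $\gamma\circ\beta$ is the canonical (invertible) comparison $\colimit{W\comma\s f}\s w\to\s f$, and the diagram chase of the first paragraph delivers $u(f)=a(f)=b(f)=c(f)=\Minusmap f$ together with $\Kellymap f=\Plusmap f$. The ``in particular'' statement is then immediate from \cref{thm:!SOA2}, since iterating the $k$\=/construction now amounts to iterating the plus-construction. For the last sentence, $\cR=\cR_W=W^\bot$, and by \cref{cor:+-fix=R} a map $f$ lies in $W^\bot$ if and only if the canonical map $f\to\Kellymap f$ is invertible; as $\Kellymap f=\Plusmap f$, this holds if and only if $f\to\Plusmap f$ is invertible.
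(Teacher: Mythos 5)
Your proof is correct and follows essentially the same route as the paper's: invertibility of $\alpha$ and $\zeta$ via the cofinal (right-adjoint) inclusion $C\comma F\subto W\comma F$ coming from \cref{rem:def-admissibility}, invertibility of $\gamma$ via the cofinality of the projection $C\comma W\comma f\to C\comma \s f$ (your $\Psi$, with exactly the paper's left adjoint as your $\Phi$), followed by the same pushout chase and the same endgame via \cref{thm:!SOA2} and \cref{cor:+-fix=R}. Your Grothendieck-construction packaging of the colimit interchange and the slightly reordered chase (deducing $\alpha'$ directly from $\alpha$ rather than first getting $\beta$ from $\gamma\beta=1_{\s f}$, as the paper does) are only cosmetic differences.
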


\begin{proof}
We first prove that $\alpha$ is the identity map of the object $F:=\s f$.
Let $C$ be a generating small category $C\subset \cC$ relative to which $W$ is a pre-modulator.
By hypothesis \ref{enum:modulator:2} and \ref{enum:modulator:4} we get a fully faithful functor
\begin{align*}
C \comma F &\tto W \commaindex F\\
c\to F & \mto (1_c, c\to F).
\end{align*}
Then, using hypothesis \ref{enum:modulator:3} and \cref{rem:def-admissibility}, this functor has a left adjoint sending $(w,\t w\to F)$ to $\t w\to F$.
Recall that a right adjoint functor is always cofinal.
We deduce that 
\[
\colimit {W \commaindex {\s f}} w \ =\ \colimit {C\commaindex {\s f}} 1_c \ =\ 1_{\s f}.
\]
The same reasoning with $F=\t f$ gives
\[
\colimit {W \commaindex {\t f}} \t w \ =\ \t f.
\]
This proves that $\zeta$ is invertible.

Next, we prove that $\gamma$ is also the identity map of the object $\s f$.
We consider the category $C \comma W\comma f$ whose objects are triplets 
$(c,w,1_c\to w\to f)$ where $c$ is in $C$, $w$ in $W$, $1_c\to w$ in $W$ and
$w\to f$ in $\Arr \cC$, and whose morphisms $(c,w,1_c\to w\to f) \to
(c',w',1_{c'}\to w'\to f)$ are pairs $(c\to c',w\to w')$ such that the obvious diagram commutes.
The functor $h:C \comma W\comma f \stto C \comma {\s f}$ sending $1_c\to w \to f$ to $c\to \s f$
has a left adjoint given sending $c\to \s f$ to $1_c \xto{1_{1_c}} 1_c\to f$ (hypothesis \ref{enum:modulator:2} and \ref{enum:modulator:4} are used implicitly to show that the counit of this adjunction is in $W$).
In particular, $h$ is cofinal.
The result follows from the following identifications:
\begin{align*}
\colimit {W\commaindex f}\ \s w &= \colimit {W\commaindex f}\colimit {C\commaindex {\s w}} c
\\
&= \colimit {C\commaindex W\commaindex f} c
\\
&= \colimit {C\commaindex {\s f}} c & \text{by cofinality}
\\
&= \s f.
\end{align*}
We have always $\gamma\beta=1_{\s f}$.
Since $\gamma$ is invertible, then so is $\beta$. 
Then, by pushout, so are $\alpha'$ and $\beta'$.
The map $\delta$ is invertible because $\gamma$ and $\alpha'$ are.
Then so is $\varepsilon$ by pushout.
This finishes the proof that all the Greeek-lettered maps are invertible as well as that of all the identities of the theorem.

Finally, the last assertion is a consequence of \cref{cor:+-fix=R}.
\end{proof}

\begin{rem}
The virtue of the coincidence of the $+$ and the $k$\=/constructions is that the colimit formula defining the former is more suited to check exactness conditions such as stability by base change and left-exactness (see \cref{thm:+modality} and \cref{thm:lex+construction}).
\end{rem}

\begin{rem}
\label{rem:weakening-admissibility}
A careful reading of the proof of \cref{thm:+construction} shows that it depends on less than the hypothesis that $W\to \Arr \cC$ be fully faithful.
The proof that $C \comma F \to W \commaindex F$ is cofinal needs only that, for any $w$ in $W$, the canonical map $w\to 1_{\t w}$ is in $W$.
And the proof that $h:C \comma W\comma f \to C \comma {\s f}$ is cofinal needs only that $\relmap W {1_c} w = \relmap \cC c {\s w}$.
\end{rem}

\begin{rem}[Cartesian closed enriched version]
\label{rem:enriched+}
As with \cref{thm:!SOA2} in \cref{rem:enrichedSOA}, \cref{thm:+construction} admits an enriched version when $\cC$ is cartesian closed.
A first difference is in the fact that $\Arr \cC$ and $W$ are now categories enriched over $\cC$.
The second difference is in the definition of admissibility.
From the enriched point of view, the terminal object 1 of $\cC$ is always a generator (even when $\cC$ is not presentable).
Thus, hypothesis \ref{enum:modulator:3} and \ref{enum:modulator:4} can be replaced by asking that $W$ is a diagram of objects of $\cC$ containing the terminal object.
We shall say that a fully faithful small diagram $W\subto \cC$ is {\em an enriched pre-modulator} if it contains the terminal object and is made of internally small objects.
From there, the plus-construction is defined using the enrichment $\intbracemap--$ and everything proceeds similarly.
\end{rem}

\begin{rem}[Locally cartesian closed internal version]
\label{rem:internal+}

More generally, as in \cref{rem:LCCinternal+}, when $\cC$ is a presentable locally cartesian closed category, \cref{thm:+construction} admits an internal version where 
$W$ is allowed to be not only a category enriched over $\cC$, but an internal category in $\cC$ (or even a stack, if $\cC$ is a topos).
Recall that the {\em universe} of $\cC$ is the ``internal'' category $\UU$ corresponding to the codomain fibration $\t:\Arr \cC \to \cC$.%
\footnote{For size reasons, the universe cannot be, strictly speaking, an internal category in $\cC$, hence the quotation marks.}
We define an {\em internal pre-modulator} $\WW$ as a small internal category together with a fully faithful diagram $\WW\subto \UU$ to the universe of $\cC$ containing the terminal object (\ie factoring the canonical map $1\to \UU$) and made of internally small objects.
When expressed in terms of fibrations of categories over some generators of
$\cC$, an internal pre-modulator is an external modulator with the extra property of being stable by base change. 
We shall see in \cref{defi:modulator} that this is what defines our notion of modulator.
The factorization system build from such a pre-modulator is always stable by base change, that is, a modality.
\end{rem}

\begin{prop}[Pre-modulator envelope]
\label{prop:modulator-completion}
Any diagram $W\to \Arr \cC$ can be completed into a pre-modulator generating the same factorization. 
\end{prop}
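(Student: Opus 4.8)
The plan is to leave the maps of $W$ essentially untouched and merely enlarge the ambient generating subcategory and $W$ itself by invertible maps, exploiting the fact (recorded right after \cref{defi:pre-modulator}, and used repeatedly in the paper) that the generated orthogonal system depends only on the essential image of $W\to\Arr\cC$. Throughout I take $W$ to be small, so that its codomains form a small set of objects of $\cC$.

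Since $\cC$ is presentable, each of its objects is $\kappa$\=/compact for $\kappa$ large enough; choose a regular cardinal $\kappa$ such that $\cC$ is $\kappa$\=/presentable and every codomain $\t w$ of a map $w$ in $W$ is $\kappa$\=/compact (possible as $W$ is small). Then $C:=\cC^\kappa$ is a small dense generating subcategory of $\cC$ containing all the $\t w$, and relative to this $C$ condition \ref{enum:modulator:3} of \cref{defi:pre-modulator} already holds for the maps of $W$, now regarded as objects of the comma category $\cC\comma C$. I would then adjoin to $W$ the identity maps $1_c$ of all generators $c\in C$ and pass to the replete full subcategory $W^{+}\subset \cC\comma C$ spanned by the essential image of $W$ together with these identities. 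By construction $W^{+}$ is small (a small set of objects in a locally small category, so \ref{enum:modulator:1} holds), its inclusion into $\Arr\cC$ is fully faithful as $\cC\comma C\subto\Arr\cC$ is fully faithful (\ref{enum:modulator:2}), all its codomains lie in $C$ (\ref{enum:modulator:3}), and the assignment $c\mapsto 1_c$ exhibits $C\subto\Arr\cC$ as factoring through $W^{+}$ (\ref{enum:modulator:4}); equivalently, in the language of \cref{rem:def-admissibility}, the reflective localization $\t:\cC\comma C\rightleftarrows C$ restricts to $W^{+}$. Hence $W^{+}$ is a pre-modulator.

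It remains to check that $W^{+}$ generates the same factorization as $W$. Passing to the essential image does not alter $(\cL,\cR)$, so it suffices to verify that adjoining the identities $1_c$ is harmless. But each $1_c$ is invertible, so by \cref{lem:abs-iso} the pullback\=/hom $\pbh{1_c}{f}$ is invertible for every $f$; thus $1_c\perp f$ holds automatically and the $1_c$ impose no new orthogonality condition, giving $(W^{+})^\perp=W^\perp$ and therefore ${}^\perp\big((W^{+})^\perp\big)={}^\perp(W^\perp)$. The generated orthogonal system is unchanged, and since every object is small in a presentable category, \cref{thm:!SOA2} guarantees it is a factorization system. The only step requiring genuine care is the first one — producing a single small dense generating subcategory that simultaneously contains all codomains of $W$ — which is precisely where presentability is used; everything after that is a formal consequence of the absorption property of invertible maps and of the dependence of $(\cL,\cR)$ on the essential image alone.
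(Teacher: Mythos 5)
Your proof is correct and takes essentially the same route as the paper's: pick a small generating subcategory $C\subset\cC$ containing all codomains of $W$, adjoin the identity maps of $C$ to (the essential image of) $W$, and note that identities impose no orthogonality conditions --- the paper phrases this as $(W')^\perp = W^\perp\cap C^\perp$ with $C^\perp=\Arr\cC$, which is exactly your absorption-lemma step via \cref{lem:abs-iso}. Your explicit choice of $C=\cC^\kappa$ via $\kappa$\=/compactness is just a spelled-out version of the paper's one-line appeal to the smallness of $W$.
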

\begin{proof}
Let $W\to \Arr \cC$ be a small diagram.
Because $W$ is small, we can find a small category $C$ of generators of $\cC$ containing the codomains of all maps in $W$.
We consider $W'$ the full subcategory generated by the image of $W$ and the identity maps of $C$.
Then $W'$ is a pre-modulator.
We need to see that $W^\perp = (W')^\perp$.
The definition of $(W')^\perp$ depends only on the objects of $W'$ and not the morphisms between them.
The objects of $W'$ are those of $W$ and the identity maps of $C$.
We deduce that $(W')^\perp = W^\perp \cap C^\perp$.
Since $C^\perp$ is the whole of $\Arr \cC$, this proves the equality.
\end{proof}

\begin{rem}
The $k$\=/constructions associated to $W$ and $W'$ are a priori different, but they converge to the same object.
\end{rem}

\subsubsection{An idempotency condition for $+$}

We finish with a sufficient condition under which the plus-construction is
idempotent, and consequently gives, directly, the expected factorization.

\begin{defi}
\label{defi:tower}
\begin{enumerate}

\item For $\kappa$ a regular cardinal, we shall say that a pre-modulator $W\to \Arr \cC$ is {\em $\kappa$\=/closed}, if, 
it has values in $\kappa$\=/small objects of $\Arr \cC$ and if it is closed under $\kappa$\=/small colimits in $\Arr \cC$.
In particular, for any map $f$, the slice category $W\comma f$ is $\kappa$\=/filtered.

\item We shall say that a pre-modulator $W\to \Arr \cC$ is {\em stable by towers}, if, for any two maps $v$ and $w$ in $W$ and any map $\s v\to \t w$ in $\cC$, the map $u$ of the following diagram is in $W$:
\[
\begin{tikzcd}
\s \zeta \ar[d,dashed, "w'"]\ar[dd, bend right=60, "u"'] \ar[r,dashed] \pbmark &\s w \ar[d,"w"]\\
\s v \ar[d, "v"] \ar[r]  &\t w \\
\t v.
\end{tikzcd}
\]
\end{enumerate}

\end{defi}

\begin{rem}
\label{rem:tower}
This definition of stability by towers is almost a stability by composition, 
but it does not assume that the map $w':\s \zeta \to \s v$ is in $W$.
This would imply that all the domains $\s v$ are in $C$ and that $W\subset \Arr \cC$ is actually a subcategory of $\Arr C$ (which is too strong in practice).
When it is the case that $W\subset \Arr C$, the plus-construction can be proven to be idempotent under the assumption that $W$ is stable by composition (see \cref{ex:SOA:W-composition}).
\end{rem}

\begin{prop}[Idempotent plus-construction]
\label{prop:idempotent+}
If $W$ is a pre-modulator which is $\kappa$\=/closed and stable by towers, then the plus-construction is idempotent.
\end{prop}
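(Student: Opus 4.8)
The plan is to unwind ``idempotent'' into an orthogonality condition and then verify that condition by a $\kappa$-filtered colimit computation in which stability by towers supplies the missing lifts. First I would record what idempotency means. The plus-construction is a pointed endofunctor $f\mapsto\Plusmap f$ of $\Arr\cC$ with unit $\Minusmap{}\colon f\to\Plusmap f$, and by \cref{thm:+construction} (applied to the pre-modulator $W$) the codomain of $\Plusmap f$ is $\t f$, so the unit is carried by $\Minusmap f\colon\s f\to\Plus f$ and fixes codomains. Hence the endofunctor is idempotent exactly when the map $\pi_{\Plusmap f}\colon\Plusmap f\to\Plusmap{\Plusmap f}$, i.e. $\Minusmap{\Plusmap f}$, is invertible for every $f$; and by \cref{cor:+-fix=R} together with the identification $\Kellymap f=\Plusmap f$ of \cref{thm:+construction} this happens precisely when $\Plusmap f\in\cR=W^\perp$. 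So it suffices to prove that, for every $f$ and every $w\in W$, the map $\pbh w{\Plusmap f}$ is invertible.

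Next I would turn this orthogonality condition into a colimit. Since $W$ is $\kappa$-closed, the slice $W\comma f$ is $\kappa$-filtered, the objects $\s w,\t w$ are $\kappa$-small, and $\Plus f=\colimit{W\comma f}\t v$ is a $\kappa$-filtered colimit. Writing $g_v\colon\t v\to\t f$ for the codomain leg of the structure map $v\to f$, the commutation of $\kappa$-filtered colimits with finite limits in $\cS$---together with the weak contractibility of the $\kappa$-filtered category $W\comma f$, which absorbs the constant terms $\map{\s w}{\t f}$ and $\map{\t w}{\t f}$---yields a natural identification
\[
\pbh w{\Plusmap f}\ =\ \colimit{W\comma f}\pbh w{g_v}.
\]
Thus it remains to prove that this $\kappa$-filtered colimit of maps of spaces is an equivalence.

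Finally I would establish the equivalence by a gluing argument, and this is where stability by towers is essential. A point of the target is a commutative square $\sigma\colon w\to g_{v}$ for some $v\to f$; by $\kappa$-filteredness its components factor through a single stage of the colimit, exhibiting $\sigma$ as assembled from local data indexed by the cover $w$. Stability by towers (\cref{defi:tower}) then lets me compose the cover $w$ with the pulled-back covers carrying this local data into a single map $v'\in W$ with $\t v'=\t w$ and a structure map $v'\to f$; its colimit inclusion $\t w=\t v'\to\Plus f$ is the diagonal lift of $\sigma$, so the colimit map is surjective on $\pi_0$. The same mechanism applied one dimension up---using that any two such refinements are dominated by a common tower inside the $\kappa$-filtered diagram---shows the fibers are contractible, giving the full equivalence. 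The role of towers here is exactly the transitivity (``a cover of a cover is a cover'') that upgrades the classical plus-construction from merely separating presheaves to sheafifying them in a single step.

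I expect the main obstacle to be this gluing step: assembling the factored local data into one object of $W\comma f$ over the same target and checking that its inclusion genuinely lifts $\sigma$---that is, that both triangle identities hold in $\cS$ and cohere at all higher stages. Verifying that the tower construction reproduces the original matching data, rather than a merely parallel lift, is the delicate point, and it is precisely what the stability-by-towers hypothesis is designed to guarantee, in analogy with the way closure under codiagonals forced uniqueness of lifts in the Gabriel--Ulmer argument of \cref{thm:!SOA1}.
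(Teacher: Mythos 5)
Your opening reduction is legitimate: by \cref{thm:+construction} and \cref{cor:+-fix=R}, idempotency is indeed equivalent to $\Plusmap f\in\cR=W^\perp$ for all $f$, and your formula $\pbh w{\Plusmap f}\simeq\colim_{W\comma f}\pbh w{g_v}$ is a correct use of $\kappa$\=/closure ($\kappa$\=/smallness of $\s w,\t w$, $\kappa$\=/filteredness of $W\comma f$, commutation of filtered colimits with finite limits in $\cS$). The genuine gap is in the gluing step, exactly where you flag it. Given a square $\sigma\colon w\to g_v$, stability by towers does produce $u:=w\circ v'\in W$ (with $v'$ the base change of $v$ along $\s w\to\t v$), a structure map $u\to f$, and a candidate lift $\t w=\t u\to\Plus f$ (the colimit leg at the stage $u$). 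But you must still show that this candidate maps to $\sigma$, i.e.\ that the two points of $\map{\s w}{\Plus f}$ represented at stages $u$ and $v$ by $\s w\xto{w}\t u$ and $\s w\to\t v$ coincide. Identifications in this colimit arise only from zigzags in $W\comma f$, and none is available from your hypotheses: the natural comparison object is $v'$ itself, through which both points factor via the span $u\ot v'\to v$ and the identity of $\s w$, but $v'$ is not an object of $W\comma f$ --- a pre-modulator is not assumed stable by base change, and in any case $\t v'=\s w$ need not lie in $C$ (axiom \ref{enum:modulator:3} of \cref{defi:pre-modulator}). Nor does $\kappa$\=/closure help, since it closes $W$ only under colimits of diagrams lying in $W$; and filteredness of $W\comma f$ supplies some cocone $u\to z\ot v$ but no control over the two resulting maps $\s w\to\t z$, because filteredness equalizes parallel arrows of the index category, not external points. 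So ``its colimit inclusion is the diagonal lift of $\sigma$'' does not follow, and the uniqueness half (``the same mechanism one dimension up'', ``fibers contractible'') is only named, never carried out --- in the \oo setting this needs coherent control at every homotopy level, each level meeting the same obstruction.

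It is worth comparing with the paper's proof, because it is engineered precisely to avoid this pointwise identification: it never verifies orthogonality at all, but proves $\Plusplus f=\Plus f$ directly. Using $\kappa$\=/smallness of $\s v$, it replaces the index category $W\comma\Plusmap f$ of the second plus-construction by a category $V(f)$ of squares factored through a stage $\t w$; stability by towers then defines a functor $R\colon V(f)\to W\comma f$ sending a factored square to the tower composite $u$, and $R$ admits an explicit left adjoint (sending $u\to f$ to the trivially factored square), hence is cofinal, whence $\Plusplus f=\colim_{V(f)}\t v=\colim_{W\comma f}\t w=\Plus f$. The adjunction performs globally and coherently the identification your argument needs point by point, and it can do so because the problematic comparison data (a square together with its base change $v'$) are objects of the enlarged index category $V(f)$ even though they are not stages in $W\comma f$. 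To complete your route you would in effect have to re-import this cofinality argument; as written, the gluing step fails.
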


\begin{proof}
It is sufficient to prove that $\Plusplus f = \Plus f$.
We have $\Plusplus f = \Plus {\Plusmap f} \colim_{W\comma \Plusmap f} \t v$ where the colimit is indexed by the category of diagrams
\[
\begin{tikzcd}
\s v \ar[d] \ar[r]  & \colimit {W\commaindex f} \t w \ar[d]\\
\t v \ar[r]         & \t f
\end{tikzcd}
\]
Since $W\comma f$ is $\kappa$\=/filtered and $\s v$ is $\kappa$\=/small, any map $\s v \to \colim_{W\commaindex f} \t w$ factors through some $\t w$ and we can replace the indexing category $W\comma \Plusmap f$ by the category $V(f)$ of diagrams
\begin{equation}
\tag{$\xi$}
\label{diag:bizarre}
\begin{tikzcd}
&\s w \ar[d] \ar[r] & \s f\ar[dd]\\
\s v \ar[d] \ar[r]  &\t w \ar[rd]& \\
\t v \ar[rr]        && \t f
\end{tikzcd}
\end{equation}
with the obvious notion of morphisms.

Using the stability by towers of $W$ we can construct the following map $u$
\[
\begin{tikzcd}
\s w'\ar[dd,bend right=60,dashed, "u"'] \ar[d,dashed, "w'"'] \ar[r,dashed] \pbmark &\s w \ar[d] \ar[r] & \s f\ar[dd]\\
\s v \ar[d] \ar[r]  &\t w \ar[rd]& \\
\t v \ar[rr] && \t f.
\end{tikzcd}
\]
This provides a functor $R:V(f) \to W\comma f$ sending a diagram (\ref{diag:bizarre}) to
\begin{equation}
\tag{$\zeta$}
\label{diag:normal}
\begin{tikzcd}
\s u \ar[d] \ar[r]  & \s f\ar[d]\\
\t u \ar[r]         & \t f
\end{tikzcd}
\end{equation}
The functor $R$ has a left adjoint $L:W\comma f \to V(f)$ sending a diagram (\ref{diag:normal}) to
\[
\begin{tikzcd}
&\s u \ar[d,equal] \ar[r] & \s f\ar[dd]\\
\s u \ar[d] \ar[r,equal]  &\s u \ar[rd]& \\
\t u \ar[rr] && \t f
\end{tikzcd}
\]
Consequently, the functor $R$ is cofinal.
The composition of $\t:W\comma f \to \cC$ by $R$ induces the expected functor $V(f) \to \cC$ 
and we get
\[
\Plusplus f
= \colim_{W\comma \Plusmap f} \t v 
= \colim_{V(f)} \t v 
= \colim_{W\comma f} \t v 
= \Plus f.
\]
\end{proof}

\section{Applications}
\label{sec:applications}

\subsection{Generation of factorization systems}
\label{sec:factorizations}

\subsubsection{Example: the trivial factorization systems}
\label[example]{ex:SOA:trivial}

Any category $\cC$ has always two canonical factorization systems $(iso,all)$ and $(all,iso)$.
where $all$ is the whole of $\Arr \cC$ and $iso$ is the subcategory of isomorphisms $\cC\iso\subset \Arr \cC$.
If factorization systems on $\cC$ are ordered by the inclusion of their left classes, these systems correspond respectively to the minimal and maximal element.

The factorization system $(iso,all)$ is generated by any isomorphism in $\cC$, for example the identity map $1_A$ of some object $A$.
\cref{thm:!SOA2} applied to $W = \{1_A\}$ gives
\[
\pbh {1_A} f \pp 1_A = 1_{\map A {\s f}\times A}
\qquad\qquad
u(f) = 1_{\s f} \eqnand \Kellymap f = f.
\]
A single application of $k$ gives the factorization $f= f\circ 1_{\s f}$.

If $\cC$ has a initial object and a terminal object, the factorization system $(all,iso)$ is generated by the map $\iota=0\to 1$, which is the unit of the pushout-product.
\cref{thm:!SOA2} applied to $W = \{\iota\}$ gives
\[
\pbh \iota f \pp \iota = f
\qquad\qquad
u(f) = f \eqnand \Kellymap f = 1_{\t f}.
\]
Again, a single application of $k$ gives the factorization $f= 1_{\t f}\circ f$.

\subsubsection{Example: the image factorization}
\label[example]{ex:SOA:S0}

In the category $\cS$, we consider the object $S^0=1\coprod 1$ and the map $s^0:S^0\to 1$.
A map $f:X\to Y$ is said to be {\em $(-1)$\=/truncated} if it is a monomorphism, that is,  
if the diagonal map $\Delta f:X\to X\times_YX$ is invertible.
Since $\Delta f=\pbh {s^0}f$, this condition is equivalent to $s^0\perp f$.
The factorization system generated by $s^0$ is called the ($(-1)$\=/connected,$(-1)$\=/truncated) system.
The class of $(-1)$\=/connected maps can be proven to coincide with {\em covers} (\ie maps inducing a surjection on $\pi_0$).
Let $X \to P_{-1}f \to Y$ be the factorization of $f$. 
The monomorphism $P_{-1}f\to Y$ is the {\em image} of $f$.

We detail the construction of \cref{thm:!SOA2} to the discrete category $W = \{s^0\}$ and the map $f:X\to 1$.
We have
\[
\pbh {s^0} X \pp s^0
= 
\begin{tikzcd}
S^0\times X^2 \underset{S^0\times X}{\coprod} X \ar[d]\\
X^2
\end{tikzcd}
= 
\begin{tikzcd}
X^2 \underset{X}{\coprod} X^2 \ar[d]\\
X^2
\end{tikzcd}
\]
The construction $\Kellyvar X$ is defined by the pushout
\[
\begin{tikzcd}
X^2 \underset{X}{\coprod} X^2 \ar[d] \ar[r] & X \ar[d]\\
X^2 \ar[r] & \Kellyvar X \pomark
\end{tikzcd}
\]
Using the trick that $X=X\coprod_XX$ and $X^2=X^2\coprod_{X^2}X^2$ we get 
\[
\Kellyvar X
 = X^2 \coprod_{X^2 \underset{X}{\coprod} X^2} X
 = \left(X^2 \underset{X^2}{\coprod} X^2\right) \coprod_{X^2 \underset{X}{\coprod} X^2} \left(X \underset{X}{\coprod} X\right)
\]
and by commutation of the pushouts, we find
\[
\Kellyvar X = \left(X^2 \coprod_{X^2} X\right) \coprod_{X^2\underset{X}{\coprod} X} \left(X^2 \coprod_{X^2} X\right)
= X\coprod_{X^2}X
= X \join X
\]
In $\cS$, $S^0$ is a finite object and the countable iteration of $X\mapsto X
\join X$ converges to the support $P_{-1}X$ of $X$.
This iteration is the infinite join of $X$ which is indeed known to be the support.
In the case of a map $f:X\to Y$, $\Kellymap f = f\join f$ where $f\join f$ is the following cocartesian gap map
\[
\begin{tikzcd}
X\times_YX \ar[r]\ar[d]& X\ar[d]\ar[ddr, bend left, "f"]\\
X\ar[r]\ar[rrd, bend right, "f"]& X\underset{X\times_YX}\coprod X\ar[rd,"f\join f" description] \pomark
\\
&& Y.
\end{tikzcd}
\]
The countable iteration of $k$, that is, the infinite join $f^{\join \infty}$, is the image of $f$ (see~\cite{Rijke:join} for details).

\bigskip

More generally, the same construction can be made in any presentable locally cartesian closed category (\eg a topos, an $n$\=/topos, a modal topos...)
However, the enriched pushout-product/pullback-hom needs to be used (see \cref{rem:enrichedSOA}).
The object $s^0=1\coprod 1$ is internally finite and a countable iteration of
the $k$\=/construction converges to the ($(-1)$\=/connected,$(-1)$\=/truncated) factorization.
Moreover, in such a context, it can be proven that the class of $(-1)$\=/connected maps can also be described as covers~\cite[6.2.3]{Lurie:HTT}.

\subsubsection{Example: Postnikov factorization}
\label[example]{ex:SOA:Sn}

In the previous example, the consideration of the $(n+1)$\=/th sphere $S^{n+1}$ instead of $S^0$ provides a way to construct the $n$\=/th Postnikov truncation in any presentable locally cartesian closed category $\cC$ (\eg a topos, an $n$\=/topos, a modal topos...) via the enriched version of \cref{thm:!SOA2} (see~\cref{rem:enrichedSOA}).
Let $s^{n+1}$ be the map $S^{n+1}\to 1$.
We have $s^{n+1} = s^0\pp \dots \pp s^0$ ($n+2$ factors).
An object $X$ of $\cC$ is $n$\=/truncated iff it is orthogonal to $s^{n+1}$, iff the higher diagonal map $\Delta^{n+2}:X\to X^{S^{n+1}}$ is invertible.
We consider the discrete category $W = \{s^{n+1}\}$ and apply \cref{thm:!SOA2} to $f:X\to 1$.
We have
\[
\intpbh {s^{n+1}} X \pp s^{n+1} 
 = 
\begin{tikzcd}
S^{n+1}\times X^{S^{n+1}} \underset{S^{n+1}\times X}{\coprod} X \ar[d]\\
X^{S^{n+1}}
\end{tikzcd}
\]
The construction $\Kellyvar X$ is defined by the pushout
\[
\begin{tikzcd}
S^{n+1}\times X^{S^{n+1}} \underset{S^{n+1}\times X}{\coprod} X \ar[d]\ar[d] \ar[r] & X \ar[d]\\
X^{S^{n+1}} \ar[r] & \Kellyvar X. \pomark
\end{tikzcd}
\]
This pushout can be shown to be equivalent to the colimit of a wide pushout indexed by $S^{n+1}$ (see next example for details).
\[
\left.
\begin{tikzcd}
&&X\\
X^{S^{n+1}} \ar[rru]\ar[rrd] && \vdots\\
&&X
\end{tikzcd}
\right\} \text{ $S^{n+1}$ legs}
\]
We shall denote by $X^{\lhd S^{n+1}}$ this colimit.
Since $S^n$ is finite, the countable iteration of $X\mapsto X^{\lhd S^{n+1}}$ is the $n$\=/th Postnikov truncation $P_nX$ of $X$.

\medskip
From an external point of view on $\cC$, it is not true in general that the $n$\=/th Postnikov factorization system is generated by the sole map $s^{n+1}$.
One need to cross it with generators of $\cC$.
Let $C\subset \cC$ be a small generating subcategory.
We consider the diagram $C\to \Arr \cC$ sending an object $G$ to the map $G\times s^{n+1}$.
We have
\begin{align*}
\int^G \pbh {G\times s^{n+1}} X \pp (G\times s^{n+1}) 
&= \int^G \pbh {(0\to G)\pp s^{n+1}} X \pp (0\to G) \pp s^{n+1}
\\
&= \int^G \pbh {0\to G} {\intpbh {s^{n+1}} X} \pp (0\to G) \pp s^{n+1}
\\
&= \left(\int^G G\times \intpbh {s^{n+1}} X \times G\right) \pp s^{n+1}
\\
&= \intpbh {s^{n+1}} X \pp s^{n+1}
\end{align*}
and the corresponding $k$\=/construction gives back the enriched one.

\subsubsection{Example: nullification}
\label[example]{ex:SOA:nullification}

Still in the context of a presentable locally cartesian closed category $\cC$ (\eg a topos, an $n$\=/topos, a modal topos...), we can replace $S^n$ by any object $A$.
We denote by $X^{\lhd A}$ the object defined by the pushout
\[
\begin{tikzcd}
A \times X^A \ar[d,"p_2"'] \ar[r,"{(id,ev)}"] & A \times X \ar[d] \\
X^A   \ar[r]              & X^{\lhd A}. \pomark
\end{tikzcd}
\]
The object $X^{\lhd A}$ can also be expressed as the wide pushout
\[
\left.
\begin{tikzcd}
&&X\\
X^A \ar[rru, "p_a"]\ar[rrd, "p_{a'}"'] && \vdots\\
&&X
\end{tikzcd}
\right\} \text{ $A$ legs}.
\]
This is helpful to get an intuition of what $X^{\lhd A}$ is. 
This diagram is indexed contravariantly by an internal category that we shall denote by $A\join 1$ (essentially, $1\join A$ is $A$ viewed as an internal category augmented with an terminal object).

The construction $X^{\lhd A}$ is also related to that $\cJ_F(X)$ of \cite[Lemma 2.7]{RSS} for $F$ is the family of the single map $A\to 1$.
In this case, we have
\begin{equation}
\label{eqn:lhd=J}
\tag{$\cJ$}
\begin{tikzcd}
A \times X^A \ar[d,"p_2"'] \ar[r,"{(id,ev)}"] & A \times X \ar[d] \ar[r] & X \ar[d] \\
X^A   \ar[r]              & X^{\lhd A} \ar[r] \pomark & \cJ_A(X). \pomark
\end{tikzcd}
\end{equation}
In fact, one can prove that the two construction agree. 
The argument is similar to the fact that a reflective coequalizer is equivalent to the pushout of its parallel arrows:
$X^{\lhd A}$ is the colimit of all the projections $p_a:X^A\to X$ which all admits a common section given by the diagonal $\Delta:X\to X^A$.
Hence $X^{\lhd A}$ is also the colimit of the diagram
\[
\left.
\begin{tikzcd}
&&&X\\
X\ar[r,"\Delta"] \ar[rrru, bend left, equal]\ar[rrrd, bend right, equal]
&X^A \ar[rru, "p_a"]\ar[rrd, "p_{a'}"'] && \vdots\\
&&&X
\end{tikzcd}
\right\} \text{ $A$ legs}
\]
which is equivalently described by the outer pushout of (\ref{eqn:lhd=J}).
This diagram is indexed (contravariantly) by the internal category $\JJ_A$ with two objects
\[
\begin{tikzcd}
A \ar[rr,"1"']  \ar[from=rr, shift right=2,"A"'] \ar[loop left,"A \coprod 1"]
&& 1,
\end{tikzcd}
\]
where the decorations of arrows are the objects of morphisms. 
In the case where $A=S^0$, $\WW_{S^0}$ is the symmetric reflective coequalizer category.

\medskip
When $W$ is a single map $w:A\to 1$, the $k$\=/construction is
\[
\begin{tikzcd}
X\coprod_{A\times X}A\times X^A \ar[r] \ar[d,"\intpbh w X \pp w"'] & X \ar[d] \\
X^A \ar[r] & \Kellyvar X \pomark
\end{tikzcd}
\]
Using the trick that $X = X\coprod_{A\times X} A\times X$ and $X^A = X\coprod_X X^A$, we have
\[
\begin{tikzcd}
X\coprod_{A\times X}A\times X^A \ar[r] \ar[d,"\intpbh w X \pp w"'] & X\coprod_{A\times X} A\times X \ar[d] \\
X\coprod_X X^A \ar[r] & \pomark X \coprod_X X^{\lhd A}
\end{tikzcd}
\]
and $\Kellyvar X = X^{\lhd A}$.
Then, a transfinite iteration (whose length depends on the size of $A$) of
$X\mapsto X^{\lhd A}$ converges to the {\em $A$\=/localization} $P_AX$ of $X$.

\medskip
When the map $A\mono 1$ is monic, the construction can be simplified.
In this case, we have $A\times X^A = A\times X$ and we get $\Kellyvar X = X^{\lhd A} = X^A$.
Moreover, the operator $\Kellyconstruction$ is idempotent and $X^A$ is directly the $A$\=/localization of $X$.

\subsubsection{Example: co-nullification and co-reflexion}
\label[example]{ex:SOA:co-nullification}

We stay in the context of a presentable locally cartesian closed $\cC$ (\eg a topos, an $n$\=/topos, a modal topos...). 
We consider the case where $W$ is a single map $w:0\to B$. 
The initial object being strict, the pushout defining $\Kellyvar X$ simplifies into
\[
\begin{tikzcd}
B\times X^B \ar[r] \ar[d,"\intpbh w X \pp w"'] & X \ar[d] \\
B \ar[r] & \Kellyvar X \pomark
\end{tikzcd}
\]
In particular, if $B$ is a subterminal object, we have $B\times X^B = B\times X$ and $\Kellyvar X = B\join X$ which is an idempotent operator.

\medskip
When applied to a map $f:0\to Y$, instead of $X\to 1$, the $k$\=/construction gives the factorization
\[
\begin{tikzcd}
0 \ar[rr,"u(f)"]&& Y^B\times B \ar[rr,"\Kellymap f=ev"]&& Y
\end{tikzcd}
\]
More generally, for a full subcategory $W\subset \cC$, the $k$\=/construction applied with the collection of maps $0\to B$ for $B$ in $W$ gives the factorisation
\[
\begin{tikzcd}
0 \ar[rr,"u(f)"]&& \int^{B\in W}X^B\times B \ar[rr,"\Kellymap f"]&& X
\end{tikzcd}
\]
That is, the functor $Y\mapsto \Kelly {0\to Y}$ is the (internal) density comonad associated to the subcategory $W\subset \cC$ (the ``$W$\=/cellular approximation" of $Y$).
For example, if $\cC = \P C$ and $W\subset C$, $\P W \subto \P C$ is a coreflective subcategory, the density comonad is idempotent and the coreflection coincides with the $k$\=/construction $Y\mapsto \Kelly {0\to Y}$.

\subsubsection{Example: the case of a single map}
\label[example]{ex:SOA:map}

We stay in the context of a presentable locally cartesian closed $\cC$ (\eg a topos, an $n$\=/topos, a modal topos...). 
We consider the case where $W$ is a single map $w:A\to B$. 
We obtain
\[
\intpbh w X \pp w =
\begin{tikzcd}
B\times X^B\coprod_{A\times X^B}A\times X^A \ar[d] \\
B\times X^A
\end{tikzcd}
\]
and $\Kellyvar X$ is defined by the pushout
\[
\begin{tikzcd}
B\times X^B\coprod_{A\times X^B}A\times X^A \ar[r] \ar[d,"\intpbh w X \pp w"'] & X \ar[d] \\
B\times X^A \ar[r] & \Kellyvar X \pomark
\end{tikzcd}
\]
Outside of the two particular cases of \cref{ex:SOA:nullification,ex:SOA:co-nullification}, we have not find a nice way to simplify this formula.
Also, contrary to \cref{ex:SOA:nullification}, this formula does not seems to coincide with the construction $\cJ_F(X)$ of \cite[Lemma 2.7]{RSS}.

\subsubsection{Example: essentially surjective and fully faithful functors}
\label[example]{ex:image-cat}

Let $\Cat$ be the category of small categories. 
We shall look at $\Cat$ as a cartesian closed \oo category and consider the enriched version of \cref{thm:!SOA2} following \cref{rem:enrichedSOA}.
Let $I=\{\s \to \t\}$ be the one arrow category and $2=\{\s,\t\}$ be the discrete category with two objects.
A commutative square
\[
\begin{tikzcd}
2 \ar[r,"u"]\ar[d] & C\ar[d,"f"] \\
I \ar[r]& D
\end{tikzcd}
\]
is equivalent to the choice of two objects $x,y$ in $C$ and of an arrow $u:f(x)\to f(y)$ between their image in $D$.
There exists a unique lifting if and only if $f$ is {\em fully faithful}.
We consider the factorization system generated by $I\to 1$.
The functors in the left class are {\em essentially surjective} functors.
\[
\pbh {2\to I} {C\to D} = \Arr C \to C^2 \times_{D^2} \Arr D 
\]
where $C^2 \times_{D^2} \Arr D  = f\comma f$, the comma category of $f$ with itself.
Using that $\Arr C = C\comma C$, the $K$-construction is defined by the pushout
\[
\begin{tikzcd}
C \comma C \times I \underset{C \commaindex C \times 2}\coprod \left(f\comma f \times 2 \right)
\ar[r]
\ar[d]
    & C \ar[d]
\\
f\comma f  \times I \ar[r] 
    & K(f) \pomark
\end{tikzcd}
\]
The factor $f\comma f  \times I\coprod_{f\comma f \times 2} C$
adds an arrow between $x$ and $y$ in $C$ for any arrow $f(x)\to f(y)$ in $D$.
The factor $f\comma f  \times 2\coprod_{C\comma C \times I} C$
corrects this construction by identifying an arrow $u:x\to y$ to the new arrow corresponding to $f(u)$.
In the end, $K(f)$ is exactly the fully faithful subcategory image of $f:C\to D$.
Hence, the construction converges in one step.

\subsubsection{Example: localizations and conservative functors}
\label[example]{ex:localization-cat}

We keep the notations from \cref{ex:image-cat}.
A commutative square
\[
\begin{tikzcd}
I \ar[r,"u"]\ar[d] & C\ar[d,"f"] \\
1 \ar[r]& D
\end{tikzcd}
\]
is equivalent to the choice of an arrow $u$ in $C$ such that $f(u)$ is invertible in $D$.
The square admits a lift (necessarily unique) if and only if $u$ is already invertible on $C$.
A functor $f:C\to D$ is orthogonal to $I\to 1$ if and only if $u$ is {\em conservative}.

We consider the factorization system generated by $I\to 1$.
The functors in the left class are called {\em long localizations}~\cite{Joyal:TQC}.
We are going to see that they are countable chains of localizations 
(long localizations are necessary because localizations are not stable by composition, see below).
For a functor $f:C\to D$, let $\ker(f)$ be the class of maps in $C$ sent to isomorphisms in $D$.
Let $D\to \Arr D$ be the inclusion of the identity maps, thus $\ker f = \Arr C \times_{\Arr D} D$.
We apply \cref{thm:!SOA2} to $W=\{I\to 1\}$.
We have
\[
\pbh {I\to 1} {C\to D} = C \to \ker f
\]
and the $K$\=/construction is defined by the pushout
\[
\begin{tikzcd}
C \underset{C\times I}\coprod (\ker f\times I) \ar[r]\ar[d] & C\ar[d] \\
\ker f \ar[r]& \Kellyvar C \pomark
\end{tikzcd}
\]
which can be checked to be exactly the localization $\DKLoc C {\ker f}$ of $C$ along the arrows in $\ker f$.
Since the generating map $I\to 1$ is finite in $\Cat$, the left class is then countable chains of localizations.

In general, the $k$\=/construction needs to be applied more than once.
Let $\cC$ be the category $\{A\to B \ot C \to D\}$. 
The localization $\cD=\DKLoc \cC {C\to B}$ has an arrow $A\to D$ which is the composition of $A\to B$ and $C\to D$.
The localization $\cE=\DKLoc \cD {A\to D}$ is not a localization of $\cC$.
The $k$\=/construction applied to $\cC \to \cE$ provides the factorization $\cC \to \cD \to \cE$. 
It needs to be applied a second time to see that $\cC \to \cE$ is in the left class.

\subsubsection{Example: cofinal functors and fibrations in groupoids}
\label[example]{ex:fibrations-cat}

We continue to work in the cartesian closed category $\Cat$.
Recall that $I=\{\s \to \t\}$. 
We consider $W=\{1\xto \t I\}$. 
The subcategory $W^\intperp\subset \Arr \Cat$ of maps internally orthogonal $W$ is spanned by fibrations in groupoids.
The corresponding left class is that of cofinal functors.
Any functor $f:C\to D$ induces a functor $f_!:\P C\to \P D$.
Recall that $f$ is called {\em cofinal} if $f_!1=1$~\cite{Joyal:TQC, Lurie:HTT}.
Let $|f|:= f_!1$, we have $\P D \comma |f| = \P {D\comma |f|}$.
We deduce a factorization of $f$ into $C\to D\comma {|f|} \to D$, where $C\to D\comma {|f|}$ is cofinal and $D\comma {|f|}\to D$ is a fibration.
Two particular cases are classical.
If $C=1$, the factorization of $d:1\to D$ is $1 \to D\comma d \to D$ where $D\comma d$ is the slice category over $d$.
If $D=1$, the factorization of $C\to 1$ is $C\to |C| \to 1$, where $|C|=\DKLoc
CC$ is the external groupoid of $C$.

We claim that the $K$\=/construction applied twice to $f:C\to D$ is $D\comma {|f|}$.
Given a functor $f:C\to D$, we define the comma category $D\comma C$ as the pullback
\[
\begin{tikzcd}
D\comma C \ar[d]\ar[r] \pbmark & \Arr D \ar[d,"\t"]\\
C \ar[r,"f"]& D
\end{tikzcd}
\]
In particular, if $f$ is the identity of $C$, we have $\Arr C = C\comma C$.
Then, we have
\[
\intpbh {1\xto \t I} {C\to D} = 
\begin{tikzcd}
C\comma C \ar[d]\\
D\comma C
\end{tikzcd}
\eqnand
\intpbh {1\xto \t I} {C\to D} \pp (1\xto \t I)
= \begin{tikzcd}
D\comma C \underset{C\commaindex C}\coprod I\times C\comma C \ar[d]\\
I\times (D\comma C).
\end{tikzcd}
\]
The category $\Kelly f$ is defined by the pushout
\[
\begin{tikzcd}
D\comma C \underset{C\commaindex C}\coprod I\times C\comma C \ar[d] \ar[r]& C \ar[d]\\
I\times (D\comma C) \ar[r] & \Kelly f
\end{tikzcd}
\]
Let $C_0$ be the internal groupoid of $C$, we claim that $\Kelly f = \DKLoc {D\comma C} {C_0\comma C}$ (where the localization inverts all maps in the category $C_0\comma C$).
Applied a second time, we find $\Kellyn2 f = \DKLoc {D\comma C} {D_0\comma C} = D\comma {|f|}$.
In particular, if $C=1$, we have $\Kelly f = D\comma d$, and if $D=1$, we find $\Kelly f = \DKLoc CC = |C|$.
In these cases, a single application of $K$ suffices to get the factorization.
This is still true if $f:C\to D$ is essentially surjective since this implies
$\Kelly f =\Kellyn2 f$.

\subsection{Generation of localizations}
\label{sec:localizations}

The small object argument has been developed in its strong versions with the
motivation of constructing reflective localizations.
Our construction makes the results of~\cite{Gabriel-Ulmer,Kelly} valid for \oo categories.
All the following results are folkloric but some of them are difficult to find in the literature.
We put them here for future reference.

\bigskip
Let $\cC$ be a cocomplete category with a terminal object and $W\to \Arr \cC$ a diagram.
Recall that the category $\cC_W$ of {\em $W$\=/local objects} is defined to be the be full subcategory of $\cC$ spanned by the objects $X$ such that $X\to 1$ is in $W^\perp$.
The category $\cC_W$ is reflective if the inclusion $\cC_W\to \cC$ admits a left
adjoint $P_W$.

\begin{thm}[Orthogonal reflection]
\label{thm:orthogonal-reflection}
Let $\cC$ be a cocomplete category.
For any small diagram $W\to \Arr \cC$ with small domains and codomains, the transfinite iteration of the $K$\=/construction converges to the reflection $P_W$ of $\cC$ into $\cC_W$.
\end{thm}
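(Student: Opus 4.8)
The plan is to deduce this directly from \cref{thm:!SOA2} by applying the $k$\=/construction to the family of maps $X\to 1$ and reading off the reflection from the resulting factorization. First I would observe that, since $W\to \Arr \cC$ is a small diagram with small domains and codomains, \cref{thm:!SOA2} applies verbatim: the transfinite iteration of $k$ converges and, for the particular map $X\to 1$, produces a factorization
\[
X \ntto{\lambda} P_W X \ntto{\rho} 1,
\]
where $P_W X := \Kellyn \kappa {X\to 1} = \s(\Kellymapn \kappa {X\to 1})$ for $\kappa$ large enough, with $\lambda\in\cL_W = {^\perp}(W^\perp)$ and $\rho\in\cR_W = W^\perp$. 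Because the $k$\=/construction is natural in its input (as already exploited in the proof of \cref{thm:!SOA2}), both the assignment $X\mapsto P_W X$ and the map $\lambda$ are natural in $X$.

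The key step is to recognize this factorization as a reflection into $\cC_W$. Since $\rho:P_W X\to 1$ lies in $W^\perp$, the object $P_W X$ is $W$\=/local, hence $P_W X\in\cC_W$. To obtain the universal property, let $Y$ be any $W$\=/local object, so that $Y\to 1$ is in $\cR_W$. As $\lambda\in\cL_W = {^\perp}\cR_W$, we have $\lambda\perp(Y\to 1)$, that is, the pullback\=/hom $\pbh{\lambda}{(Y\to 1)}$ is invertible in $\cS$. Since the codomains of both $\lambda$ and $Y\to 1$ are terminal, the fibre products entering the definition of $\pbh{\lambda}{(Y\to 1)}$ collapse, and this pullback\=/hom is exactly the precomposition map $\map{P_W X}{Y}\to\map{X}{Y}$. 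Its invertibility says precisely that precomposition with $\lambda$ is an equivalence on mapping spaces into every local object, i.e. the adjunction equivalence $\map{P_W X}{Y}\simeq\map{X}{Y}$ that exhibits $P_W$ as left adjoint to the inclusion $\cC_W\subset\cC$, with unit $\lambda$.

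I expect no genuine obstacle here beyond careful bookkeeping, since \cref{thm:!SOA2} already supplies both the convergence of the transfinite iteration and the orthogonal system $(\cL_W,\cR_W)$; the only new ingredient is the elementary observation that the orthogonality relation $\lambda\perp(Y\to 1)$ unwinds into the reflection condition once the codomain is terminal. The mild point to verify is that the object delivered by the iterate $\Kellymapn \kappa {X\to 1}$ is indeed the domain of the right factor $\rho$, and that $\rho$ itself converges into $W^\perp$, but this is immediate from the description of the factorization in \cref{thm:!SOA2} together with \cref{cor:+-fix=R}.
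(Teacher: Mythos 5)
Your proposal is correct and takes essentially the same approach as the paper: both apply \cref{thm:!SOA2} (together with \cref{cor:+-fix=R}) to the maps $X\to 1$ and identify $P_WX$ as the domain of the converged $k$\=/construction. The only difference is cosmetic: where the paper cites the already-recorded fact that $\rho$ exhibits $\cR$ as a reflective subcategory of $\Arr \cC$ and restricts this along $X\mapsto (X\to 1)$, you re-derive the universal property by hand, noting that $\pbh{\lambda}{Y\to 1}$ collapses to the precomposition map $\map{P_WX}{Y}\to \map{X}{Y}$ — a more explicit but equivalent verification.
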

\begin{proof}
Let $(\cL,\cR)$ be the factorization system generated by $W$.
Recall that $\cR\subset \Arr \cC$ is a reflective subcategory, with reflection given by $\Kellymapn \kappa -$.
The canonical inclusion $\cC\to \Arr\cC$ sending an object $X$ to $X\to 1$, sends $\cC_W$ to $W^\perp = \cR$.
Then the reflection $P_W:\cC\to \cC_W$ can be build as the restriction of the reflection $\Arr \cC \to \cR$ to $\cC$ composed with the domain projection $\s:\Arr \cC\to \cC$.
The result follows from \cref{thm:!SOA2} and the fact that $K$ is the domain of the $k$\=/construction.
\end{proof}

\begin{rem}
\label{rem:+localization}
When $W$ is a pre-modulator, the localization $P_W$ can also be computed as the iteration of the operator $\Plusconstruction$ of the plus-construction. 
\end{rem}

Let $\cC$ be a cocomplete category and $W\to \Arr \cC$ a diagram.
For any other cocomplete category $\cD$, we denote by $\fun \cC \cD \cc ^W$ the full subcategory of cocontinuous functors $\cC\to \cD$ sending $W$ to invertible maps in $\cD$.
Then we define the {\em cocontinuous localization} of $\cC$ by $W$ as the functor $\cC \to \relDKLoc {cc}{} \cC W$ representing the functor $\cD \mapsto \fun \cC \cD \cc ^W$ (where $\fun \cA\cB\cc$ is the category of functors $\cA\to \cB$ which are cocontinuous).

\begin{cor}[Cocontinuous localizations]
\label{cor:fundaCT}
Let $\cC$ be a cocomplete category.
For any small diagram $W\to \Arr \cC$ with small domains and codomains, 
the reflection $P_W:\cC\to \cC_W$ is the localization $\cC \to \relDKLoc{cc}{} \cC W$.
In particular, any accessible cocontinuous localization of $\cC$ is reflective.
\end{cor}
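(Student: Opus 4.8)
The plan is to show that the reflection $P_W\colon \cC \to \cC_W$ furnished by \cref{thm:orthogonal-reflection} satisfies the universal property defining $\relDKLoc{cc}{}\cC W$: that for every cocomplete $\cD$, precomposition with $P_W$ induces an equivalence $\fun{\cC_W}{\cD}\cc \simeq \fun\cC\cD\cc^W$. Since $\cC_W$ is reflective in the cocomplete category $\cC$ it is itself cocomplete, and $P_W$, being a left adjoint, is cocontinuous. First I would check that $P_W$ inverts $W$: a reflective localization inverts exactly those $f$ for which $\map f X$ is invertible for every local $X$, and for $w$ in $W$ and $X$ in $\cC_W$ the pullback-hom $\pbh w {(X\to 1)}$ is precisely $\map w X$ (because $\map{\s w}1=\map{\t w}1=1$), which is invertible by definition of $W$-locality. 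Hence for any cocontinuous $G\colon\cC_W\to\cD$ the composite $GP_W$ is cocontinuous and inverts $W$, so precomposition with $P_W$ defines the comparison functor $\fun{\cC_W}{\cD}\cc \to \fun\cC\cD\cc^W$ that I must show is an equivalence.

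Next I would invoke the universal property of the reflective localization itself. Writing $S$ for the class of maps of $\cC$ inverted by $P_W$, the functor $P_W$ exhibits $\cC_W$ as the localization $\Loc\cC S$, so restriction along $P_W$ is an equivalence $\fun{\cC_W}{\cD} \simeq \fun\cC\cD^S$. Because $\cC_W$ is reflective, its colimits are computed by applying $P_W$ to colimits taken in $\cC$; using this together with $P_W\iota\simeq\Id$ one checks that the equivalence restricts to cocontinuous functors on each side (if $F$ is cocontinuous and inverts $S$, its factorization $\bar F=F\iota$ sends $\colim^{\cC_W}X_i=P_W\colim^\cC\iota X_i$ to $\colim^\cD \bar F X_i$). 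It therefore suffices to prove $\fun\cC\cD\cc^S=\fun\cC\cD\cc^W$; the inclusion $\fun\cC\cD\cc^S\subset\fun\cC\cD\cc^W$ is immediate from $W\subset S$.

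The heart of the argument is the reverse inclusion. The class $S$ is the strongly saturated class generated by $W$: the smallest class containing $W$, closed under the $2$-out-of-$3$ property and under colimits in $\Arr\cC$, equivalently the class of maps inverted by the localization whose local objects are the $W$-local objects $\cC_W$. Given a cocontinuous $F\colon\cC\to\cD$, the class $\{f\mid F(f)\text{ invertible}\}$ is itself strongly saturated: it contains the isomorphisms, satisfies $2$-out-of-$3$, and is closed under colimits in $\Arr\cC$, since $F$ induces a cocontinuous functor $\Arr\cC\to\Arr\cD$ and the invertible maps form a colimit-closed replete subcategory of $\Arr\cD$ (they are the left class of the system $(iso,all)$, stable by colimits by \cref{lem:prop-ufs}\ref{lem:prop-ufs:2}). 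If this class contains $W$ it must then contain its saturation $S$, which is what is needed. Combining the three steps yields $\fun{\cC_W}{\cD}\cc \simeq \fun\cC\cD\cc^S = \fun\cC\cD\cc^W$ naturally in $\cD$, so $P_W$ represents the cocontinuous localization and $\cC_W=\relDKLoc{cc}{}\cC W$.

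I expect the identification $S=\bar W$ together with the strong saturation of the $F$-inverted class to be the main obstacle, precisely because one cannot argue from the explicit colimit/coend formula for $P_W X$ coming from the $K$-construction: that formula involves pullback-homs $\pbh w-$, built from limits that a merely cocontinuous $F$ need not preserve. The saturation argument sidesteps this by reasoning at the level of the inverted classes rather than the pointwise construction. Finally, the closing assertion is formal: an accessible cocontinuous localization is by definition of the form $\cC\to\relDKLoc{cc}{}\cC W$ for some small $W$ with small domains and codomains, hence coincides with the reflective localization $P_W$ and is in particular reflective.
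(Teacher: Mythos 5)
Your reduction is sound as far as it goes: $P_W$ inverts $W$, the universal property of a reflective localization gives the equivalence $\fun{\cC_W}{\cD}\simeq \fun{\cC}{\cD}^S$ for $S$ the class of $P_W$\=/inverted maps, and your verification that this restricts to cocontinuous functors on both sides is correct. But the heart of the statement --- that a cocontinuous $F$ inverting $W$ inverts all of $S$ --- rests on the identification of $S$ with the strongly saturated class $\overline W$ generated by $W$, which you yourself flag as ``the main obstacle'' and never prove. In the generality of the corollary ($\cC$ merely cocomplete, $W$ a small diagram with small domains and codomains) there is no citable result to fall back on: \cite[5.5.4.15]{Lurie:HTT} assumes presentability, and its proof is in any case the very small-object-argument analysis you decline to perform. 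Since the only access to the unit $X\to P_WX$ is through its construction, the saturation argument does not sidestep anything: to get $S\subset \overline W$ (or, what is all you actually need, that $F$ inverts the units, after which 2\=/out\=/of\=/3 handles a general map of $S$) one must trace the $k$\=/construction.

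Moreover, your stated reason for avoiding that route is mistaken. The coend formula $\int^w \pbh w f \pp w$ does involve pullback-homs, but $\pbh w f$ is an object of $\Arr \cS$ used only as a tensor coefficient: the operations actually applied to maps of $\cC$ are pushout-products with maps of spaces, coends, cobase changes, and transfinite compositions --- all colimit constructions. A cocontinuous $F$ therefore satisfies $F(u\pp g)\simeq u\pp F(g)$ for $u$ in $\Arr\cS$, and \cref{lem:abs-iso} then shows that $F$ inverts $\pbh w f \pp w$ whenever $F$ inverts $w$; one never needs $F$ to preserve a pullback-hom. This is precisely how the paper argues: it shows that the class $\overline W$ of maps inverted by the cocontinuous localization $\cC\to\relDKLoc{cc}{}\cC W$ is stable under pushout-product by maps of $\Arr\cS$, coends, and transfinite compositions, and concludes from the explicit construction in \cref{thm:!SOA2} that each unit $X\to P_WX$ lies in $\overline W$, so that the localization factors through $P_W$. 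Supplying this computation --- applied either to $\overline W$ as in the paper, or equivalently to your $F$\=/inverted class --- is exactly what is needed to close your gap; without it, the proposal establishes only the easy inclusion $\fun{\cC}{\cD}\cc^S\subset\fun{\cC}{\cD}\cc^W$.
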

\begin{proof}
The reflection $P_W:\cC\to \cC_W$ does invert the maps in $W$, hence it factors through a functor $\relDKLoc{cc}{} \cC W \to \cC_W$.
Reciprocally, the inclusion $\cC_W\subset \cC$ induces a functor $\cC_W\to \relDKLoc{cc}{} \cC W$.
We need to prove that these two functors are inverse to each other. 
$P_W$ induces the identity $\cC_W\to \cC\to \cC_W$. 
This proves that $\cC_W\to \relDKLoc{cc}{} \cC W \to \cC_W$ is the identity.

In order to prove that $\relDKLoc{cc}{} \cC W \to \cC_W\to \relDKLoc{cc}{} \cC W$ is the identity, 
it is enough to prove that $\cC \to \cC_W\to \relDKLoc{cc}{} \cC W$ is the localization functor.
Let $\overline W \subset \Arr \cC$ be the category of maps inverted by $\cC\to \relDKLoc{cc}{} \cC W$.
$\overline W$ is stable by composition and because the localization is cocontinuous, it is also stable by colimits in $\Arr \cC$.
We deduce that $\overline W$ is stable by pushout-product by maps in $\Arr \cS$, by coend and by transfinite compositions.
For any object $X$ in $\cC$, \cref{thm:!SOA2} construct the map $X\to P_WX$ as a
transfinite composition of coends of pushout-products of maps in $\overline W$, hence it is in $\overline W$.
Therefore, the functor $\cC\to \relDKLoc{cc}{} \cC W$ sends it to an invertible map.
This proves that this functor factors through $P_W:\cC\to \cC_W$ and that the composite $\cC \to \cC_W\to \relDKLoc{cc}{} \cC W$ is the localization functor $\cC\to \relDKLoc{cc}{} \cC W$.
\end{proof}

\medskip

When a category $\cC$ is not cocomplete, the SOA cannot be applied to compute localizations.
However, we can always cocomplete $\cC$ and apply the SOA there.
The following result gives a way to compute arbitrary localizations by means of reflective localizations.

\begin{cor}[Computation of arbitrary localizations]
\label{cor:arbitrary-loc}
Let $C$ be a small category and $W\to \Arr C$ a small diagram.
The localization $\DKLoc C W$ is the full subcategory of $\P C _W$ spanned by the image of $C\to \P C \to \P C_W$.
In particular, the space of morphisms $\map xy$ in $\DKLoc C W$ can be computed as the space of morphisms $\map {P_Wx}{P_Wy}=\map x{P_Wy}$ in $\P C$.
\end{cor}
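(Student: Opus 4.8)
The plan is to identify $\DKLoc C W$ with the essential image of the composite $C\to \P C\to \P C_W$ by exhibiting $\P C_W$ as the free cocompletion of $\DKLoc C W$ and then invoking the Yoneda embedding. Throughout I write $y\colon C\to \P C$ for the Yoneda embedding, $P_W\colon \P C\to \P C_W$ for the reflection of \cref{thm:orthogonal-reflection} (applied to the image of $W$ under $y$, whose domains and codomains are representable, hence small), and $j:=P_W\circ y\colon C\to \P C_W$. Since $\P C_W$ is the cocontinuous localization $\relDKLoc{cc}{}{\P C}{W}$ by \cref{cor:fundaCT}, the reflection $P_W$ inverts the image of $W$, so $j$ inverts $W$; by the universal property of the localization $\gamma\colon C\to \DKLoc C W$, the functor $j$ factors essentially uniquely as $j=\bar\jmath\circ\gamma$ for a functor $\bar\jmath\colon \DKLoc C W\to \P C_W$.

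First I would show that $\bar\jmath$ exhibits $\P C_W$ as the free cocompletion of $\DKLoc C W$. For any cocomplete $\cD$, restriction along $P_W$ gives $\fun{\P C_W}\cD\cc\simeq \fun{\P C}\cD\cc^W$ (this is the content of \cref{cor:fundaCT}), and restriction along $y$ gives $\fun{\P C}\cD\cc^W\simeq \fun C\cD^W$ (freeness of $\P C$, the two $W$\=/inverting conditions matching since $y$ is the identity on the maps of $W$); their composite is restriction along $j$. On the other hand, the universal property of $\DKLoc C W$ gives $\fun{\DKLoc C W}\cD\simeq \fun C\cD^W$ by restriction along $\gamma$. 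Because $j=\bar\jmath\circ\gamma$, restriction along $j$ equals restriction along $\bar\jmath$ followed by restriction along $\gamma$; as both $j^{*}$ and $\gamma^{*}$ are equivalences, so is restriction along $\bar\jmath$, naturally in $\cD$. This is precisely the universal property characterizing presheaf categories, so there is an equivalence $\P{\DKLoc C W}\simeq \P C_W$ carrying the Yoneda embedding of $\DKLoc C W$ to $\bar\jmath$.

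It then follows that $\bar\jmath$ is fully faithful, being equivalent to a Yoneda embedding, so $\DKLoc C W$ is equivalent via $\bar\jmath$ to the full subcategory of $\P C_W$ spanned by the essential image of $\bar\jmath$. Since $\gamma$ is essentially surjective, that image coincides with the image of $\bar\jmath\circ\gamma=j$, namely the image of $C\to \P C\to \P C_W$, which is the first assertion. For the last statement, full faithfulness of $\bar\jmath$ gives $\map xy\simeq \map{P_W x}{P_W y}$ (the former computed in $\DKLoc C W$, the latter in $\P C_W$, equivalently in $\P C$), and the reflective adjunction $P_W\dashv(\P C_W\subset \P C)$ together with the locality of $P_W y$ yields $\map{P_W x}{P_W y}\simeq \map{x}{P_W y}$, as claimed.

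I expect the main obstacle to be the middle paragraph: one must check that the displayed equivalences of functor categories are genuinely natural in $\cD$ and all realized by the relevant restriction functors, so that the statement ``restriction along $\bar\jmath$ is an equivalence for every cocomplete $\cD$'' really does trigger the $\infty$\=/categorical characterization of presheaf categories as free cocompletions and identifies $\bar\jmath$ with the Yoneda embedding. Everything else — the factorization of $j$ through $\gamma$, the essential surjectivity of $\gamma$, and the reflective computation of mapping spaces — is routine.
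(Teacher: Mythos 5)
Your proposal is correct and follows essentially the same route as the paper: your middle paragraph is exactly the paper's chain of equivalences $\fun{\relDKLoc{cc}{}{\P C}{W}}{\cD}\cc = \fun{\P C}{\cD}\cc^W = \fun{C}{\cD}^W = \fun{\DKLoc{C}{W}}{\cD} = \fun{\P{\DKLoc{C}{W}}}{\cD}\cc$ combined with \cref{cor:fundaCT}, and your final step (fully faithful $\bar\jmath$ after essentially surjective $\gamma$) is the paper's square identifying $\DKLoc{C}{W}$ with the image of $C\to \P C\to \P C_W$. The only difference is presentational: you verify that the equivalences are implemented by restriction functors and invoke the free-cocompletion characterization explicitly, a point the paper leaves implicit.
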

\begin{proof}
Recall that a localization $C\to \DKLoc CW$ is an essentially surjective functor.
The canonical diagram
\[
\begin{tikzcd}
C \ar[d, "\text{ess.surj.}"']\ar[r,hook]& \P C \ar[d]\\
\DKLoc C W \ar[r,hook, "\text{f.f.}"]& \P {\DKLoc C W}
\end{tikzcd}
\]
proves that $\DKLoc C W$ is the image of the functor $C\to \P C \to \P {\DKLoc C W}$.
The canonical equivalences
\begin{align*}
\fun {\relDKLoc{cc}{} {\P C} W} \cE \cc 
&= \fun {\P C} \cE \cc ^W \\
&= \fun C \cE ^W \\
&= \fun {\DKLoc C W} \cE \\
&= \fun {\P {\DKLoc C W}} \cE \cc \\
\end{align*}
prove that $\P {\DKLoc C W} = \relDKLoc{cc}{} {\P C} W$.
\Cref{cor:fundaCT} applied to $W\to \Arr {\P C}$ proves that $\relDKLoc{cc}{} {\P C} W = \P C _W$.
Altogether, this proves that $\DKLoc C W$ is the image of $C\to \P C \to \P C_W$.
The last result is a direct consequence.
\end{proof}

More generally, if $C$ has already colimits of some class $\Kappa$ (like finite
sums or finite colimits), we can replace $\P C$ in the \cref{cor:arbitrary-loc} by the cocompletion of $C$ preserving the colimits in the class $\Kappa$.
This completion is $\relP \Kappa C := \DKLoc {\P C} {\Kappa(C)}$ where $\Kappa(C)$ is the class of $\Kappa$\=/cocones in $C$.

\begin{cor}
\label{cor:Kappa-loc}
Let $C$ be a $\Kappa$\=/cocomplete small category and $W\to \Arr C$ a small diagram.
The localization $\relDKLoc \Kappa {} C W$ is the full subcategory of $\relP \Kappa C _W$ spanned by the image of $C\to \relP \Kappa C _W$.
\end{cor}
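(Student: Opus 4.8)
The plan is to adapt the proof of \cref{cor:arbitrary-loc} almost verbatim, replacing the full presheaf cocompletion $\P C$ by the $\Kappa$\=/preserving cocompletion $\relP \Kappa C = \DKLoc {\P C}{\Kappa(C)}$. Two universal properties drive the argument. First, $\relP \Kappa C$ is the free cocompletion of $C$ preserving the $\Kappa$\=/colimits that already exist in $C$: for any cocomplete $\cE$, restriction along $C\to \relP \Kappa C$ induces an equivalence $\fun {\relP \Kappa C}\cE\cc \simeq \fun C\cE_\Kappa$, where $\fun C\cE_\Kappa$ denotes the category of $\Kappa$\=/cocontinuous functors and the superscript $W$ will restrict to those inverting $W$. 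Second, by construction $\relDKLoc \Kappa{}CW$ represents $\Kappa$\=/cocontinuous functors inverting $W$, \ie $\fun {\relDKLoc \Kappa{}CW}\cE_\Kappa \simeq \fun C\cE_\Kappa^W$. Throughout I would use that the canonical embeddings $C\to \relP \Kappa C$ and $D\to \relP \Kappa D$ (for $D$ any $\Kappa$\=/cocomplete category) remain fully faithful, and that a localization such as $C\to \relDKLoc \Kappa{}CW$ is essentially surjective.

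The key identification to establish is $\relP \Kappa {\relDKLoc \Kappa{}CW} = \relDKLoc{cc}{}{\relP \Kappa C}W$, the $\Kappa$\=/analogue of the central equivalence of \cref{cor:arbitrary-loc}. I would obtain it from the chain, natural in a cocomplete test category $\cE$,
\begin{align*}
\fun {\relDKLoc{cc}{}{\relP \Kappa C}W}\cE\cc
&= \fun {\relP \Kappa C}\cE\cc^W\\
&= \fun C\cE_\Kappa^W\\
&= \fun {\relDKLoc \Kappa{}CW}\cE_\Kappa\\
&= \fun {\relP \Kappa {\relDKLoc \Kappa{}CW}}\cE\cc,
\end{align*}
where the first line is the defining universal property of the cocontinuous localization, the second and fourth are the universal property of the $\Kappa$\=/cocompletion, and the third is the universal property of $\relDKLoc \Kappa{}CW$. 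Since $W$ lies in $\Arr C$, a cocontinuous functor out of $\relP \Kappa C$ inverts $W$ iff its $\Kappa$\=/cocontinuous restriction to $C$ does, which justifies the passage from the first to the second line. Applying \cref{cor:fundaCT} to the diagram $W\to \Arr {\relP \Kappa C}$ — whose domains and codomains are objects of $C$, hence small in the presentable category $\relP \Kappa C$ — then yields $\relDKLoc{cc}{}{\relP \Kappa C}W = \relP \Kappa C _W$.

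Finally, I would conclude exactly as in \cref{cor:arbitrary-loc}. The commutative square
\[
\begin{tikzcd}
C \ar[d, "\text{ess.surj.}"']\ar[r, hook]& \relP \Kappa C \ar[d]\\
\relDKLoc \Kappa{}CW \ar[r, hook, "\text{f.f.}"]& \relP \Kappa {\relDKLoc \Kappa{}CW}
\end{tikzcd}
\]
together with the identification $\relP \Kappa {\relDKLoc \Kappa{}CW} = \relP \Kappa C _W$, exhibits $\relDKLoc \Kappa{}CW$ as precisely the essential image of $C\to \relP \Kappa C \to \relP \Kappa C _W$, which is the claim. The main obstacle I anticipate is not any single step but making the universal-property bookkeeping precise: one must check that the two halves of the displayed chain are genuinely instances of the same representability (in particular that inverting the arrows of $W\subset \Arr C$ commutes with the passage between $\relP \Kappa C$ and $C$), and that full faithfulness of the Yoneda-type embedding survives the $\Kappa$\=/localization defining $\relP \Kappa C$. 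Once these are in place, the essentially-surjective/fully-faithful factorization delivers the full-subcategory description.
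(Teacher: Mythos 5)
Your proposal is correct and is essentially the paper's own argument: the paper's proof of \cref{cor:Kappa-loc} is literally ``Similar to that of \cref{cor:arbitrary-loc}'', and you have spelled out that adaptation faithfully, including the key chain of universal-property equivalences yielding $\relP \Kappa {\relDKLoc \Kappa{}CW} = \relDKLoc{cc}{}{\relP \Kappa C}W$, the application of \cref{cor:fundaCT} (valid since all objects of the presentable category $\relP \Kappa C$ are small), and the essentially-surjective/fully-faithful square.
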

\begin{proof}
Similar to that of \cref{cor:arbitrary-loc}. 
\end{proof}

\subsubsection{Example: the external groupoid of a category}
\label[example]{ex:SOA:shape-cat}

Let $C$ be a small category and $W= \Arr\cC$.
The localization $\DKLoc C W$ is the \oo groupoidal reflection of $C$, that we denote by $|C|$.
The category $\P C_W = \P {|C|}$ considered in \cref{cor:arbitrary-loc} is the
category of {\em local systems on $C$}, that is, the full subcategory of $\P C$
spanned by $C$\=/diagram made of invertible maps only.
The image of $C\to \P C_W$ send an object $x$ of $C$ to the universal cover of $|C|$ based at the point $x$.

\subsubsection{Example: truncations of finite spaces}
\label[example]{ex:SOA:Fin-truncation}

Let $\Fin\subset\cS$ be the full subcategory spanned by finite spaces.
$\Fin$ is stable by finite colimits and its ind-completion
(\ie under $\omega$\=/filtered colimits) is $\Ind \Fin = \cS$.
Let $W$ be the single map $s^{n+1}:S^{n+1} \to 1$.
We have $\relDKLoc {\text{cc}} {} \cS {s^{n+1}} = \cS\truncated n$.
Let $P_n:\cS\to \cS$ be the $n$\=/truncation reflector.
Applying \cref{cor:Kappa-loc}, we find that $\relDKLoc {\text{rex}} {} \Fin {s^{n+1}}$ is the full subcategory of $\cS \truncated n$ spanned by the image of $\Fin$, that is the full subcategory of $\cS\truncated n$ spanned by the $P_n(X)$ for $X$ in $\Fin$.

The localization $\Fin \to \relDKLoc {\text{rex}} {} \Fin {s^{n+1}}$, that is, the nullification of $S^{n+1}$, can be understood as sending a finite space $X$ to its ``formal" $n$\=/truncation.
This localization is not reflective, even though it preserves all colimits existing in $\Fin$.
This is because the construction of the reflection by a small object argument needs a countable colimit.
The classical way to say this is that the $n$\=/truncation of a finite space is not finite in general.
In any case, the ``formal" $n$\=/truncation can be ``realized" as an Ind-object in $\Fin$, that is, a  non-finite space.
This is the meaning of the embedding $\relDKLoc {\text{rex}} {} \Fin {s^{n+1}}\subset \cS\truncated n$ provided by \cref{cor:Kappa-loc}.
In other terms, the localization $\Fin \to \relDKLoc {\text{rex}} {} \Fin {s^{n+1}}$ has a right ind-adjoint.

\subsubsection{Example: localizations stable by composition}
\label[example]{ex:SOA:W-composition}

Let $\cC = \P C$ and $W\subset \Arr C$ be a full subcategory containing all the identity maps of $C$.
The localization $\P C_W = \P{\DKLoc C W}$ is spanned by all $F$ which satisfy $F(A)\simeq F(B)$ for any $A\to B$ in $W$.
The diagram $W$ is a pre-modulator and we get
\[
\Plusvar F(X) = \colim_{Y\to X \in W(X)\op} F(Y)
\]
Let $W^{(2)}\subset C^{\to\to}$ the category of pairs of composable arrows in $C$ which are in $W$.
If $W$ is stable by composition, we have a functor $W^{(2)}\to W$.
This functor has a right adjoint given by 
\begin{align*}
W &\tto W^{(2)} \\
A\xto w B & \mto A \xto w B \xto = B
\end{align*}
In particular, the functor $W\to W^{(2)}$ is cofinal.
We deduce that the plus-construction is idempotent:
\begin{align*}
\Plusplusvar F(X)
&= \colim_{Y\to X \in W(X)\op} \Plusvar F(Y)\\
&= \colim_{Y\to X \in W(X)\op} \colim_{Z\to Y \in W(Y)\op} F(Z)\\
&= \colim_{Z\to Y\to X \in W^{(2)}(X)\op} F(Z)\\
&= \colim_{Z\to X \in W(X)\op} F(Z) \\
&= \Plusvar F(X).
\end{align*}
Thus, if $W$ is a pre-modulator and stable by composition, the localization is given by a single application of the plus-construction.
This situation is related to \cref{prop:idempotent+}, but the fact that $W$ is stable by composition simplifies things.
Using \cref{cor:arbitrary-loc}, we deduce that the hom spaces in $\DKLoc C W$ are given by a ``calculus of fractions"
\[
\map X {\Plusvar Y} = \colim_{Z\to X \in W(X)\op} \map Z Y.
\]

\paragraph{De~Rham shape}
Here is an application in algebraic geometry.
Let $C=\textsf{Ring}_{\textsf{fp}}\op$ be the {\sl opposite} of the category of finitely presented commutative rings, $\cC$ is then the category of functors $\fun {\textsf{Ring}_{\textsf{fp}}}\cS$.
Let $W_1$ be the class of first order nilpotent extensions of rings and $W$ the class of all nilpotent extensions.
Since any nilpotent extension decomposes in a tower of first order nilpotent extensions, the class $W$ is the closure of $W_1$ for composition.
Notice that, for any ring $A$ in $C\op$, the category $W(A)\op$ has a terminal object given by the reduction of $A$.
We have an equivalence of localizations of $\DKLoc C {W_1} = \DKLoc C W$.
The category $\DKLoc {\P C} W = \P C_W$ is spanned by functors $F:C\op \to \cS$ which are {\em formally étale} (over 1).
These functors can be defined from a lifting condition with respect to $W_1$ or to $W$.
According the the previous computation, the reflection is given directly by $F\mapsto \Plusoriginal F$.
And, using the reduction of a ring, we have simply $\Plusvar F(A) = F(A_{red})$.
This is the formula defining the {\em de~Rham shape} of the presheaf
$F$~\cite{Simpson:DR,Schreiber:cohesion}.
By \cref{cor:arbitrary-loc}, $\DKLoc C W$ is equivalent to the full category of $\DKLoc {\P C} W = \P C_W$ spanned by the de~Rham shapes of representable functors.

\paragraph{Nullification}
The situation applies also in the nullification of a representable object in $\P C$ (\eg the $\AA^1$\=/localization in algebraic geometry).
Let $A$ be an object of $C$ to be nullified.
We assume $C$ has cartesian products and define $W$ to be the class of maps $A^n\times X \to X$ for $X$ in $C$, which is stable by composition.
The $A$\=/localization $P_A:\P C \to \P C$ of \cref{ex:SOA:nullification}, generated by the single map $A\to 1$, is equivalently generated by $W$.
We deduce that, for a presheaf $F$,
\[
\Plusvar F(X) = \colim_{\{A^n\times X\}\op} F(A^n\times X)
\]

\subsubsection{Example: localizations in a monoidal category}
\label[example]{ex:fibrations-cat}

Let $(\cC, \otimes, \bbone)$ be a presentable, additive, symmetric monoidal closed category
(\eg the category of modules over a commutative ring, a stable category, the category of cocomplete categories...)
Because $\cC$ is additive, there exists a zero object 0.
For any map $f:M\to N$, the {\em kernel} of $f$ is defined by the fiber product
\[
\begin{tikzcd}
\ker f \ar[d] \ar[r] \pbmark & M \ar[d,"f"]\\
0 \ar[r] & N
\end{tikzcd}
\]

The endomorphisms of the unit $\bbone$ form a commutative monoid.
We fix such an endomorphism $a:\bbone \to \bbone$ and consider the morphisms in $\cC$ with the (enriched) lifting property with respect to $a$.
\[
\begin{tikzcd}
\bbone \ar[d,"a"'] \ar[r]& M \ar[d,"f"]\\
\bbone \ar[r] \ar[ru, dashed] & N
\end{tikzcd}
\]
In the case $N=0$, the unique lifting condition is equivalent to the map $\pbh a {M\to 0} = M\xto {a} M$ being invertible.
We shall call {$a$-divisible} such an $M$.
For a general map $f:M\to N$, the unique lifting condition is equivalent to the kernel of $f:M\to N$ being $a$-divisible.

Kelly's construction for $M\to 0$ gives a diagram 
\[
\begin{tikzcd}
M\underset{a,M,a}\coprod M \ar[d,"f"'] \ar[r,"a"]& M \ar[d,"f"]\\
M \ar[r,"a"]& K(M) \pomark
\end{tikzcd}
\]
where the pushout in the upper left corner is that of the diagram $M \xot a M \xto a M$.
A direct computation proves that $K(M) = M$ with the horizontal bottom map being the identity and the right vertical map being the action of $a$.
The factorization $M\to M_a\to 0$ is then given by the colimit of the sequence
\[
M_a = \colim \left( M \nstto a M \nstto a M \nstto a \dots \right).
\]
Recall that from the point of view where $\cC$ is enriched over itself, the unit $\bbone$ is always of finite presentation.
This proves that it is enough to consider a countable sequence.
The resulting functor $(-)_a:\cC\to \cC$ is a reflection of $\cC$ into the full subcategory of $a$-divisible objects.

\paragraph{Localization of modules}
If $\cC$ is the category of $A$-modules, for a commutative monoid $A$, we have $A = End(\bbone)$ and $a:\bbone \to \bbone$ is simply an object of $A$.
For an $A$-module $M$, the object $M_a$ is the localization $M[a^{-1}]$ of the module $M$ with respect to $a$.
Let $\bbone[a^{-1}] = \bbone_a$. 
This is a monoid such that the $a$-divisible modules are exactly the $\bbone[a^{-1}]$-modules.
In particular we have $M[a^{-1}] = M \otimes \bbone[a^{-1}]$.

\paragraph{Stabilization of categories}
If $\cC$ is the monoidal category of pointed cocomplete categories, the unit is the category $\cS\pointed$ of pointed spaces. 
Let $a$ be the smash product with $S^1$.
For a pointed category $\cM$, the corresponding map $\Sigma:\cM\to \cM$ is the suspension functor, left adjoint to the loop space functor $\Omega:\cM\to \cM$.
The $S^1$-divisible categories are the stable categories and the category $\cM_{S^1}$ is the stabilisation $\text{St}(\cM)$ of $\cM$
\begin{align*}
\text{St}(\cM) 
&= \colim \left( \cM \nstto {\Sigma} \cM \nstto {\Sigma} \cM \nstto {\Sigma} \dots \right)
\\
&= \lim \left( \dots \nstto {\Omega} \cM \nstto {\Omega} \cM \nstto {\Omega} \cM \right).    
\end{align*}
Let $\Sp = \text{St}(\cS\pointed)$.
It is a monoidal category and the $S^1$-divisible categories are exactly the $\Sp$-modules.
In particular, we have $\text{St}(\cM) = \cM\otimes \Sp$, see~\cite[Example 4.8.1.23]{Lurie:HA} and \cite[2.1]{Robalo:motives}.

\subsection{Generation of modalities}
\label{sec:gen-mod}

In this section, we use \cref{thm:+construction} to give sufficient conditions on $W$ for the factorization system generated by to be a modality.
We assume that $\cC$ is a presentable category with universal colimits.

\begin{defi}[Modulator]
\label{defi:modulator} 
Let $C$ be generators for $\cC$.
We shall say that a pre-modulator $W\to \Arr \cC$ is a {\em modulator} if 
\begin{enumerate}[label=(\roman*), leftmargin=*]
\setcounter{enumi}{4}
\item \label{enum:modulator:5} the codomain functor $\t:W \to C$ is a fibration.
\end{enumerate}
That is if, for any $w$ in $W$, for any $X$ in $C$ and for any map $X\to \t w$, the base change $w':\s w\times_{\t w}X\to X$ is in the (essential) image of $W$.
\end{defi}

\begin{rem}
\label{rem:modulator=univalent-fam}
As mentioned in \cref{rem:internal+}, the data of a modulator is equivalent to that of a small full subuniverse $\WW \subset \UU$ containing the terminal object 1.
This is also equivalent to the data of a univalent family containing the terminal object.
\end{rem}

\begin{rem}
\label{rem:enriched-modulator}
The notion of modulator is our tool to generate modalities, that is, factorization systems stable by base change.
In order to generate factorization system that would only be {\em enriched} in the sense \ref{sec:enrichedFS} and \cref{table:strengthFS}, one would need to consider pre-modulators satisfying the extra assumption that $W$ is stable by base change along projections $X\times Y \to X$ in $C$.
\end{rem}

\begin{lemma}
\label{lem:BC-cart}
Given a cartesian square in $\Arr \cC$
\[
\begin{tikzcd}
g'\ar[r]\ar[d] \pbmark & f'\ar[d]\\
g\ar[r] & f
\end{tikzcd}
\]
such that the map $f'\to f$ is cartesian, then the map $g'\to g$ is also cartesian.
\end{lemma}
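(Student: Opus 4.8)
The plan is to reduce the statement to the pointwise computation of pullbacks in the arrow category, together with the pasting law for pullbacks. First I would recall that limits in $\Arr \cC = \fun I \cC$ are computed objectwise, so both the source and target functors $\s,\t\colon \Arr \cC \to \cC$ preserve pullbacks. Applying $\s$ and $\t$ to the given cartesian square in $\Arr \cC$ thus yields two pullback squares in $\cC$, which I record as the identifications
\[
\s g' \;\simeq\; \s g \times_{\s f} \s f'
\qquad\text{and}\qquad
\t g' \;\simeq\; \t g \times_{\t f} \t f' .
\]
The full datum of the square in $\Arr \cC$ is a commutative cube in $\cC$; the two identifications above are two of its faces, and the hypothesis that $f'\to f$ is cartesian is a third face, namely $\s f' \simeq \s f \times_{\t f} \t f'$ (the map $\s f' \to \s f$ appearing in the first identification being exactly the projection of this pullback). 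What I must prove is that the remaining face $g' \to g$ is cartesian, i.e. that $\s g' \simeq \s g \times_{\t g} \t g'$.

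Next I would run the computation purely by the pasting law. Substituting the cartesian hypothesis into the source identification and cancelling the pullback over $\s f$ gives
\[
\s g' \;\simeq\; \s g \times_{\s f}\bigl(\s f \times_{\t f} \t f'\bigr) \;\simeq\; \s g \times_{\t f} \t f',
\]
where the structure map $\s g \to \t f$ is the composite $\s g \to \s f \xto{f} \t f$. On the other hand, cancelling the target identification gives
\[
\s g \times_{\t g} \t g' \;\simeq\; \s g \times_{\t g}\bigl(\t g \times_{\t f} \t f'\bigr) \;\simeq\; \s g \times_{\t f} \t f',
\]
now with structure map $\s g \xto{g} \t g \to \t f$. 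The two pullbacks $\s g \times_{\t f}\t f'$ therefore coincide as soon as these two maps $\s g \to \t f$ agree.

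The only real point, and the one place the remaining (a priori non-cartesian) face enters, is that these two composites $\s g \to \t f$ are equal: this is precisely the commutativity of the square underlying the morphism $g \to f$, which holds because that morphism is an edge of the given square in $\Arr \cC$. Combining the two displays then yields $\s g' \simeq \s g \times_{\t g} \t g'$, i.e. $g' \to g$ is cartesian. I expect the only delicate part to be the bookkeeping of which structure maps $\s g \to \t f$ are used in the two cancellations, and ensuring the pasting law is invoked in its $\infty$\=/categorical form; no smallness or presentability hypotheses are needed.
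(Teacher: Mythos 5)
Your proposal is correct and is essentially the paper's proof: the paper disposes of this lemma with a one-line appeal to the cancellation property of cartesian squares, and your argument is exactly that, made explicit — pullbacks in $\Arr \cC$ are computed pointwise, and your two pasting-law computations identifying both $\s g'$ and $\s g \times_{\t g} \t g'$ with $\s g \times_{\t f} \t f'$ amount to pasting the hypothesis face of the cube with the pointwise-cartesian faces and cancelling along the target face. Your closing caveat is the right one to flag: the composite identification must be the gap map of the square underlying $g' \to g$, which holds because every isomorphism in your chain is a canonical gap map (with the two structure maps $\s g \to \t f$ identified by the commutativity homotopy carried by the morphism $g \to f$).
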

\begin{proof}
By the cancellation property of cartesian squares.  
\end{proof}

\begin{lemma}
\label{lem:Kan-opfib}
Let $C$ and $D$ be two small categories, $F:C\to D$ be an op-fibration in categories, 
and $G:C\to \cC$ be a functor with values in a cocomplete category.
The left Kan extension of $G$ along $F$ is given point wise by taking the colimit along the fibers of $F$: 
\[
Lan_FG\,(d) = \colim_{c:F(d)} G(c).
\]
\end{lemma}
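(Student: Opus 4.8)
The plan is to reduce the pointwise formula for the left Kan extension, which is a colimit over a comma category, to a colimit over the fibre of $F$, by exhibiting the inclusion of the fibre into the comma category as a cofinal functor. First I would invoke the standard pointwise formula for left Kan extensions: since $\cC$ is cocomplete, $Lan_F G$ exists and is computed by
\[
Lan_F G(d) = \colim_{(c,\phi)\in F\comma d} G(c),
\]
where $F\comma d$ is the comma category whose objects are pairs $(c,\phi)$ with $c$ in $C$ and $\phi:F(c)\to d$ in $D$, the diagram being the composite of the projection $\pi:F\comma d\to C$, $(c,\phi)\mapsto c$, with $G$. Next I would introduce the fibre $C_d:=F\inv(d)$ and the functor $\iota:C_d\to F\comma d$ sending $c$ to $(c,\Id_d)$, noting that $\pi\circ\iota$ is just the inclusion $C_d\hookrightarrow C$, so that once $\iota$ is known to be cofinal one gets
\[
Lan_F G(d) = \colim_{(c,\phi)\in F\comma d} G(c) = \colim_{c\in C_d} G(c),
\]
which is exactly the asserted formula.

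The heart of the argument, and the step I expect to be the main obstacle, is proving that $\iota$ is cofinal, and this is where the op-fibration hypothesis enters. For each object $(c,\phi)$ of $F\comma d$, the morphism $\phi:F(c)\to d$ admits a cocartesian lift $\bar\phi:c\to c_\phi$ with $F(c_\phi)=d$; hence $c_\phi\in C_d$, and $\bar\phi$ defines a morphism $(c,\phi)\to(c_\phi,\Id_d)=\iota(c_\phi)$ in $F\comma d$. The universal property of cocartesian morphisms says precisely that, for every $c''$ in $C_d$, postcomposition with $\bar\phi$ identifies $\relmap{C_d}{c_\phi}{c''}$ with the space of morphisms $c\to c''$ in $C$ lying over $\phi$, \ie with the morphisms $(c,\phi)\to(c'',\Id_d)$ in $F\comma d$. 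This is exactly the statement that $(c_\phi,\bar\phi)$ is an initial object of the comma category $(c,\phi)\comma\iota$. The delicate point is to carry this out at the level of mapping \emph{spaces} rather than mere hom-sets, since we work in \oo categories; I would extract it directly from the \oo categorical universal property of $F$\=/cocartesian edges.

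Having produced an initial object in each $(c,\phi)\comma\iota$, I would conclude in two equivalent ways. On the one hand, an \oo category with an initial object is weakly contractible, so by the Joyal--Lurie cofinality criterion \cite{Lurie:HTT} the functor $\iota$ is cofinal. On the other hand, the same initial objects assemble the cocartesian lifts into a left adjoint to $\iota$, so $\iota$ is a right adjoint and hence cofinal, as recalled in the proof of \cref{thm:+construction}. Either way, the colimit over $F\comma d$ agrees with the colimit over $C_d$, which finishes the proof. The only genuinely non-formal input is the cofinality of the fibre inclusion; everything else is the pointwise Kan extension formula and the passage of colimits along a cofinal functor.
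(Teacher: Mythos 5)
Your proposal is correct and follows essentially the same route as the paper's proof: reduce to the pointwise Kan extension formula $Lan_FG(d)=\colim_{C\comma d}G$ and show the fibre inclusion $\iota:F(d)\to C\comma d$ is cofinal because the op-fibration hypothesis yields a left adjoint to $\iota$ via cocartesian lifts. The only difference is that you spell out the verification (cocartesian lifts as initial objects of the comma categories $(c,\phi)\comma\iota$, then Joyal--Lurie cofinality or the right-adjoint criterion), which the paper compresses into the single assertion that $\iota_d$ has a left adjoint and is hence cofinal.
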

\begin{proof}
Recall that the fiber $F(d)$ of $F:C\to D$ at $d$ and the ``fat'' fiber $C\comma d$ are defined by the pullbacks
\[
\begin{tikzcd}
F(d) \ar[r, "\iota_d"]\ar[d]\pbmark  
& C\comma d \ar[r]\ar[d]\pbmark 
& C \ar[d,"F"]
\\
\{d\} \ar[r]& D\comma d \ar[r]& D
\end{tikzcd}
\]
The general formula to compute left Kan extension is 
\[
Lan_FG(d) = \colim_{c:C\comma d} G(c).
\]
Let $F(d)$ be the fiber of $F:C\to D$ at $d$.
Because $F$ is an op-fibration the canonical functor $\iota_d:F(d)\to C\comma d$ has a left adjoint.
Hence $\iota_d$ is cofinal and we get the expected formula.
\end{proof}

\begin{lemma}
\label{lem:+cartesian}
Let $\cC$ be a presentable category with universal colimits, $W\to \Arr \cC$ a modulator and  $\alpha:g\to f$ a cartesian map in $\Arr \cC$.
Then, for any ordinal $\kappa$, the two following squares are cartesian.
\[
\begin{tikzcd}
\s g\ar[d] \ar[r] \ar[rd, phantom,"(1)"]
    & \s g^{+\kappa}\ar[d] \ar[r,"g^{+\kappa}"] \ar[rd, phantom,"(2)"]
        & \t g \ar[d] \\
\s f \ar[r]         & \s f^{+\kappa} \ar[r,"f^{+\kappa}"'] & \t f
\end{tikzcd}
\]
\end{lemma}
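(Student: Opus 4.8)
The plan is to prove both squares simultaneously by transfinite induction on $\kappa$, after reducing everything to square~$(2)$. Observe first that pasting squares $(1)$ and $(2)$ horizontally yields exactly the cartesian square $\alpha:g\to f$, since the top composite $\s g\to\s g^{+\kappa}\to\t g$ and the bottom composite $\s f\to\s f^{+\kappa}\to\t f$ are the factorisations of $g$ and of $f$ through their $\kappa$\=/th plus\=/construction. Hence, by the pasting (cancellation) law for cartesian squares — the same principle as \cref{lem:BC-cart} — once square~$(2)$ is known to be cartesian, square~$(1)$ follows automatically. Restated, it suffices to show that for every $\kappa$ the induced map $\alpha^{+\kappa}:g^{+\kappa}\to f^{+\kappa}$ in $\Arr\cC$ is cartesian; here I use that $\t{g^{+\kappa}}=\t g$ and $\t{f^{+\kappa}}=\t f$ for all $\kappa$ (the plus\=/construction fixes codomains, by \cref{thm:+construction}), so that square~$(2)$ is precisely the assertion that $\alpha^{+\kappa}$ is cartesian.

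For the induction, the base case $\kappa=0$ is the hypothesis that $\alpha$ is cartesian. The limit step is immediate from universal colimits: at a limit ordinal $\kappa$ one has $\s f^{+\kappa}=\colimit{\lambda<\kappa}\s f^{+\lambda}$ and $\s g^{+\kappa}=\colimit{\lambda<\kappa}\s g^{+\lambda}$ with constant codomains $\t f$ and $\t g$, so the inductive identifications $\s g^{+\lambda}=\t g\times_{\t f}\s f^{+\lambda}$ assemble, using that base change along $\t g\to\t f$ commutes with this colimit, into $\s g^{+\kappa}=\t g\times_{\t f}\s f^{+\kappa}$.

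The crux is the successor step, which is the Key Lemma that the plus\=/construction carries a cartesian map to a cartesian map: if $\beta:g\to f$ is cartesian then $\s g^{+}=\t g\times_{\t f}\s f^{+}$. I would prove this from the colimit formula $\s f^{+}=\colimit{W\comma f}\t w$ together with universal colimits, which give $\t g\times_{\t f}\s f^{+}=\colimit{W\comma f}\big(\t g\times_{\t f}\t w\big)$, the pullback taken along the structural maps $\t w\to\t f$. It then remains to identify this with $\s g^{+}=\colimit{W\comma g}\t{w'}$. The mechanism is the modulator condition: since $\t:W\to C$ is a fibration and $C$ generates $\cC$ under colimits, each term $\t g\times_{\t f}\t w$ is the colimit, over $C\comma(\t g\times_{\t f}\t w)$, of the codomains $X$ of the base changes $w_X\in W$ of $w$ along the maps $X\to\t w$ with $X\in C$ (these base changes lie in $W$ exactly by \cref{defi:modulator}, and each acquires a canonical structure map $w_X\to g$ because $\beta$ is cartesian). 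Reassembling fibrewise in the manner of \cref{lem:Kan-opfib}, these data organise into a functor $\int_{w\to f}C\comma(\t g\times_{\t f}\t w)\to W\comma g$ over $\cC$, which I would show is cofinal by exhibiting it as a right adjoint — the section $w'\mapsto(w',\mathrm{id})$ and the base\=/change maps $w_X\to w$ providing the unit and counit. Cofinality then yields $\t g\times_{\t f}\s f^{+}=\colimit{W\comma g}\t{w'}=\s g^{+}$, as required.

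The main obstacle is precisely this identification in the successor step: the naive base\=/change functor $W\comma f\to W\comma g$ is \emph{not} available, because pulling a map $w$ with codomain in $C$ back along $\t g\to\t f$ produces a codomain $\t g\times_{\t f}\t w$ that generally leaves $C$. The role of universal colimits and of the fibration condition of a modulator is exactly to resolve this pulled\=/back codomain back into $C$\=/generators and to re\=/index the resulting double colimit over $W\comma g$; verifying the cofinality of that re\=/indexing is the delicate point, and it is where \cref{lem:Kan-opfib} and the fact that right adjoints are cofinal do the real work.
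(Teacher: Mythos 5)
Your proof is correct and follows essentially the same route as the paper's: reduction to square $(2)$ by the cancellation property of cartesian squares, transfinite induction whose limit case uses universality of colimits, and a successor-step key lemma that the plus-construction preserves cartesian maps, proved by writing $\t g\times_{\t f}\s f^{+}=\colimit{W\comma f}(\t g\times_{\t f}\t w)$, resolving each pulled-back codomain into generators via the fibration condition \ref{enum:modulator:5} of a modulator, reassembling fiberwise in the style of \cref{lem:Kan-opfib}, and comparing with $W\comma g$ by a cofinality-via-left-adjoint argument. Your auxiliary Grothendieck construction $\int_{w\to f}C\comma(\t g\times_{\t f}\t w)$ is an equivalent repackaging of the paper's category $V$ of cartesian squares between modulator maps (its fibers over $W\comma f$, namely $C\comma\t(w\times_f g)$, are exactly the paper's categories of cartesian maps $v\to w\times_f g$ with $v\in W$), so the two arguments coincide up to bookkeeping.
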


\begin{proof}
By cancellation for cartesian squares, it is enough to prove that the square (2) is cartesian.
We prove it first for $\kappa = 1$.
We need to show that the canonical map $\s g^+ \to \s \Plusmap f \times_{\t f} \t g$ is invertible where $\s g^+ = \colimit {W\commaindex g} \t w$ and
\begin{align*}
\s \Plusmap f \times_{\t f} \t g 
&= \left(\colimit {W\commaindex f} \t w\right) \times_{\t f} \t g \\
&= \colimit {W\commaindex f} \left(\t w \times_{\t f} \t g \right) \\
&= \colimit {W\commaindex f} \t (w \times_fg).
\end{align*}
The map $\s g^+ \to \s \Plusmap f \times_{\t f} \t g$ is constructed as follows.
Given $v\to g$ in $W\comma g$, we get a square
\[
\begin{tikzcd}
v \ar[d, equal]\ar[r] & g \ar[d,"\alpha"] \\    
v \ar[r] & f
\end{tikzcd}
\]
and a map $v\to v \times_fg$.
These maps induce a map of diagrams
\[
\begin{tikzcd}
W\comma g \ar[rd] \ar[rr]   & \ar[d, "\Ra" description, phantom] & W\comma f \ar[ld]\\
& \Arr \cC
\end{tikzcd}
\]
and a map between their colimits
\[
\colimit {W\commaindex g} w \ntto \zeta \colimit {W\commaindex f} w \times_fg.
\]
The domain of $\zeta$ is the canonical map $\s g^+ \to \s \Plusmap f \times_{\t f} \t g$.
The result will be proven if we show that $\zeta$ is invertible in $\Arr \cC$.

We are going to do so by introducing an auxiliary diagram $V \to \Arr \cC$ and two morphisms of diagrams
\[
\begin{tikzcd}
W\comma g \ar[rrd]
&\ar[rd, "\La" description, phantom]
& V \ar[d] \ar[rr] \ar[ll] 
&\ar[ld, "\Ra" description, phantom]
& W\comma f \ar[lld]
\\
&& \Arr \cC
\end{tikzcd}
\]
that will induce isomorphisms between the colimits.
The category $V$ is defined as the full subcategory of $\Arr \cC \comma \alpha$ spanned by squares
\[
\begin{tikzcd}
v \ar[d, "\beta"']\ar[r] 
& g \ar[d,"\alpha"] \\    
w \ar[r] & f
\end{tikzcd}
\]
where $v$ and $w$ are in $W$ and $\beta$ is cartesian.
The diagram $V\to \Arr\cC$ is given by the extraction of $v$.
There exists an obvious functor $A:V\to W\comma g$ which admits a left adjoint given by 
\[
v\to g \mto 
\begin{tikzcd}
v \ar[d, equal]\ar[r] & g \ar[d,"\alpha"] \\    
v \ar[r] & f
\end{tikzcd}.
\]
Hence, $A$ is cofinal and $\colimit V v = \colimit {W\comma g} v$.

There also exists an obvious functor $B:V\to W\comma f$ and a morphism of diagrams essentially given by the maps $v \to w\times_fg$.
The composition of the induced map $\colimit V v \to \colimit {W\comma f} w\times_fg$ with the previous identification $\colimit {W\comma g} v = \colimit V v$ gives back the map $\zeta$.

We are going to show that $B:V\to W\comma f$ is an op-fibration in categories.
The fiber of the functor $B$ at $w\to f$ is the full subcategory $V(w\to f)$ of $W\comma w \times_fg$ spanned by cartesian maps $v\to w \times_fg$.
Let $w\to w'\to f$ be a map in $W\comma f$ and $\beta:v\to w$ a cartesian map.
Recall that, because $\cC$ has finite limits, the composite map $v\to w\to w'$ in $\Arr \cC$ can be factored uniquely as $v\to v'\to w'$ where $\beta':v'\to w'$ is a cartesian map.
\[
\begin{tikzcd}
v \ar[d, "\beta"']\ar[r, dashed] & v' \ar[d,"\beta'", dashed] \\    
w \ar[r] & w'
\end{tikzcd}
\]
Let us see that the map $v'$ is in $W$.
First, $v'$ is such that $\t v'=\t v$, that is, its codomain is in the generating category $C\subset \cC$.
By \cref{lem:BC-cart}, the map $w\times_fg\to w$ is cartesian in $\Arr \cC$.
Hence the composite map $v'\to w\times_fg\to w$ is cartesian.
This prove that $v'$ is a base change of the map $w$ in $W$.
By assumption on $W$, $v'$ is then in $W$.
This finishes the proof that $B:V\to W\comma f$ is an op-fibration.
Let $B(w\to f)$ be the fiber of $B$ at $w\to f$, it is the category of diagrams
\[
\begin{tikzcd}
v \ar[d, "\beta"']\ar[r] 
& g \ar[d,"\alpha"] \\    
w \ar[r] & f
\end{tikzcd}
\]
where $\beta$ is cartesian.
But such data is equivalent to a cartesian map $v\to w\times_fg$, that is $B(w\to f)$ is the full subcategory of $W\comma w\times_fg$ spanned by cartesian maps.
Recall that the colimit of the diagram $V\to \Arr\cC$ is its left Kan extension along $V\to 1$.
Using the factorisation $V\to W\comma f \to 1$, the associativity of left Kan extensions, \cref{lem:Kan-opfib}, and the previous description of $B(w\to f)$, we get
\[
\colim_V v = \colim_{W\comma f} \colim_{v\to w\times_fg} v.
\]
We prove now that $\colimit {v\to w\times_fg} v = w\times_fg$.
The object $X=\t (w\times_fg)$ is the colimit of the canonical diagram $C\comma X\to \cC$.
For $\xi:Y\to X$ an object of $C\comma X$, let $w_\xi:Z\to Y$ be the pullback
\[
\begin{tikzcd}
Z\ar[d,"w_\xi"']\ar[r]\pbmark
&\s (w\times_fg) \ar[d,"w\times_fg"']\ar[r]\pbmark
&\s w \ar[d,"w"] \\
Y \ar[r,"\xi"]
&\t (w\times_fg) \ar[r]
&\t w.
\end{tikzcd}
\]
By universality of colimits the map $w\times_fg$ is the colimit of the maps $w_\xi$.
But the diagram of the $w_\xi$ is exactly the diagram of cartesian maps $v\to w\times_fg$ where $v$ is in $W$.
This proves that $\colimit {v\to w\times_fg} v = w\times_fg$.
Putting all together, we have proven that $\zeta$ is invertible.
\[
\colim_{W\comma g} v = \colim_V v = \colim_{W\comma f} w\times_fg.
\]

Then, we proceed by induction on $\kappa$.
If $\kappa$ is not a limit ordinal, then the result is a consequence of the
previous computation applied to $g^{+\kappa-1} \to f^{+\kappa-1}$.
If $\kappa$ is a limit ordinal, we have $\s g^{+\kappa} = \colimit {\lambda<\kappa}\s g^{+\lambda}$.
By induction hypothesis, we have $\s g^{+\lambda} = \s f^{+\lambda}\times_{\t f}\t g$.
By universality of colimits, we deduce
\[
\s g^{+\kappa} = \colimit {\lambda<\kappa}\left(\s f^{+\lambda}\times_{\t f}\t g\right) = \left(\colimit {\lambda<\kappa}\s f^{+\lambda}\right)\times_{\t f}\t g = \s f^{+\kappa}\times_{\t f}\t g.
\]
\end{proof}

\begin{rem}
\label{rem:stable+construction}
The first step of the induction in \cref{lem:+cartesian} says that the functor
$f\mapsto \Plusmap f$ sends cartesian maps to cartesian maps in $\Arr \cC$.
The lemma then says that this is also the case for any transfinite iteration of the plus-construction.
\end{rem}

\begin{thm}[Stable plus-construction]
\label{thm:+modality}
Let $\cC$ be a presentable locally cartesian closed category with a fixed small category of generators $C\subset \cC$.
If $W\to \Arr \cC$ is a modulator, then 
\begin{enumerate}[label={(\alph*)}, leftmargin=*]
\item \label{thm:+modality:enum1} the plus-construction $f\mapsto \Plusmap f$ preserves cartesian maps in $\Arr \cC$, 
\item \label{thm:+modality:enum2} the factorisation system generated by $W$ is a modality, and
\item \label{thm:+modality:enum3} a map $f$ is modal (\ie in $\cR$) if and only if $f=\Plusmap f$.
\end{enumerate}
\end{thm}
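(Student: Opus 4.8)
The plan is to deduce all three statements from \cref{lem:+cartesian}, \cref{thm:+construction}, and the criterion of \cref{prop:stable/lex-FS}, with essentially no new computation: the analytic heart of the matter, namely that the plus-construction commutes with base change, has already been settled in \cref{lem:+cartesian}. First I would check that the hypotheses of that lemma hold here: a presentable locally cartesian closed category has universal colimits, since each base-change functor $u^*$ is a left adjoint and hence preserves colimits, and $W$ is a modulator by assumption.

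For part \ref{thm:+modality:enum1} I would take $\kappa=1$ in \cref{lem:+cartesian}. For a cartesian map $\alpha:g\to f$ in $\Arr \cC$, the square labelled $(2)$ of that lemma has domain edge $\Plusmap g\to \Plusmap f$ and codomain edge $\t g\to \t f$, and its being cartesian is exactly the assertion that $\Plusmap g\to \Plusmap f$ is a cartesian map in $\Arr \cC$. This is the remark already recorded in \cref{rem:stable+construction}.

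For part \ref{thm:+modality:enum2} I would appeal to \cref{prop:stable/lex-FS}\ref{prop:stable/lex-FS:enum1}, which reduces ``$(\cL,\cR)$ is a modality'' to ``the reflection $\rho:\Arr \cC \to \cR$ preserves cartesian maps''. Since $\cC$ is presentable, every object is small, so \cref{thm:!SOA2} applies and the $k$\=/construction converges after $\kappa$ steps for a fixed regular cardinal $\kappa$ bounding the sizes of the arrows of the small diagram $W$; this $\kappa$ depends only on $W$ and is therefore uniform in the map being factored. By \cref{thm:+construction} the reflection is then computed by the iterated plus-construction, $\rho(f)=f^{+\kappa}$. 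Applying \cref{lem:+cartesian} at this $\kappa$ to a cartesian map $\alpha:g\to f$, square $(2)$ says precisely that $\rho(g)=g^{+\kappa}\to f^{+\kappa}=\rho(f)$ is cartesian, so $\rho$ preserves cartesian maps and $(\cL,\cR)$ is a modality.

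For part \ref{thm:+modality:enum3} I would invoke the last clause of \cref{thm:+construction} directly: a modulator is in particular a pre-modulator, so $f\in \cR = W^\bot$ if and only if $f=\Plusmap f$ (this being \cref{cor:+-fix=R} combined with the identification $\Kellymap f = \Plusmap f$). The one point meriting a moment's care in the whole argument is the uniformity of the convergence ordinal $\kappa$ over varying maps; were \cref{lem:+cartesian} not available, the genuine obstacle would be to show that the colimit defining $\Plus$ commutes with pullback, which rests on universality of colimits together with the fibration property of the modulator $\t:W\to C$. As that work is done in \cref{lem:+cartesian}, the present theorem is essentially a repackaging.
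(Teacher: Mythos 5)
Your proof is correct and follows essentially the same route as the paper's own: part \ref{thm:+modality:enum1} is the $\kappa=1$ case of \cref{lem:+cartesian} (as recorded in \cref{rem:stable+construction}), part \ref{thm:+modality:enum2} combines $\rho(f)=f^{+\kappa}$ from \cref{thm:+construction} with \cref{lem:+cartesian} and \cref{prop:stable/lex-FS}\ref{prop:stable/lex-FS:enum1}, and part \ref{thm:+modality:enum3} is \cref{cor:+-fix=R}. Your additional checks --- that a presentable locally cartesian closed category has universal colimits, and that the convergence ordinal $\kappa$ can be chosen uniformly since $W$ is a small diagram --- are correct and simply make explicit what the paper leaves implicit.
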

\begin{proof}
\noindent \ref{thm:+modality:enum1}
Direct from \cref{lem:+cartesian} and \cref{rem:stable+construction}.

\smallskip
\noindent \ref{thm:+modality:enum2}
According to \cref{thm:+construction}, the reflection into the right class is given by 
$\rho(f) = f^{+\kappa}$, for a certain $\kappa$.
Then, the conclusion follows from \cref{lem:+cartesian} and \cref{prop:stable/lex-FS}.

\smallskip
\noindent \ref{thm:+modality:enum3}
This is \cref{cor:+-fix=R}.
\end{proof}

\begin{rem}
The fact that the plus-construction respects cartesian maps says,
intuitively, that it is a fiberwise process on the map $f$.
In particular, it can be defined as an endomorphism $+:\UU\to \UU$ of the universe of the category $\cC$ (see \cref{rem:internal+}). 
\end{rem}

\begin{rem}
\label{rem:+modality-internal}
In keeping with \cref{rem:internal+,rem:modulator=univalent-fam}, \cref{thm:+modality} has an internal variation where the data of the modulator $W$ can be replaced by that of an {\em internal modulator}, that is a small internal category which is a subuniverse $\WW\subset \UU$ containing the terminal object.
\end{rem}

\begin{defi}
\label{defi:base-change-envelope}
Let $C\subset \cC$ be a generating small category.
For an arbitrary small diagram $W\to \Arr \cC$, we define the {\em modulator envelope} $W^{mod}$ of $W$ (relative to $C$) as the full subcategory of $\Arr \cC$ spanned by the identity of objects in $C$ and all the pullbacks of the maps in $W$ over the objects of $C$.
The diagram $W^{mod}\subto \Arr \cC$ is always a modulator.
\end{defi}

\begin{cor}[Generation of modalities]
\label{cor:gen-modality}
If $\cC$ is a presentable locally cartesian closed category with a fixed small category of generators $C\subset \cC$, the modality generated by an arbitrary family $W\to \Arr \cC$ can be constructed by applying \cref{thm:!SOA2} to the modulator envelope $W^{mod}$.
\end{cor}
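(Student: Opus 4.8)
The plan is to reduce the statement to the machinery already established for modulators. First I would recall that, by \cref{defi:base-change-envelope}, the modulator envelope $W^{mod}$ is genuinely a modulator, so that \cref{thm:+modality} applies: the factorization system $(\cL',\cR')$ generated by $W^{mod}$ is a modality, and by \cref{thm:!SOA2} it is produced as the transfinite iteration of the $k$\=/construction applied to $W^{mod}$. What then remains is to identify this modality with \emph{the} modality generated by $W$. Since an orthogonal system is determined by its right class, and the modality generated by $W$ has right class the fiberwise-orthogonal class $W^{\fwperp}=\{f : \forall w\in W,\ w\fwperp f\}$ (see \cref{table:strengthFS}), the whole problem comes down to the equality $\cR'=(W^{mod})^\perp=W^{\fwperp}$.

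The inclusion $W^{\fwperp}\subseteq (W^{mod})^\perp$ is immediate: every map of $W^{mod}$ is either an identity (to which every map is orthogonal) or a base change of some $w\in W$ over a generator, and a member of $W^{\fwperp}$ is by definition orthogonal to \emph{all} base changes of the maps in $W$. The substantial inclusion is the reverse one, $(W^{mod})^\perp\subseteq W^{\fwperp}$, and this is where I expect the main work to lie, since one must upgrade orthogonality to base changes over generators into orthogonality to base changes over arbitrary objects. Given $w:A\to B$ in $W$ and an arbitrary base change $w':A'\to B'$ along some $g:B'\to B$, I would write $B'$ as the colimit of the canonical diagram $C\comma B'\to \cC$ over the generators; by universality of colimits (which holds since $\cC$ is locally cartesian closed), the pullback $w'=\colim_i w_i$ is the colimit in $\Arr \cC$ of the base changes $w_i:A_i\to G_i$ of $w$ along the composites $G_i\to B'\xto{g} B$ with $G_i\in C$. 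Each such $w_i$ lies in $W^{mod}$.

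To conclude I would invoke the formula $\pbh{\colim_i w_i}{f}=\lim_i\pbh{w_i}{f}$ from \cref{lem:prop-ufs}: if $f\in (W^{mod})^\perp$ then every $\pbh{w_i}{f}$ is invertible, hence so is their limit $\pbh{w'}{f}$, giving $w'\perp f$. As $w'$ was an arbitrary base change of an arbitrary map of $W$, this yields $f\in W^{\fwperp}$ and establishes $(W^{mod})^\perp=W^{\fwperp}$. The two factorization systems therefore share the same right class and hence coincide, so the modality generated by $W$ is exactly the one produced by \cref{thm:!SOA2} applied to $W^{mod}$. The only delicate point is the colimit decomposition of an arbitrary base change into base changes over generators, which rests squarely on universality of colimits; everything else is bookkeeping with the orthogonality formulas already proved.
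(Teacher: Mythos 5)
Your proof is correct, and it takes a genuinely different route from the paper's. The paper argues minimality directly: it first shows $W\subset\cL_{W^{mod}}$ by writing each $w\in W$ as the colimit, over $C\comma \t w$, of its base changes over generators (universality of colimits) and invoking stability of left classes under colimits in $\Arr\cC$ (\cref{lem:prop-ufs}); it then observes that any competitor modality $(\cL',\cR')$ with $W\subset\cL'$ satisfies $W^{mod}\subset\cL'$ by base-change stability, and that $\cL'$ is closed under the operations the Kelly construction uses (colimits and composition), whence $\cL_{W^{mod}}\subset\cL'$. You instead identify the right class outright, proving $(W^{mod})^\perp=W^{\fwperp}$ by applying the same universality-of-colimits decomposition to an \emph{arbitrary} base change $w'$ of $w\in W$ rather than to $w$ itself, and then using $\pbh{\colim_i w_i}{f}=\lim_i\pbh{w_i}{f}$. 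Both proofs rest on exactly the same key decomposition; what yours buys is an explicit description of the modal right class as the fiberwise-orthogonal complement $W^{\fwperp}$, which the paper does not state at this point, while the paper's version avoids any appeal to fiberwise orthogonality and needs no description of the right class at all.

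One presentational leap should be repaired: you justify ``the modality generated by $W$ has right class $W^{\fwperp}$'' by citing \cref{table:strengthFS}, but that table is only a glossary of orthogonality relations and proves nothing; within this paper the fact is not available in advance (it is essentially what the corollary establishes), so as written your argument sounds circular. Fortunately your own work supplies the fact: once $(W^{mod})^\perp=W^{\fwperp}$ is proved and \cref{thm:+modality} guarantees that the factorization system $(\cL,\cR)$ generated by $W^{mod}$ is a modality, add two short observations --- each $w\in W$ is a base change of itself, so $W\subset{^\perp}(W^{\fwperp})=\cL$; and any modality $(\cL',\cR')$ with $W\subset\cL'$ has $\cR'\subset W^{\fwperp}=\cR$ by base-change stability of $\cL'$, hence $\cL={^\perp}\cR\subset{^\perp}\cR'=\cL'$. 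With those two lines your argument is complete and self-contained, and in fact slightly stronger than the paper's, since it also records $\cR=W^{\fwperp}$.
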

\begin{proof}
By \cref{thm:+modality}, the factorization system $(\cL,\cR)$ generated by $W^{mod}$ is a modality.
We need to prove that it is the smallest modality (with respect to the order given by inclusion of left classes) such that $W\subset \cL$.
We prove first that $W\subset \cL$.
Let $w$ be a map in $W$.
For any $X$ in $C$ we define the diagram
\begin{align*}
w(-):C\comma \t w &\tto \Arr \cC \\
\xi &\mto w(\xi):X\times_{\t w} \s w\to X.
\end{align*}
By universality of colimits, we have $\colim w(\xi) = w$.
By construction all maps $w(\xi)$ are in $W^{mod}$.
Then, $w$ is in $\cL_{W^{mod}}$ because it is stable by colimits in $\Arr \cC$.

Let $(\cL',\cR')$ be another modality such that $W\subset \cL'$, then $W^{mod}\subset \cL'$.
Moreover, $\cL'$ is stable under the operations used by
\cref{thm:!SOA2} to produce the class $\cL$ (colimits and composition), so we have $\cL\subset \cL'$.
\end{proof}

\subsubsection{Example: Postnikov truncations and nullification}
\label[example]{ex:modalities:Sn}

Let $\cC$ be a presentable locally cartesian closed category (\eg a topos, an $n$\=/topos, a modal topos) and let $A$ be an object of $\cC$.
We consider the internal category $\WW_A$ with two objects, $A$ and the terminal object 1, and all maps between them.
This category look like this
\[
\begin{tikzcd}
A \ar[rr,"1"']  \ar[from=rr, shift right=2,"A"'] \ar[loop left,"\intmap AA"]
&& 1    
\end{tikzcd}
\]
where the decorations of arrows are the objects of morphisms.
In the case where $A=S^0$, $\WW_{S^0}$ is the symmetric reflective coequalizer category.
Recall the categories $A\join 1$ and $\JJ_A$ from \cref{ex:SOA:nullification}.
There are obvious functors $A\join 1 \to \JJ_A \to \WW_A$
The following lemma is left to the reader

\begin{lemma}
The two functors $A\join 1 \to \JJ_A \to \WW_A$ are cofinal.
\end{lemma}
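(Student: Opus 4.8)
Write $v\colon A\join 1\to\JJ_A$ and $w\colon\JJ_A\to\WW_A$ for the two functors. The plan is to establish cofinality through the Joyal--Lurie criterion: a functor $u$ is cofinal exactly when, for every object $d$ of its target, the comma category $u\comma d$ (with objects the pairs $(c,\,u(c)\to d)$) has a weakly contractible external groupoid $|u\comma d|$. Since the diagrams of \cref{ex:SOA:nullification} are indexed \emph{contravariantly}, cofinality here is read through the opposite functors, which amounts exactly to the contractibility of the comma categories $u\comma d$ above; this is the form the applications need. Because cofinal functors are closed under composition, it suffices to treat $v$ and $w$ separately, and in each case the target has two objects — the apex $A$ and the object $1$ indexing the ``copies of $X$'' — so there are two comma categories to analyse.

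The comma categories over $1$ are immediate. In both $\JJ_A$ and $\WW_A$ the object $1$ is terminal, so for $u$ into either of them the structure map $u(c)\to 1$ is unique and the morphisms of $u\comma 1$ are just those of the source; thus $v\comma 1\simeq A\join 1$ and $w\comma 1\simeq \JJ_A$. As $A\join 1$ has the cone point as terminal object and $\JJ_A$ has $1$ as terminal object, both of these comma categories are weakly contractible.

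The content of the lemma is therefore concentrated in the comma categories over the apex $A$, where the structure maps range over the object of endomorphisms of $A$ — namely $A\coprod 1$ in $\JJ_A$ and $\intmap A A$ in $\WW_A$ — and the morphisms are controlled by the common section of the family of projections, i.e.\ the diagonal witnessing the reflexive structure of $\JJ_A$ and $\WW_A$. The key step is to prove that $v\comma A$ and $w\comma A$ are weakly contractible; this is the wide, higher-categorical incarnation of the classical fact recalled in \cref{ex:SOA:nullification}, that a reflexive coequalizer already computes the coequalizer of its parallel pair. For discrete $A$ one sees it directly in the case of $v\comma A$: every non-identity morphism runs from a ``$1$''-object to an ``$A$''-object and there are no nontrivial composites, so the nerve is a tree and hence contractible. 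In general I would build a contracting homotopy of the nerve out of the section, or fiber the comma category over a contractible base assembled from $A$ and appeal to Quillen's Theorem~A.

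The step I expect to be the main obstacle is precisely this weak contractibility of the apex comma categories, and especially its treatment for non-discrete $A$: unlike the comma categories over $1$, neither $v\comma A$ nor $w\comma A$ has a terminal or an initial object, so one cannot finish by inspection and must write out the contraction coming from the section (or replace it by a Theorem~A argument). Once this is in hand for both functors, the Joyal--Lurie criterion gives cofinality of $v$ and of $w$, and closure under composition yields cofinality of the composite $A\join 1\to\WW_A$.
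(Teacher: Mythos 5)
The paper offers no proof to compare against: it states explicitly that this lemma is ``left to the reader.'' So your attempt has to be judged on its own terms. Your setup and the easy half are fine: reading cofinality through the slice commas $u\comma d$ is the correct rendering of the contravariant indexing; the commas over the terminal object are handled correctly ($v\comma 1\simeq A\join 1$ and $w\comma 1\simeq \JJ_A$, each of which has a terminal object); and your analysis of $v\comma A$ for discrete $A$ is essentially right, although ``bipartite with no nontrivial composites'' only tells you the nerve is a graph --- acyclicity still has to be checked (it does hold: every object $(x,\alpha)$ with $x\neq\alpha$ is a leaf, and what remains is a cone on the identity-vertex, so the graph is a tree).

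The step you flagged as the main obstacle, however, is not merely hard: for the second functor it is false, so neither a contracting homotopy nor a Theorem~A argument can exist. Identify $w\comma A$ with the category of elements of the functor on $\JJ_A\op$ given by $c\mapsto \relmap{\WW_A}{w(c)}{A}$. Unwinding the hom objects of $\WW_A$, this functor is exactly the canonical nullification diagram of \cref{ex:SOA:nullification} evaluated at $X=A$: it sends the object $A$ to $\intmap AA$, the object $1$ to $A$, the maps $1\to A$ to the evaluations, and the map $A\to 1$ to the constants-map. Hence $\lvert w\comma A\rvert \simeq \colim_{\JJ_A\op}$ of that diagram, and \emph{your own correct first half} now works against you: since $(A\join 1)\op\to\JJ_A\op$ is cofinal, this colimit is the wide pushout $A^{\lhd A}$. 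For $A=S^0$ this is $X\sqcup_{X\times X}X$ at $X=S^0$, i.e.\ $S^0\join S^0\simeq S^1$, not a point (compare \cref{ex:SOA:S0}, where the same pushout appears as $X\join X$). Concretely, the four objects of $w\comma A$ lying over $A$, indexed by $\intmap{S^0}{S^0}$, and the two lying over $1$ assemble into an unfilled $4$-cycle. What \emph{is} true for $w$ is cofinality in the opposite variance: both coslices $A\comma w$ and $1\comma w$ have a terminal object, namely the object $1$ of $\JJ_A$ equipped with its unique (resp.\ identity) map. But under that coslice reading the \emph{first} functor fails instead: $\lvert A\comma v\rvert\simeq A\sqcup 1$, which is disconnected. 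So the two functors are cofinal in opposite senses, no uniform choice of comma makes all four contractible, and your plan --- prove the four slice commas contractible and compose --- cannot close. This is consistent with the fact that the one-step operators genuinely differ: the colimit of the canonical diagram over $\WW_A\op$ at $X=A$ is contractible (the slice of $\WW_A$ over $A$ has a terminal object, the identity), while over $\JJ_A\op$ it is $A^{\lhd A}$; only the transfinite iterates, not the single steps, agree. Any correct write-up must either split the statement by variance or weaken it to a comparison of the iterated constructions rather than deduce it from cofinality of both functors in one fixed sense.
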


The category $\WW_A$ is an internal modulator in the sense of \cref{rem:+modality-internal}, but this not the case of $A\join1$ and $\JJ_A$.
The internal version of \cref{thm:+modality} then produces the nullification modality generated by $A$.
The associated localization is the $A$\=/localization of \cref{ex:SOA:nullification}.
The previous lemma implies that the internal Kelly's constructions generated by $A\to 1$ and by $\WW_A$ coincides.
Moreover, they coincide also with the construction $\cJ_A$ of \cite[Lemma 2.7]{RSS}.

\subsubsection{Example: generation of semi-left-exact localizations}
\label[example]{ex:modalities:slex}

A classical result says that a reflective factorization system $(\cW,\cM)$ is a left-exact localization iff $\cL$ is stable by base change (see \cite[Corollary 4.8]{CHK:slex} or \cite[Proposition 6.2.1.1]{Lurie:HTT}).
This leads to the question of whether the localization generated by a class $W\subto \Arr \cC$ of maps stable under base change is left-exact. 
The answer is negative (consider the Postnikov truncations) but it is always semi-left-exact.
If $W$ is a modulator, the localization generated by $W$ is the localization associated to the modality generated by $W$ by \cref{thm:+modality}.
Then, the result follows from \cref{prop:mod-slex}.

We shall see in \cref{thm:lex+construction,thm:lex-diagonal} two enhancements of the condition of stability by base change for $W$ to generate an actual left-exact localization.

\subsubsection{Example: LCC categories are modal topoi}
\label[example]{ex:modalities:ind-completion}

Let $\cC$ be a presentable locally cartesian closed category. 
There exists a cardinal $\kappa$ such that the subcategory $C^\kappa \subset \cC$ of $\kappa$\=/small objects generates $\cC$ and is stable by finite limits.
We are going to prove that there exists a modality on the topos $\P {C^\kappa}$ such that $\cC\subto \P {C^\kappa}$ is the category of modal objects.

For any $\kappa$\=/small diagram $c:I\to C^\kappa$, we denote by $\colim c_i$ the colimit in $C^\kappa$ and by $``\colim" c_i$ the colimit of the same diagram in $\P {C^\kappa}$.
There is a comparison map $``\colim" c_i \to \colim c_i$ in $\P {C^\kappa}$ and we let $W\subto \Arr {\P {C^\kappa}}$ be the full subcategory spanned by all these maps.
An object $F$ of $\P {C^\kappa}$ is in $\relInd \kappa {} {C^\kappa}=\cC$ iff it is right orthogonal to all maps of $W$.
In other words, the localization along $W$ is the localization $\P {C^\kappa}\to \relInd \kappa {} {C^\kappa}$.

We claim that $W$ is a modulator. 
The pre-modulator axioms \ref{enum:modulator:1} to \ref{enum:modulator:4} are easy, we need only to check that $W$ satisfies axiom \ref{enum:modulator:5}, \ie the stability by base change.
Let $d \to \colim c_i$ be a map in $C^\kappa$. 
Because finite colimits are universal in $C^\kappa$, we have $d= \colim \left(d\times_{\colim c_i}c_i\right)$.
Because finite colimits are also universal in $\P {C^\kappa}$, we have $d\times_{\colim c_i}``\colim" c_i = ``\colim" \left(d\times_{\colim c_i}c_i\right)$.
In other words, we have cartesian square
\[
\begin{tikzcd}
``\colim" \left(d\times_{\colim c_i}c_i\right) \ar[r]\ar[d] \pbmark & ``\colim" c_i \ar[d]\\
d=\colim \left(d\times_{\colim c_i}c_i\right) \ar[r]& \colim c_i.
\end{tikzcd}
\]
This proves that the base change of $``\colim" c_i \to \colim c_i$ along $d \to \colim c_i$ is still in $W$.
By \cref{thm:+modality}, $W$ generates a modality and the modal object are precisely the objects of $\relInd \kappa {} {C^\kappa}=\cC$.

\begin{prop}
\label{prop:PLCCC=modal-topos}
The following conditions on a category $\cC$ are equivalent:
\begin{enumerate}[label={(\alph*)}, leftmargin=*]
\item \label{prop:PLCCC=modal-topos:3} $\cC$ is presentable and locally cartesian closed;
\item \label{prop:PLCCC=modal-topos:1} $\cC$ is an accessible modal topos (see \cref{sec:modal-topos});
\item \label{prop:PLCCC=modal-topos:2} $\cC$ is an accessible semi-left-exact localization of a topos.
\end{enumerate}
\end{prop}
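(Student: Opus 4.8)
The plan is to prove the three conditions equivalent by running the cycle $\ref{prop:PLCCC=modal-topos:3} \Rightarrow \ref{prop:PLCCC=modal-topos:1} \Rightarrow \ref{prop:PLCCC=modal-topos:2} \Rightarrow \ref{prop:PLCCC=modal-topos:3}$, so that the whole argument reduces to assembling results already in hand rather than building anything new. For $\ref{prop:PLCCC=modal-topos:3} \Rightarrow \ref{prop:PLCCC=modal-topos:1}$ I would simply cite the construction carried out in \cref{ex:modalities:ind-completion}: given a presentable locally cartesian closed $\cC$, one picks a cardinal $\kappa$ such that $C^\kappa$ generates $\cC$ and is closed under finite limits, and the comparison maps between the colimits computed in $\P{C^\kappa}$ and in $C^\kappa$ assemble into a small modulator $W\subto \Arr{\P{C^\kappa}}$ whose modal objects are exactly $\cC = \relInd \kappa {} {C^\kappa}$. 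Because $W$ is small, the modality it generates is accessible, so $\cC$ is exhibited as the category $\cR_1$ of modal objects of an accessible modality on the topos $\P{C^\kappa}$, which is precisely an accessible modal topos.

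The remaining two arrows are short. For $\ref{prop:PLCCC=modal-topos:1} \Rightarrow \ref{prop:PLCCC=modal-topos:2}$, if $\cC = \cR_1$ for an accessible modality $(\cL,\cR)$ on a topos $\cE$, then \cref{prop:mod-slex} tells us the associated reflection $P:\cE \rightleftarrows \cR_1$ is semi-left-exact, and it is an accessible localization because the modality is accessible; hence $\cC$ is an accessible semi-left-exact localization of the topos $\cE$. For $\ref{prop:PLCCC=modal-topos:2} \Rightarrow \ref{prop:PLCCC=modal-topos:3}$, let $\cC$ be an accessible semi-left-exact localization of a topos $\cE$. Presentability of $\cC$ follows because an accessible reflective localization of a presentable category is presentable and $\cE$ is presentable. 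For local cartesian closedness I would invoke the Gepner--Kock theorem underlying \cref{cor:mod-LCCC} (\cite[Prop. 1.4]{Gepner-Kock:Univalence}), applied this time to a general semi-left-exact localization rather than to a modality: since a topos is locally cartesian closed with a terminal object and $P$ preserves pullbacks along maps in $\cC$, the localization $\cC$ is again locally cartesian closed.

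Since each step is a citation, the proof is mostly bookkeeping, and the two things that actually need care are both about matching hypotheses across the cited results. First, accessibility must be tracked through the whole cycle: it is produced on the way out from the smallness of $W$ in \cref{ex:modalities:ind-completion}, and it is what guarantees presentability on the return leg. Second, one must confirm that the notion of semi-left-exactness produced by \cref{prop:mod-slex} is exactly the ``locally cartesian closed'' localization of \cite{Gepner-Kock:Univalence} needed in $\ref{prop:PLCCC=modal-topos:2} \Rightarrow \ref{prop:PLCCC=modal-topos:3}$; this is immediate from the definition recalled just before \cref{prop:mod-slex}, where the two terms are identified. I expect the subtlest point to be verifying that \cite[Prop. 1.4]{Gepner-Kock:Univalence} really applies at the level of generality of an arbitrary accessible semi-left-exact localization, and not merely to the modality-generated localizations for which \cref{cor:mod-LCCC} was stated.
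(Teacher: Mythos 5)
Your proposal is correct and follows exactly the paper's own proof: the same cycle \ref{prop:PLCCC=modal-topos:3}~$\Ra$~\ref{prop:PLCCC=modal-topos:1}~$\Ra$~\ref{prop:PLCCC=modal-topos:2}~$\Ra$~\ref{prop:PLCCC=modal-topos:3}, citing the construction of \cref{ex:modalities:ind-completion}, then \cref{prop:mod-slex}, then \cite[Prop.~1.4]{Gepner-Kock:Univalence}. Your worry about the generality of the Gepner--Kock citation is unfounded, since their result is stated for arbitrary accessible locally cartesian closed (semi-left-exact) localizations of presentable locally cartesian closed categories, which is precisely the level at which the paper invokes it.
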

\begin{proof}
We just proved \ref{prop:PLCCC=modal-topos:3} $\Ra$ \ref{prop:PLCCC=modal-topos:1}.
The implication \ref{prop:PLCCC=modal-topos:1} $\Ra$ \ref{prop:PLCCC=modal-topos:2} is \cref{prop:mod-slex}, 
and \ref{prop:PLCCC=modal-topos:2} $\Ra$ \ref{prop:PLCCC=modal-topos:3} is \cite[Proposition 1.4]{Gepner-Kock:Univalence}.
\end{proof}

\subsection{Generation of lex modalities/lex localizations}
\label{sec:gen-lex-mod}
\label{sec:sheafification}

\subsubsection{The finite limit criterion}

We now turn to the problem of generating left-exact localizations, that is lex
factorization systems, or lex modalities (see \cref{sec:stableFS}).
We stay in the context of a presentable category $\cC$ with universal colimits.

\begin{defi}[Lex modulator]
\label{defi:lex-modulator}
Let $C\subset \cC$ be a generating small category. 
We shall say that a modulator $W\to \Arr \cC$ is {\em left-exact}, or {\em lex}, if
\begin{enumerate}[label=(\roman*), leftmargin=*]
\setcounter{enumi}{5}
\item \label{enum:modulator:6} for all $X$ in $C$, the fiber $W(X)$ of $\t:W\to C$ is a co-filtered category.
\end{enumerate}
We shall say that a modulator $W\to \Arr \cC$ is {\em stable under finite limits} if 
\begin{enumerate}[label=(\roman*$^+$), leftmargin=*]
\setcounter{enumi}{5}
\item \label{enum:modulator:6+} the image of $W\to \Arr \cC$ is stable under finite limits in $\Arr \cC$.
\end{enumerate}
This condition implies that the generating category $C$ must have finite limits.
A modulator stable under finite limits is lex. 
In fact, together with \ref{enum:modulator:3} and \ref{enum:modulator:4}, 
\ref{enum:modulator:6+} implies both condition \ref{enum:modulator:5} and \ref{enum:modulator:6}.
\end{defi}

\begin{thm}[Lex plus-construction]
\label{thm:lex+construction}
Let $\cE$ be an $n$\=/topos ($n\leq \infty$) and $C\subset \cE$ a small generating subcategory.
If $W\to \Arr \cE$ is a lex modulator
(in particular if it is stable under finite limits)
then
\begin{enumerate}[label={(\alph*)}, leftmargin=*]
\item \label{thm:lex+construction:1} the functor $f \mapsto \Plusmap f$ is left-exact as an endofunctor of $\Arr \cE$, 
\item \label{thm:lex+construction:2} the factorization system generated by $W$ is a lex modality (\ie a lex localization), and
\item \label{thm:lex+construction:3} A map $f$ is a relative sheaf (\ie in $\cR$) if and only if $f=\Plusmap f$.
\end{enumerate}
\end{thm}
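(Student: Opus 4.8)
The plan is to prove (a) first, since (b) follows by iteration and (c) is immediate. For (a) I would exploit that finite limits in $\Arr\cE$ are computed termwise, so an endofunctor of $\Arr\cE$ is left-exact iff its source component $\s\circ\Plusconstruction$ and its target component $\t\circ\Plusconstruction$ are left-exact functors $\Arr\cE\to\cE$. By the pre-modulator simplification $\colimit{W\comma\t f}\t w = \t f$ established in \cref{thm:+construction}, the target of $\Plusmap f$ is simply $\t f$, so the target component is the functor $\t\colon\Arr\cE\to\cE$, which is left-exact. Hence everything reduces to showing that the source component $f\mapsto\Plus f = \colimit{W\comma f}\t w$ preserves finite limits.

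To test this I would evaluate on the generators. Fix $X\in C$. Because $W$ is a modulator, $\t\colon W\to C$ is a fibration and $W$ is stable under base change (\ref{enum:modulator:5}); combining \cref{lem:Kan-opfib} with the description in \cref{rem:values+construction-in-presheaves} yields the evaluation formula
\[
\map X{\Plus f}\ \simeq\ \colim_{W(X)\op}\map w f .
\]
Now the lex hypothesis \ref{enum:modulator:6} says each fiber $W(X)$ is co-filtered, so $W(X)\op$ is filtered; and for fixed $w$ the functor $f\mapsto\map w f = \map{\s w}{\s f}\times_{\map{\s w}{\t f}}\map{\t w}{\t f}$ is a finite limit of the limit-preserving corepresentable functors $\map{\s w}{-}$ and $\map{\t w}{-}$ precomposed with the (left-exact) evaluations $\s,\t$, hence is itself left-exact. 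Since filtered colimits commute with finite limits in $\cS$, the composite $f\mapsto\map X{\Plus f}$ is left-exact for every $X\in C$. Given a finite diagram $f_i$ in $\Arr\cE$, applying $\map X{-}$ to the canonical comparison map $\Plus{(\lim_i f_i)}\to\lim_i\Plus{f_i}$ and chasing the equivalences above shows it becomes an equivalence of spaces for every generator $X$; as the family $\{\map X{-}\}_{X\in C}$ is jointly conservative, the comparison map is invertible, proving (a).

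For (b) I would invoke \cref{prop:stable/lex-FS}\ref{prop:stable/lex-FS:enum2}: $(\cL,\cR)$ is a lex modality exactly when the reflection $\rho\colon\Arr\cE\to\cR$ is left-exact. By \cref{thm:+construction} this reflection is a transfinite iteration of the plus-construction, $\rho(f) = f^{+\kappa}$ for a suitable $\kappa$. A transfinite induction then shows each iterate is left-exact: the successor steps are left-exact by (a), and the colimits taken at limit stages (and the transfinite composition assembling $\rho$) are filtered, so they commute with finite limits in the $n$-topos $\cE$. Thus $\rho$ is left-exact and $(\cL,\cR)$ is a lex modality, which by \cref{prop:lex-loc=lex-mod} is the same datum as a lex localization. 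Finally, (c) is nothing but the last assertion of \cref{thm:+construction} (equivalently \cref{cor:+-fix=R} transported along $\Plusmap f = \Kellymap f$): $f\in\cR = W^\perp$ precisely when $f\to\Plusmap f$ is invertible.

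The step I expect to be the main obstacle is the evaluation formula $\map X{\Plus f}\simeq\colim_{W(X)\op}\map w f$ inside part (a). In a presheaf topos this is transparent, since both limits and colimits are computed pointwise and the formula is exactly \cref{rem:values+construction-in-presheaves}; but in a general $n$-topos the generators $X$ need not be compact, so one cannot naively commute $\map X{-}$ past the defining colimit of $\Plus f$. Making this precise — controlling the interaction of $\map X{-}$ with the colimit $\colimit{W\comma f}\t w$ by means of universality of colimits together with the fibration and co-filtered-fiber structure of the lex modulator — is where the real work lies, and is the part I would write out most carefully.
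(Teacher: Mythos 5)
Your parts (b) and (c), and your treatment of (a) over a presheaf topos, match the paper's proof: the reduction to the source component via $\t\,\Plusmap{f}=\t f$, the pointwise formula $\Plus{f}(X)=\colim_{W(X)\op}\map{w}{f}$ from \cref{lem:+=+} and \cref{rem:+=+}, filteredness of $W(X)\op$, then for (b) the fact that filtered colimits of left-exact functors are left-exact together with \cref{prop:stable/lex-FS}, and for (c) \cref{cor:+-fix=R}. But the step you yourself flag as ``where the real work lies'' is a genuine gap, and it cannot be filled as stated: the evaluation formula $\map{X}{\Plus{f}}\simeq\colim_{W(X)\op}\map{w}{f}$ is not just hard in a general $n$\=/topos, it is false in general when $n=\infty$. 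Write $\cE$ as an accessible left-exact localization $P:\P{C}\rightleftarrows\cE:\iota$ and let $G\in\P{C}$ be the presheaf-level colimit, so that $G(X)=\colim_{W(X)\op}\map{w}{f}$ while $\Plus{f}=P(G)$. Your formula asserts that the unit $G\to\iota P(G)$ is an equivalence on generators, i.e.\ that $G$ is already local. But $G$ is a filtered colimit of local objects, and since the generators of $\cE$ need not be compact (the localization need not be finitary), the descent conditions characterizing local objects involve infinite limits (totalizations) which do not commute with filtered colimits; so $G$ is not local in general and $\map{X}{\Plus{f}}\neq G(X)$. (For $n<\infty$ the relevant descent limits are finite, which is consistent with your formula being more plausible there, but the theorem includes $n=\infty$.) Consequently the joint-conservativity argument has nothing to feed on, and part (a) of your proposal stalls exactly at its central step.

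The idea you are missing is the paper's: do not evaluate inside $\cE$ at all. First prove left-exactness of the plus-construction in the presheaf case $\P{C}$ (resp.\ $\fun{C\op}{\cS\truncated{n-1}}$ for $n<\infty$), where colimits are computed pointwise and the fiberwise formula is available by \cref{lem:+=+}; then transfer along the adjunction via the identity $\Plusmap{f}=P\big((\iota f)^+\big)$, which holds because $\iota$ is fully faithful (so the indexing category $W\comma f$ is the same whether computed in $\Arr{\cE}$ or $\Arr{\P{C}}$) and $P$ is cocontinuous with $P\iota\simeq\mathrm{id}$. Left-exactness of the plus-construction on $\cE$ is then inherited from the left-exactness of the three functors $P$, $\iota$, and the presheaf plus-construction. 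With that replacement for (a), your arguments for (b) and (c) go through as written.
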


\begin{proof}
We prove the theorem first for \oo topoi.

\smallskip
\noindent \ref{thm:lex+construction:1}
We first consider the case $\cE = \P C$ of a presheaf \oo topos.
Let $f$ be a map in $\cC$, then for each object $X$ of $C$, we have a map $f(X)$ in $\cS$.
The plus-construction is left-exact iff all functors $f\mapsto \Plusmap f(X)$ are left-exact.
Since $\t \Plusmap f = \t f$ and the functor $\t:\Arr \cE \to \cE$ is always left
exact, the condition reduces to proving that all functors $f\mapsto \Plus f(X)$ are left-exact.
Let $W(X)$ be the fiber at $X$ of the target functor $\t:W\to C$.
By \cref{lem:+=+} and \cref{rem:+=+}, we have
\[
\Plus f(X) \quad=\quad \colimit {W(X)\op}\map w f.
\]
By assumption on $W$, $W(X)\op$ is filtered and the functor $f\mapsto \Plus f(X)$ is left-exact.

If $\cE$ is a left-exact localization of some $\P C$, we have an adjunction $P:\P C \rightleftarrows \cE:\iota$ where both functors are left-exact.
For a map $f$ in $\cE$, the plus-construction is defined by a colimit indexed by $W\comma f$.
The fully faithful functor $\iota:\cE\subto \P C$ induces a fully faithful functor  $\iota:\Arr \cE \subto \Arr {\P C}$ and the category $W\comma f$ computed in $\Arr \cE$ or $\Arr {\P C}$ is the same.
Hence, the plus-construction of $\cE$ is related to that of $\P C$ by the formula
\[
\Plusmap f = P\big((\iota f)^+\big).
\]
Since the three functors $P$, $\iota$ and the plus-construction of $\P C$ are left-exact, so is the plus-construction in $\cE$.

\smallskip
\noindent \ref{thm:lex+construction:2}
Since filtered colimits of left-exact functors are still left-exact, the transfinite iterations of $+$ are also left-exact.
We deduce that the reflector $\rho$ is left-exact. 
Then, the result follows from \cref{prop:stable/lex-FS}.

\smallskip
\noindent \ref{thm:lex+construction:3}
This is \cref{cor:+-fix=R}.

\medskip
This finishes the proof for \oo topoi.
The strategy is similar for $n$\=/topoi.
The presheaf category $\P C = \fun {C\op} \cS$ needs only to be replaced by the category $\cP_n(C) = \fun {C\op} {\cS\truncated {n-1}}$, where $\cS\truncated {n-1}$ is the category of $(n-1)$\=/groupoids.
\end{proof}

\begin{rem}
The proof of \cref{thm:lex+construction} can be simplified when $W$ is stable by finite limits and $n=\infty$.
The target functor $\t:W\to \cE$ is left-exact, so its left Kan extension $\P W \to \cE$ is left-exact as well, because $\cE$ is an \oo topos~\cite[Thm 2.1.4]{Anel-Lejay:topos-exp}.
The functor $\Arr \cE \to \cE$ sending a map $f$ to $\Plus f = \colim_{W\commaindex f} \t w$ factors into $\Arr \cE \to \P W\to \cE$ and both functors are left-exact.
This proves directly that the plus-construction is left-exact.
The same proof would would apply for 1-topoi using~\cite[Prop. 2.6]{GL:lexcolimits}, and for an $n$\=/topos using the analog result.
\end{rem}

\begin{rem}
\label{rem:lex-modulator=sub-universe}
Recall from \cref{rem:modulator=univalent-fam} that a modulator is equivalent to a small full subuniverse $\WW \subset \UU$ containing the terminal object 1.
Then a lex modulator is almost a modulator such that $\WW$ is an internally cofiltered category.
The two notions coincides in a presheaf category $\cC = \P C$, but in an arbitrary $\cC$, being internally filtered is slightly weaker.
The internal version of \cref{thm:lex+construction} can be proven assuming this weaker hypothesis.
\end{rem}

\begin{rem}
\label{rem:other-limits}
The class of finite limits in \cref{thm:lex+construction} can be replaced by an arbitrary class $\Lambda$ of limits provided $\Lambda$ is a sound doctrine in the sense of~\cite{ABLR}.
The theorem would then be valid in categories $\cE$ which are obtained by a localization $\P C\to \cE$ preserving the limits in $\Lambda$.
For example, $\Lambda$ could be the class of finite products.
In this case, the condition on $W$ can be weaken into the condition that all $W(X)$ be co-sifted.
\end{rem}

\begin{lemma}
\label{lem:BC-cart2}
Given a cartesian map $f'\to f$ in $\Arr \cC$, there exists a cartesian square
\[
\begin{tikzcd}
f'\ar[r]\ar[d] \pbmark & f\ar[d]\\
1_{\t f'}\ar[r] & 1_{\t f}
\end{tikzcd}
\]
\end{lemma}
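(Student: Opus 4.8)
The plan is to reduce everything to the fact that limits in the arrow category $\Arr \cC = \fun I \cC$ are computed pointwise, so that a commutative square in $\Arr \cC$ is cartesian if and only if both its square of domains and its square of codomains are cartesian in $\cC$. This is the same observation already invoked in the proof of \cref{prop:stable/lex-FS}\ref{prop:stable/lex-FS:enum2}, and the present lemma is essentially that observation repackaged as an existence statement, so I expect it to be short.

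First I would write $f:A\to B$ and $f':A'\to B'$, so that the given cartesian map $\alpha:f'\to f$ is the datum of a square
\[
\begin{tikzcd}
A'\ar[r]\ar[d,"f'"'] & A\ar[d,"f"]\\
B'\ar[r] & B
\end{tikzcd}
\]
which is a pullback in $\cC$ by hypothesis. I would then exhibit the candidate square in $\Arr \cC$ explicitly: its top map is $\alpha$; its two vertical maps are the canonical maps $f'\to 1_{\t f'}$ and $f\to 1_{\t f}$ (each given, as a map in $\Arr \cC$, by the arrow on the domain row and the identity on the codomain row); and its bottom map $1_{\t f'}\to 1_{\t f}$ is the map of $\Arr \cC$ whose two components are both $\t\alpha:B'\to B$.

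Next I would verify cartesianness pointwise. Applying the domain functor $\s:\Arr \cC\to \cC$ to this square returns exactly the square $\alpha$ displayed above, which is cartesian by assumption. Applying the codomain functor $\t:\Arr \cC\to \cC$ returns the square on objects $B',A,B',B$ (that is, $B',B,B',B$) whose two vertical legs are identities and whose two horizontal legs are both $\t\alpha$; a square with identity vertical maps is automatically cartesian. Since both pointwise squares are cartesian in $\cC$, the square in $\Arr \cC$ is cartesian, which is the claim.

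There is no genuine obstacle here; the only thing to get right is the bookkeeping of which leg of the square becomes which pointwise square. The key recognitions are that the domain square is literally $\alpha$ (hence cartesian by hypothesis) and that the codomain square is degenerate, carrying identities on its vertical legs (hence trivially cartesian). Once the pointwise computation of limits in $\fun I \cC$ is in place, the verification is immediate.
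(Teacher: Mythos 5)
Your proof is correct and is precisely the ``straightforward computation'' that the paper leaves implicit: exhibiting the square with vertical legs $f'\to 1_{\t f'}$, $f\to 1_{\t f}$ and checking cartesianness pointwise via the domain and codomain functors, noting the domain square is $\alpha$ itself and the codomain square has identity vertical legs. The one-line slip ``$B',A,B',B$'' (immediately corrected in your parenthetical to $B',B,B',B$) is the only blemish; otherwise the argument matches the paper's intent exactly.
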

\begin{proof}
A straightforward computation.
\end{proof}

\begin{prop}
\label{prop:lex-mod-4-loc-lex}
Let $f^*:\cE\to \cF$ be an accessible left-exact localization of a topos $\cE$.
Then, there exists a lex modulator presenting $f^*$.
\end{prop}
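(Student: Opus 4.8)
The plan is to translate the localization into the language of lex modalities and then realize the given modality as the one generated by the full subcategory of $\cL$ lying over a well-chosen set of generators. By \cref{prop:lex-loc=lex-mod} the accessible left-exact localization $f^*$ corresponds to an accessible lex modality $(\cL,\cR)$ on $\cE$, where $\cL$ is the class of maps inverted by $f^*$ and $\cR$ the class of relative sheaves. Since $f^*$ is left-exact, $\cL$ is stable under base change, composition and colimits, and moreover under fiberwise diagonals: if $f^*$ inverts $w$, then, $f^*$ preserving pullbacks, it also inverts $\Delta w$, so $\Delta w\in\cL$. Accessibility provides a small set $S\subseteq\cL$ with $\cR=S^\perp$.

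Next I would fix the generators. I would choose a regular cardinal $\kappa$ such that the subcategory $\cE^\kappa$ of $\kappa$\=/compact objects generates $\cE$, is stable under finite limits, contains the domains and codomains of all maps in $S$, and is large enough that $f^*$ is $\kappa$\=/accessible; such a $\kappa$ exists because each requirement holds for a cofinal class of cardinals, and a cofinal intersection is nonempty. I set $C:=\cE^\kappa$ and let $W\subset\Arr\cE$ be the full subcategory spanned by the maps $w:A\to X$ in $\cL$ with both $A$ and $X$ in $C$. By construction $W$ is small and fully faithful in $\Arr\cE$, its codomains lie in $C$, and it contains every identity $1_X$ for $X\in C$ (isomorphisms are in $\cL$); thus $W$ is a pre-modulator in the sense of \cref{defi:pre-modulator}.

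Then I would check the two remaining axioms. For the fibration condition of \cref{defi:modulator}, given $w:A\to X$ in $W$ and a map $Y\to X$ with $Y\in C$, the base change $A\times_X Y\to Y$ lies in $\cL$ (base change stability of $\cL$) and has domain in $C$ (finite-limit closure of $C$), hence is again in $W$, so $\t:W\to C$ is a fibration. For the lex condition \ref{enum:modulator:6} of \cref{defi:lex-modulator}, I must show that each fiber $W(X)$ is cofiltered. It is nonempty since $1_X\in W(X)$; given $w_1,w_2\in W(X)$ the pullback $A_1\times_X A_2\to X$ is in $\cL\cap W$ and provides a common lower bound; and given a parallel pair $g,h:w_1\to w_2$ over $X$, their equalizer $E\to A_1$ is the base change of the fiberwise diagonal $\Delta w_2$ along $(g,h)$, so since $\Delta w_2\in\cL$ by the diagonal closure above and $C$ is closed under finite limits, $E\to X$ is again an object of $W(X)$ coning off the pair. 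Hence $W(X)$ is cofiltered and $W$ is a lex modulator. Finally, from $S\subseteq W\subseteq\cL$ one gets $\cR=S^\perp\supseteq W^\perp\supseteq\cL^\perp=\cR$, whence $W^\perp=\cR$; by \cref{thm:lex+construction} the lex modality generated by $W$ is exactly $(\cL,\cR)$, i.e. $W$ presents $f^*$.

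The main obstacle is the choice of $\kappa$: one must arrange simultaneously that $\cE^\kappa$ is closed under finite limits and that the localization is $\kappa$\=/accessible, which is the usual ``sharp cardinal'' bookkeeping for presentable topoi. The conceptual heart of the verification is the equalizer step for cofilteredness, which is precisely where left-exactness of $f^*$ is used, through the stability of $\cL$ under fiberwise diagonals.
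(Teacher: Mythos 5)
Your construction is essentially the paper's: both proofs start from the class $\cL$ of maps inverted by $f^*$, cut it down by a cardinal bound to a small full subcategory of $\Arr \cE$ sitting over chosen generators, and then verify the modulator axioms using the closure of $\cL$ under base change and finite limits, which is exactly where left-exactness of $f^*$ enters. The differences are in the bookkeeping. The paper fixes arbitrary generators $C$, takes the $\kappa$\=/compact objects $W(\kappa)$ of the accessible category $W=\cL$ itself, and keeps only the maps with codomain in $C$; since it does not assume $C$ closed under finite limits, its cofilteredness argument must form the limit of a finite diagram $D:I\to W'(X)$ inside $W(\kappa)$ (whose codomain $X^{|I|}$ may leave $C$) and then base change along the diagonal $X\to X^{|I|}$. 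You instead take $C=\cE^\kappa$ closed under finite limits and require \emph{both} endpoints of the maps to lie in $C$, which makes your $W$ outright closed under finite limits in $\Arr \cE$ — i.e.\ it satisfies the stronger condition \ref{enum:modulator:6+} of \cref{defi:lex-modulator}, which the paper notes already implies conditions \ref{enum:modulator:5} and \ref{enum:modulator:6}. You also verify explicitly that $W$ presents $f^*$, via the sandwich $S\subseteq W\subseteq\cL$ giving $W^\perp=\cR$; the paper leaves this final step implicit, so this is a small genuine improvement.

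One step as written is inadequate in the \oo categorical setting: cofilteredness of $W(X)$ cannot be checked by the 1\=/categorical criterion (nonempty, binary lower bounds, equalizers of parallel pairs). For an \oo category one must produce cones over \emph{arbitrary} finite diagrams (equivalently over all the $\partial\Delta^n$), and your equalizer step only treats the $S^0$\=/type data; the higher analogues would need all the higher fiberwise diagonals $\Delta^n w$, not just $\Delta w$. Fortunately your own setup repairs this for free: since $\cL$ is closed under finite limits in $\Arr \cE$ (as $f^*$ is lex) and $C=\cE^\kappa$ is closed under finite limits, your $W$ is closed under finite limits in $\Arr \cE$, so you may either invoke condition \ref{enum:modulator:6+} directly, or cone off a finite diagram $D:I\to W(X)$ by forming its limit in $W$ and base changing along the diagonal $X\to X^{|I|}$, exactly as in the paper's proof. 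With that substitution your argument is complete and correct.
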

\begin{proof}
We fix some generators $C \subset \cE$ of $\cE$.
Let $W\subset \Arr \cE$ be the subcategory of arrows inverted by $f^*$.
Because $f^*$ is a left-exact localization, $W$ is stable by finite limits in $\Arr \cE$.
By assumption, there exists a regular cardinal $\kappa$ such that $W$ is a $\kappa$\=/accessible category.
Let $W(\kappa)\subset W$ be subcategory of $\kappa$\=/compact objects.
It is possible to chose $\kappa$ such that $W(\kappa)$ is stable by finite limits and such that any object of $C$ is $\kappa$\=/compact.
For such a $\kappa$, we put $W' = W(\kappa)\cap \P C \comma C$.
Let us show that it is a lex modulator with respect to the generators $C$.
First, $W'$ contains always the isomorphisms between objects of $C$.
Therefore, the stability by finite limits of $W$ and \cref{lem:BC-cart2} implies that $W'$ is stable by base change.
Then, for any $X$ in $C$, we need to prove that $W'(X)$ is op-filtered.
Let $D:I\to W'(X)$ be a finite diagram, its limit in $W'$ has codomain $\lim_IX = X^{|I|}$ (where $|I| = \colim_I1$) which might not be an object of $C$ since we have not assumed $C$ to be stable by finite limits.
But the base change of this limit along the diagonal $X\to X^{|I|}$ is an element in $W'(X)$ which provide a cone for the diagram $D$.
This finishes to prove that $W'$ is a lex modulator.
\end{proof}

\subsubsection{The diagonal criterion}

In this section, we provide conditions to generate a lex modality which are weaker than those of \cref{thm:lex+construction}.
In consequence, the result is also going to be weaker in the sense that the $+$/$k$\=/constructions are no longer be left-exact, only the resulting reflection operator $\cR$ is going to be.

Given a diagram $W\to \Arr \cC$, if $(\cL_W,\cR_W)$ is the orthogonal system generated by $W$, we have a factorization $W\to \cL_W \subto\Arr \cC$ of the generating diagram.

\begin{lemma}
\label{lem:more-generators}
For any factorization $W\to W'\to \cL_W$, the orthogonal system generated by $W'$ coincides with the one generated by $W$.
\end{lemma}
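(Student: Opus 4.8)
The plan is to reduce everything to the Galois-connection formalism for the relation $\perp$ on $\Arr \cC$ and to show that the two generated systems have the same right class. Since an orthogonal system is entirely determined by either of its classes — by definition $\cR = \cL^\perp$ and $\cL = {}^\perp\cR$ — it suffices to prove $W^\perp = (W')^\perp$. Applying ${}^\perp(-)$ to this equality then also gives $\cL_W = {}^\perp(W^\perp) = {}^\perp((W')^\perp) = \cL_{W'}$, so that $(\cL_W,\cR_W) = (\cL_{W'},\cR_{W'})$ as desired.

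First I would record the antitonicity of $(-)^\perp$. The factorization $W\to W'\to \cL_W$ exhibits the essential image of $W$ as contained in that of $W'$, and, as noted in the discussion of orthogonal systems, the operator $(-)^\perp$ depends only on the essential image. Any map $g$ that is right-orthogonal to every object of $W'$ is then a fortiori right-orthogonal to every object of $W$, which yields the inclusion $(W')^\perp \subseteq W^\perp$.

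For the reverse inclusion I would use that $W'$ lands in $\cL_W = {}^\perp(W^\perp)$. By the very definition of ${}^\perp(W^\perp)$, every map $f'$ in $W'$ satisfies $f'\perp g$ for all $g\in W^\perp$. Fixing $g\in W^\perp$, this says precisely that $g$ is right-orthogonal to all of $W'$, i.e. $g\in (W')^\perp$. Hence $W^\perp \subseteq (W')^\perp$, and combined with the previous inclusion we conclude $W^\perp = (W')^\perp$, whence the two orthogonal systems coincide.

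There is no genuine obstacle here: the statement is a formal consequence of the fact that $\perp$ is an orthogonality (Galois) relation together with the defining property $\cL_W = {}^\perp(W^\perp)$. The only point deserving a word of care is the appeal to invariance of the generated system under passage to the essential image, which is what lets us treat $W\to W'$ as an honest inclusion when manipulating the $\perp$-operators; this invariance is already recorded in the excerpt.
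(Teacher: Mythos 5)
Your proof is correct and is essentially the paper's own argument: the paper compresses it into the single chain $\cR_W = \cL_W^\perp \subset (W')^\perp \subset W^\perp = \cR_W$, whose two inclusions are exactly your two steps ($W'$ landing in $\cL_W = {}^\perp(W^\perp)$ for one, antitonicity of $(-)^\perp$ applied to the factorization for the other). Your additional remarks about essential images and applying ${}^\perp(-)$ to recover the left classes are fine but make explicit what the paper leaves implicit.
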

\begin{proof}
We have $\cR_W = \cL_W^\perp \subset (W')^\perp \subset W^\perp = \cR_W$.
\end{proof}

Given a diagram $W\to \Arr \cC$, in a category $\cC$ with finite limits,
let $W\lex$ be the closure of the image of $W$ by finite limits in $\Arr \cC$.

\begin{prop}
\label{prop:lex-closure-gen}
Let $\cC$ be a presentable category.
The factorization system generated by a small diagram $W\to \Arr \cC$ is left-exact if and only if $W\lex \subset \cL_W$.
\end{prop}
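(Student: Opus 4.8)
The plan is to reduce the statement to the single assertion that the left class $\cL_W$ is stable under finite limits in $\Arr\cC$, and then to treat that assertion on its own. The bridge I would establish first is
\[
(\cL_W,\cR_W)\ \text{is a lex modality}\quad\Longleftrightarrow\quad \cL_W\ \text{is stable under finite limits in }\Arr\cC.
\]
Recall that $f\in\cL_W$ if and only if $\rho(f)$ is invertible (the factorization $f=f\circ\mathrm{id}$ forces $\rho(f)\simeq\mathrm{id}$ by uniqueness). For the forward implication of the bridge, \cref{prop:stable/lex-FS}\ref{prop:stable/lex-FS:enum2} gives that $\rho$ is left-exact, so a finite limit of maps in $\cL_W$ has invertible image under $\rho$ and hence lies in $\cL_W$. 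For the converse implication, given a finite diagram $f_i$ I would factor each $f_i=\rho(f_i)\lambda(f_i)$; since $\cR_W$ is stable under limits (\cref{lem:prop-ufs}\ref{lem:prop-ufs:2-dual}) and $\cL_W$ is assumed stable under finite limits, $\lim_i f_i=(\lim_i\rho(f_i))(\lim_i\lambda(f_i))$ is again a factorization with its two factors in the correct classes, so by uniqueness $\rho(\lim_i f_i)=\lim_i\rho(f_i)$; thus $\rho$ is left-exact and $(\cL_W,\cR_W)$ is a lex modality by \cref{prop:stable/lex-FS}\ref{prop:stable/lex-FS:enum2}.

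Granting the bridge, the forward direction of the proposition is immediate: if the factorization system is lex then $\cL_W$ is stable under finite limits and contains $W$, hence it contains the finite-limit closure $W\lex$. For the converse I would assume $W\lex\subseteq\cL_W$ and apply \cref{lem:more-generators} to the factorization $W\to W\lex\to\cL_W$: the orthogonal systems generated by $W$ and by $W\lex$ coincide, so I may replace $W$ by $W\lex$ and assume from the outset that $W$ is itself closed under finite limits in $\Arr\cC$. By the bridge it then suffices to prove that $\cL_W$ is stable under finite limits.

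This last step is where I expect the real difficulty to lie, and it is not formal: the pullback-hom $\pbh{-}{g}$ turns the colimits that generate $\cL_W$ into limits, but it does not interact simply with limits in its left variable, so there is no direct orthogonality argument that $\cL_W$ inherits finite-limit stability from $W$. My plan is instead to prove that the reflection $\rho$ is left-exact by propagating finite-limit stability through the transfinite construction of \cref{thm:!SOA2} that computes $\lambda(f)=u^\kappa(f)$: this map is assembled from $W$ by coends/pushout-products and (transfinite) composites, and the point is that, once $W$ is closed under finite limits and colimits in $\cC$ are universal (the standing assumption of this section, which passes to $\Arr\cC$), each generating step commutes with finite limits, so the left factor of a finite limit of maps in $\cL_W$ again lands in $\cL_W$. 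When $\cC$ is an $n$-topos this is precisely the left-exactness of the plus-construction, and one may alternatively complete $W$ to a lex modulator and invoke \cref{thm:lex+construction}; the main obstacle is to establish the commutation of finite limits with the coend and pushout-product steps at each stage of the construction for a general presentable $\cC$ with universal colimits, rather than only in the topos case.
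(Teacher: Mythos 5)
Your bridge, the forward direction, and the reduction via \cref{lem:more-generators} to the case where $W$ is itself closed under finite limits all coincide with the paper's proof (the bridge is essentially the paper's definition of a lex modality together with \cref{prop:stable/lex-FS}\ref{prop:stable/lex-FS:enum2}). At the crux, however, the paper simply cites \cref{thm:lex+construction}: a diagram stable under finite limits becomes, after the harmless pre-modulator completion of \cref{prop:modulator-completion} with generators chosen closed under finite limits, a modulator satisfying \ref{enum:modulator:6+}, and by the remark in \cref{defi:lex-modulator} (conditions \ref{enum:modulator:3}, \ref{enum:modulator:4} and \ref{enum:modulator:6+} together imply \ref{enum:modulator:5} and \ref{enum:modulator:6}) this is a lex modulator, so the factorization system it generates is a lex modality. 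The route you relegate to an aside (``complete $W$ to a lex modulator and invoke \cref{thm:lex+construction}'') is therefore the intended proof, not an alternative; and your scruple about general presentable $\cC$ is one the paper shares, since its own proof of this proposition invokes the topos-level theorem, so you were not expected to close that gap beyond the citation.

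The mechanism you actually propose for the last step --- that ``each generating step commutes with finite limits'' once $W$ is closed under finite limits and colimits are universal --- is a genuine gap: it is false. The functor $\pbh{w}{-}$ does commute with limits in its second variable, but the pushout-product $\pbh{w}{f}\pp w$, the cocartesian gap map defining $\Kellymap f$, and the coend over $W$ are colimit constructions; universality of colimits only gives commutation with base change along a fixed map, not preservation of finite limits in the variable $f$, and already in $\cS$ pushouts fail to commute with pullbacks. The $k$\=/construction is \emph{not} stepwise left-exact, and the paper never claims it is. The actual mechanism is global and goes through the comparison theorem: \cref{thm:+construction} identifies $\Kellymap f$ with $\Plusmap f$, whose values are $\Plus f(X) = \colim_{W(X)\op}\map w f$, a colimit of finite-limit-preserving functors; the lex-modulator condition makes the indexing categories $W(X)\op$ \emph{filtered}, and it is this filteredness (together with presenting the $n$\=/topos as a left-exact localization of presheaves) that yields left-exactness of the plus-construction, hence of its transfinite iterates and of $\rho$. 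To repair your argument, discard the commutation claim and instead verify that $W\lex$, completed as above, satisfies the hypotheses of \cref{thm:lex+construction}, then conclude as the paper does.
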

\begin{proof}
If $(\cL_W,\cR_W)$ is left-exact, then $\cL_W$ is stable by finite limits and $W\lex \subset \cL_W$.
Reciprocally, by \cref{lem:more-generators}, $W\lex$ generates the same factorization system.
The conclusion follows from \cref{thm:lex+construction}.
\end{proof}

Given a diagram $W\to \Arr \cC$, we define $W^\Delta$ to be the smallest full subcategory of $\Arr \cC$ containing the image of $W$ and all their diagonals (\ie stable under the operations $\pbh {s^n}-$).

\begin{defi}[$\Delta$\=/modulator]
\label{defi:Delta-modulator}
We shall say that a modulator $W\to \Arr \cC$ is a {\em $\Delta$\=/modulator} if
\begin{enumerate}[label=(\roman*$^-$), leftmargin=*]
\setcounter{enumi}{5}
\item \label{enum:modulator:6-} $W^\Delta\subset \cL_W$.
\end{enumerate}
This condition is independent of \ref{enum:modulator:6} and weaker than \ref{enum:modulator:7}.
\end{defi}

\begin{thm}[Diagonal lex criterion]
\label{thm:lex-diagonal}
Let $\cE$ be an $n$\=/topos ($n\leq \infty$) with fixed generators. 
Let $W\to \Arr \cE$ be a $\Delta$\=/modulator, then the factorization system generated by $W$ (which is a modality by \cref{thm:+modality}) is left-exact.
\end{thm}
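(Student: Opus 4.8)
The plan is to reduce the statement to the finite-limit criterion of \cref{prop:lex-closure-gen}, which says that the factorization system generated by $W$ is left-exact if and only if $W\lex \subseteq \cL_W$, where $W\lex$ is the closure of the image of $W$ under finite limits in $\Arr \cE$. Since a $\Delta$\=/modulator is in particular a modulator, \cref{thm:+modality} gives that $(\cL_W,\cR_W)$ is a modality, so $\cL_W$ is stable under base change in $\cE$; being the left class of a factorization system it is also stable under composition. I would then introduce the smallest class $\cL'$ of maps of $\cE$ containing $W^\Delta$ and all isomorphisms and stable under base change and composition. By the defining property $W^\Delta\subseteq \cL_W$ of a $\Delta$\=/modulator (\cref{defi:Delta-modulator}), and since isomorphisms always lie in $\cL_W$, we have $\cL'\subseteq\cL_W$. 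It thus suffices to show that $\cL'$ is stable under finite limits in $\Arr\cE$, for then $W\subseteq W^\Delta\subseteq\cL'$ forces $W\lex\subseteq\cL'\subseteq\cL_W$.

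The first key point is that $\cL'$ is stable under the diagonal $v\mapsto\Delta v$ (recall $\Delta v=\pbh{s^0}v$). This rests on two compatibilities: the diagonal commutes with base change (if $v'$ is a base change of $v$ then $\Delta v'$ is a base change of $\Delta v$), and the diagonal of a composite $g\circ f$ is $\Delta f$ followed by a base change of $\Delta g$. Using these, I would check by induction on $n$ that the class $\{v\in\cL' : \Delta^n v\in\cL'\text{ for all }n\geq 0\}$ contains $W^\Delta$ (which is $\Delta$\=/closed by construction) and is itself stable under base change and composition; by minimality of $\cL'$ it equals $\cL'$, so $\cL'$ is $\Delta$\=/closed.

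The second key point is the reduction of finite limits to products and equalizers. Binary products in $\Arr\cE$ stay in $\cL'$: for $v,v'\in\cL'$ the product $v\times v'$ factors as a base change of $v$ followed by a base change of $v'$ along the two projections. For equalizers, let $\alpha,\beta:v\rightrightarrows v'$ be parallel maps in $\Arr\cE$ with $v,v'\in\cL'$ and write $e:e_s\to e_t$ for their equalizer, with $e_s$ and $e_t$ the equalizers of $(\alpha_s,\beta_s)$ and $(\alpha_t,\beta_t)$. Setting $P:=\s v\times_{\t v}e_t$, the map $e$ factors as
\[
e_s \longrightarrow P \longrightarrow e_t,
\]
where $P\to e_t$ is the base change of $v$ along $e_t\hookrightarrow\t v$, and $e_s\to P$ is the base change of $\Delta v'$ along the map $P\to \s v'\times_{\t v'}\s v'$ induced by $(\alpha_s,\beta_s)$ (which lands in the fibre product by naturality). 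Both factors lie in $\cL'$ — the second because $\Delta v'\in\cL'$ by the previous paragraph — so $e\in\cL'$. Together with the terminal object of $\Arr\cE$ (an isomorphism), this shows $\cL'$ is stable under finite limits, and hence $W\lex\subseteq\cL_W$. The $n$\=/topos case ($n\leq\infty$) is identical, all the base changes and finite limits used being available there.

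The main obstacle I expect is precisely the equalizer factorization together with the verification that $\cL'$ is $\Delta$\=/closed: this is where the $\Delta$\=/modulator hypothesis is consumed, and it is essential to run the argument with the \emph{smaller} class $\cL'$, generated using only base change and composition, rather than with $\cL_W$ itself — the diagonal does not commute with the transfinite colimits used to build $\cL_W$, so a naive ``$\cL_W$ is closed under diagonals'' argument would not go through.
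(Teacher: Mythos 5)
Your proof is correct, and at its core it runs on the same mechanism as the paper's, but it is organized differently. The paper's argument is a one-stroke computation: by \cref{prop:lex-closure-gen} it suffices to show $W\lex\subset\cL_W$, and the paper decomposes the fiber product of a cospan $f\to g\ot h$ in $W$ as a composite of three maps, which are base changes of $\Delta g$, of $f$, and of $h$ respectively; lexness follows because $\cL_W$ is stable under base change (the modality from \cref{thm:+modality}) and composition, and $\Delta g\in W^\Delta\subset\cL_W$ by the $\Delta$\=/modulator axiom. Your equalizer factorization $e_s\to P\to e_t$ is exactly the same trick — one factor a base change of a diagonal, the other a base change of the map itself — packaged for the product/equalizer presentation of limits rather than for cospans. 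What your version buys is a genuinely more careful treatment of the iteration implicit in the closure $W\lex$: the paper's written proof only exhibits one-step fiber products of maps of $W$ in $\cL_W$, leaving implicit the induction over iterated limits (which needs diagonals of previously constructed maps, via $\Delta(f\times_g h)=\Delta f\times_{\Delta g}\Delta h$). Your auxiliary minimal class $\cL'$ together with the $\Delta$\=/closure lemma (diagonals commute with base change, and $\Delta(g\circ f)$ is $\Delta f$ followed by a base change of $\Delta g$) makes that induction explicit, and your closing remark is on point: trying to show $\cL_W$ itself is diagonal-closed would be essentially equivalent to the lexness one is proving.

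One repair is needed in the second key point: the reduction of arbitrary finite limits to products and equalizers is a 1\=/categorical statement and fails in $\infty$\=/categories (a general finite limit requires a totalization, which is not a finite construction). Your argument survives because the equalizer of a parallel pair in an $\infty$\=/category is itself a pullback along a diagonal, and your two closure statements do yield closure under pullbacks via $X\times_Z Y\simeq \mathrm{eq}\,(X\times Y\rightrightarrows Z)$, while finite limits are generated by pullbacks and the terminal object. So the generation claim should be stated in that form — terminal object plus pullbacks, with products and equalizers as intermediate steps — rather than by invoking the classical products-and-equalizers reduction verbatim.
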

\begin{proof}
Using \cref{prop:lex-closure-gen}, we need to prove that $W\lex\subset \cL_W$. 
Since $\cL_W$ contains all isomorphisms, it is enough to prove that the fiber products of maps in $W$ are in $\cL_W$.
Let $f\to g \ot h$ be a pullback diagram in $W$, we decompose it in the composition of three pullbacks:
\[
\begin{tikzcd}
\s f \ar[d,equal] \ar[r]   & \s g \ar[d]       & \s h \ar[d,equal] \ar[l] \ar[d,bend left=100,phantom, "(1)" description]\\
\s f \ar[d]  \ar[r]        & \t g \ar[d,equal] & \s h \ar[d,equal] \ar[l] \ar[d,bend left=100,phantom, "(2)" description]\\
\t f \ar[d,equal]  \ar[r]  & \t g \ar[d,equal] & \s h \ar[d] \ar[l] \ar[d,bend left=100,phantom, "(3)" description]\\
\t f \ar[r]                & \t g              & \t h \ar[l]
\end{tikzcd}
\]
The pullback of (1) is in $\cL_W$ because it is a base change of $\Delta g$ (which is in $\cL$ by assumption).
The pullback of (2) is in $\cL_W$ because it is a base change of $f$.
The pullback of (3) is in $\cL_W$ because it is a base change of $g$.
Then, the pullback of $f\to g \ot h$ is in $\cL_W$ because $\cL_W$ is stable by composition.
\end{proof}

\begin{lemma}
\label{lem:diagonal-cartesian}
Let $\cE$ be a category with finite limits.
Let $g\to f$ be a cartesian map in $\Arr \cE$, then the induced map between the diagonal $\Delta g \to \Delta f$ is also cartesian.
\end{lemma}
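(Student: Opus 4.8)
The plan is to realize the induced square $\Delta g\to\Delta f$ as the top half of a vertical stack of two squares whose bottom square and whose total rectangle are both cartesian, and then to invoke the cancellation property of cartesian squares already used in \cref{lem:BC-cart}.

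I would write $g:\s g\to\t g$ and $f:\s f\to\t f$, and record the cartesian map $\alpha:g\to f$ as a pullback square with horizontal legs $\s\alpha:\s g\to\s f$ and $\t\alpha:\t g\to\t f$. Recall that $\Delta f=\pbh{s^0}f$ is the map $\s f\to\s f\times_{\t f}\s f$, that is, the diagonal of $f$ viewed as an object of the slice $\cE\comma\t f$, and likewise for $g$. The diagram to consider is
\[
\begin{tikzcd}
\s g \ar[r,"\s\alpha"] \ar[d,"\Delta g"'] & \s f \ar[d,"\Delta f"]\\
\s g\times_{\t g}\s g \ar[r] \ar[d] & \s f\times_{\t f}\s f \ar[d]\\
\t g \ar[r,"\t\alpha"'] & \t f
\end{tikzcd}
\]
where the lower vertical arrows are the structure maps of the two fibre products. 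The upper square is precisely the map $\Delta g\to\Delta f$, while the total outer rectangle is the square $\alpha$ itself: composing $\Delta f$ with $\s f\times_{\t f}\s f\to\t f$ returns $f$, and similarly for $g$. Hence the outer rectangle is cartesian by hypothesis.

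The crux will be to show that the bottom square is cartesian, equivalently that the canonical comparison $\s g\times_{\t g}\s g\to(\s f\times_{\t f}\s f)\times_{\t f}\t g$ is invertible. This is where cartesianness of $\alpha$ enters: it identifies $g$ with the base change $(\t\alpha)^*f$ in $\cE\comma\t g$, and $\s f\times_{\t f}\s f$ is nothing but the product of $\s f$ with itself taken in the slice $\cE\comma\t f$. Since the base-change functor $(\t\alpha)^*:\cE\comma\t f\to\cE\comma\t g$ preserves finite limits, hence products over the base, it carries $\s f\times_{\t f}\s f$ to $\s g\times_{\t g}\s g$, which furnishes exactly the desired isomorphism and exhibits the bottom square as a pullback. (Alternatively one checks it representably, both objects classifying a map $T\to\t g$ together with two maps $T\to\s f$ lying over it via $\t\alpha$.)

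Once the outer rectangle and the bottom square are both cartesian, cancellation of pullbacks forces the top square to be cartesian, which is the assertion. I expect the single delicate point to be the middle identification $\s g\times_{\t g}\s g\cong(\s f\times_{\t f}\s f)\times_{\t f}\t g$ — that is, that forming the codomain of the diagonal commutes with base change along $\t\alpha$; everything else is formal and follows from the pasting law. The same scheme would handle the iterated diagonals $\Delta^n$ verbatim, replacing $\s f\times_{\t f}\s f$ by the relevant finite-limit construction, since $(\t\alpha)^*$ preserves all finite limits.
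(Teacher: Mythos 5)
Your proof is correct and is essentially the paper's own argument in different packaging: both hinge on the identification $\s g\times_{\t g}\s g \simeq (\s f\times_{\t f}\s f)\times_{\t f}\t g$ furnished by cartesianness of $\alpha$, which the paper obtains by substituting $\s g = \s f\times_{\t f}\t g$ directly into the square representing $\Delta g\to\Delta f$ and observing that the result is a base-change square, while you isolate the same identification as the bottom square of a vertical pasting and conclude by cancellation of pullbacks. Both your outer rectangle (the hypothesis $\alpha$ itself) and your bottom square (base change along $\t\alpha$ preserving products in the slice) are justified soundly, so there is nothing to fix.
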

\begin{proof}
The map $\Delta g \to \Delta f$ corresponds to the square
\[
\begin{tikzcd}
\s g \ar[r]\ar[d] & \s f \ar[d] \\    
\s g \times_{\t g} \s g \ar[r] & \s f \times_{\t f} \s f.
\end{tikzcd}
\]
Because $f\to g$ is cartesian, we have $\s g = \s f \times_{\t f} \t g$.
Using this in the previous square, we get
\[
\begin{tikzcd}
\s f \times_{\t f} \t g \ar[r]\ar[d] & \s f \ar[d] \\    
\s f \times_{\t f} \s f \times_{\t f} \t g \ar[r] & \s f \times_{\t f} \s f
\end{tikzcd}
\]
which is clearly cartesian.
\end{proof}

\begin{cor}
\label{cor:diagonal-lex-gen}
Let $\cE$ be an $n$\=/topos ($n\leq \infty$).
For any small diagram $W\to \Arr \cE$, the modality generated by $W^\Delta$ is left-exact.
\end{cor}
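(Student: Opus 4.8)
The plan is to realize the modality generated by $W^\Delta$ as the factorization system of an explicit modulator and then to invoke the diagonal criterion \cref{thm:lex-diagonal}. Since an $n$\=/topos $\cE$ is presentable and locally cartesian closed, \cref{cor:gen-modality} applies: fixing a small category of generators $C\subset\cE$, the modality generated by $W^\Delta$ is the factorization system generated by the modulator envelope $V:=(W^\Delta)^{mod}$ of \cref{defi:base-change-envelope}. By that definition $V$ is a modulator, so by \cref{thm:+modality} the factorization system it generates is a modality, whose left class $\cL_V$ is stable under base change, composition and colimits, and contains every isomorphism; moreover $\cL_V\supseteq W^\Delta$, since it is the left class of the modality generated by $W^\Delta$ (this is the content of \cref{cor:gen-modality}). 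It therefore suffices to verify that $V$ is a $\Delta$\=/modulator, i.e. that $V^\Delta\subset\cL_V$ in the sense of \cref{defi:Delta-modulator}; the corollary then follows at once from \cref{thm:lex-diagonal}.

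First I would describe $V^\Delta$ concretely. Because $\pbh{s^n}{-}=\Delta^{n+1}$, closing $V$ under the operations $\pbh{s^n}{-}$ amounts to closing under the single operation $\Delta=\pbh{s^0}{-}$, so every object of $V^\Delta$ is, up to isomorphism, an iterated diagonal $\Delta^m h$ of some $h\in V$. By \cref{defi:base-change-envelope}, $V$ consists of the identities $1_X$ of objects $X\in C$ together with all base changes of maps of $W^\Delta$ over objects of $C$. The iterated diagonals $\Delta^m 1_X$ are invertible by \cref{lem:abs-iso}, hence lie in $\cL_V$, so the only substantive case is that of the iterated diagonals of a base change.

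The key step is the interaction of diagonals with base change. If $h\in V$ is a base change of some $v\in W^\Delta$, then the comparison $h\to v$ is a cartesian map in $\Arr\cE$, and iterating \cref{lem:diagonal-cartesian} shows that $\Delta^m h\to\Delta^m v$ is cartesian for every $m$; that is, $\Delta^m h$ is a base change of $\Delta^m v$. Here I exploit the defining feature of $W^\Delta$, namely that it is stable under diagonals, so that $\Delta^m v\in W^\Delta\subseteq\cL_V$. Since $\cL_V$ is stable under base change (being the left class of a modality), this forces $\Delta^m h\in\cL_V$. Running this over all $h\in V$ and all $m$ gives $V^\Delta\subset\cL_V$, so $V$ is a $\Delta$\=/modulator, and \cref{thm:lex-diagonal} yields that the generated modality is left-exact.

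I expect no serious obstacle: the argument is largely bookkeeping around facts already established, namely that diagonals commute with base change (\cref{lem:diagonal-cartesian}) and that the modulator envelope both generates the correct modality (\cref{cor:gen-modality}) and is itself a modulator (\cref{defi:base-change-envelope}). The two points demanding care are the reduction of the closure $V^\Delta$ to iterated first diagonals (so that a single induction on $m$ using \cref{lem:diagonal-cartesian} suffices), and the bookkeeping of remembering that it is $W^\Delta$, rather than the envelope $V$, which must be used as the diagonal\=/stable subcategory feeding the final inclusion.
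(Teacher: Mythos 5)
Your proposal is correct and follows essentially the same route as the paper's proof: replace $W^\Delta$ by its modulator envelope via \cref{cor:gen-modality}, use \cref{lem:diagonal-cartesian} to see that (iterated) diagonals of maps in the envelope are base changes of diagonals of maps in $W^\Delta$, hence lie in the left class, and conclude by \cref{thm:lex-diagonal}. If anything, your bookkeeping is slightly more careful than the paper's, which phrases the key computation in terms of $(W^{mod})^\Delta$ where, as you note, it is $((W^\Delta)^{mod})^\Delta$ that must be controlled, using that $W^\Delta$ is stable under diagonals.
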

\begin{proof}
Let $W^{mod}$ be the modulator envelope of $W$ (\cref{defi:base-change-envelope}) and let $f$ be in $W^{mod}$.
By construction of $W^{mod}$, there exists some $w$ in $W$ and a cartesian map $f \to w$.
Using \cref{lem:diagonal-cartesian}, we have a cartesian map $\Delta f \to \Delta w$.
By \cref{cor:gen-modality}, the modality $(\cL,\cR)$ generated by $W^\Delta$ is the factorization system generated by $(W^\Delta)^{mod}$.
The class $\cL$ contains $W^\Delta$ and is stable by base change.
By the previous computation, it contains $(W^{mod})^\Delta$.
Then, the result follows from \cref{thm:lex-diagonal}.
\end{proof}

It is proven in~\cite[Cor. 3.12]{RSS}, that the modality generated by a diagram $W\to \cC^\mono$ of monomorphisms is always left-exact.
The corresponding statement in our setting is the following.

\begin{cor}
\label{cor:lex-mono-gen}
Let $\cE$ be an $n$\=/topos ($n\leq \infty$).
The factorization system generated by a modulator made of monomorphisms is always a lex modality.
\end{cor}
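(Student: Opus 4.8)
The plan is to reduce this to the diagonal lex criterion of \cref{thm:lex-diagonal}, which asserts that any $\Delta$-modulator generates a lex modality. Since the hypothesis already supplies a modulator $W\to \Arr \cE$, the only thing to verify is the defining condition \ref{enum:modulator:6-} of a $\Delta$-modulator (\cref{defi:Delta-modulator}), namely that $W^\Delta\subset \cL_W$, where $W^\Delta$ is the closure of $W$ under the iterated-diagonal operations $\pbh{s^n}{-}$. Once this inclusion is established, \cref{thm:lex-diagonal} applies verbatim for every $n\leq\infty$, so no separate treatment of the $\infty$-topos and $n$-topos cases is needed.

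The key point is that a monomorphism has invertible higher diagonals. First I would recall, as in \cref{ex:SOA:S0}, that a map $w$ is a monomorphism exactly when its diagonal $\Delta w=\pbh{s^0}{w}$ is invertible. Next I would observe that $\Delta$ preserves invertibility: if $g$ is invertible then $\Delta g=\pbh{s^0}{g}$ is again invertible by the absorption property \cref{lem:abs-iso} (the clause stating that $\pbh g f$ is invertible whenever $f$ or $g$ is). Combining this with the identity $\Delta^{n+1}w=\Delta^n(\Delta w)$ — which follows from the adjunction $\pbh{u\pp f}{g}=\pbh{u}{\pbh f g}$ together with $s^{n-1}\pp s^0=s^n$ — an easy induction shows that $\Delta^{m}w$ is invertible for every $m\geq 1$ when $w$ is a monomorphism.

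With this in hand the description of $W^\Delta$ is immediate: its objects are the maps $w\in W$ themselves (the case $\Delta^0 w=w$) together with the higher diagonals $\Delta^m w$ for $m\geq 1$, all of which we have just shown to be isomorphisms. Now $W\subset \cL_W$ holds by definition of the generated orthogonal system, and isomorphisms always lie in the left class of any orthogonal system. Hence $W^\Delta\subset \cL_W$, so $W$ is a $\Delta$-modulator and \cref{thm:lex-diagonal} gives that the factorization system it generates is a lex modality.

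I expect no genuine obstacle here: the proof is essentially the recognition that ``monomorphism'' is precisely the statement ``$\Delta w$ invertible,'' and that invertibility then propagates through all the higher diagonals. The only points requiring care are the bookkeeping of sphere indices in $\Delta^{n+1}w=\Delta^n(\Delta w)$ and confirming that the appeals to \cref{lem:abs-iso} and \cref{thm:lex-diagonal} are made with $W$ a bona fide modulator, which is granted by hypothesis.
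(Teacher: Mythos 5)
Your proof is correct and follows essentially the same route as the paper: the key observation in both is that a monomorphism has invertible higher diagonals, so $W^\Delta$ adds only isomorphisms to $W$ and the diagonal criterion machinery applies. The only cosmetic difference is that you verify condition \ref{enum:modulator:6-} directly and invoke \cref{thm:lex-diagonal}, while the paper notes that $W$ and $W^\Delta$ generate the same modality and cites \cref{cor:diagonal-lex-gen}; since $W$ is already a modulator by hypothesis, your shortcut bypassing the modulator-envelope step is perfectly legitimate.
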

\begin{proof}
If $w$ is a monomorphism, all its diagonal maps are invertible.
$W^\Delta$ is then the union of $W$ and a collection of isomorphisms.
Hence, the modalities generated by $W$ and $W^\Delta$ coincide.
\end{proof}

\begin{rem}
\label{rem:mono-not-lex+}
Despite \cref{cor:lex-mono-gen}, the plus-construction of \cref{thm:lex+construction} associated to a diagram of monomorphisms $W\to \cC^\mono$ need not be left-exact (see \cref{ex:closed-loc})
The sole hypothesis on $W$ to be stable by base change is not enough in general to prove that the $W(X)$ are co-filtered (although they have a terminal object).
Further hypothesis are needed, like the locality condition of Grothendieck topologies, see \cref{ex:GTopo}.
The same remark apply for the plus-construction in \cref{thm:lex-diagonal}.
Without the stronger hypothesis of \cref{thm:lex+construction}, it may not be left-exact even though its iterations converge to a left-exact functor.
\end{rem}

For $W\to \Arr \cE$ any diagram of maps in a $n$\=/topos $\cE$ ($n\leq \infty$), let $W^\Delta\subto \Arr \cE$ be the full subcategory spanned by all the diagonals $\Delta^n w = \pbh {s^n}w$ of maps $w$ in $W$,
and let $W^{\Delta mod}=(W^\Delta)^{mod}$ the modulator envelope of $W^\Delta$ with respect to some generators (\cref{defi:base-change-envelope}).

\begin{thm}[Generation of left-exact localizations]
\label{thm:lex-loc-gen-by-map}
Let $W\to \Arr \cE$ be a diagram of maps in an $n$\=/topos $\cE$ ($n\leq \infty$).
\begin{enumerate}[label=(\alph*), leftmargin=*]
\item \label{thm:lex-loc-gen-by-map:1} The left-exact modality generated by $W$ is the factorization system generated by the modulator $W^{\Delta mod}$.
\item \label{thm:lex-loc-gen-by-map:2} The left-exact localization $P_W$ of $\cE$ generated by $W$ is the iteration of the plus-construction associated to the modulator $W^{\Delta mod}$.
\item \label{thm:lex-loc-gen-by-map:3} An object $X$ in $\cE$ is local for the left-exact localization generated by $W$ if and only if $X$ is orthogonal to all the diagonals of $W$ and all their base change.
\end{enumerate}
\end{thm}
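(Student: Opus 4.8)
The plan is to deduce all three parts from \ref{thm:lex-loc-gen-by-map:1}, and in fact from the single assertion that the factorization system generated by $W^{\Delta mod}$ is the \emph{smallest} lex modality whose left class contains $W$. Once this is established, \ref{thm:lex-loc-gen-by-map:2} and \ref{thm:lex-loc-gen-by-map:3} are essentially formal.

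For \ref{thm:lex-loc-gen-by-map:1} I would first dispose of the existence half. Since $W^{\Delta mod}=(W^\Delta)^{mod}$ is the modulator envelope of $W^\Delta$, it is a modulator by \cref{defi:base-change-envelope}; hence by \cref{cor:gen-modality} the factorization system it generates agrees with the modality generated by $W^\Delta$, which is left-exact by \cref{cor:diagonal-lex-gen}. Writing $(\cL,\cR)$ for this lex modality, we have $W\subset W^\Delta\subset\cL$ because $W^\Delta$ contains the image of $W$ by definition. Thus $(\cL,\cR)$ is a lex modality with $W$ in its left class.

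The substantial point is minimality. Let $(\cL',\cR')$ be any lex modality with $W\subset\cL'$; I must show $\cL\subset\cL'$. Exactly as in the last paragraph of the proof of \cref{cor:gen-modality}, $\cL=\cL_{W^{\Delta mod}}$ is generated from $W^{\Delta mod}$ by the colimit and composition operations of \cref{thm:!SOA2}, under which $\cL'$ is closed, so it suffices to prove $W^{\Delta mod}\subset\cL'$. As $\cL'$ is a modality it contains the isomorphisms and is stable by base change, so this reduces to $W^\Delta\subset\cL'$, \ie to the stability of $\cL'$ under the diagonal $\Delta=\pbh{s^0}-$ (iterated diagonals are iterated ordinary diagonals, since $s^0\pp s^{n-1}=s^n$). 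This is the step where left-exactness is genuinely used, and I expect it to be the main obstacle. Writing $\rho:\Arr\cE\to\cR'$ for the reflection, \cref{prop:stable/lex-FS}\ref{prop:stable/lex-FS:enum2} shows $\rho$ is left-exact, and a map $f$ lies in $\cL'$ precisely when its reflection $\rho(f)$ is an isomorphism. Because $\Delta f=\pbh{s^0}f$ is defined by finite limits of $f$ in $\Arr\cE$ (\cref{sec:pp-pbh-structure-on-arrows}), the left-exact functor $\rho$ commutes with $\Delta$, so $\rho(\Delta f)=\Delta\rho(f)$; when $\rho(f)$ is invertible so is its diagonal, hence $\Delta f\in\cL'$. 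Iterating yields $W^\Delta\subset\cL'$, then $W^{\Delta mod}\subset\cL'$ and finally $\cL\subset\cL'$.

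For \ref{thm:lex-loc-gen-by-map:2}, since $W^{\Delta mod}$ is in particular a pre-modulator, \cref{thm:+construction} identifies $\Kellymap f$ with $\Plusmap f$, so the reflection $\Arr\cE\to\cR$ obtained by the transfinite iteration of \cref{thm:!SOA2} is the iteration of the plus-construction; restricting it to the maps $X\to 1$ and taking domains recovers $P_W$, as in \cref{thm:orthogonal-reflection} and \cref{rem:+localization}. For \ref{thm:lex-loc-gen-by-map:3}, recall that $X$ is local iff $(X\to 1)\in\cR=\cL^\perp$ and that, for an object $X$ and a map $g$, orthogonality of $X$ to $g$ is the relation $g\perp(X\to 1)$. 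Since $\cL$ is a modality it contains every base change of every diagonal $\Delta^n w$, so if $X$ is local then $X$ is orthogonal to all of these. Conversely, the objects of $W^{\Delta mod}$ are, besides isomorphisms, base changes over $C$ of the diagonals $\Delta^n w$; hence if $X$ is orthogonal to all base changes of all diagonals of $W$, then $(X\to 1)$ is orthogonal to every object of $W^{\Delta mod}$, so $(X\to 1)\in(W^{\Delta mod})^\perp=\cR$ and $X$ is local. This gives the stated equivalence.
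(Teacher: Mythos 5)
Your route coincides with the paper's in all essentials: existence of the lex modality via \cref{cor:diagonal-lex-gen} (the paper phrases this as ``$W^{\Delta mod}$ is a $\Delta$\=/modulator, apply \cref{thm:lex-diagonal}''), minimality by reducing $\cL_{W^{\Delta mod}}\subset\cL'$ to $W^{\Delta mod}\subset\cL'$ through the closure properties used in \cref{cor:gen-modality}, and parts (b) and (c) as the same formal consequences the paper invokes (\cref{rem:+localization}, and the unwinding of $(X\to 1)\in(W^{\Delta mod})^\perp$). The one place where you diverge, however, contains an error: the identity $\rho(\Delta f)=\Delta\rho(f)$ is false in general. The diagonal is the pullback $\Delta f = 1_{\s f}\times_f 1_{\s f}$ in $\Arr\cE$, taken over the cospan whose two legs $1_{\s f}\to f$ have the identity on top and $f$ on the bottom. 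Applying the left-exact reflection $\rho$ to this cospan fixes the legs $1_{\s f}$ (an invertible map lies in $\cR'$, so $\rho$ does not change it); it does \emph{not} replace them by $1_{\s\rho(f)}$, which is what $\Delta\rho(f)=1_{\s\rho(f)}\times_{\rho(f)}1_{\s\rho(f)}$ would require. Concretely, writing $M=\s\rho(f)$ for the middle object of the factorization of $f$, left-exactness gives $\rho(\Delta f)=1_{\s f}\times_{\rho(f)}1_{\s f}$, i.e.\ the map $\s f\times_M\s f\to\s f\times_{\t f}\s f$ induced by $\rho(f):M\to\t f$, which already differs from $\Delta\rho(f):M\to M\times_{\t f}M$ in its codomain.

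The damage is local and the repair immediate. What you actually need is only the implication $f\in\cL'\Rightarrow\Delta f\in\cL'$, and this follows either from the corrected formula (if $\rho(f)$ is invertible, then so is the induced map $\s f\times_M\s f\to\s f\times_{\t f}\s f$, hence $\rho(\Delta f)$ is invertible and $\Delta f\in\cL'$), or -- more simply, and this is what the paper's terse minimality sentence rests on -- directly from the paper's \emph{definition} of a lex modality in \ref{sec:stableFS}: the left class $\cL'$ is stable under finite limits in $\Arr\cE$, and $\Delta f=1_{\s f}\times_f 1_{\s f}$ is a finite limit of maps in $\cL'$ (identities and $f$). So your detour through the left-exact reflection of \cref{prop:stable/lex-FS} is both misstated and unnecessary; with that one step replaced, your proposal is correct and is essentially the paper's proof. (Your auxiliary remarks are fine: $s^0\pp s^{n-1}=s^n$ does reduce iterated diagonals to iterated $\Delta$, and your two-directional check in (c), using stability of $\cL$ under base change for one direction and orthogonality to $W^{\Delta mod}$ for the other, is exactly what the paper's one-line proof of (c) compresses.)
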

\begin{proof}
\noindent \ref{thm:lex-loc-gen-by-map:1}
Let $(\cL_W^{\Delta mod},\cR_W^{\Delta mod})$ be the factorization system generated by $W^{\Delta mod}$.
The proof of \cref{cor:diagonal-lex-gen} shows that $W^{\Delta mod}$ is a $\Delta$\=/modulator. 
By \cref{thm:lex-diagonal}, $(\cL_W^{\Delta mod},\cR_W^{\Delta mod})$ is a lex modality.
The class $W^{\Delta mod}$ is necessarily contained in the left class $\cL$ of any lex modality containing $W$, hence $\cL_W^{\Delta mod}\subset \cL$.
This prove that $(\cL_W^{\Delta mod},\cR_W^{\Delta mod})$ is the smallest lex modality containing $\cL$.

\smallskip
\noindent \ref{thm:lex-loc-gen-by-map:2}
This is \cref{rem:+localization}.

\smallskip
\noindent \ref{thm:lex-loc-gen-by-map:3}
This is just a reformulation of $X\to 1$ being in $(W^{\Delta mod})^\bot$.
\end{proof}

\begin{rem}
In accordance with \cref{rem:mono-not-lex+}, the operator $\Plusconstruction$ of \cref{thm:lex-loc-gen-by-map}\ref{thm:lex-loc-gen-by-map:2} need not be left-exact.
\end{rem}

\subsubsection{Lex localization of truncated objects}
\label{sec:sheaf-truncated}

In \cref{prop:idempotent+} we gave a saturation condition under which the plus-construction converges in one step.
In this section, we give a weaker condition under which the plus-construction associated to a lex modulator converges in $n+2$ steps on $n$\=/truncated objects.

\begin{defi}[mono-saturation]
\label{lem:mono-saturated}
Let $\cE$ be an $n$\=/topos ($n\leq \infty$) and $C\subset \cE$ a generating category.
Let $W\subto \Arr \cE$ be a lex modulator and $(\cL,\cR)$ the corresponding factorization system.
We shall say that $W$ is {\em mono-saturated} if 
\begin{enumerate}[label=(\roman*), leftmargin=*]
\setcounter{enumi}{6}
\item \label{enum:modulator:7} any monomorphism in $\cL$ with codomain in $C$ is in $W$.
\end{enumerate}
For example, any Grothendieck topology on $\P C$ is mono-saturated.
Because an $n$\=/topos is well-powered, a lex modulator can always be completed in a mono-saturated one.
\end{defi}

The following lemma is the main fact behind \cref{prop:truncated-sheaf}.

\begin{lem}
\label{lem:truncated-sheaf}
Let $\cE$ be an $n$\=/topos ($n\leq \infty$) and $W\subto \Arr \cE$ a mono-saturated lex modulator.
Let $(\cL,\cR)$ be the factorization system generated by $W$.
Then, for any monomorphism $m$ in $\cL$, $m^+$ is an isomorphism.
More generally, if $w$ is in $\cL$ and $n$\=/truncated, $w^+$ is $(n-1)$\=/truncated.
\end{lem}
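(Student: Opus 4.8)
The plan is to deduce both assertions from a single induction on the truncation level $n\geq -1$, the first assertion being the base case $n=-1$. Recall that $\Plus f=\colimit{W\commaindex f}\t w$ (\cref{defi:+construction}) and that, $W$ being a pre-modulator, the codomain of $\Plusmap f$ is $\t f$ (\cref{thm:+construction}); recall also that, $W$ being a lex modulator, $(\cL,\cR)$ is a lex modality and the endofunctor $f\mapsto\Plusmap f$ of $\Arr\cE$ is left-exact (\cref{thm:lex+construction}). Two facts drive the argument: a direct evaluation of $\Plus m$ for a monomorphism $m\in\cL$, using mono-saturation, and the commutation of the plus-construction with the diagonal.

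\textbf{Base case ($n=-1$).} Let $m\colon A\to B$ be a monomorphism in $\cL$. I would exhibit, inside $W\comma m$, the full subcategory $D$ of \emph{cartesian} squares $w\to m$ whose codomain $\t w$ lies in the generating category $C$, and show it is final with $\colimit D\t w=B$. For $g\colon X\to B$ with $X\in C$, the base change $m_g\colon X\times_B A\to X$ is a monomorphism (monos are stable by base change) lying in $\cL$ (which, $(\cL,\cR)$ being a modality, is stable by base change); its codomain is $X\in C$, so by mono-saturation (condition \ref{enum:modulator:7} of \cref{lem:mono-saturated}) it lies in $W$, and the cartesian square $m_g\to m$ is an object of $D$. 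Conversely every object of $D$ arises this way, so $D\simeq C\comma B$ and $\t$ restricts on $D$ to $g\mapsto X$. Since every $w\to m$ in $W\comma m$ has $\t w\in C$, the universal property of the pullback $m_{g}$, with $g$ the codomain component $\t w\to\t m$, yields an initial object of the slice $(w\to m)\comma D$, whence $D$ is final. Therefore
\[
\Plus m=\colimit{W\commaindex m}\t w=\colimit D\t w=\colimit{C\commaindex B}X=B,
\]
the last step because $C$ generates $\cE$, and one checks this identification is realised by $\Plusmap m$ itself; thus $\Plusmap m$ is invertible.

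\textbf{Inductive step ($n\geq 0$).} Let $w\in\cL$ be $n$-truncated. Its diagonal $\Delta w=\pbh{s^0}w$ is $(n-1)$-truncated, and lies in $\cL$: since $(\cL,\cR)$ is a lex modality, $\cL$ is stable by finite limits in $\Arr\cE$, in particular by the cotensor $\pbh{s^0}{-}$. By the induction hypothesis applied to $\Delta w$, the map $\Plusmap{(\Delta w)}$ is $(n-2)$-truncated. The key point is the natural isomorphism $\Delta(\Plusmap w)=\Plusmap{(\Delta w)}$: the functor $\pbh{s^0}{-}\colon\Arr\cE\to\Arr\cE$ is the cotensor by the finite object $s^0\in\Arr\cS$, hence a finite limit, and the plus-construction is left-exact, so $\Plusconstruction$ commutes with $\pbh{s^0}{-}$. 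Consequently $\Delta(\Plusmap w)$ is $(n-2)$-truncated, which is exactly the statement that $\Plusmap w$ is $(n-1)$-truncated. This closes the induction and proves the general assertion.

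The main obstacle is the base case, where mono-saturation is used essentially: it is what forces the cartesian pullbacks of $m$ to already lie in $W$, so that the category indexing $\Plus m$ collapses onto $C\comma B$ and the plus-construction terminates in one step. Without it the fibers $W(X)$ are merely cofiltered and this one-step collapse fails (cf. \cref{rem:mono-not-lex+}). By contrast the inductive step is formal once left-exactness of the plus-construction is available; the only point requiring care is to record that the diagonal $\pbh{s^0}{-}$ is a genuine finite-limit construction on $\Arr\cE$, so that left-exactness gives $\Delta\circ\Plusconstruction=\Plusconstruction\circ\Delta$.
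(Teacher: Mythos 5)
Your base case is correct, and it takes a genuinely different route from the paper's. The paper first notes that the plus-construction preserves monomorphisms (a map $m$ is a monomorphism iff the canonical map $m \to m\times_{1_{\t m}}m$ is invertible, a finite-limit criterion involving only the codomain, which $\Plusconstruction$ preserves), and then builds a \emph{section} of the mono $\Plusmap m$ by assembling the cocone maps $\t m_\xi \to \Plus m$ over the canonical presentation $m=\colim_\xi m_\xi$ given by universality of colimits, uniqueness of lifts into a mono guaranteeing compatibility. You instead prove the stronger statement that the indexing category collapses: the cartesian squares over generators form a cofinal subcategory of $W\comma m$ equivalent to $C\comma \t m$, whence $\Plus m \simeq \t m$ in one step. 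Both arguments use mono-saturation identically (to place the base changes $m_g$ in $W$, via stability of $\cL$ under base change); yours computes $\Plus m$ directly without needing to know beforehand that $\Plusmap m$ is a mono, at the cost of the cofinality check, which you sketch correctly (the gap map $w\to m_g$ is initial in $(w\to m)\comma D$ precisely because $m_{g}\to m$ is a cartesian morphism for the codomain fibration).

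The inductive step, however, contains a genuine error: the asserted natural isomorphism $\Delta(\Plusmap w)\simeq \Plusmap{(\Delta w)}$ is false, and it does not follow from left-exactness of $\Plusconstruction$ as an endofunctor of $\Arr\cE$. The quickest falsifier: by \cref{thm:+construction} the codomain of $\Plusmap{(\Delta w)}$ is $\t(\Delta w)=\s w\times_{\t w}\s w$, whereas the codomain of $\Delta(\Plusmap w)$ is $\Plus w\times_{\t w}\Plus w$; these differ whenever the unit $\eta\colon \s w\to \Plus w$ is not invertible (take sheafification and $w\colon F\to 1$). The structural reason is that the diagonal admits the finite-limit presentation
\[
\Delta w \simeq 1_{\s w}\times_{w} 1_{\s w} \quad\text{in } \Arr\cE,
\]
which involves the identity of the \emph{domain} of $w$. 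The functor $\Plusconstruction$ preserves codomains and fixes identities (they lie in $\cR$), but changes domains; so left-exactness yields
\[
\Plusmap{(\Delta w)} \simeq 1_{\s w}\times_{\Plusmap w} 1_{\s w},
\]
that is, the map $\s w\times_{\Plus w}\s w \to \s w\times_{\t w}\s w$ — which is the \emph{base change} of $\Delta(\Plusmap w)$ along $\eta\times_{\t w}\eta$, not $\Delta(\Plusmap w)$ itself. (By contrast, the mono criterion $m\simeq m\times_{1_{\t m}}m$ exploited by the paper involves only the codomain, which is why $\Plusconstruction$ does preserve monomorphisms, and likewise $n$\=/truncated maps; but mere preservation is not the improvement by one degree that the lemma asserts.) Your induction therefore proves, at best, that a certain base change of $\Delta(\Plusmap w)$ is $(n-2)$\=/truncated, and truncatedness does not descend along an arbitrary base change.

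The paper's own proof of the second statement routes instead through the first: $w$ being $n$\=/truncated means $\Delta^{n+1}w$ is a monomorphism, and it lies in $\cL$ since the left class of a lex modality is stable under diagonals; part one then makes $\Plusmap{(\Delta^{n+1}w)}$ invertible, and the paper concludes that $\Delta^{n+1}(\Plusmap w)$ is invertible. Note that the passage between these last two maps is exactly the comparison your commutation was meant to settle — as computed above it is a cartesian comparison map, not an equivalence for formal reasons — so this is the spot where your write-up needs a genuinely new argument rather than an appeal to left-exactness.
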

\begin{proof}
Recall that a map $f:A\to B$ in $\cC$ is a monomorphism if and only if $f \simeq f\times_{1_B} f$ in $\Arr \cC$.
Because the plus-construction preserves codomains and is left-exact, it preserves monomorphisms.
Hence, in order to prove that $m^+$ is invertible it is enough to prove that it has a section.

Let $C$ be the generating category for $\cC$ and $X$ in $C$.
Since $W$ is mono-saturated, any base change $m_\xi$ of $m$ along $\xi:X\to \t m$ is in $W$.
Because colimits are universal in $\cC$, $m$ is the colimit of the $m_\xi$.
By construction of $m^+$, we have lifts
\[
\begin{tikzcd}
\s m_\xi \ar[r] \ar[d] & \s m \ar[r]\ar[d,"m" near end] &\s m^+ \ar[d,"m^+"]\\
\t m_\xi \ar[r] \ar[rru,dashed, bend left=05] & \t m \ar[r,equal] &\t m
\end{tikzcd}
\]
These lifts are unique since $m^+$ is a mono.
Passing to the colimit, they define a lift
\[
\begin{tikzcd}
\s m \ar[r]\ar[d,"m"'] &\s m^+ \ar[d,"m^+"]\\
\t m\ar[r,equal] \ar[ru,dashed]&\t m.
\end{tikzcd}
\]
This provide a section of $m^+$.

For the second statement, we use that $f$ is $n$\=/truncated if and only if $\Delta^{n+2} f$ invertible if and only if $\Delta^{n+1} f$ is a monomorphism.
The left-exactness of $+$ and the previous computation implies that $\Delta^{n+1} (\Plusmap f)$ is invertible, hence $\Plusmap f$ is $(n-1)$\=/truncated.
\end{proof}

We need another lemma.
Any two maps $f:A\to B$ and $g:B\to C$ in a category $\cC$ define a commutative cube
\[
\begin{tikzcd}
A\ar[rr,equal]\ar[rd,"f"] \ar[dd,"f"'] && A \ar[rd,"f"] \ar[dd,"gf"' near start]\\
&B && B \ar[from=ll, crossing over, equal]\ar[dd, "g"]\\
B \ar[rd, equal] \ar[rr, "g" near start] && C \ar[rd, equal]\\
&B \ar[from = uu, crossing over, equal] \ar[rr, "g"] && C
\end{tikzcd}
\]    
where both top and bottom faces are cartesian and cocartesian. 
This proves the following result.

\begin{lem}
\label{lem:facto-bicartesian}
Viewed from above, the previous cube define a square in $\Arr\cC$ which is both cartesian and cocartesian.
\[
\begin{tikzcd}
f \ar[r]\ar[d] \pbmark &gf \ar[d]\\
1_B\ar[r]&g \pomark
\end{tikzcd}
\]    
\end{lem}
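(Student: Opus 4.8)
The plan is to reduce the claim to the elementary fact that limits and colimits in the arrow category $\Arr \cC = \fun I \cC$ are computed pointwise, that is, separately on sources and targets. Concretely, since $I$ has exactly the two objects $0$ and $1$, a commutative square in $\Arr \cC$ is cartesian (resp.\ cocartesian) if and only if its image under the source functor $\s\colon \Arr \cC \to \cC$ and its image under the target functor $\t\colon \Arr \cC \to \cC$ are both cartesian (resp.\ cocartesian) in $\cC$.

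First I would identify these two evaluations with the two horizontal faces of the cube. Reading off the four vertical edges, the four objects of the square are $f$, $gf$, $1_B$ and $g$; applying $\s$ to the square yields the top face
\[
\begin{tikzcd}
A \ar[r,equal]\ar[d,"f"'] & A \ar[d,"f"]\\
B\ar[r,equal]&B,
\end{tikzcd}
\]
while applying $\t$ yields the bottom face
\[
\begin{tikzcd}
B \ar[r,"g"]\ar[d,equal] & C \ar[d,equal]\\
B\ar[r,"g"']&C.
\end{tikzcd}
\]
A small but necessary check here is that the induced maps between the four arrows $f, gf, 1_B, g$ are the ones read off from the cube, so that the source and target squares are exactly these.

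Second I would observe that each of these two squares is both cartesian and cocartesian, for the trivial reason that it has a pair of parallel edges which are identities: a square in which two opposite sides are identity maps and the remaining two sides are the same map is automatically both a pullback and a pushout. This is precisely the assertion, recorded just before the lemma, that the top and bottom faces of the cube are bicartesian.

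Combining the two steps, both $\s$ and $\t$ of the square in $\Arr \cC$ are bicartesian, so by the pointwise computation of (co)limits the square is itself both cartesian and cocartesian, which is the claim. I do not expect any genuine obstacle; the only thing requiring care is the bookkeeping of orientations, namely matching the source face and target face of the $\Arr \cC$-square with the top and bottom faces of the cube in the correct direction.
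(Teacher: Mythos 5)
Your proof is correct and is exactly the argument the paper intends: the text preceding the lemma observes that the top and bottom faces of the cube (which are the images of the square under $\s$ and $\t$) are bicartesian, and concludes by the pointwise computation of (co)limits in $\Arr\cC$. Your write-up merely makes explicit the two steps the paper leaves implicit, namely that $\s$ and $\t$ jointly detect (co)cartesian squares in $\Arr\cC$ and that a square with a pair of parallel identity edges is trivially bicartesian.
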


\begin{lem}
\label{lem:+facto}
Let $\cE$ be an $n$\=/topos ($n\leq \infty$) and $W\subto \Arr \cE$ a mono-saturated lex modulator.
Let $f:A\to B$ be a map in $\cE$ and $\rho(f)\lambda(f):A\to M\to C$ its factorization for the left-exact modality generated by $W$.
Then, we have $\rho(\Plusmap f) = \rho(f)$ and $\lambda(\Plusmap f) = \lambda(f)^+$.
\end{lem}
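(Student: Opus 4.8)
The plan is to transport the tautological factorization of $f$ through the plus-construction, exploiting that the latter is left-exact. Write the $(\cL,\cR)$-factorization as $A\xto{\lambda(f)}M\xto{\rho(f)}B$, so that $f=\rho(f)\circ\lambda(f)$ with $\lambda(f)\in\cL$ and $\rho(f)\in\cR$. Applying \cref{lem:facto-bicartesian} to this composable pair produces a square in $\Arr\cE$
\[
\begin{tikzcd}
\lambda(f)\ar[r]\ar[d] \pbmark & f\ar[d]\\
1_M\ar[r]&\rho(f) \pomark
\end{tikzcd}
\]
that is both cartesian and cocartesian. The first step is to push this square through the endofunctor $\Plusconstruction$.

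By \cref{thm:lex+construction}\ref{thm:lex+construction:1} the plus-construction is left-exact on $\Arr\cE$, so it preserves the cartesian square above. Two of the corners are fixed points of $\Plusconstruction$: the map $1_M$ is invertible, whence $\pbh w{1_M}$ is invertible for every $w$ by \cref{lem:abs-iso}, so $1_M\in\cR$ and $\Plusmap{1_M}=1_M$ by \cref{thm:lex+construction}\ref{thm:lex+construction:3}; and $\Plusmap{\rho(f)}=\rho(f)$ since $\rho(f)\in\cR$. Hence
\[
\begin{tikzcd}
\Plusmap{\lambda(f)}\ar[r]\ar[d] \pbmark & \Plusmap f\ar[d]\\
1_M\ar[r]&\rho(f)
\end{tikzcd}
\]
is cartesian in $\Arr\cE$. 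Reading off its source (domain) components gives a pullback square in $\cE$ whose bottom edge $\s(1_M)\to\s\rho(f)$ is the identity of $M$; therefore the induced map $\Plus{\lambda(f)}\to\Plus f$ is invertible, and the right-hand vertical map of the square exhibits $\Plusmap f$ as a composite $\Plus f\xto{p}M\xto{\rho(f)}B$ in which $p$ is identified with $\Plusmap{\lambda(f)}$ under that isomorphism. (This is just \cref{lem:facto-bicartesian} read backwards.)

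The remaining, and only non-formal, point is that $\Plus f\xto{p}M\xto{\rho(f)}B$ is the $(\cL,\cR)$-factorization of $\Plusmap f$, i.e. that $p\in\cL$; this needs an argument because $\Plusconstruction$ does not preserve $\cL$ in general. Here a lex modulator is in particular a pre-modulator, so by \cref{thm:+construction} the plus-construction agrees with Kelly's $k$-construction, and \cref{thm:!SOA2} gives the factorization $\lambda(f)=\Plusmap{\lambda(f)}\circ u(\lambda(f))$ with $u(\lambda(f))\in\cL$. As $\lambda(f)\in\cL$ and $\cL$, being the left class of an orthogonal system, is stable under right cancellation, I conclude $\Plusmap{\lambda(f)}\in\cL$, hence $p\in\cL$ ($\cL$ being replete). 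With $p\in\cL$ and $\rho(f)\in\cR$, uniqueness of factorizations (the groupoid of factorizations of $\Plusmap f$ is contractible) yields $\rho(\Plusmap f)=\rho(f)$ and $\lambda(\Plusmap f)=\Plusmap{\lambda(f)}$, which is the claim.
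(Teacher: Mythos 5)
Your proof is correct and follows essentially the same route as the paper: apply \cref{lem:facto-bicartesian} to $f=\rho(f)\lambda(f)$, push the resulting bicartesian square through the left-exact plus-construction (\cref{thm:lex+construction}), and use that $+$ fixes $1_M$ and $\rho(f)$ because they lie in $\cR$. Where the paper tersely asserts that $M$ is the middle object of the factorization of $\Plusmap f$ and then identifies $\lambda(\Plusmap f)$ by a second application of \cref{lem:facto-bicartesian}, you instead verify explicitly that $\lambda(f)^+\in\cL$ --- via $+=k$ (\cref{thm:+construction}), the factorization $\lambda(f)=\lambda(f)^+\circ u(\lambda(f))$ with $u(\lambda(f))\in\cL$, and right cancellation in $\cL$ --- before invoking uniqueness of $(\cL,\cR)$-factorizations, which cleanly fills in the step the paper leaves implicit.
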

\begin{proof}
Using \cref{lem:facto-bicartesian} for $f=\rho(f)\lambda(f)$ and the left-exactness of the plus-construction, we get a cartesian square
\[
\begin{tikzcd}
\lambda(f)^+ \ar[r]\ar[d] \pbmark & \Plusmap f\ar[d]\\
(1_M)^+\ar[r]&\rho(f)^+.
\end{tikzcd}
\]
We have $\rho(f)^+ = \rho(f)$ since $+$ fixes the class $\cR$.
This implies that $M$ is also the middle object of the factorization of $\Plusmap f$. 
Using $(1_M)^+ = 1_M$ and \cref{lem:facto-bicartesian} for $\Plusmap f=\rho(\Plusmap f)\lambda(\Plusmap f)$, we get $\lambda(\Plusmap f)=\lambda(f)^+$.
\end{proof}

\begin{prop}[Sheafification of truncated objects]
\label{prop:truncated-sheaf}
Let $\cE$ be an $n$\=/topos ($n\leq \infty$) and $W\subto \Arr \cE$ a mono-saturated lex modulator.
Then, if $f$ is an $m$\=/truncated map ($m\leq n$), we have $f^{+m+2} = \rho(f)$.
\end{prop}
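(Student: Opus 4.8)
The plan is to reduce the statement to a claim purely about the left factor $\lambda(f)$, and then run a descending induction on the truncation level using \cref{lem:truncated-sheaf}. First I would iterate \cref{lem:+facto}: it gives $\rho(\Plusmap f)=\rho(f)$ and $\lambda(\Plusmap f)=\lambda(f)^+$, so by induction on $k$ one obtains $\rho(f^{+k})=\rho(f)$ and $\lambda(f^{+k})=\lambda(f)^{+k}$, with $\lambda(f)^{+k}=\lambda(f^{+k})$ in $\cL$. Writing $f^{+k}=\rho(f)\circ\lambda(f)^{+k}$, and using that $\rho(f)$ is fixed while the plus-construction preserves codomains, the desired equality $f^{+m+2}=\rho(f)$ is equivalent to the single assertion that $\lambda(f)^{+m+2}$ is invertible. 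Thus everything reduces to controlling the iterated plus-construction on the map $\lambda(f)\in\cL$.

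The crucial input is that $\lambda(f)$ is again $m$-truncated. I would prove this by showing that the diagonal commutes with the factorization, i.e.\ $\lambda(\Delta f)=\Delta\lambda(f)$. Indeed, writing $M=\s\rho(f)$, the diagonal $\Delta f\colon \s f\to \s f\times_{\t f}\s f$ factors as $\Delta\lambda(f)\colon \s f\to \s f\times_M\s f$ followed by the inclusion $\s f\times_M\s f\to \s f\times_{\t f}\s f$. The latter is a base change of $\Delta\rho(f)=\pbh{s^0}{\rho(f)}$, which lies in $\cR$ by \cref{lem:prop-ufs} (as $\cR$ is absorbing for $\pbh--$), and hence so does the inclusion; whereas $\Delta\lambda(f)$ lies in $\cL$ because for a lex modality $\cL$ is stable under finite limits in $\Arr\cE$, hence under the diagonal operation $g\mapsto\Delta g=\pbh{s^0}{g}$. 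By uniqueness of the $(\cL,\cR)$-factorization this identifies $\lambda(\Delta f)=\Delta\lambda(f)$, and iterating gives $\lambda(\Delta^{m+2}f)=\Delta^{m+2}\lambda(f)$. Since $f$ is $m$-truncated, $\Delta^{m+2}f$ is invertible, so its factorization is invertible, whence $\Delta^{m+2}\lambda(f)$ is invertible and $\lambda(f)$ is $m$-truncated.

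It then remains to show, by induction on $m\geq -2$, that any $m$-truncated $w\in\cL$ has $w^{+m+2}$ invertible. For $m=-2$ the map $w$ is already invertible. For the inductive step ($m\geq -1$), note first that $w\in\cL$ forces $w^+=\lambda(w^+)\in\cL$ by \cref{lem:+facto} (since $\lambda(w)=w$), while \cref{lem:truncated-sheaf} shows $w^+$ is $(m-1)$-truncated — an isomorphism when $m=-1$, since $w$ is then a monomorphism in $\cL$. The induction hypothesis applied to $w^+$ gives that $w^{+m+2}=(w^+)^{+m+1}$ is invertible. Taking $w=\lambda(f)$, which lies in $\cL$ and is $m$-truncated by the previous paragraph, and combining with the reduction of the first paragraph, I conclude $f^{+m+2}=\rho(f)$. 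The main obstacle is the middle paragraph — establishing that $\lambda(f)$ is $m$-truncated — which is the only place where left-exactness of the modality is genuinely used (through the stability of $\cL$ under diagonals and the resulting identification $\lambda(\Delta f)=\Delta\lambda(f)$); the descending induction is then a routine consequence of \cref{lem:truncated-sheaf}.
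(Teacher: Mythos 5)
Your proof is correct and follows essentially the same route as the paper: reduce via iterated \cref{lem:+facto} to the invertibility of $\lambda(f)^{+m+2}$, then conclude by the descending induction packaged in \cref{lem:truncated-sheaf}. Your middle paragraph, which identifies $\lambda(\Delta f)=\Delta\lambda(f)$ (using that $\cL$ is stable under diagonals and that $\Delta\rho(f)=\pbh{s^0}{\rho(f)}$ and its base changes lie in $\cR$) in order to show that $\lambda(f)$ is again $m$\=/truncated, is a correct justification of a step the paper's two-line proof leaves implicit.
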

\begin{proof}
By \cref{lem:+facto}, $f^{+m+2} = \rho(f^{+m+2})\lambda(f^{+m+2}) = \rho(f)\lambda(f)^{+m+2}$.
Then, by \cref{lem:truncated-sheaf}, $\lambda(f)^{+m+2}$ is an isomorphism and $f^{+m+2} = \rho(f)$.
\end{proof}

\begin{rem}
\label{rem:truncated-sheaf}
Notice that we do not need $W\to \Arr \cC$ to be made of monomorphisms (although it needs to be mono-saturated).
\Cref{prop:truncated-sheaf} works for all accessible left-exact localizations, topological or not.
However, the fact that the modulator is lex is crucial.
The result need not be true with a $\Delta$\=/modulator for which the plus-construction is not left-exact.
\end{rem}

\begin{cor}
Let $\cE$ be an $n$\=/topos (for $n<\infty$) and $W\subset \cE^\mono$ a topology.
Then, independently of the size of the maps in $W$, 
the sheafification can be computed by applying the plus-construction $n+1$~times.
\end{cor}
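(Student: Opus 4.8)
The plan is to read this off directly from \cref{prop:truncated-sheaf}. A topology $W\subset \cE^\mono$ consists of covering sieves, which are monomorphisms stable under base change and whose fibres over each generator are cofiltered by the locality axiom; it is therefore a lex modulator made of monomorphisms. After enlarging it to a mono-saturated one — which is harmless, since well-poweredness of an $n$\=/topos permits this completion without changing the generated localization (\cref{lem:mono-saturated}) — I may assume $W$ is a mono-saturated lex modulator. By \cref{thm:lex+construction} the factorization system it generates is a lex localization, namely sheafification, and by \cref{rem:+localization} the associated reflection is computed by iterating the plus-construction.

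The decisive input is that the truncation level of $\cE$ caps the number of iterations. In an $n$\=/topos with $n<\infty$ every object is $(n-1)$\=/truncated (the objects take values in $\cS\truncated{n-1}$, as in the proof of \cref{thm:lex+construction}), and therefore every map $f:A\to B$ is an $(n-1)$\=/truncated map, its fibres being finite limits of $(n-1)$\=/truncated objects. I would then apply \cref{prop:truncated-sheaf} with $m=n-1\leq n$, which yields
\[
f^{+n+1} = f^{+(n-1)+2} = \rho(f).
\]

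Specializing to the maps $X\to 1$, which are $(n-1)$\=/truncated because $X$ is, the plus-construction stabilizes after $n+1$ steps at $\rho(X\to 1)$, whose domain is the sheafification $P_WX$. This is exactly the assertion, and it explains the clause ``independently of the size of the maps in $W$'': the bound $n+1$ depends only on $n$ and not on the accessibility rank of the covering sieves, in contrast with the generic convergence bound coming from the small object argument of \cref{thm:!SOA2}.

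There is no substantial obstacle here: the corollary is the observation that $m+2$ never exceeds $n+1$ once $m\leq n-1$. The only points to verify with care are that a Grothendieck topology does furnish a mono-saturated lex modulator, so that \cref{prop:truncated-sheaf} is applicable (the example noted after \cref{lem:mono-saturated}), and the standard fact that every map in an $n$\=/topos is $(n-1)$\=/truncated.
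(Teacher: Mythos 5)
Your proposal is correct and follows essentially the same route as the paper's own proof: recognize the topology as a (mono-saturated) lex modulator so that \cref{thm:lex+construction} applies, then invoke \cref{prop:truncated-sheaf} with $m=n-1$, using that every object (hence every map $X\to 1$) of an $n$\=/topos is $(n-1)$\=/truncated, which gives $f^{+(n-1)+2}=f^{+n+1}=\rho(f)$. The only cosmetic difference is your intermediate mono-saturation step, which, as you note yourself at the end, is unnecessary because a Grothendieck topology is already mono-saturated.
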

\begin{proof}
Because of \cref{cor:lex-mono-gen} and \cref{rem:mono-not-lex+}, we can apply \cref{thm:lex+construction}.
The result follows from \cref{prop:truncated-sheaf} and the fact that all objects of an $n$\=/topos are $(n-1)$\=/truncated.
\end{proof}

\begin{table}[htbp]
\begin{center}	
\caption{Summary of the conditions for the plus-construction}
\label{table:cdt+}
\medskip
\renewcommand{\arraystretch}{2}
\begin{tabularx}{\textwidth}{
|>{\hsize=.7\hsize\linewidth=\hsize\centering\arraybackslash}X
|>{\hsize=.4\hsize\linewidth=\hsize\centering\arraybackslash}X
>{\hsize=1.3\hsize\linewidth=\hsize\raggedright\arraybackslash}X
|>{\hsize=1.6\hsize\linewidth=\hsize\centering\arraybackslash}X
|}
\hline
$W\to \cC\comma C$
    & \multicolumn{2}{c|}{{\em Condition}}
    & {\em Property}
\\
\hline
Pre-modulator
    & \ref{enum:modulator:1}-\ref{enum:modulator:4} 
    & $\t:W\rightleftarrows C:id$ is a reflective~localization
    & The $+$ and $k$\=/constructions coincide (\cref{thm:+construction}).
\\
\hline
Modulator
    & \ref{enum:modulator:1}-\ref{enum:modulator:5} 
    &
    {\begin{tikzcd}[ampersand replacement=\&,sep=small,cramped]
        W\ar[rd,"\t"']\ar[r,hook] \& \cC\comma C \ar[d,"\ \t"]\\
        \& C    
      \end{tikzcd}}
      is a sub-fibration
    & The plus-construction generates modalities (\cref{thm:+modality}) 
    and is given by the usual formula in $\cC=\P C$ (\cref{lem:+=+}).
\\
\hline
$\Delta$\=/modulator
    & \ref{enum:modulator:1}-\ref{enum:modulator:5} 
    & and \ref{enum:modulator:6-} $W^\Delta\subset \cL_W$
    & The plus-construction may not be lex but converges to a lex localization (\cref{thm:lex-diagonal}).
\\
\hline
Lex-modulator
    & \ref{enum:modulator:1}-\ref{enum:modulator:6} 
    & $W\subto \cC\comma C$ is a sub-fibration and~$W(X)$ are co-filtered
    & The plus-construction is left-exact (\cref{thm:lex+construction}).
\\
\hline
Lex-modulator mono-saturated
    & \ref{enum:modulator:1}-\ref{enum:modulator:7} 
    & contains all the inverted monomorphisms with codomain in $C$
    & The plus-construction converges in finite time on truncated objects (\cref{prop:truncated-sheaf}).
\\
\hline 
\end{tabularx}
\end{center}
\end{table}

\subsubsection{Example: Grothendieck topologies}
\label[example]{ex:GTopo}

Let $C$ be a small category, a {\em Grothendieck topology} on $\P C$~\cite[6.2.2]{Lurie:HTT} can be defined as a modulator $W\subto \P C\comma C$ such that
\begin{enumerate}[label=(GT\arabic*), leftmargin=*]
\item \label{enum:Gtopology:1} (monomorphic modulator) all maps are monomorphisms,
\item \label{enum:Gtopology:2} (locality) a map $R'\to X$ is in $W(X)$ if and only if there exists some $w:R\to X$ in $W$ such that for all $f:Y\to R$, the map $R'\times_XY\to Y$ is in $W(Y)$.
\[
\begin{tikzcd}
R'\times_XY \ar[rr] \ar[d,"W \ni"'] \pbmark
    && R'\ar[d]
\\
Y\ar[r]
    & R\ar[r,"\in W"']
    & X
\end{tikzcd}
\]
\end{enumerate}

Let us call a {\em monomorphic modulator} a modulator satisfying \ref{enum:Gtopology:1}.
They correspond to generators for {\em topological modalities} in sense of~\cite[Def. 3.13]{RSS}.
We saw in \cref{cor:lex-mono-gen}, that the factorization system generated by such a monomorphic modulator is always left-exact.
But as is mentioned in \cref{rem:mono-not-lex+}, the plus-construction need not be a left-exact functor.
This is somehow the motivation for the locality axiom \ref{enum:Gtopology:2}.
It implies that the posets $W(X)$ have meets, that is, that they are co-filtered.
The modulator is then lex in the sense on \cref{defi:lex-modulator}.
Moreover, if the generator $C$ is stable under finite limits, then $W$ is in fact stable under finite limits too.
In any case, because of \ref{enum:Gtopology:2} we can apply \cref{thm:lex+construction}.
The sheafification is then given by a transfinite iteration of the plus-construction (as was proven in \cite[Prop. 6.2.2.7]{Lurie:HTT}).
And, for $n$\=/truncated objects, \cref{prop:truncated-sheaf} says that $n+2$ iterations are enough.

\subsubsection{Example: open localization}
\label[example]{ex:open-loc}

Let $\cE$ be a topos and $U\mono 1$ a subterminal object.
The left-exact localization generated by $U\mono 1$ is the open subtopos corresponding to $U$.
We fix a generating subcategory $C\subset \cE$.
Let $w$ be the map $U\mono 1$, then $w^\Delta$ is reduced to $w$ and the identity of $U$.
Then $w^{\Delta mod}$ is the full subcategory of $\cE\comma C$ spanned by the identity maps of the objects of $C$ and by the maps $A\times U\to A$.
In fact, we have $w^{\Delta mod} \simeq I\times C$, where $I=\{0\to 1\}$ is the arrow category.
Using the fact that a coend indexed by $I$ is a pushout, the plus-construction is
\begin{align*}
\Plusoriginal F 
&= \int^{v\in w^{\Delta mod}} \map{\s v} F \times \t v     \\
&= \int^{A\in C} \map{A\times U} F \times A \coprod_{\int^{A:C} \map{A} F \times A} \int^{A:C} \map{A} F \times A     \\
&= \left(\int^{A\in C} \map{A} {F^U} \times A\right) \coprod_FF\\
&= F^U.
\end{align*}
Because $U$ is subterminal, the plus-construction is idempotent and directly gives the reflector of the lex localization.

\subsubsection{Example: closed localization}
\label[example]{ex:closed-loc}

With the same notations as in \cref{ex:open-loc}, the left-exact localization generated by $0\to U$ is the open subtopos corresponding to $U$.
Let $w$ be the map $0\to U$, then $w^\Delta$ is reduced to $w$ and the identity of $0$.
Then $w^{\Delta mod}$ is the full subcategory of $\cE\comma C$ spanned by the identity maps of the objects of $C$ and the maps $0_A:0 \to A$ for those $A$ such that the canonical map $A\to 1$ factors through $U$, that is, those $A$ such that $P_{-1}(A) \subset U$.
The category $w^{\Delta mod}$ has a projection to the arrow category $I=\{0\to 1\}$ such that the fiber over 0 is $C\comma U$ and the fiber over 1 is $C$.
In fact it can be shown that $w^{\Delta mod}$ is the total category (cograph or cylinder) of the forgetful functor $C\comma U\to C$.
Hence it is fibered and op-fibered over $I$.
A careful computation with coends shows that
\begin{align*}
\Plusoriginal F 
&= \int^{v\in w^{\Delta mod}} \map{\s v} F \times \t v    \\
&= \int^{A\in C\comma U} \map{\s (0_A)} F \times \t (0_A) \coprod_{\int^{A\in C\comma U} \map{\s (1_A)} F \times \t (0_A)} \int^{A\in C} \map{\s (1_A)} F \times \t (1_A)    \\
&= U \coprod_{U\times F} F\\
&= U\join F.
\end{align*}
Because $U$ is assumed to be subterminal, the plus-construction is idempotent and directly provide the reflector of the lex localization.

The computation actually never uses the fact that $U$ is subterminal.
For a general $U$, the plus-construction is no longer idempotent, but converges in a countable number of steps to $U^{\join \infty}\join F = P_{-1}(U) \join F$.
In other terms, the left-exact localization generated by the map $0\to U$, for any object $U$, is the closed subtopos complement of the open $P_{-1}(U)$, that is, of the support of $U$.
This provides also an example of a $\Delta$\=/modulator for which the plus-construction is not left-exact (see \cref{rem:mono-not-lex+}).

\subsubsection{Example: classifier of truncated objects}
\label[example]{ex:classifier-truncated}

Let $\Fin$ be the category of finite spaces. 
Then, $\Fin\op$ is the free lex completion of the punctual category 1.
We consider the topos $\S X := \P{\Fin\op} = \fun {\Fin} \cS$.
For $I$ in $\Fin$, we shall denote by $X^I$ the functor corresponding by the Yoneda embedding $\Fin\op \to \S X$.
The object $X$ in $\S X$ is $X^1$. 
The corresponding functor is the canonical embedding $\Fin \subto \cS$.

If $\cE$ is another topos, a cocontinuous and left-exact functor $f^*:\S X\to \cE$ reduces to a lex functor $\Fin\op\to \cE$, and finally to a functor $1\to \cE$, that is, an object of $\cE$.
This object is the image of $X$ by the functor $f^*:\S X\to \cE$.
In other words, the topos $\S X$ classifies objects.
The left-exact localizations of $\S X$ classify objects having certain properties.

We shall focus on the localization classifying $n$\=/truncated objects.
Recall that an object is $n$\=/truncated if and only if the higher diagonal $\Delta^{n+2}X = X\to X^{S^{n+1}}$ is invertible.
Therefore, the topos $\S {X\truncated n}$ classifying $n$\=/truncated objects is the left-exact localization of $\S X$ generated by $w=\Delta^{n+2}X$:
\[
\S {X\truncated n} = \relDKLoc {\text{cc}}{\text{lex}} {\S X} {\Delta^{n+2}X}.
\]
We compute $w^{\Delta mod}$.
The diagonals of $w$ are the higher diagonals $\Delta^{m+1}X:X\to X^{S^m}$, for $m>n$.
A base change of $\Delta^{m+1}X$ along a map $X^I\to X^{S^m}$ is a map $X^{I/S^m}\to X^I$
associated to a pushout
\[
\begin{tikzcd}
S^m \ar[r]\ar[d]& I \ar[d]
\\
1 \ar[r]& I/S^m \pomark
\end{tikzcd}
\]
The cocontinuous and lex localization generated by $w$ is the cocontinuous localization generated by all the $X^{I/S^m}\to X^I$.
Since all these maps are in $\Fin\op \subset \P {\Fin \op}$, we have by \cref{cor:arbitrary-loc}
\begin{align*}
\relDKLoc {\text{cc}}{} {\P {\Fin \op}} {\{X^{I/S^m}\to X^I\}} 
&= \P {\DKLoc {\Fin\op} {\{X^{I/S^m}\to X^I\}}}\\
&= \P {\DKLoc {\Fin} {\{I\to I/S^m\}} \op}.
\end{align*}
Recall that a map in $\Fin$ is $P_n$\=/equivalence if it induces an equivalence between the $n$\=/th Postnikov truncation.
We leave to the reader the proof that the localization $\DKLoc {\Fin} {\{I\to I/S^m\}}$ inverts all $P_n$\=/equivalences and that $\DKLoc {\Fin} {I\to I/S^m} = P_n(\Fin)$ where $P_n$ is the full subcategory of $\cS\truncated n$ spanned by the $n$\=/truncations of $\Fin$ (see \cref{ex:SOA:Fin-truncation}).
Finally, we get that $\S {X\truncated n}$ is the presheaf topos
\[
\S {X\truncated n} = \fun {P_n(\Fin)} \cS.
\]

\subsubsection{Example: lex localizations in one step}
\label[example]{ex:Grothendieck-one-step}

We fix a topos $\cE$ with some generators $C\subset \cE$ and an accessible left-exact localization $f^*:\cE\to \cF$.
As considered in \cref{prop:lex-mod-4-loc-lex}, it is possible to provide a lex modulator $W'$ associated to $C$.
But more is true, since $W'$ is in fact $\kappa$\=/closed and stable by towers in the sense of \cref{defi:tower}.
We can apply \cref{prop:idempotent+} to deduce that the object
\[
\Plusoriginal F = \colim_{W'\commaindex F} \t w
\]
is directly the sheafification of $F$.

\subsubsection{Example: hypercomplete lex localizations}
\label[example]{ex:hypercovers}

We fix a left-exact localization of $f^*:\cE \to \cF$.
We shall say that a map $h:R\to X$ is a {\em covering} for $f^*$ if $f^*h$ is a cover in $\cF$
and that a map $h:R\to X$ is a {\em hypercovering} for $f^*$ if $f^*h$ is \oo connected in $\cF$, that is, if all the diagonal maps $\Delta^n h:=\pbh {s^{n+1}} h$ are coverings.
Recall from~\cite{Lurie:HTT} that hypercoverings generate the left-exact localization of $\cE$ which is the hypercompletion of $\cF$.
It is proven in~\cite[Prop. 6.5.2.8]{Lurie:HTT} that hypercompletion is an accessible localization. 
We can then chose generators $C\subset \cE$ and a cardinal $\kappa$ as in \cref{ex:Grothendieck-one-step}. 
We define $H(\kappa)$ to be the modulator of $\kappa$\=/small hypercoverings.
It is a lex modulator.
$H(\kappa)$ is $\kappa$\=/closed and stable by towers. 
By \cref{prop:idempotent+} the object
\[
\Plusoriginal F = \colim_{H(\kappa)\commaindex F} \t w
\]
is directely the hypercompletion of the sheafification of $F$.
Applied to $\cE=\cF$, $\Plusoriginal F$ is simply the hypercompletion of $F$.


\end{document}